\documentclass[12pt]{amsart}

\usepackage{fullpage,url,amssymb,enumerate,colonequals}
\usepackage{mathrsfs}
\usepackage[section]{placeins}
\usepackage{MnSymbol}
\usepackage{extarrows}
\usepackage{lscape}
\usepackage[all,cmtip]{xy}

\usepackage[OT2,T1]{fontenc}
\usepackage{color}

\usepackage[
        colorlinks, citecolor=darkgreen,
        backref,
        pdfauthor={Nuno Freitas}, 
]{hyperref}

\usepackage{comment}
\usepackage{multirow}


\newcommand{\C}{\mathbb{C}}
\newcommand{\F}{\mathbb{F}}
\newcommand{\Fbar}{{\overline{\F}}}

\newcommand{\PP}{\mathbb{P}}
\newcommand{\Q}{\mathbb{Q}}

\newcommand{\Z}{\mathbb{Z}}
\newcommand{\Qbar}{{\overline{\Q}}}

\newcommand{\Ebar}{{\overline{E}}}

\newcommand{\rhobar}{{\overline{\rho}}}

\newcommand{\ff}{\mathfrak{f}}



\newcommand{\calL}{\mathcal{L}}

\newcommand{\calO}{\mathcal{O}}

\newcommand{\vv}{\upsilon}

\newcommand{\fp}{\mathfrak{p}}


\DeclareMathOperator{\Aut}{Aut}

\DeclareMathOperator{\Dic}{Dic}

\DeclareMathOperator{\End}{End}

\DeclareMathOperator{\Frob}{Frob}
\DeclareMathOperator{\Gal}{Gal}

\DeclareMathOperator{\Norm}{Norm}

\DeclareMathOperator{\tr}{tr}



\newcommand{\unr}{{\operatorname{unr}}}

\newcommand{\GL}{\operatorname{GL}}

\newcommand{\SL}{\operatorname{SL}}




\numberwithin{equation}{section}

\newtheorem{theorem}{Theorem}
\newtheorem*{theorem*}{Theorem}
\newtheorem{lemma}{Lemma}
\newtheorem{corollary}{Corollary}
\newtheorem{proposition}{Proposition}

\theoremstyle{definition}
\newtheorem{definition}[equation]{Definition}

\newtheorem{conjecture}[equation]{Conjecture}
\newtheorem{example}[equation]{Example}

\theoremstyle{remark}
\newtheorem{remark}[equation]{Remark}

\newenvironment{psmallmatrix}
  {\left(\begin{smallmatrix}}
  {\end{smallmatrix}\right)}

\definecolor{darkgreen}{rgb}{0,0.5,0}

\setlength{\parindent}{0mm}
\setlength{\parskip}{1ex plus 0.5ex}

\DeclareRobustCommand{\SkipTocEntry}[5]{}

\begin{document}

\title{On the symplectic type of isomorphisms of the $p$-torsion of elliptic curves}

\author{Nuno Freitas}
\address{
Instituto de Ciencias Matem\'aticas, CSIC, 
Calle Nicol\'as Cabrera
13--15, 28049 Madrid, Spain}
\email{nuno.freitas@icmat.es}

\author{Alain Kraus}
\address{Sorbonne Universit\'e,
Institut de Math\'ematiques de Jussieu - Paris Rive Gauche,
UMR 7586 CNRS - Paris Diderot,
4 Place Jussieu, 75005 Paris, 
France}
\email{alain.kraus@imj-prg.fr}

\thanks{Freitas is
supported by the
European Union's Horizon 2020 research and innovation programme under the Marie Sk\l{l}odowska-Curie grant 
agreement No.\ 747808 and the grant {\it Proyecto RSME-FBBVA $2015$ Jos\'e Luis Rubio de Francia}.}

\keywords{Elliptic curves, torsion points, Weil pairing, symplectic isomorphism, local fields}

\subjclass[2010]{Primary 11G05, 11G07, Secondary 11D41}

\date{\today}

\begin{abstract} Let $p \geq 3$ be a prime. Let $E/\Q$ and $E'/\Q$ be elliptic curves with isomorphic $p$-torsion modules $E[p]$ and $E'[p]$. Assume further that either 
(i) every $G_\Q$-modules isomorphism $\phi : E[p] \to E'[p]$  
admits a multiple $\lambda \cdot \phi$ with $\lambda \in \F_p^\times$
preserving the Weil pairing; or 
(ii) no $G_\Q$-isomorphism $\phi : E[p] \to E'[p]$ preserves the Weil pairing.
This paper considers the problem 
of deciding if we are in case (i) or (ii).

Our approach is to consider the problem locally at a prime $\ell \neq p$.
Firstly, we determine the
primes $\ell$ for which the local curves $E/\Q_\ell$ and $E'/\Q_\ell$ contain enough information 
to decide between (i) or (ii). Secondly, we establish a collection of criteria, in terms of the standard invariants associated to minimal Weierstrass models of $E/\Q_\ell$ and $E'/\Q_\ell$, to decide between (i) and (ii). We show that our results give a complete solution to the problem by local methods away from~$p$.

We apply our methods to show the non-existence of rational points on certain hyperelliptic 
curves of the form $y^2 = x^p - \ell$ and $y^2 = x^p - 2\ell$ where $\ell$ is a prime; 
we also give incremental results on the Fermat equation $x^2 + y^3 = z^p$.
As a different application, we discuss variants of a question raised by Mazur 
concerning the existence of symplectic isomorphisms 
between the $p$-torsion of two non-isogenous elliptic curves 
defined over $\Q$.  

\end{abstract}

\maketitle

{
 \setlength{\parskip}{0.5ex}
  \hypersetup{linkcolor=black}
  \tableofcontents
}

{\large \part{Motivation and Results}}

\section{Introduction}

Let $p$ be an odd prime. Let $E$ and $E'$ be elliptic curves over $\Q$ and write 
$E[p]$ and $E'[p]$ for their $p$-torsion modules. Write $G_\Q = \Gal(\Qbar/\Q)$ for 
the absolute Galois group of $\Q$.

Let $\phi : E[p] \to E'[p]$ be a $G_\Q$-modules isomorphism;
hence there is an element $d(\phi) \in \F_p^\times$ such that,
for all $P, Q \in E[p]$, the Weil pairings satisfy
$e_{E',p}(\phi(P), \phi(Q)) = e_{E,p}(P, Q)^{d(\phi)}$.
We say that $\phi$ is a {\it symplectic isomorphism} or an
{\it anti-symplectic isomorphism}
if $d(\phi)$ is a square or a non-square modulo~$p$, 
respectively.

We note that, for isomorphic $E[p]$ and~$E'[p]$,
it is possible that isomorphisms with both symplectic types exist;
this occurs if and only if $E[p]$ admits an anti-symplectic automorphism. It is easy to find such examples for $p \leq 5$ (see Example~\ref{Ex:mod5} below), but in fact no such example exists
over~$\Q$ when $p \geq 7$  
(cf. Proposition~\ref{P:typeTriple}) or $p \geq 3$
and the mod~$p$ representation 
$\rhobar_{E,p} : G_\Q \to \GL_2(\F_p)$ attached to~$E$ 
is irreducible (cf. Corollary~\ref{C:problemA}).

Let us briefly mention our main sources of Galois isomorphisms $\phi : E[p] \to E'[p]$. 
Any $\Q$-isogeny $h \; \colon \; E \to E'$ of degree~$n$ coprime to~$p$ restricts
to an isomorphism of $G_\Q$-modules $\phi \; \colon \; E[p] \to E'[p]$. 
In the case of $E$ and $E'$ being non-isogenous curves it is an important conjecture of 
Frey and Mazur that there exists an absolute constant~$C$ such that if 
a $G_\Q$-isomorphism $\phi \; \colon \; E[p] \to E'[p]$ exists then $p < C$. 
Using Cremona's database \cite{lmfdb} together with \cite[Proposition~4]{KO} 
we can find many pairs of non-isogenous curves with isomorphic $p$-torsion 
for $p \leq 17$. In spite of the Frey-Mazur conjecture, we are often interested in considering 
$\phi \; \colon \; E[p] \to E'[p]$ where $E'$ is a fixed elliptic curve, but the existence of the curve~$E$ is 
hypothetical and the prime~$p$ is allowed to grow. This latter setting is of great interest for us 
as it arises in the study of Diophantine equations, as is explained 
in Section~\ref{S:modularmethod}.

We consider triples $(E,E',p)$ where $E/\Q$ and $E'/\Q$ are elliptic curves with 
isomorphic $p$-torsion and such 
that the $G_\Q$-modules 
isomorphisms $\phi :  E[p] \rightarrow E'[p]$ are either all symplectic or all anti-symplectic. 
In this case, we will say that the {\it symplectic type} of~$(E,E',p)$
is respectively symplectic or anti-symplectic. From the discussion above, it follows that, most of the time, when an isomorphism $\phi : E[p] \simeq E'[p]$ exists there is only one possible symplectic type for any such~$\phi$ (for the same $p$), that is
the symplectic type of~$(E,E',p)$ is well defined.

In this work we consider the following problem.

\noindent {\bf Problem A:} 
Given a triple $(E,E',p)$ as above, how do we determine its symplectic type?

If a triple $(E,E',p)$ arises from an isogeny $h \; \colon \; E \to E'$ of degree~$n$ 
not divisible by~$p$, then $d(h|_{E[p]}) = n$ and the symplectic type of $(E,E',p)$ is 
symplectic if $n$ is a square mod~$p$ and anti-symplectic otherwise (Corollary~\ref{C:isogeny}). 
Thus Problem A has an easy solution in this case. 

For a general triple $(E, E', p)$ as in Problem A, in principle,  one could compute
the $p$-torsion fields of $E$ and $E'$, consider the Galois action on 
$E[p]$ and $E'[p]$ and check if they are symplectically or
anti-symplectically isomorphic. However, the degree of the $p$-torsion fields grows very fast
with~$p$ making this method not practical already for $p=7$. 
One could work instead with the $p$-torsion field locally at some prime $\ell$ (assuming we 
proved this will give the correct symplectic type) but this also becomes impractical 
very fast. Another issue with this approach is that it cannot be applied when one of 
the curves is not concrete or the value of $p$ is varying.
Thus, we are interested in having practical methods to determine the 
symplectic type of~($E$, $E'$, $p$) using as little information 
independent of~$p$ as possible. 
We will develop a variety of such methods by working locally. 

Let $\ell \neq p$ be a prime. 
Let $E/\Q_\ell$ and $E'/\Q_\ell$ 
be elliptic curves with isomorphic $p$-torsion and
such that the $\Gal(\Qbar_\ell/ \Q_\ell)$-isomorphisms $\phi : (E/\Q_\ell)[p] \to (E'/\Q_\ell)[p]$ are either all symplectic or all anti-symplectic.\footnote{The definition of a symplectic or anti-symplectic $\Gal(\Qbar_\ell/ \Q_\ell)$-modules isomorphisms is analogous to the one over~$\Q$ given above; see section~\ref{S:notation} for details.}
In this situation, we say that 
a {\bf (local) symplectic criterion exists}
and we call {\bf (local) symplectic criterion} 
to a method/theorem allowing 
to determine the symplectic 
type of one (hence all) such isomorphism~$\phi$.

There are already symplectic criteria available in the literature.
Indeed, the first criterion was established in 1992 by 
the second author with Oesterl\'e \cite{KO} 
and is applicable 
when $E$ and~$E'$
have a common prime~$\ell$ of multiplicative reduction (see Theorem~\ref{T:potMult}). 
Recently, the first author in \cite{F33p} and 
jointly with Naskr{\k e}cki and Stoll in \cite{FNS23n} 
proved other symplectic criteria in some cases of potentially good reduction.

The main goal of this paper is to optimize some of the existing criteria and establish new 
ones in all the remaining cases in which a symplectic criterion can possibly exist. These criteria can then be applied to Problem A as follows. 

Let $(E,E',p)$ be as in Problem~A. We have 
$(E/\Q_\ell)[p] \simeq (E'/\Q_\ell)[p]$ as $\Gal(\Qbar_\ell/ \Q_\ell)$-modules for all primes~$\ell$.
We let
$\calL_{(E,E',p)}$ be the set of primes $\ell \ne p$ for which a 
symplectic criterion exists. 

As we shall explain below,
the symplectic type of~$(E,E',p)$ 
is encoded in the information of
the local curves $E/\Q_\ell$ and
$E'/\Q_\ell$ for any
$\ell \in \calL_{(E,E',p)}$.
More precisely,
the methods in this paper will allow us to do the following. 
\begin{enumerate}
 \item Given a triple $(E,E',p)$ as in Problem~A we completely
 determine the set $\calL_{(E,E',p)}$.
 \item For each $\ell \in \calL_{(E,E',p)}$ we have a symplectic criterion
 to determine the symplectic type of $(E,E',p)$ from $p$ and the Weierstrass models
 of $E/\Q_\ell$ and $E'/\Q_\ell$. Often we only need the standard invariants $(c_4, c_6, \Delta_m)$ attached to minimal models of these local curves and congruence conditions on~$p$.
\end{enumerate}
Note that, although to solve Problem A for a particular $(E,E',p)$ it is enough to find one prime~$\ell$ in part (1), 
we obtain the complete list $\calL_{(E,E',p)}$. In particular, 
when $\calL_{(E,E',p)}$ is empty there is no prime $\ell \ne p$
such that local information at $\ell$ allows us to determine the 
symplectic type of $(E,E',p)$ 
(see Proposition~\ref{P:emptyL} for such an example). Therefore, we obtain an answer to Problem A which
is optimal using only standard local information away from~$p$.

We compile in Tables~\ref{Table:CriteriaList} and~~\ref{Table:TwistingLemmas}
the complete list of symplectic criteria 
in terms of the reduction types of the curves; see Section~\ref{S:results} for the full notation and statements.
\begin{small}
 \begin{table}[htb]
\renewcommand{\arraystretch}{1.25}
\begin{tabular}{|c|c|c|c|c|} \hline
{\sc reduction type of} $E$, $E'$ & {\sc prime} $\ell \neq p$& {\sc Extra conditions} & {\sc criteria}
& {\sc Proof in}
\text{\large\strut}\\\hline
good ($e=1$)  & $(\ell/p) = 1$       &  $p \mid \Delta_\ell, \; p \nmid \beta_\ell, \; \Ebar = \Ebar'$ & Theorem~\ref{T:simpleAbelian} & \S \ref{S:thm12} \\
good ($e=1$)  & $(\ell/p) = 1$       &  $p \mid \Delta_\ell, \; p \nmid \beta_\ell$ & Theorem~\ref{T:mainAbelian} & \S \ref{S:genGood} \\
pot. good $e = 3$ &$\ell \equiv 2 \pmod{3}$ &      & Theorems~\ref{T:mainTame3},~\ref{T:main3torsion} & 
\S \ref{S:thm1}, \S \ref{S:thm2} \\
pot. good $e = 3$ &$\ell \equiv 1 \pmod{3}$ &   $p=3$   & Theorem~\ref{T:e=p=3} & \S \ref{S:thm3}\\
pot. good $e = 3$& $\ell = 3$               & $\tilde{\Delta} \equiv 2 \pmod{3}$ & Theorem~\ref{T:mainWild3} & \S \ref{S:thm4} \\
pot. good $e=4$         &$\ell \equiv 3 \pmod{4}$ &                                     & Theorem~\ref{T:mainTame4} & \S \ref{S:thm5} \\
pot. good $e=4$         &$\ell = 2$               &  $\tilde{c}_4 \equiv 5\tilde{\Delta} \pmod{8}$& Theorem~\ref{T:mainWild4} & \S \ref{S:thm6}\\
pot. good $e=8$         &$\ell = 2$               &    & Theorems~\ref{T:mainWilde8},~\ref{T:mainWilde8II} & \cite{FNS23n}, \S \ref{S:pfThm9} \\
pot. good $e=12$        &$\ell = 3$               &     & Theorem~\ref{T:mainWilde12},~\ref{T:mainWilde12II} & \cite{FNS23n}, \S \ref{S:pfThm9} \\
pot. good $e=24$        &$\ell = 2$               &    & Theorem~\ref{T:Wilde24} & \cite{F33p} \\ \hline 
multiplicative &   any             &  $p \nmid \vv_\ell(\Delta_m)$    & Theorem~\ref{T:potMult} & \cite{KO} \\
\hline 
$E$ multiplicative, $e'=3$ & $\ell \neq 3$               &  $p=3$ & Theorem~\ref{T:mixedReduction} & \S\ref{S:mixedReduction} \\
 \hline 
\end{tabular}
\caption{Summary of symplectic criteria. 
We assume $p \geq 3$ and $(E/\Q_\ell)[p] \simeq (E'/\Q_\ell)[p]$ as $\Gal(\Qbar_\ell/ \Q_\ell)$-modules; 
$e$ and $e'$ are the semistability defects of $E$ and $E'$;
the quantities
$\Delta_\ell$, $\beta_\ell$, 
$\tilde{c}_4$, $\tilde{\Delta}$
and $\Delta_m$ are attached to~$E$.}
\label{Table:CriteriaList}
\end{table}
\end{small}
\begin{small}
 \begin{table}[htb]
 \renewcommand{\arraystretch}{1.25}
\begin{tabular}{|c|c|c|c|c|} \hline
\multicolumn{3}{|c|}{{\sc Twisted reduction types for $E$, $E'$}} & {\sc Twisting Lemmas} & {\sc Proof In} \\ \hline
\multicolumn{3}{|c|}{$e=e'=2$ (twist of good)}                         & Lemmas~\ref{L:lemma3},~\ref{L:lemma4} & \S \ref{S:4lemmas} \\
\multicolumn{3}{|c|}{$e=e'=6$ (twist of $e=e'=3$)}                        
& Lemmas~\ref{L:lemma1},~\ref{L:lemma2} & \S \ref{S:4lemmas}\\
\multicolumn{3}{|c|}{$E$, $E'$ pot. multiplicative}                        
  & Lemma~\ref{L:multTwist} & \cite[p.~444]{SilvermanII} \\
\multicolumn{3}{|c|}{$E$ pot. multiplicative, $e'=6$}  & Lemmas~~\ref{L:lemma1},~\ref{L:lemma2},~\ref{L:multTwist} & \\
\hline
\end{tabular}
\caption{Reduction types that reduce to those in Table~\ref{Table:CriteriaList} after a quadratic twist.}
\label{Table:TwistingLemmas}
\end{table}
\end{small}

We make the following remarks about the tables:
\begin{itemize}
 \item Table~\ref{Table:CriteriaList} contains the full list of conditions under which our criteria are applicable. 
 \item The cases in Table~\ref{Table:TwistingLemmas} sometimes can be reduced to those in Table~\ref{Table:CriteriaList} by twisting.
 More precisely, suppose that $(E/\Q_\ell)[p] \simeq (E'/\Q_\ell)[p]$ and a case 
 in Table~\ref{Table:TwistingLemmas} holds. 
 Apply the relevant twisting lemma to get the value~$d$ to twist both curves. If the twisted curves $dE$ and~$dE'$ satisfy any of the conditions in Table~\ref{Table:CriteriaList}, then applying the  criterion in the same line to $dE$ and~$dE'$ gives the desired conclusion (due to Lemma~\ref{L:twistIso}).
 \item When applying the twisting lemmas in the mixed reduction case (last line of Table~\ref{Table:TwistingLemmas}), we can choose to either apply Lemma~\ref{L:multTwist} to~$E$ or (depending on~$\ell$) Lemmas~\ref{L:lemma1},~\ref{L:lemma2} to~$E'$.
\item Up to line 10 of Table~\ref{Table:CriteriaList}, both curves are assumed to have potentially good reduction; on line 11 both have multiplicative reduction; the last line 
contains the only criterion in the case of mixed reduction types.
 \item To keep Table~\ref{Table:CriteriaList} easier to read, conditions are stated only for~$E$ whenever possible; this is allowed by the assumption $(E/\Q_\ell)[p] \simeq (E'/\Q_\ell)[p]$ which implies the same conditions hold for~$E'$. For example, $e=e'$ up to line 10 (see Proposition~\ref{P:TwistsameType}).
\item The hypothesis $(E/\Q_\ell)[p] \simeq (E'/\Q_\ell)[p]$ is superfluous in the 1st line, as it follows from the stronger extra condition $\Ebar = \Ebar'$ on the residual curves.
\item The cases $e=8,24$ occur only for $\ell=2$ and $e=12$ occurs only for $\ell = 3$ 
 (see \cite{Kraus1990}).
\end{itemize}

We summarize the above discussion into one theorem. 
\begin{theorem*}
Let $\ell$ and $p \geq 3$ be 
different primes.
Let $(E,E',p)$ be as in Problem~ A. 

Then~$\ell \in \calL_{(E,E',p)}$ if and only if, after replacing $E$ and $E'$ by twists according to~Table~\ref{Table:TwistingLemmas} if necessary, 
one of the lines of Table~\ref{Table:CriteriaList} is satisfied. Moreover, the symplectic type of $(E,E',p)$ is determined by the
theorems listed under {\sc Criteria} in that same line.
\end{theorem*}
\begin{proof}
This is a consequence of Theorem~\ref{T:tableEquivalence}, the results stated in Section~\ref{S:results} and Theorem~\ref{T:mainAbelian}.
\end{proof}

\subsection*{Motivation} 
Our interest in
Problem A arises from two 
sources; namely, variants of a question of Mazur
and Diophantine equations. 
We will discuss both in detail in Section~\ref{S:motivation} and 
in Part~\ref{Part:applications} we use 
our main results to obtain applications to these topics.

\subsection*{Possible generalizations} 
It is natural to wonder about
the possibility of extending the present work 
to number fields since Problem A clearly makes sense in that setting.
Following the approach in this paper, that would lead us to consider 
symplectic criteria for elliptic curves defined over finite extensions $F/\Q_{\ell}$. 
To our knowledge, the available results in this direction are 
for the case of curves $E/F$ with (potentially)  multiplicative reduction 
(as the proof over~$\Q_\ell$ applies) or in the special case 
of potentially good reduction covered by \cite[Proposition A.2]{HK2002}. 
Nevertheless, according to the results in Section~\ref{S:existence}, one 
knows that criteria will exist in particular when $\rhobar_{E,p}$ has non-abelian 
image. This will be the case, for instance when $E/F$ gets good reduction over extensions with 
non-abelian inertia subgroups; in the case of cyclic inertia, the existence of criteria will be 
less common as it will depend on which roots of unit belong to~$F$. Furthermore, to 
make the criteria explicit would require consider many cases depending on the ramification of 
the extension $F/\Q_\ell$ and its residue field.

\addtocontents{toc}{\SkipTocEntry}
\subsection*{Acknowledgments}
We thank Benjamin Matschke for many helpful discussions. 
We also thank Michael Bennett, Imin Chen, Lassina Demb\'el\'e and Michael Stoll for 
their comments. Finally, we thank the anonymous referees for carefully reading 
the paper and providing many suggestions and comments which significantly improved the exposition.

All the calculations required for this
work were done using {\tt Magma}~\cite{MAGMA}.

\section{A double motivation}
\label{S:motivation}

\subsection{A question of Barry Mazur} 
\label{S:FreyMazur}

In Mazur's remarkable 1978 paper on isogenies, he asked (see \cite[p. 133]{Mazur}) the following question
\begin{quote}
{\em Are there non-isogenous elliptic curves over $\Q$ with symplectically isomorphic $p$-torsion modules for some $p \geq 7$?}
\end{quote}
An affirmative answer was obtained by the second author and Oesterl\'e  in \cite{KO}, 
using the symplectic criterion for the case of multiplicative 
reduction (see Theorem~\ref{T:potMult}).
They showed that the elliptic curves with Cremona labels 7448e1 and 152a1 
have symplectically isomorphic $7$-torsion modules;
with the same criterion they also showed that the $7$-torsion modules of the 
curves 26a1 and 182c1 are anti-symplectically isomorphic. 

Related to the question above, Mazur also posed the problem of finding 
all elliptic curves~$E'/\Q$ having $p$-torsion module symplectically isomorphic 
to that of a fixed elliptic curve $E/\Q$. Similarly, one can formulate the analogous 
problem where we ask for anti-symplectic isomorphisms instead. In both cases, the 
problem can be rephrased as finding rational points on certain twists 
of the modular curve $X(p)$. These twists are denoted $X_E(p)$ and $X^-_E(p)$
in the symplectic and anti-symplectic case, respectively.
The results developed in this work provide, in particular, quick local tests for the possibility 
of $X_E(p)$ and $X^-_E(p)$ having rational points (see 
Section~\ref{S:appMazur} for more details and examples).

In view of the previous discussion, Kani and Schanz \cite{KaniSchanz} studied the geometry of the surfaces that parametrize pairs of elliptic curves with isomorphic $n$-torsion, and conjectured that for $n \leq 12$ there are infinitely many pairs of such curves with different $j$-invariants (see Conjecture~5 and Question~6 in {\it loc. cit.}). 
This conjecture was firstly established for~$n=7$ by the second author~\cite{Kraus1996}; later, jointly with Halberstadt, they described 
infinitely many 6-tuples of non-isogenous curves with symplectically isomorphic $7$-torsion (see~\cite[Proposition~6.3]{HK2003}).
For $n=11$ the conjecture was proved by Kani and Rizzo~\cite{KaniRizzo} but their proof 
does not provide examples; Fisher~\cite{Fisher} gave alternative proofs for
both cases, including a description of infinitely many pairs of non-isogenous elliptic curves 
for both symplectic types of $7$-torsion isomorphisms and infinitely many 
such pairs with symplectically isomorphic $11$-torsion. 
An analogous infinite family with anti-symplectically isomorphic $11$-torsion
is unknown; note that, in this case,
the corresponding parameterizing surface is of general type (see \cite[Theorem~4]{KaniSchanz}). Recently, Fisher~\cite{Fisher13cong} 
showed that there are infinitely many non-trivial pairs of $13$-congruent
elliptic curves, both in the symplectic and anti-symplectic cases.

The question and the discussion above
are closely related to the powerful Frey-Mazur Conjecture
(see also \cite[Section~4.1]{darmon-SerreConjs}).

\begin{conjecture}[Frey--Mazur] There is a constant $C$ such that the following holds. 
If $E$ and $E'$ are elliptic curves over $\Q$ such that $E[p]$ and $E'[p]$ are isomorphic 
as $G_\Q$-modules for some prime $p > C$, then $E$ and $E'$ are $\Q$-isogenous.
\label{C:FM}
\end{conjecture}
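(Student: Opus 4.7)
The plan is to approach the Frey--Mazur conjecture through a combination of moduli-theoretic and modularity-theoretic ideas, though I must stress at the outset that this is a famously open problem and what follows is a sketch of directions rather than a strategy one can realistically complete today. I would first translate the conjecture into a statement about rational points on a moduli space. For each prime $p$, the pairs $(E, E')/\Q$ equipped with a $G_\Q$-equivariant isomorphism $\phi : E[p] \isom E'[p]$ are parameterized by surfaces $Z^{\pm}(p)$ --- the Kani--Schanz surfaces associated to the two symplectic types --- which arise as quotients of $X(p) \times X(p)$ by an appropriate Atkin--Lehner/Hecke involution. The diagonal embedding and the graphs of $\Q$-isogenies of degree prime to $p$ produce the ``trivial'' rational points, and the conjecture asserts that for $p$ sufficiently large only these trivial points exist.

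The first half of the plan is geometric: by the classification in \cite{KaniSchanz}, both $Z^+(p)$ and $Z^-(p)$ are of general type once $p$ is large enough, so a Bombieri--Lang--Vojta-type principle predicts that their non-trivial rational points are not Zariski-dense. The second half is arithmetic and rests on modularity: if $E[p] \isom E'[p]$ then the associated newforms $f, f'$ are congruent mod $p$, and one would try to upgrade an effective level-lowering argument \emph{à la} Ribet--Wiles into a statement that such a congruence, for $p$ larger than an absolute constant $C$, forces equality of all Hecke eigenvalues away from the levels, and hence isogeny of $E$ and $E'$. The local symplectic criteria of the present paper enter as a filter: for any hypothetical counterexample $(E, E', p)$, the criteria summarized in Table~\ref{Table:CriteriaList} constrain, at every prime $\ell \ne p$ where $E/\Q_\ell$ provides enough information, the reduction behaviour of $E'$ and pin down the symplectic type; by Mazur's question and the discussion above, any candidate pair must satisfy these local compatibilities simultaneously at all relevant $\ell \in \calL_{(E,E',p)}$.

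The hard part, and the reason the conjecture has resisted proof for more than four decades, is producing the effective constant $C$. Uniform boundedness of congruences between distinct newforms of possibly different levels, with a bound depending only on the weight and independent of the tame levels, is not known; nor is an effective Bombieri--Lang statement for general-type surfaces available. The local methods of this paper can shrink, and in favourable cases empirically eliminate, the candidate counterexamples for any fixed small $p$, but they cannot by themselves produce the uniform bound that the conjecture demands. A genuine proof would therefore require either a breakthrough in the Diophantine theory of surfaces of general type, or a decisive effective refinement of modularity and level-lowering of a kind that currently lies well beyond reach.
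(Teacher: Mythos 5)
The statement you were asked to prove is Conjecture~\ref{C:FM}, the Frey--Mazur conjecture. The paper does not prove it; it is stated precisely as an open conjecture motivating the authors' work, and it remains open. Your submission is therefore not comparable to any argument in the paper, and, as you yourself acknowledge, it is not a proof: it is a survey of two strategies (Zariski non-density of rational points on the Kani--Schanz surfaces $Z^{\pm}(p)$ of general type via a Bombieri--Lang/Vojta principle, and an effective uniform bound on congruences between newforms via modularity and level-lowering), together with an accurate explanation of why neither can currently be carried out. Your remark that the local symplectic criteria of this paper serve only as a filter on candidate pairs for fixed $p$, and cannot produce the uniform constant $C$, is correct and consistent with how the authors use them (they derive lower bounds such as $C^{+} \geq 13$ and $C^{-} \geq 17$, not upper bounds).

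The concrete gap is therefore the entire content of the conjecture: no step of your proposal establishes the existence of $C$. Both pillars you invoke are themselves open --- there is no effective (or even ineffective, in the form needed) Bombieri--Lang theorem for surfaces of general type, and even granting non-density of rational points on $Z^{\pm}(p)$ one would still have to rule out infinitely many isolated non-trivial points; likewise, uniform boundedness of mod~$p$ congruences between non-isogenous elliptic curves is precisely a restatement of the conjecture, not a tool for proving it. Since the statement is a conjecture, the honest assessment is that your text should be presented as a discussion of the problem, not as a proof attempt.
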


A strong form of this conjecture states that $C=19$.
In the recent work of the first author with Cremona~\cite{CremonaF} 
the authors provide global symplectic criteria that apply in cases 
where local methods of this paper may fail. Furthermore, by applying all the methods from {\it loc. cit} together with the local methods in this work, the authors determined the symplectic type of all mod~$p \geq 7$ congruences between elliptic curves present in the LMFDB database, which currently includes all elliptic curves of conductor less than 500000. They have found that while such congruences exist for each $p \leq 17$, there are none for $p \geq 19$ in the database, in line with the strong form of the Frey-Mazur conjecture.

In Section~\ref{S:appMazur} we 
give a few selected examples found in 
the database that illustrate 
the applicability of our local criteria. 

\subsection{Diophantine equations}
\label{S:modularmethod}
Wiles' proof \cite{Wiles} of Fermat's Last Theorem
pioneered a new strategy to attack Diophantine equations, 
now known as {\it the modular method}. 
Nowadays, when applying it to a Diophantine equation 
the most challenging part is often in the very end where we need to obtain a contradiction. 
Indeed, after applying modularity and
level lowering results, one gets an isomorphism
\begin{equation}
  \rhobar_{E,p} \sim \rhobar_{f,\fp}
\label{E:iso}  
\end{equation}
between the (irreducible) mod~$p$ representation of the Frey curve $E$ and
the mod~$\fp$ representation of a newform $f$ with weight 2 and `small' level $N$, 
for some prime $\fp \mid p$ in $\Qbar$. 
In the proof of FLT we have $N=2$ and there are no candidate newforms $f$, giving a contradiction. 
In essentially every other application of the modular method there are possible forms~$f$,
hence we needed to derive a contradiction to \eqref{E:iso} for all such $f$. 
In \cite{HK2002} the second author and Halberstadt realized
that using symplectic criteria may allow to distinguish
the two Galois representations in \eqref{E:iso}, obtaining the 
desired contradiction for certain~$f$. 

We now explain their method, which is known 
as {\it the symplectic argument}. 
Suppose that~$\eqref{E:iso}$ holds with $f$ a newform
corresponding to the isogeny class of an elliptic curve $W$, hence
there is a $G_\Q$-modules isomorphism $\phi : E[p] \to W[p]$.
Since $\rhobar_{E,p}$ is irreducible,
all such $\phi$ have the same symplectic type,
thus it makes sense to consider $(E,W,p)$.
Now suppose we find two primes $\ell_1, \ell_2 \neq p$ for which
we can determine the symplectic type of $(E,W,p)$ using only local 
information at these primes individually.
It is often the case that the primes $\ell_1$ and $\ell_2$
will give compatible symplectic types only for $p$ satisfying certain congruence conditions, 
therefore we get a contradiction to \eqref{E:iso} for
all the values of $p$ not satisfying those conditions.

At the time of the original application of the symplectic argument, 
the only symplectic criterion available was for the case of multiplicative 
reduction (see Theorem~\ref{T:potMult}). The authors used it to show 
that if $A,B,C$ are odd coprime integers,
then there is a set of exponents $p$ with positive Dirichlet
density such that the equation $Ax^p + By^p + Cz^p = 0$ has no solutions satisfying $xyz \ne 0$. 
The same criterion was recently used by the first author and Siksek \cite{FS3} 
to prove Fermat's Last Theorem over $\Q(\sqrt{17})$ for a set of exponents with density $1/2$. 
Even more recently, the first author in \cite{F33p} and 
jointly with Naskr{\k e}cki and Stoll in \cite{FNS23n} 
developed new symplectic criteria 
(see Theorems~\ref{T:mainWilde12} and~\ref{T:mainWilde8})
and applied them to obtain new results on the Fermat equations 
$x^3 + y^3 = z^p$ and $x^2 + y^3 = z^p$. Further applications of the symplectic 
argument were obtained by the authors of this paper in~\cite{FK1}.

It is therefore promising for applications to Diophantine equations to 
extend the local symplectic criteria available and, with
the results in this paper, we will provide a 
complete toolbox of easy to use local 
symplectic criteria (see Section~\ref{S:results}).
Some of our new results have already seen new Diophantine applications;
indeed, in the joint work of the first author with 
Bennett and Bruni \cite{BenBruFre}, 
Theorem~\ref{T:mainTame4}  
was applied to the equation $x^3 + y^3 = qz^p$ with $q \in \Z$.

Moreover, building on results from \cite{FNS23n},  
we will show in Section~\ref{S:2319} how Theorem~\ref{T:simpleAbelian} sometimes applies 
to reduce the number of curves that we need to find rational points in order to solve the 
very challenging Fermat equation $x^2 + y^3 = z^p$.
Furthermore, in Section~\ref{S:hyperelliptic}, we will use Theorems~\ref{T:Wilde24}~and~\ref{T:mainWilde8II} to improve results from~\cite{IK2006}, about the
non-existence of $\Q$-rational points on hyperelliptic curves of 
the form $y^2=x^p-\ell$ and $y^2=x^p-2 \ell$, where $\ell$ is a prime.

\section{Our approach to the problem of determining the symplectic type} 
\label{S:Overview}

We aim to give an exhaustive solution to the problem of determining the 
symplectic type of triples
$(E,E',p)$ by local methods away from $p$. In this section 
we will explain our approach, summarize what is already in the literature
and what are the novelties of this work. 

\subsection{Notation} \label{S:notation} 
We need notation that will allow us to 
work with fields other than $\Q$.

For a field $K$, we write $\overline{K}$ for an algebraic 
closure and $G_K = \Gal(\overline{K}/K)$ for its absolute Galois group.
For $E$ an elliptic curve defined over~$K$, we write $E[p]$ for its $p$-torsion
$G_K$-module and $\rhobar_{E,p} : G_K \to \Aut(E[p])$ for the corresponding
Galois representation. 

For $K$ a field of characteristic zero or a finite field of characteristic~$\neq p$ 
we fix, for all primes~$p$, a primitive $p$-th root of unity $\zeta_p \in \overline{K}$.
We write $e_{E,p}$ for the Weil pairing on~$E[p]$.

We say that a $\F_p$-basis $(P,Q)$ of $E[p]$ is a \emph{symplectic basis} if
$e_{E,p}(P,Q) = \zeta_p$; we say it is an {\it anti-symplectic basis} if 
$e_{E,p}(P,Q) = \zeta_p^{r}$ with $r$ not a square mod~$p$.

Now let $E /K$ and $E'/K$ be two elliptic curves and
$\phi : E[p] \to E'[p]$ be an isomorphism of $G_K$-modules.
From elementary properties of the Weil pairing it follows
that there is an element $d(\phi) \in \F_p^\times$ such that
\begin{equation}
e_{E',p}(\phi(P), \phi(Q)) = e_{E,p}(P, Q)^{d(\phi)} \quad \text{for all $P, Q \in E[p]$.} 
\label{E:dphi}
\end{equation}
Note that for any $a \in \F_p^\times$ we have $d(a\phi) = a^2 d(\phi)$.
So up to scaling~$\phi$, only the class of~$d(\phi)$ modulo squares matters.
We say that $\phi$ is a \textit{symplectic isomorphism} if
$d(\phi)$ is a square in~$\F_p^\times$, and an \textit{anti-symplectic isomorphism}
if $d(\phi)$ is a non-square. We call this the {\it symplectic type} of $\phi$. 
Note that $\phi$ preserves the Weil pairing precisely when $d(\phi) = 1$.

Finally, we say that $E[p]$ and~$E'[p]$ are
\emph{symplectically} (or \emph{anti-symplectically}) \emph{isomorphic},
if there exists a symplectic (or anti-symplectic) isomorphism of~$G_K$-modules between them.
Note that it is possible that $E[p]$ and~$E'[p]$ are both symplectically
and anti-symplectically isomorphic; this will be the case if and only if
$E[p]$ admits an anti-symplectic automorphism (cf. Example~\ref{Ex:mod5} below).

\subsection{A local approach} 
\hfill
\label{S:localApproach}

For each prime $\ell$ we let $G_{\Q_\ell} \subset G_\Q$ be a decomposition subgroup at~$\ell$.

Let $\ell$ and $p \geq 3$ be different primes.   
Let $E$ and $E'$ be elliptic curves over $\Q$ with isomorphic $p$-torsion.
Let $\phi : E[p] \to E'[p]$ be an
isomorphism of $G_\Q$-modules.
We consider the isomorphism of $G_{\Q_\ell}$-modules 
$\phi_\ell : E[p] \rightarrow E'[p]$ obtained 
from $\phi$ by viewing the curves $E$, $E'$ over~$\Q_\ell$.
The symplectic type of $\phi$ is equal to the symplectic type of $\phi_\ell$. 

Assume that, for some prime $\ell$, the following holds:
\begin{itemize}
 \item[(i)] The $G_{\Q_\ell}$-modules $E[p]$ and $E'[p]$ are 
 not simultaneously symplectically and anti-symplectically isomorphic.
 \item[(ii)] We can decide, from simple local information about $E$ and $E'$
 over $\Q_\ell$, which is the symplectic type in (i).
\end{itemize}
Then, using (ii), we determine the symplectic type of any $G_{\Q_\ell}$-isomorphism
between $E[p]$ and~$E'[p]$ which, in particular, is the symplectic 
type of $\phi_\ell$. Thus $\phi$ also has that symplectic type. 
We conclude that given $(E,E',p)$ as in the introduction we can determine its symplectic type
if we can find a prime $\ell$ for which (i) and (ii) hold. 

In view of (i), we recall and emphasize the following idea that is recurrent 
in this paper. Let $E$ and $E'$ be elliptic curves over $\Q_\ell$ with isomorphic $p$-torsion modules for $p \neq \ell$. We say that a {\bf (local) symplectic criterion exists} if all the $G_{\Q_\ell}$-modules isomorphisms $\phi : E[p] \to E'[p]$ have the same symplectic type.
We can now describe the two main steps of our approach:

\begin{enumerate}
 \item[(I)]
 Let $E/\Q_\ell$ and $E'/\Q_\ell$ be elliptic curves 
 with isomorphic $p$-torsion modules.
 For each possible types of reduction of $E, E'$ we classify, in terms of $\ell$, $p$ and 
 standard information on the curves, when a symplectic criterion exists;
 \item[(II)] For each case of the classification obtained in (I), 
  we give a procedure to determine the symplectic type
  from the value of $p$ and standard information on $E/\Q_\ell$ and $E'/\Q_\ell$.
\end{enumerate}
In other words, in part (I) we classify every possible case in which a 
local symplectic criterion at $\ell \neq p$ exists and in part (II) we provide 
such criteria in every case missing in the  literature.

We note that items (1) and (2) explained in the introduction
will follow from (I) and (II). 

\subsection{What is left to be done?} 
\label{S:left?}
We will now explain which criteria are available in the literature
and which are the novelties of this work. First we need some more notation.

Let $p$ and $\ell$ be primes such that $p \geq 3$ and $\ell \ne p$. 
Write $\Q_\ell^{un}$ for the maximal unramified extension of $\Q_\ell$.
Given an elliptic curve $E/\Q_\ell$ with potentially good reduction 
we write $L=\Q_\ell^{\text{un}}(E[p])$ and $\Phi=\Gal(L/\Q_\ell^{un})$.
We call $L$ the {\it inertial field} of $E$. 
The field $L$ is the minimal extension of $\Q_\ell^{\text{un}}$ where 
$E$ obtains good reduction (see \cite{ST1968}). 
We denote by $e = e(E)$ the order of $\Phi$ which is called the {\it semistability defect of} $E$. 
We say that $E/\Q_\ell$ has {\it tame reduction} if 
$e$ is coprime to $\ell$ and we say it has {\it wild reduction} otherwise. In the presence of a second curve $E'/\Q_\ell$ sometimes we will simply
write $e'$ for its semistability defect $e(E')$.

Let $E/\Q_\ell$ have potentially good reduction but not good reduction, so that $e \ne 1$.
It is well known (see \cite{Kraus1990}) that $\ell$ and $\Phi$ satisfies one of the 
following possibilities:
\begin{itemize}
 \item $\ell \geq 5$ and $\Phi$ is cyclic of order 2,3,4,6;
 \item $\ell = 3$ and $\Phi$ is cyclic of order 2,3,4,6 or isomorphic to $\Dic_{12}$,
 the Dicyclic group of order 12;
 \item $\ell = 2$ and $\Phi$ is cyclic of order 2,3,4,6 or it has order 8 and is isomorphic to the quaternion group $H_8$ or it has order 24 and is isomorphic to $\SL_2(\F_3)$.
\end{itemize}
Below we list the symplectic criteria currently available in the literature. 
Note that all of them 
require $E/\Q_\ell$ and~$E'/\Q_\ell$ to have the same kind of reduction; moreover, in the case of potentially good reduction, the hypothesis $E[p] \simeq E'[p]$ implies the semistability defects satisfy $e=e'$ (see part (A) of Proposition~\ref{P:TwistsameType}), thus no further reference to $E'$ is required.
\begin{itemize}
 \item for any $\ell$ of multiplicative reduction 
 there is \cite[Proposition~2]{KO} (see Theorem~\ref{T:potMult});
 \item for $\ell=2$, $e=24$ there is \cite[Theorem~4]{F33p} (see Theorem~\ref{T:Wilde24}); 
 \item for $\ell=2$, $e=8$  there is \cite[Theorem~4.6]{FNS23n} (see Theorem~\ref{T:mainWilde8});
 \item for $\ell=3$, $e=12$ there is \cite[Theorem~4.7]{FNS23n} (see Theorem~\ref{T:mainWilde12});
 \item for $\ell \equiv 2 \pmod{3}$ and $e=3,6$ 
 there is \cite[Proposition~A.2]{HK2002} but this criterion has fairly restrictive applications due to
its large list of hypothesis. 
\end{itemize}
Suppose that $E/\Q_\ell$ and $E'/\Q_\ell$ have potentially
good reduction and isomorphic $p$-torsion. If their semistability defects satisfy 
$e=e'=6$ or $e=e'=2$,
then there is an element $d \in \Q_\ell$ such that the quadratic twists $dE$ and $dE'$ 
respectively satisfy $e(dE) = e(dE') = 3$ or $e(dE) = e(dE') = 1$ (cf. Proposition~\ref{P:TwistsameType}). 
Since twisting both curves by~$d$ preserves the existence and the symplectic type (Lemma~\ref{L:twistIso})
of a $p$-torsion isomorphism
we conclude that the cases $e=6$ and $e=2$ reduce respectively to the cases $e=3$ and $e=1$ (good reduction).

We conclude that
there are no symplectic criteria available in the literature when
\begin{itemize}
 \item the curves have mixed reduction types, more precisely, 
 $E/\Q_\ell$ has potentially good reduction while $E'/\Q_\ell$ has potentially multiplicative reduction;
\end{itemize}
and when both curves have potentially good reduction 
in the following cases:
\begin{itemize}
 \item the tame case $\ell \neq 3$, $e=3$ when \cite[Proposition~A.2]{HK2002} does not apply;
 \item the wild case $\ell=e=3$;
 \item the tame case $\ell \neq 2$, $e=4$;
 \item the wild case $\ell = 2$ and $e=4$;
 \item the case  of good reduction: any $\ell$ and $e=1$;
 \item also, Theorem~\ref{T:Wilde24} part~(2) contains a procedure
 to detect the symplectic type when $(2/p) = -1$ via the
 `simple formula' $\upsilon_2(\Delta_m(E)) \equiv \upsilon_2(\Delta_m(E')) \pmod{3}$ 
 whilst Theorems~\ref{T:mainWilde8}~and~\ref{T:mainWilde12} do not contain an analogous formula.
\end{itemize}
Thus, to fully complete item (II) in Section~\ref{S:localApproach} 
we have to prove a symplectic criterion or a `simple formula' 
in each of the missing cases described above.
To complete (I) we need to understand, for every type of reduction of $E$, when 
the symplectic type of all $G_{\Q_\ell}$-module isomorphisms $\phi : E[p] \to E'[p]$ 
is the same. This may, for example, result in restrictions 
on the prime $\ell$ or on the invariants $c_4$, $c_6$ and $\Delta_m$ attached to 
a minimal model of $E/\Q_\ell$. A summary of these restrictions together with
a reference for the statement to the corresponding criterion can be 
found in Table~\ref{Table:CriteriaList}.

\subsection{Strategy of proof of a symplectic criterion} 
Let $E$, $E'$ be elliptic curves over~$\Q_\ell$ with potentially good reduction 
and isomorphic $p$-torsion modules. Assume that
all $G_{\Q_\ell}$-modules isomorphisms $\phi : E[p] \to E'[p]$ have the same symplectic type, i.e.
a symplectic criterion exists.
For example,  this happens when $\rhobar_{E,p}(G_{\Q_\ell})$ is non-abelian 
~(see Theorem~\ref{T:conditionRho}).

Fix symplectic bases of $E[p]$ and $E'[p]$.
Let $\phi : E[p] \to E'[p]$ be a $G_{\Q_\ell}$-modules isomorphism 
and write $M_\phi$ for the matrix 
representing it.
We shall show (see Lemma~\ref{L:determinant}) that $\phi$ is symplectic 
if and only if the determinant of $M_\phi$ is a square mod~$p$. Moreover,
we have $\rhobar_{E,p}(g) = M_\phi \rhobar_{E',p}(g) M_\phi^{-1}$ for all $g \in G_{\Q_\ell}$, 
but this equality is not enough to read whether $\det M_\phi$ is a quadratic residue, unless we 
determine the action on the $p$-torsion explicitly. 

We aim to decide the symplectic type of $\phi$ 
from easy information about $E$ and~$E'$. 
The main challenge we face is that we need a method that works for 
general $E$ and $E'$ but keeps track of the very specific information 
required to determine the symplectic type. 

We start by noting (Corollary~\ref{C:notBoth}) that if, for some subgroup $H \subset G_{\Q_\ell}$, we have  $\rhobar_{E,p}(H)$  non-abelian 
then it is enough to find a matrix $M \in \GL_2(\F_p)$ 
satisfying $\rhobar_{E,p}(g) = M \rhobar_{E',p}(g) M^{-1}$ for all $g \in H$,
for which we can determine the square class of 
$\det M$ mod~$p$.
Then we show there are symplectic bases for $E[p]$ and $E'[p]$ such that the images of 
$\rhobar_{E,p}(H)$ and $\rhobar_{E',p}(H)$ land in the same subgroup of $\GL_2(\F_p)$; this implies
that $M$ belongs to the normalizer $N_H$ of $\rhobar_{E,p}(H)$ in $\GL_2(\F_p)$.
Depending on the group structure of $\rhobar_{E,p}(H)$ we can pin down more concretely the 
shape of the matrices in $N_H$. For example, when $\ell = 2$, $H = I_2$, $\rhobar_{E,p}(I_2) \simeq H_8$
and $(2/p) = 1$ then all the matrices in $N_H$ have square determinant mod~$p$; 
this argument plays an essential part in the proof of Theorem~\ref{T:mainWilde8} 
(given in \cite[Theorem~4.6]{FNS23n}).
 
However, for most of the criteria we want to prove, the situation has two extra obstacles;
indeed, (i) there are matrices in $N_H$ with both types of determinant and (ii) the smallest subgroup 
$H$ with non-abelian image is the whole $G_{\Q_\ell}$.
We are therefore obliged to describe more precisely matrices $\rhobar_{E,p}(\sigma)$,
$\rhobar_{E,p}(\tau)$ and $\rhobar_{E',p}(\sigma)$, $\rhobar_{E',p}(\tau)$, generating 
the non-abelian image; the natural choice is to take $\sigma$ a generator of inertia (which is 
often cyclic) and $\tau$ a Frobenius element. 

Assume $E$ obtains good reduction over a field $L$. Then inertia acts via the homomorphism~$\gamma_E : I_\ell \to \Aut(\Ebar) \rightarrow \Ebar[p]$, where $\Ebar$ 
is the reduction of a model of $E/L$ with good reduction (see Lemma~\ref{L:gammaE}).
By showing the existence of Weierstrass models 
with certain properties (e.g. Lemmas~\ref{L:3torsionmodel}~and~\ref{L:gammaEe3}), we are able to describe 
the image of $\gamma_E$ in $\Aut(\Ebar)$ independently of any choice of basis (and
similarly for $E'$ and $\gamma_{E'}$). In particular, this allows to choose a basis where we can control 
the images of $\tau$ but also of $\rhobar_{E,p}(\sigma)$ and $\rhobar_{E',p}(\sigma)$
(e.g. Lemma~\ref{L:goodbasis}). 
This imposes more constraints on the matrices $M \in N_H$ which are allowed. 
To finish we show these are enough restrictions to decide if
$\det M$ is a square mod~$p$ (see, for example, case (1) in the proof of Theorem~\ref{T:mainTame3}). 

\section{A complete list of local symplectic criteria at $\ell \neq p$}
\label{S:results}

Here we state all the symplectic criteria established in this paper together 
with all other criteria available in the literature so 
that we obtain an easy to use and complete list.
We will divide the list according to the type of 
reduction and inertia sizes as described 
in Section~\ref{S:left?}.

To make this section self-contained we will 
introduce and/or repeat all the relevant notation.

Let $p$ and $\ell$ be primes such that $p \geq 3$ and $\ell \ne p$. 

Write $\F_\ell$ for the finite field with $\ell$ elements.

Let $\Q_\ell$ be the field of $\ell$-adic numbers and $\overline{\Q}_\ell$ an algebraic closure.
Write $\Q_\ell^{un} \subset \overline{\Q}_\ell$ 
for the maximal unramified extension of $\Q_\ell$. 
Write $\vv_\ell$ for the $\ell$-adic valuation in $\Q_\ell$ with $\vv_\ell(\ell) = 1$.

Write $G_{\Q_\ell} = \Gal(\overline{\Q}_\ell / \Q_\ell)$ for the absolute Galois group.
Let $I_\ell \subset G_{\Q_\ell}$ denote the inertia subgroup. Note that $\Q_\ell^{un}$
is the field fixed by $I_\ell$. Write $\Frob_\ell \in G_{\Q_\ell}$ for a Frobenius element.

Let $E/\Q_\ell$ be an elliptic curve, $E[p]$ its 
$p$-torsion module
and $\rhobar_{E,p}$ its mod~$p$ representation.

Let $\Delta_m = \Delta_m(E)$ denote the discriminant of a minimal Weierstrass model of $E$. 
Given a minimal model for $E/\Q_\ell$, whenever $c_4 \neq 0$ or $c_6 \neq 0$, 
we define the quantities $\tilde{c}_4$, $\tilde{c}_6$ 
and $\tilde{\Delta}$ by
\[
 c_4 = \ell^{\vv_\ell(c_4)} \tilde{c}_4, \qquad c_6 = \ell^{\vv_\ell(c_6)} \tilde{c}_6, \qquad \Delta_m = \ell^{\vv_\ell(\Delta_m)}\tilde{\Delta}.
\]
When we simultaneously use two elliptic curves $E$ and $E'$ we adapt the 
notation accordingly writing, in particular, $E'[p]$, $e'$, $c_4'$, $\tilde{c}_4'$, $c_6'$, $\tilde{c}_6'$ 
and $\Delta_m'$, $\tilde{\Delta}'$.

Let $E/\Q_\ell$ and $E'/\Q_\ell$ be elliptic curves with isomorphic $p$-torsion.
We say that $E[p]$ and~$E'[p]$ are {\it symplectically isomorphic} 
if there exists a $G_{\Q_\ell}$-isomorphism $\phi \; : \; E[p] \rightarrow E'[p]$
such that the quantity $d(\phi)$ in \eqref{E:dphi} is a square modulo~$p$; we say
$E[p]$ and~$E'[p]$ are {\it anti-symplectically isomorphic} if
$d(\phi)$ is a non-square modulo~$p$.

Let $a \in \Z$. We recall also the definition of the Legendre symbol 
\[
\left( \frac{a}{p} \right) = 
  \begin{cases}
  1 \text{ if } a  \text{ is a square modulo } p, \\
  -1 \text{ if } a  \text{ is not a square modulo } p, \\
  0 \text{ if a is divisible by } p. \
  \end{cases}
\]

\subsection{The case of additive potentially good reduction} \hfill

Given an elliptic curve $E/\Q_\ell$ with potentially good reduction 
we write $L=\Q_\ell^{\text{un}}(E[p])$ and $\Phi=\Gal(L/\Q_\ell^{un})$.
We call $L$ the {\it inertial field} of $E$. 
The field $L$ is independent of $p$ and is the minimal extension of $\Q_\ell^{\text{un}}$ where 
$E$ obtains good reduction (see \cite{ST1968}). 
We denote by $e = e(E)$ the order of $\Phi$. 
The determination of $e$
in terms of the triple of invariants 
$(c_4, c_6, \Delta_m)$ attached to $E$
is given in \cite{Kraus1990};
in particular, we can have $e=1,2,3,4,6,8,12$ or $24$. 
In the presence of a second curve $E'/\Q_\ell$ with potentially good reduction we write~$e'$ 
for~$e(E')$.

Note that when both curves have potentially good reduction, the hypothesis $E[p] \simeq E'[p]$ implies the semistability defects satisfy $e=e'$ (cf. Proposition~\ref{P:TwistsameType}), thus 
in statements of this section
we only need to inculde~$e$.

{\bf (A) Cyclic inertia of order $e=3$ and $e=6$.} 

We start with the list of symplectic criteria for the case $e=3$.

\begin{theorem} Let $\ell \equiv 2 \pmod{3}$ be a prime. 
Let $E$ and $E'$ be elliptic curves over $\Q_\ell$ with potentially good reduction
and $e=3$. Let $p \neq \ell$ be an odd prime.

Suppose that $E[p]$ and $E'[p]$ are isomorphic $G_{\Q_{\ell}}$-modules.

Set $t=1$ if exactly one of $E$, $E'$ has a $3$-torsion point defined over $\Q_\ell$ and $t=0$ otherwise.

Set $r=0$ if $\upsilon_\ell(\Delta_m) \equiv \upsilon_\ell(\Delta_m') \pmod{3}$ and $r=1$ otherwise. 

(i) If $p \geq 5$, then
\[
 E[p] \text{ and } E'[p] \quad \text{are symplectically isomorphic} \quad \Leftrightarrow \quad \left(\frac{\ell}{p}\right)^r \left (\frac{3}{p} \right)^t = 1.
\]
(ii) If $p = 3$, then $E[3]$ and $E'[3]$ are
symplectically isomorphic if and only if~$r=0$.

Moreover, $E[p]$ and $E'[p]$ are not both symplectically and anti-symplectically isomorphic.
\label{T:mainTame3}
\end{theorem}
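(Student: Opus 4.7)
The plan is to apply the general strategy of Section~\ref{S:Overview} with $H = G_{\Q_\ell}$, pinning down the matrices of $\rhobar_{E,p}$ and $\rhobar_{E',p}$ at a tame inertia generator $\sigma$ (mapping to a generator of $\Phi \simeq \Z/3\Z$) and at a Frobenius lift $\tau$. The hypothesis $\ell \equiv 2 \pmod{3}$ means $\Frob_\ell$ inverts the unique quotient $\Z/3\Z$ of the tame inertia through which $\rhobar_{E,p}|_{I_\ell}$ factors, so the relation $\tau \sigma \tau^{-1} = \sigma^{-1}$ holds in the image. Since $e=3$ makes $\rhobar_{E,p}(\sigma)$ of order exactly~$3$, we obtain $\rhobar_{E,p}(G_{\Q_\ell}) \simeq S_3$, which is non-abelian. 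By Theorem~\ref{T:conditionRho} this already proves the \emph{moreover} clause and guarantees that every $G_{\Q_\ell}$-module isomorphism $\phi : E[p] \to E'[p]$ has the same symplectic type, so it suffices to compute $\det M_\phi$ modulo squares for one such $\phi$.

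For the criterion itself, I would begin with a minimal-Weierstrass normalization analogous to Lemma~\ref{L:3torsionmodel} and Lemma~\ref{L:gammaEe3}, writing $E$ in the form $y^2 = x^3 + a$ with $\vv_\ell(a) \in \{0,1,\dots,5\}$, and similarly $E'$; here $e = 3$ forces $j(\Ebar) = 0$, since $\Ebar$ must carry an order-$3$ automorphism. Reducing modulo the uniformiser of the (tame) inertial field identifies $\gamma_E : I_\ell \to \Aut(\Ebar)$ with the canonical subgroup of order~$3$, and the class of $\vv_\ell(\Delta_m)$ modulo~$3$ is the only invariant distinguishing the inertial character on $E[p]$ from a fixed reference, up to a cubic twist whose effect on $\det M_\phi$ modulo squares is a factor of $\ell$. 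Choosing symplectic bases of $E[p]$ and $E'[p]$ in which $\rhobar_{E,p}(\sigma)$ and $\rhobar_{E',p}(\sigma)$ are brought to the same explicit form (cf.\ Lemma~\ref{L:goodbasis}), the relation $\tau\sigma\tau^{-1} = \sigma^{-1}$ and the cyclotomic constraint $\det \rhobar_{E,p}(\tau) \equiv \ell \pmod{p}$ reduce $\rhobar_{E,p}(\tau)$ and $\rhobar_{E',p}(\tau)$ to a short list of normal forms. A conjugating matrix $M_\phi$ then lies in the normaliser of the common diagonal torus, and a direct case analysis (diagonal versus anti-diagonal $M_\phi$) yields $\det M_\phi \equiv \ell^r \cdot 3^t \pmod{(\F_p^\times)^2}$, which by Lemma~\ref{L:determinant} translates into the stated Legendre symbol identity.

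The main obstacle is identifying the factor $(3/p)^t$: one has to track which $\F_p$-line in $E[p]$ contains the image of a rational $3$-torsion point and match the eigen-structure of $\rhobar_{E,p}(\tau)$ against that of $\rhobar_{E',p}(\tau)$. The mismatch between the cases $t=0$ and $t=1$ is what produces the extra factor of~$3$ in the determinant comparison (the potential sign being absorbed in the square class through the form of the Weil pairing). Carrying this out uniformly requires a short sub-case analysis distinguishing $p \equiv 1 \pmod{3}$ (where $\rhobar_{E,p}(\sigma)$ is diagonalisable over $\F_p$) from $p \equiv 2 \pmod{3}$ (where it is only diagonalisable over $\F_{p^2}$ and $M_\phi$ must be described via the corresponding nonsplit torus). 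Finally, one verifies that the quadratic twist used in Section~\ref{S:left?} to reduce the case $e=6$ to $e=3$ preserves both $r$ and~$t$, so that the formula is twist-stable, completing the proof.
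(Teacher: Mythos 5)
Your overall architecture matches the paper's: the non-abelian image of $\rhobar_{E,p}(G_{\Q_\ell})$ (via Corollary~\ref{C:nonabelianTame} and Corollary~\ref{C:notBoth}) settles the \emph{moreover} clause, and the factor $(\ell/p)^r$ does indeed come from a normalizer-versus-centralizer analysis of the Cartan subgroup $C_G(A)$ after putting $\rhobar_{E,p}(\sigma)$ and $\rhobar_{E',p}(\sigma)$ into matched symplectic bases with $A'=A^{\pm1}$ according to $\vv_\ell(\Delta_m)\bmod 3$. But there are genuine gaps. First, the normalization you propose, $y^2=x^3+a$, forces $j_E=0$ and so does not cover a general curve with $e=3$; the model the paper actually needs (Lemma~\ref{L:3torsionmodel}) is $y^2+axy+by=x^3$ with $(0,0)$ of order $3$, which exists precisely when $E$ has a rational $3$-torsion point, and it is this model that makes $\gamma_E(\sigma)$ computable as $(X,Y)\mapsto(\omega_3^{2\alpha}X,Y)$ on the common residual curve $Y^2+Y=X^3$. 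You also omit the preliminary reduction (via Proposition~\ref{P:mod3}) by which, when neither curve has a rational $3$-torsion point, one twists both by $\chi_3$ to restore that hypothesis before choosing bases.

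Second, and more seriously, your account of where $(3/p)^t$ comes from --- ``tracking which $\F_p$-line contains the image of a rational $3$-torsion point and matching eigen-structures'' --- does not work: a $3$-torsion point does not lie in $E[p]$ for $p\neq 3$, and matching Frobenius eigenlines gives no control on the square class of $\det M_\phi$, let alone the specific value $3$. The paper's mechanism is concrete: when exactly one curve, say $E'$, lacks a rational $3$-torsion point, $\rhobar_{E',3}$ is still reducible of the complementary Borel type, so there is a rational $3$-isogeny $h:E'\to E''$ onto a curve that does have a rational $3$-torsion point and has the same $\vv_\ell(\Delta_m)\bmod 3$; the induced isomorphism $\phi_h$ on $p$-torsion satisfies $r(\phi_h)=3$, which is literally the source of the symbol $(3/p)$. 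Without this isogeny step the factor $(3/p)^t$ is asserted but not derived. Finally, the theorem allows $p=3$, where $A$ has order $p$ and $C_G(A)$ is not a Cartan subgroup, so your dichotomy between split and nonsplit tori breaks down; the paper treats $p=3$ by a separate short computation in $\GL_2(\F_3)$.
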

To apply Theorem~\ref{T:mainTame3} we need to decide when an elliptic curve over
$\Q_\ell$ ($\ell \ne 3$) satisfying $e=3$ has a 3-torsion point over $\Q_\ell$.
The next theorem provides such classification.

\begin{theorem} Let $\ell \ne 3$ and $E/\Q_\ell$ be an elliptic curve
with tame additive reduction with $e=3$. Then the curve $E$ has a $3$-torsion point
defined over $\Q_\ell$ if and only if

(A) $\ell \geq 5$  and $-6\tilde{c}_6$ is a square in $\Q_\ell$ or,

(B) $\ell = 2$ and we are in one of the following cases

\begin{itemize}
 \item[(1)] $(\vv_\ell(c_4),\vv_\ell(c_6),\vv_\ell(\Delta_m)) = (4,5,4)$ and
  \[
   (\tilde{c}_4,\tilde{c}_6) \equiv (7,1) \pmod{8} \quad \text{ or } \quad (\tilde{c}_4,\tilde{c}_6) \equiv (3,5) \pmod{8};
  \]
  
 \item[(2)] $(\vv_\ell(c_4),\vv_\ell(c_6),\vv_\ell(\Delta_m)) \in \{(\geq 6,5,4),(\geq 7,7,8)\}$ 
 and $\tilde{c}_6 \equiv 5 \pmod{8}$;
 
 \item[(3)] $(\vv_\ell(c_4),\vv_\ell(c_6),\vv_\ell(\Delta_m)) = (4,6,8)$ and $(\tilde{c}_4,\tilde{c}_6)$ satisfies one of the following conditions
   $$\begin{array}{llll}
   -   & \tilde{c}_4 \equiv 29 \pmod{32} & \text { and }  & \tilde{c}_6 \equiv 15 \pmod{16} \\
   -   & \tilde{c}_4 \equiv 5 \pmod{32}  & \text { and }  & \tilde{c}_6 \equiv  3 \pmod{16} \\
   -   & \tilde{c}_4 \equiv 13 \pmod{32} & \text { and }  & \tilde{c}_6 \equiv 7  \pmod{16} \\
   -   & \tilde{c}_4 \equiv 21 \pmod{32} & \text { and }  & \tilde{c}_6 \equiv 11 \pmod{16}. \\
   \end{array}$$
\end{itemize}
 \label{T:main3torsion}
\end{theorem}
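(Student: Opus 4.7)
My plan is to reduce the statement to an analysis of the $3$-division polynomial. A nontrivial $3$-torsion point of $E$ is defined over $\Q_\ell$ if and only if $\psi_3(x) \in \Q_\ell[x]$ has a root $x_0 \in \Q_\ell$ at which the right-hand side $f(x_0)$ of the Weierstrass equation is a square in~$\Q_\ell$. For each case of the statement I would therefore fix a convenient Weierstrass model, compute $\psi_3$, and translate the resulting condition into congruences on the standard invariants $\tilde c_4, \tilde c_6$.

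\textbf{Part (A).} For $\ell \geq 5$ with $e = 3$ the $j$-invariant vanishes, so $E$ admits a model of the form $y^2 = x^3 + B$ with $B \in \Q_\ell^\times$. The semistability defect equals $6/\gcd(6, \vv_\ell(B))$, so $e = 3$ together with minimality forces $\vv_\ell(B) \in \{2, 4\}$. For this model $\psi_3(x) = 3x(x^3 + 4B)$. The root $x = 0$ yields $y^2 = B$ and so gives a $\Q_\ell$-rational $3$-torsion point iff $B \in (\Q_\ell^\times)^2$. A root with $x^3 = -4B$ is impossible because $\vv_\ell(-4B) = \vv_\ell(B) \in \{2,4\}$ is not divisible by~$3$. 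Finally, $c_6 = -864 B$ with $\vv_\ell(c_6) = \vv_\ell(B)$ even, so $B$ and $-6 \tilde c_6$ differ by a square in $\Q_\ell^\times$, yielding the stated condition.

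\textbf{Part (B).} For $\ell = 2$ with $e = 3$ I would use Kraus's classification~\cite{Kraus1990} to enumerate the possible valuation triples $(\vv_2(c_4), \vv_2(c_6), \vv_2(\Delta_m))$: they are precisely the triples appearing in cases~(1), (2), (3). For each such triple, fix a parametric Weierstrass equation $y^2 + a_1 xy + a_3 y = x^3 + a_2 x^2 + a_4 x + a_6$ with $a_i \in \Z_2$ realising the given valuations, and express $(\tilde c_4, \tilde c_6)$ as explicit polynomials in the residues of the $a_i$ modulo sufficiently high powers of~$2$ (up to $32$ in case~(3)). Then compute $\psi_3 \in \Z_2[x]$ and decide, via Hensel lifting, when $\psi_3$ has a root $x_0 \in \Q_2$ at which $f(x_0)$ is a square; the outcome is finally re-expressed as the explicit congruences on $(\tilde c_4, \tilde c_6)$ appearing in the statement.

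\textbf{Main obstacle.} The bulk of the work lies in the $\ell = 2$ case: the $2$-adic bookkeeping is delicate, particularly in case~(3) where precision modulo $32$ is required, and one must ensure both completeness and irredundancy of the list of admissible $(\tilde c_4, \tilde c_6)$-classes (including that a given class is attained by some model within the relevant reduction type). Following the paper's announced approach, I would delegate the exhaustive Hensel-lifting tests to \texttt{Magma} and then distil the output into the explicit congruences stated for each sub-case.
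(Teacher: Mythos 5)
Your treatment of part (A) rests on a false premise: for $\ell \geq 5$, the condition $e=3$ does \emph{not} force $j_E = 0$. For $\ell \ge 5$ one has $e = 12/\gcd(\vv_\ell(\Delta_m),12)$, so $e=3$ only means $\vv_\ell(\Delta_m) \in \{4,8\}$ (Kodaira type IV or IV*); the order-$3$ inertia action forces $j(\Ebar)=0$ for the \emph{residual} curve, hence $\vv_\ell(j_E) > 0$, but not $j_E=0$. For instance $y^2 = x^3 + \ell^2 x + \ell^2$ has $(\vv_\ell(c_4),\vv_\ell(c_6),\vv_\ell(\Delta_m))=(2,2,4)$, hence $e=3$, and $j \ne 0$. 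Consequently the model $y^2 = x^3 + B$ and the factorization $\psi_3 = 3x(x^3+4B)$ only cover the locus $c_4=0$, and your argument says nothing about the generic curve with $e=3$. The paper instead works with the integral minimal model $y^2 = x^3 + ax+b$, $a = -c_4/48$, $b=-c_6/864$, uses the Newton polygon of the $3$-division polynomial $f = X^4 + 2aX^2 + 4bX - a^2/3$ to produce the unique root $x_0$ of integer valuation, and then proves the estimate $\vv_\ell(x_0^3 + ax_0) \ge \vv_\ell(b)+1$, so that $\vv_\ell(x_0^3+ax_0+b) = \vv_\ell(b)$ is even and the square condition reduces to $-6\tilde c_6$ being a square mod~$\ell$. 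You would need to supply this (or an equivalent) argument for $a \ne 0$.

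Part (B) follows the same computational strategy as the paper (case-by-case $2$-adic analysis of the division polynomial on a fixed integral model, assisted by \texttt{Magma}), but two points need care. First, the valuation triples alone do not characterize $e=3$ at $\ell=2$: they come with congruence side-conditions on $\tilde c_4$, $\tilde c_6$, $\tilde\Delta$ (cases $C_{3,i}$--$C_{3,iv}$ of Table~\ref{Table:model}) that must be carried along when enumerating admissible residue classes. Second, the paper first establishes (Proposition~\ref{P:rootf}) that the $3$-division polynomial has exactly one root in $\Z_2$, using that $\Gal(\Q_2(E[3])/\Q_2) \simeq S_3$; it notes explicitly that the Newton-polygon argument fails in the case $(\ge 7,7,8)$, where all four roots have valuation $0$, so a blind Hensel search must be justified as exhaustive and run at sufficient precision. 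Neither point is fatal, but both must appear in a complete write-up.
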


An analogous results to Theorem~\ref{T:mainTame3}, in the case
$\ell \equiv 1 \pmod{3}$, exists only for $p=3$.

\begin{theorem} Let $\ell \equiv 1 \pmod{3}$ be a prime. 
Let $E$ and $E'$ be elliptic curves over $\Q_\ell$ with potentially good reduction
and $e=3$. 

Suppose that $E[3]$ and $E'[3]$ are isomorphic $G_{\Q_{\ell}}$-modules.
Then
\[
 E[3] \text{ and } E'[3] \quad \text{are symplectically isomorphic} \quad \Leftrightarrow \quad 
 \upsilon_\ell(\Delta_m) \equiv \upsilon_\ell(\Delta_m') \pmod{3}.
\]
Moreover, $E[3]$ and $E'[3]$ are not both symplectically and anti-symplectically isomorphic.
\label{T:e=p=3}
\end{theorem}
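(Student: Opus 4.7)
Since $\ell \equiv 1 \pmod 3$ forces $\ell \geq 7$, both $E$ and $E'$ have $j = 0$ and admit short Weierstrass models $y^2 = x^3 + D$, $y^2 = x^3 + D'$ with minimal valuations satisfying $\upsilon_\ell(\Delta_m) = 2\upsilon_\ell(D_m) \in \{4, 8\}$; consequently the hypothesis on the $\Delta_m$'s is equivalent to $\upsilon_\ell(D_m) \equiv \upsilon_\ell(D_m') \pmod 3$. The common inertial field is $L = \Q_\ell^{un}(\ell^{1/3})$ with $\Phi = \Gal(L/\Q_\ell^{un})$ cyclic of order $3$, generated by $\sigma$ satisfying $\sigma(\ell^{1/3}) = \zeta_3 \ell^{1/3}$ for a fixed $\zeta_3 \in \mu_3 \subset \Q_\ell$. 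Following the strategy of Section~\ref{S:Overview}, I would describe the matrices $\rhobar_{E,3}(\sigma)$ and $\rhobar_{E',3}(\sigma)$ in suitable symplectic bases, use these to constrain any intertwining matrix, and determine the square class of its determinant.

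The first step is to pin down the $\sigma$-action. Over $L$, the substitution $X = x/\ell^{\upsilon_\ell(D_m)/3}$, $Y = y/\ell^{\upsilon_\ell(D_m)/2}$ yields a good reduction model $\Ebar : Y^2 = X^3 + \tilde{D}$ with $\tilde{D}$ a unit, and a direct calculation shows $\sigma$ acts on $\Ebar$ by $(X,Y) \mapsto (\zeta_3^{-\upsilon_\ell(D_m)} X, Y)$, i.e.\ as the automorphism $[\zeta_3^{-\upsilon_\ell(D_m)}]$ of the $j=0$ curve. Since $[\zeta_3]$ has order $3$ and cannot act as the identity on $\Ebar[3]$ (because $\ker(1-\zeta_3) \subset \Ebar$ has order only $3 < 9$), it acts as a non-trivial unipotent. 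Choosing a symplectic basis of $\Ebar[3]$ whose first vector lies in $\ker(1-\zeta_3)$, the matrix of $[\zeta_3]$ becomes $\begin{pmatrix} 1 & c \\ 0 & 1 \end{pmatrix}$ with $c \in \F_3^\times$, and $[\zeta_3^2] = [\zeta_3]^{-1}$ has matrix $\begin{pmatrix} 1 & -c \\ 0 & 1 \end{pmatrix}$. As the curves $\Ebar$ arising from different $E$'s are all isomorphic over $\Q_\ell^{un}$ (being sextic twists of each other by units whose 6-th roots lie in $\Q_\ell^{un}$), the constant $c$ is independent of $E$, and the invariant $a_E$ defined by $\rhobar_{E,3}(\sigma) = \begin{pmatrix} 1 & a_E \\ 0 & 1 \end{pmatrix}$ depends only on $\upsilon_\ell(D_m) \pmod 3$, with the two residue classes producing $a_E = c$ and $a_E = -c$ respectively.

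Given now any $G_{\Q_\ell}$-module isomorphism $\phi : E[3] \to E'[3]$ with matrix $M$, the relation $M\,\rhobar_{E',3}(\sigma) = \rhobar_{E,3}(\sigma)\,M$ forces $M = \begin{pmatrix} \alpha & \beta \\ 0 & \delta \end{pmatrix}$ with $a_{E'} \alpha = a_E \delta$, hence $\det M = (a_{E'}/a_E)\,\alpha^2$. Since $(\F_3^\times)^2 = \{1\}$, the square class of $\det M$ equals $a_{E'}/a_E$, which is trivial precisely when $a_E = a_{E'}$, that is, when $\upsilon_\ell(\Delta_m) \equiv \upsilon_\ell(\Delta_m') \pmod 3$. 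By Lemma~\ref{L:determinant} this is the desired equivalence. For the second claim, any $G_{\Q_\ell}$-automorphism of $E[3]$ commutes with the unipotent $\rhobar_{E,3}(\sigma)$, so lies in its centralizer $\left\{\begin{pmatrix} \alpha & \beta \\ 0 & \alpha \end{pmatrix} : \alpha \in \F_3^\times,\ \beta \in \F_3\right\}$, whose elements all have determinant $\alpha^2 = 1$, ruling out anti-symplectic automorphisms. The main obstacle lies in the middle step: verifying the explicit formula for $\sigma$ acting on $\Ebar$ and confirming that the two valuation classes modulo $3$ produce genuinely distinct conjugacy classes of unipotents in $\SL_2(\F_3)$---a phenomenon that crucially relies on $(\F_3^\times)^2 = \{1\}$, which is why such a criterion is available only for $p = 3$ when $\ell \equiv 1 \pmod 3$.
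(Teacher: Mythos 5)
Your overall architecture is the same as the paper's: identify the image of an inertia generator $\sigma$ as a nontrivial unipotent in a suitable symplectic basis, show that its off-diagonal entry changes sign exactly when $\vv_\ell(\Delta_m)$ changes class modulo $3$, and then read off the square class of the determinant of any intertwining matrix. The closing linear algebra — including the observation that the centralizer of the unipotent has only square determinants, which gives the final claim — is correct and matches Lemmas~\ref{L:determinant} and~\ref{L:CentralizerAbelian}, and your remark that the argument hinges on $(\F_3^\times)^2 = \{1\}$ is exactly the right point.

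However, the step on which everything rests is false as stated: an elliptic curve over $\Q_\ell$ with potentially good reduction and $e=3$ need not have $j$-invariant $0$. For $\ell \geq 5$ the condition $e=3$ corresponds to $(\vv(c_4),\vv(c_6),\vv(\Delta_m)) = (\geq 2,\,2,\,4)$ or $(\geq 3,\,4,\,8)$ (case $A_3$ of Table~\ref{Table:model}); this forces $\vv_\ell(j_E) > 0$, i.e.\ $j_E \equiv 0$ modulo $\ell$, but $c_4$ is generically nonzero and so $j_E \neq 0$. Only the residual curve $\Ebar$ over $\Fbar_\ell$ has $j=0$. Consequently a general $E$ satisfying the hypotheses admits no model $y^2 = x^3 + D$, and your explicit computation of the good-reduction model over $L$ and of the action $\sigma : (X,Y) \mapsto (\zeta_3^{-v}X,Y)$ — the step that produces the dependence of $a_E$ on $\vv_\ell(\Delta_m) \bmod 3$ — does not apply outside the $j=0$ subfamily. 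Replacing $E$ by a $j=0$ curve with the same discriminant valuation would be circular, since one would need to know that the inertia action together with the symplectic structure is determined by $\vv_\ell(\Delta_m) \bmod 3$, which is what is being proved. The paper fills this gap by first passing to an unramified quadratic twist so that both curves acquire a rational $3$-torsion point (possible because $\rhobar_{E,3}$ is reducible with unipotent inertia image), then using the model $y^2 + axy + by = x^3$ of Lemma~\ref{L:3torsionmodel} to produce a good-reduction model over $L$ and to compute $\gamma_E(\sigma) : (X,Y) \mapsto (\omega_3^{2\alpha}X, Y)$ with $\alpha = \vv_\ell(\Delta_m)/4$ (Lemma~\ref{L:gammaEe3II}). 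With that substitute for your middle step, your endgame goes through verbatim.
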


We have the following theorem in the wild case $\ell=e=3$.

\begin{theorem} Let $E/\Q_3$ and $E'/\Q_3$ be elliptic curves
with potentially good reduction and~$e=3$, that is
$(\vv_3(c_4), \vv_3(c_6), \vv_3(\Delta_m))$ and $(\vv_3(c_4'), \vv_3(c_6'), \vv_3(\Delta_m'))$
belong to $\{ (2,3,4),(5,8,12)\}$.

Let $p \geq 5$ be a prime and suppose that $E[p]$ and $E'[p]$ are isomorphic $G_{\Q_3}$-modules. 

Let $r=0$ if $\tilde{c}_6 \equiv \tilde{c}_6' \pmod{3}$ and $r=1$ otherwise.

Suppose that $\tilde{\Delta} \equiv 2 \pmod{3}$. Then $\tilde{\Delta}^\prime \equiv 2 \pmod{3}$.
Furthermore,
\[
 E[p] \text{ and } E'[p] \quad \text{are symplectically isomorphic} \quad \Leftrightarrow \quad \left (\frac{3}{p} \right)^r = 1
\]
Moreover, $E[p]$ and $E'[p]$ are not both symplectically and anti-symplectically isomorphic.
\label{T:mainWild3}
\end{theorem}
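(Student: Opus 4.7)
The proof follows the general strategy of Section~\ref{S:Overview}. Fix symplectic bases of $E[p]$ and $E'[p]$ and describe the restrictions of $\rhobar_{E,p}$ and $\rhobar_{E',p}$ to $G_{\Q_3}$ as $\GL_2(\F_p)$-valued representations. For any $G_{\Q_3}$-module isomorphism $\phi\colon E[p]\to E'[p]$ with matrix $M_\phi$, Lemma~\ref{L:determinant} reduces the question of symplectic type to computing the square class of $\det M_\phi$ in $\F_p^\times$.

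The first task is to pin down the inertial field $L=\Q_3^{un}(E[p])$. Since $e=3=\ell$, $L/\Q_3^{un}$ is a wildly ramified cyclic cubic extension. Using that $(\vv_3(c_4),\vv_3(c_6),\vv_3(\Delta_m))\in\{(2,3,4),(5,8,12)\}$ together with $\tilde{\Delta}\equiv 2\pmod{3}$, I give an explicit description of $L$ in terms of $\tilde{c}_6$ and $\tilde{\Delta}$, the congruence on $\tilde{\Delta}$ selecting a definite such extension. Because $E[p]\cong E'[p]$ as $G_{\Q_3}$-modules, the curves $E$ and $E'$ share the same inertial field, which forces $\tilde{\Delta}'\equiv 2\pmod{3}$. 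Over $L$ both curves acquire good reduction with $j$-invariant $0\in\overline{\F}_3$, since an order-$3$ element of $\Aut(\Ebar)$ exists in characteristic $3$ only when $j(\Ebar)=0$, in which case $|\Aut(\Ebar)|=12$.

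Let $\sigma$ generate $\Gal(L/\Q_3^{un})$ and $\tau\in G_{\Q_3}$ lift Frobenius; put $A=\rhobar_{E,p}(\sigma)$, $B=\rhobar_{E,p}(\tau)$, and define $A',B'$ analogously. Since the mod-$p$ cyclotomic character is unramified at $3$, $A,A'\in\SL_2(\F_p)$, and having exact order $3$ they both satisfy $X^2+X+I=0$. By constructing Weierstrass models of $E$ and $E'$ over the ring of integers of $L$ whose reductions are supersingular, I describe the inertia actions $\gamma_E$ and $\gamma_{E'}$ explicitly and choose symplectic bases in which $A=A'$; then $M=M_\phi$ lies in the centralizer $C_A\subset\GL_2(\F_p)$, a non-split torus isomorphic to $\F_{p^2}^\times$ when $p\equiv 2\pmod{3}$ and a split torus when $p\equiv 1\pmod{3}$. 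Imposing the remaining constraint $MB'M^{-1}=B$ and relating $B'$ to $B$ by the twist connecting $E$ to $E'$ (whose residue in $\F_3^\times$ is controlled by $\tilde{c}_6/\tilde{c}_6'$), I extract the square class of $\det M$ and show it equals $(3/p)^r$, yielding the claimed equivalence. The non-abelianness of $\rhobar_{E,p}(G_{\Q_3})$, which follows in the wild case from the explicit non-trivial action of Frobenius on $A$ forced by $\tilde{\Delta}\equiv 2\pmod{3}$, then ensures that the symplectic type does not depend on $\phi$, so $E[p]$ and $E'[p]$ cannot be both symplectically and anti-symplectically isomorphic.

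The main obstacle is the explicit analysis of the wild cubic extension $L/\Q_3^{un}$ and the construction of the Weierstrass models needed to compute $\gamma_E,\gamma_{E'}$ in terms of $(\tilde{c}_4,\tilde{c}_6,\tilde{\Delta})$: the supersingular reduction with $\Aut(\Ebar)$ of order $12$ means that the interplay between the order-$3$ automorphism coming from $\sigma$ and the Frobenius-induced twist governed by $\tilde{c}_6$ must be tracked with care, and it is precisely the parameter $r\in\{0,1\}$ that records whether this twist contributes a factor of $3$ to $\det M$ modulo squares. Once these geometric ingredients are in place, extracting the square class of $\det M$ is routine but lengthy.
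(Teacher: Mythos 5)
Your proposal is correct and follows essentially the same route as the paper: equality of the $p$-torsion (hence inertial) fields forces $\tilde{\Delta}'\equiv 2\pmod{3}$ (the paper phrases this via Proposition~\ref{P:nonabelianWild3}, i.e.\ non-abelianness of $K$), both curves acquire good reduction over the explicit wild cubic extension with supersingular residual curve of $j$-invariant $0$, the inertia action is computed through $\gamma_E$ from explicit Weierstrass models, and the square class of $\det M_\phi$ is extracted by a Cartan-subgroup argument as in Lemma~\ref{L:determinant}. The only (cosmetic) difference is the normalization: the paper chooses reduction-induced symplectic bases in which the Frobenius matrices coincide and the inertia generators satisfy $A'=A^{\pm1}$ according to whether $\tilde{c}_6\equiv\tilde{c}_6'\pmod{3}$, which makes the determinant extraction immediate, whereas you normalize the inertia matrices to agree and push the $\tilde{c}_6/\tilde{c}_6'$ discrepancy into the Frobenius matrices --- equivalent in substance, though note that $E$ and $E'$ are not literally related by a quadratic twist, so the discrepancy element is really the ratio of the translation parameters in $\gamma_E(\sigma)$ and $\gamma_{E'}(\sigma)$.
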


The case $e=6$ can be reduced to 
the case $e=3$ by 
taking an adequate 
simultaneous 
quadratic twist of~$E$ and~$E'$
by some $d \in \Q_\ell$. The following two lemmas describe explicitly the value 
of~$d$ we need to take. 
We remark that, although in Lemma~\ref{L:lemma1} the twist is independent of the curve, this is not the case in Lemma~\ref{L:lemma2}. However, it follows from Proposition~\ref{P:TwistsameType} that, under the hypothesis $E[p] \simeq E'[p]$, the same twist works for both curves.

\begin{lemma}
\label{L:lemma1} Let $\ell \geq 3$ and $E/\Q_\ell$ be an elliptic curve 
with potentially good reduction with~$e=6$. 

Then the quadratic twist $E'/\Q_\ell$ of $E$ by $\sqrt{\ell}$ has potentially good reduction with $e'=3$.
\end{lemma}

\begin{lemma} \label{L:lemma2} 
Let $E/\Q_2$ be an elliptic curve with potentially good reduction with $e=6$.

Write $t=(c_4(E), c_6(E), \Delta_m(E))$. Then~$t$ satisfies one of the following cases and we
let $u\in \lbrace -2,-1,2\rbrace$ be defined as follows:
 \[
u = 
  \begin{cases}
   -1 \ \text{ if } \ v(\Delta_m) \in \lbrace 4,8\rbrace, \\
  2 \ \text{ if } \ v(\Delta_m)=10 \quad \text{and} \quad \tilde{c}_6\equiv 1 \pmod 4,\\
  -2 \ \text{ if } \ v(\Delta_m)=10  \quad \text{and} \quad \tilde{c}_6\equiv -1 \pmod 4,\\
   2 \ \text{ if } \ t=(6,9,14) \quad \text{and} \quad \tilde{c}_6\equiv -1 \pmod 4,\\
  -2 \ \text{ if } \ t=(6,9,14) \quad \text{and} \quad \tilde{c}_6\equiv 1 \pmod 4,\\
 2 \ \text{ if } \ t=(\geq 9,10,14) \quad \text{and} \quad \tilde{c}_6\equiv 1 \pmod 4, \\
  -2 \ \text{ if } \ t=(\geq 9,10,14) \quad \text{and} \quad \tilde{c}_6\equiv -1 \pmod 4. \
  \end{cases}
\]
Then the quadratic twist $E'/\Q_2$ of $E$ by $\sqrt{u}$ has potentially good reduction with $e'=3$. 
\end{lemma}

\

{\bf (B) Cyclic inertia of order $e=4$.}

\begin{theorem} Let $\ell \equiv 3 \pmod{4}$ be a prime. 
Let $E$ and $E'$ be elliptic curves over $\Q_\ell$ with potentially good reduction
and $e=4$. Let $p \geq 5$ be a prime. 

Set $r=0$ if $\upsilon_\ell(\Delta_m) \equiv \upsilon_\ell(\Delta_m') \pmod{4}$ and $r=1$ otherwise. 

Set $t=1$ if exactly one of $\tilde{\Delta}$, $\tilde{\Delta}'$ is a square mod~$\ell$ and $t=0$ otherwise.

Suppose that $E[p]$ and $E'[p]$ are isomorphic $G_{\Q_{\ell}}$-modules. Then
\[
 E[p] \text{ and } E'[p] \quad \text{are symplectically isomorphic} \quad \Leftrightarrow \quad \left(\frac{\ell}{p}\right)^r \left (\frac{2}{p} \right)^t = 1
\]
Moreover, $E[p]$ and $E'[p]$ are not both symplectically and anti-symplectically isomorphic.
\label{T:mainTame4}
\end{theorem}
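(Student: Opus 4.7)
The plan is to follow the general strategy outlined in Section~\ref{S:Overview}, namely to pick good Weierstrass models, identify explicitly the inertia action on $E[p]$ and $E'[p]$, and then constrain the matrix $M_\phi$ of a $G_{\Q_\ell}$-isomorphism $\phi: E[p] \to E'[p]$ enough to determine $\det M_\phi$ modulo squares.

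First, I would verify that a symplectic criterion exists, i.e. that $E[p]$ and $E'[p]$ cannot be simultaneously symplectically and anti-symplectically isomorphic. Since $p \neq \ell$ the cyclotomic character is unramified at $\ell$, so $\det\rhobar_{E,p}(\sigma) = 1$ and the image of a generator $\sigma$ of tame inertia has characteristic polynomial $X^2 + 1$; in particular it has order $4$. A Frobenius lift $\tau$ satisfies $\tau\sigma\tau^{-1} \equiv \sigma^\ell \pmod{I_\ell^{\mathrm{wild}}}$, and since $\ell \equiv 3 \pmod{4}$ this gives $\rhobar_{E,p}(\tau)\rhobar_{E,p}(\sigma)\rhobar_{E,p}(\tau)^{-1} = \rhobar_{E,p}(\sigma)^{-1} \neq \rhobar_{E,p}(\sigma)$, so $\rhobar_{E,p}(G_{\Q_\ell})$ is non-abelian. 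Theorem~\ref{T:conditionRho} (applied in the style used elsewhere in the paper for the non-abelian inertia cases) then yields the final sentence of the theorem.

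Next I would pin down the inertia action concretely. Since $e=4$ the curve $E$ acquires good reduction over a totally tamely ramified degree-$4$ extension of $\Q_\ell^{\mathrm{un}}$, and the reduction $\Ebar$ has $j(\Ebar)=1728$. Choosing a minimal Weierstrass model of $E/\Q_\ell$ and an explicit change of variables (as in Kraus's classification of the defect $e$), one obtains a model $y^2 = x^3 + Ax$ for $\Ebar$ over the residue field, on which the inertia generator $\sigma$ acts via $(x,y)\mapsto(-x,iy)$ with $i \in \Fbar_\ell$ a fixed primitive $4$th root of unity. Transporting this through the isomorphism $E[p] \isomto \Ebar[p]$ (cf.\ the analogue of Lemma~\ref{L:gammaE} used in the proof of Theorem~\ref{T:mainTame3}) gives a canonical $\F_p$-basis of $E[p]$ in which $\rhobar_{E,p}(\sigma)$ has an explicit matrix depending on the square-class of $\pi$ in $L^\times/L^{\times 4}$, where $\pi$ is a uniformizer used in the twist. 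The same applies to $E'$. The ramification part of this data is controlled by $\upsilon_\ell(\Delta_m) \bmod 4$ (this is essentially how $e$ is computed from the triple $(c_4,c_6,\Delta_m)$ in the tame case), while the unit part is controlled by the class of $\tilde\Delta$ in $\F_\ell^\times/\F_\ell^{\times 2}$.

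With compatible bases chosen so that $\rhobar_{E,p}(\sigma)$ and $\rhobar_{E',p}(\sigma)$ have \emph{the same} matrix form, Lemma~\ref{L:determinant} reduces the problem to determining the square-class of $\det M_\phi$, where $M_\phi$ commutes with this common matrix and conjugates $\rhobar_{E',p}(\tau)$ to $\rhobar_{E,p}(\tau)$. The centralizer is a torus (split if $p \equiv 1 \pmod{4}$, non-split if $p \equiv 3 \pmod{4}$), and the Frobenius constraint forces $\det M_\phi$ to equal, up to squares, a specific element built from the Frobenius-eigendata of $E$ and $E'$. Expanding everything, the $r=1$ case contributes a factor of $\ell$ coming from the choice of different fourth-root-twists, while the $t=1$ case contributes a factor of $2$ coming from the discrepancy of Frobenius traces on $\Ebar$ and $\Ebar'$ via $j=1728$ arithmetic (essentially from $\#\Ebar(\F_\ell) \bmod p$ relative to $\#\Ebar'(\F_\ell)$ when exactly one of $\tilde\Delta,\tilde\Delta'$ is a square).

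The main obstacle will be the last step: extracting the precise factors of $\ell$ and $2$ from the explicit model computation. The $(\ell/p)^r$ factor is fairly standard (it already appears in the analogous Theorem~\ref{T:mainTame3}), but the $(2/p)^t$ factor requires a careful analysis of how Frobenius permutes the two $\sqrt{-1}$-eigenspaces of $\rhobar_{E,p}(\sigma)$ as $\tilde\Delta$ varies in $\F_\ell^\times/\F_\ell^{\times 2}$, and how the off-diagonal Frobenius entries rescale after normalizing to a symplectic basis via the Weil pairing. Checking that the answer matches on the nose the clean formula $(\ell/p)^r(2/p)^t$ for every combination of $\upsilon_\ell(\Delta_m) \bmod 4$, square-class of $\tilde\Delta$, and residue class of $p$ mod $4$ will likely be done with a case-by-case verification, assisted by {\tt Magma} as elsewhere in the paper.
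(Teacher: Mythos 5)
Your skeleton is sound and matches the paper for most of the argument: non-abelian image via $\tau\sigma\tau^{-1}=\sigma^{-1}$ (so Corollary~\ref{C:notBoth} gives the last assertion), an explicit computation of $\gamma_E(\sigma)$ on a residual curve with $j=1728$, and the Cartan-centralizer argument forcing $\det M_\phi$ into $C\cap C'=$ scalars up to a known factor, which produces $(\ell/p)^r$. But your proposed source for the $(2/p)^t$ factor is wrong. You attribute it to ``the discrepancy of Frobenius traces on $\Ebar$ and $\Ebar'$'' and to $\#\Ebar(\F_\ell)\bmod p$ versus $\#\Ebar'(\F_\ell)$. For $\ell\equiv 3\pmod 4$ every curve over $\F_\ell$ with $j=1728$ is supersingular, so $a_\ell=0$ and $\#\Ebar(\F_\ell)=\#\Ebar'(\F_\ell)=\ell+1$ always: there is no trace discrepancy, and this mechanism yields no factor at all. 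Likewise ``how Frobenius permutes the $\sqrt{-1}$-eigenspaces'' is independent of $\tilde\Delta$ (Frobenius always swaps them, since $\ell\equiv -1\pmod 4$). The actual mechanism in the paper is a $2$-isogeny: by Lemma~\ref{L:full2torsion}, $\tilde\Delta$ being a square mod~$\ell$ is equivalent to $E$ having full $2$-torsion over $F$, which is what decides whether the normalized residual curve is $Y^2=X^3-X$; exactly one of $E$ and its $\Q_\ell$-rational $2$-isogenous curve has this property (Lemma~\ref{L:modelEtame4}). When $t=1$ one replaces $E'$ by its $2$-isogenous curve $E''$, and the induced isomorphism $E'[p]\to E''[p]$ multiplies the Weil pairing by the degree $2$ — that is the entire source of $(2/p)^t$. (When $t=0$ with both curves lacking full $2$-torsion, one replaces both, and the two factors of $2$ cancel.) Without this device your two residual curves are genuinely different quartic twists and you cannot lift a common symplectic basis, so the ``compatible bases'' step of your plan does not go through as stated.

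A second, smaller gap: for $e=4$ one can have $K\cap F=K_2\subsetneq F$ (Lemma~\ref{L:FinK}(ii)), in which case $\Frob_F$ does not act on $K$ as the chosen $\tau$; the paper corrects by $\tilde\tau=\sigma^2\cdot\Frob_F$ and uses $A^2=-I$ to see that the correction is by $\pm I$ for both curves and hence harmless (Lemma~\ref{L:goodbasisTame4}). Your proposal assumes the Frobenius of $F$ directly furnishes $N=N'$, which needs this justification. Finally, the paper's proof is not a {\tt Magma}-assisted case check in this theorem; once the $2$-isogeny reduction is in place the formula follows uniformly from the Cartan argument.
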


We have the following theorem in the wild case $\ell=2$, $e=4$.

\begin{theorem} Let $E/\Q_2$ and $E'/\Q_2$ be elliptic curves with potentially good reduction
and $e=4$, that is $(\vv_2(c_4), \vv_2(c_6), \vv_2(\Delta_m))$ and $(\vv_2(c_4'), \vv_2(c_6'), \vv_2(\Delta_m'))$ belong to $\{ (5,8,9),(7,11,15)\}$. 

Let $p \geq 3$ be a prime and suppose that $E[p]$ and $E'[p]$ are isomorphic $G_{\Q_2}$-modules. 

Let $r=0$ if $\tilde{c}_6 \equiv \tilde{c}_6' \pmod{4}$ and $r=1$ otherwise.

Suppose that $\tilde{c}_4 \equiv 5\tilde{\Delta} \pmod{8}$. Then $\tilde{c}_4' \equiv 5\tilde{\Delta}' \pmod{8}$. Moreover,
\[
 E[p] \text{ and } E'[p] \quad \text{are symplectically isomorphic} \quad \Leftrightarrow \quad \left (\frac{2}{p} \right)^r = 1
\]
Moreover, $E[p]$ and $E'[p]$ are not both symplectically and anti-symplectically isomorphic.
\label{T:mainWild4}
\end{theorem}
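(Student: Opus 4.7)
The plan is to follow the general strategy laid out in Section~\ref{S:Overview}. First I would verify that, under the hypothesis $\tilde{c}_4 \equiv 5\tilde{\Delta} \pmod{8}$, the image $\rhobar_{E,p}(G_{\Q_2})$ is non-abelian, so that Theorem~\ref{T:conditionRho} guarantees that every $G_{\Q_2}$-modules isomorphism $\phi : E[p] \to E'[p]$ has the same symplectic type; this yields the last sentence of the theorem. The same argument, combined with the assumed isomorphism $E[p] \simeq E'[p]$, shows that $\rhobar_{E',p}(G_{\Q_2})$ is non-abelian and, reinterpreting the $8$-congruence as an invariant of the local representation, forces $E'$ to satisfy $\tilde{c}_4' \equiv 5\tilde{\Delta}' \pmod{8}$.

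Next I would produce a Weierstrass model of $E$ over its inertial field $L=\Q_2^{un}(E[p])$ with good reduction, by an argument analogous to Lemma~\ref{L:3torsionmodel} adapted to the wild quartic setting at $\ell=2$. The extension $L/\Q_2^{un}$ is a ramified cyclic quartic extension, and an explicit change of coordinates takes $E/L$ to a model $\overline{E}$ with good reduction over $\Fbar_2$; tracking this change of coordinates produces a closed-form description of the image of a generator $\sigma$ of $\Phi = \Gal(L/\Q_2^{un})$ in $\Aut(\overline{E})$, analogous to Lemma~\ref{L:gammaEe3}. Since $\sigma$ will multiply the invariant differential by a primitive fourth root of unity, this description is independent of any choice of $p$-torsion basis, which is precisely what permits a direct comparison with the corresponding data for $E'$.

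Then I would fix a symplectic basis of $E[p]$ in which $\rhobar_{E,p}(\sigma)$ is diagonalized (passing to $\F_{p^2}$ when $p \equiv 3 \pmod 4$) and, by the previous step, a matching symplectic basis of $E'[p]$ so that $\rhobar_{E',p}(\sigma)$ is represented by the same diagonal matrix $\mathrm{diag}(\zeta_4,\zeta_4^{-1})$. Any matrix $M_\phi$ intertwining the two representations on $I_2$ must lie in the normalizer of this diagonal torus, so $M_\phi$ is either diagonal or antidiagonal. Computing the Frobenius matrices $\rhobar_{E,p}(\tau)$ and $\rhobar_{E',p}(\tau)$ from the good-reduction models of the previous step, they are controlled modulo scalars by $\tilde{c}_6 \bmod 4$ and $\tilde{c}_6' \bmod 4$ respectively. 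Imposing the intertwining relation $\rhobar_{E,p}(\tau) \cdot M_\phi = M_\phi \cdot \rhobar_{E',p}(\tau)$ pins down the square class of $\det M_\phi$ as an explicit function of $\tilde{c}_6/\tilde{c}_6' \pmod 4$, and Lemma~\ref{L:determinant} then yields $\phi$ symplectic if and only if $(2/p)^r = 1$.

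The main obstacle will be the second step. Wild ramification at $\ell=2$ forces one to work with the $a$-invariants rather than $(c_4,c_6)$ at several points, and tracking how the assumption $\tilde{c}_4 \equiv 5\tilde{\Delta}\pmod 8$ propagates through the coordinate changes (including the mod~$8$ Hensel lifts required to reach good reduction over $L$) is intricate. As elsewhere in the paper, a case analysis parametrized by the admissible residues of $(\tilde{c}_4, \tilde{c}_6) \pmod 8$, carried out with the aid of {\tt Magma}, will be indispensable to organize the remaining computations.
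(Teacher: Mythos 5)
Your overall architecture matches the paper's: Proposition~\ref{P:nonabelianWild4} shows the congruence $\tilde{c}_4 \equiv 5\tilde{\Delta} \pmod 8$ is equivalent to the $p$-torsion field being non-abelian (hence it transfers to $E'$ and Corollary~\ref{C:notBoth} gives the last claim); one then builds a good-reduction model over the inertial field, computes $\gamma_E(\sigma) \in \Aut(\Ebar)$ basis-freely, and runs the normalizer-of-Cartan argument with a Frobenius element to read off the square class of $\det M_\phi$. However, your key mechanism in the second step is wrong. Since $\ell = 2$ and $e = 4$ is the wild case, the residual curve is the supersingular curve $\Ebar : Y^2 + Y = X^3$ over $\Fbar_2$, whose automorphism group has order $24$; an automorphism $(x,y) \mapsto (u^2x + r,\, u^3y + u^2sx + t)$ of this curve satisfies $u^3 = 1$, so every order-$4$ automorphism has $u = 1$ and acts \emph{trivially} on the invariant differential. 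Your proposed invariant --- ``$\sigma$ multiplies the invariant differential by a primitive fourth root of unity'' --- therefore carries no information and cannot distinguish $\gamma_E(\sigma)$ from $\gamma_{E'}(\sigma)$ or its inverse. In the paper the distinguishing datum lives entirely in the additive part: Lemma~\ref{L:gammEwild4} shows $\gamma_E(\sigma) : (X,Y) \mapsto (X+1,\, Y + X + \alpha_1 + \omega_3)$, where $\alpha_1$ is the second $2$-adic digit of $\tilde{c}_6$, i.e.\ it records $\tilde{c}_6 \bmod 4$; changing $\alpha_1$ inverts the automorphism, which is exactly why $r$ is defined by comparing $\tilde{c}_6$ and $\tilde{c}_6'$ modulo $4$. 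Extracting this requires the explicit coordinate changes of Lemma~\ref{L:rescurvewilde2} with nontrivial $\hat r, \hat s, \hat t$, not a cotangent computation.

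A related slip in your third step: you assert that one can choose symplectic bases making $\rhobar_{E,p}(\sigma)$ and $\rhobar_{E',p}(\sigma)$ literally equal, and then locate the $\tilde{c}_6$-dependence in the Frobenius matrices ``modulo scalars''. The paper does the opposite, and for good reason: the whole content of the criterion is whether one can achieve $A = A'$ or only $A^{-1} = A'$ in symplectic bases (this is the sign $s$ determined by $\gamma_E(\sigma)^s = \gamma_{E'}(\sigma)$), while the Frobenius matrices are normalized to coincide, $N = N'$, via the reduction maps (Lemma~\ref{L:goodbasisTame4}, including the correction $\tilde\tau = \sigma^2 \cdot \Frob_F$ needed when $K \cap F$ is only the quadratic subfield $\Q_2(\sqrt{-2})$). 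You also omit the preliminary reduction, via quadratic twist by $-1$, to a single inertial field $F_1$ among the two possibilities of Theorem~\ref{T:goodOverF}(2). Without the correct carrier of the $\tilde{c}_6 \bmod 4$ information, the final determinant computation cannot be completed as you describe.
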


{\bf (C) Non-abelian inertia.}

Let $H_8$ denote the quaternion group, $\Dic_{12}$ the 
dicyclic group with 12 elements and $\SL_2(\F_3)$ the special linear group
of degree 2 and coefficients in $\F_3$.

We remark that the next theorems do not require the assumption 
of $E[p]$ and $E'[p]$ being isomorphic $G_{\Q_\ell}$-modules, because the weaker 
assumption that the curves have the same inertial field 
already guarantees, for all $p$, an isomorphism at the level of inertia. However, in practice 
we often have
$E[p] \simeq E'[p]$ which implies the equality of inertial fields.

The following is \cite[Theorem~4]{F33p}.  

\begin{theorem}
  Let $E$ and~$E'$ be elliptic curves over~$\Q_2$
  with potentially good reduction. Assume they have the same inertial field
  $L$ and $e=24$, so that $\Gal(L/\Q_2^{un}) \simeq \SL_2(\F_3)$.
  
  Let $p \geq 3$ be a prime. Then $E[p]$ and $E'[p]$ are isomorphic as $I_2$-modules. Furthermore,
\begin{enumerate}[\upshape(1)]
 \item if $(2/p)=1$ then $E[p]$ and $E'[p]$ are symplectically isomorphic $I_2$-modules. 
 \item if $(2/p)=-1$ then $E[p]$ and $E'[p]$ are symplectically isomorphic $I_2$-modules
 if and only if $\upsilon_2(\Delta_m(E)) \equiv \upsilon_2(\Delta_m(E')) \pmod{3}$. 
\end{enumerate}
Moreover, $E[p]$ and $E'[p]$ are not both symplectically and anti-symplectically isomorphic.
\label{T:Wilde24}
\end{theorem}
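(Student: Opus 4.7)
The plan is to reduce statements~(1)--(2) to the representation theory of $\Phi \simeq \SL_2(\F_3)$ and to extract the invariant $\vv_2(\Delta_m)\pmod 3$ from an explicit comparison of good models of $E$ and $E'$ over $\calO_L$.

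Since $p\neq 2$, the mod-$p$ cyclotomic character is unramified at $2$, so the Weil pairing forces $\det \rhobar_{E,p}|_{I_2}=1$ and $\rhobar_{E,p}|_{I_2}$ factors through a faithful embedding $\Phi \hookrightarrow \SL_2(\F_p)$. The character table of $\SL_2(\F_3)$ shows that it has, up to isomorphism over $\Fbar_p$, a unique absolutely irreducible faithful $2$-dimensional representation with trivial determinant; its character is integer-valued, so this representation is defined over $\F_p$ for every odd $p$. Applying the same description to $E'$ already gives an $I_2$-module isomorphism $\phi:E[p]\to E'[p]$. By Schur's lemma, any two such isomorphisms differ by a scalar in $\F_p^\times$; fixing symplectic bases and letting $M_\phi$ be the matrix of $\phi$, one has $\det(a\phi)=a^2\det M_\phi$, so the class of $\det M_\phi$ in $\F_p^\times/(\F_p^\times)^2$ is intrinsic to $(E,E')$. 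By Lemma~\ref{L:determinant} this class detects the symplectic type of $\phi$, and in particular $E[p]$ and $E'[p]$ cannot be simultaneously symplectically and anti-symplectically isomorphic.

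It remains to identify this square class explicitly. I would use Serre--Tate to work with a good model of $E$ over $\calO_L$; its reduction $\Ebar$ is the (unique up to isomorphism) supersingular elliptic curve over $\Fbar_2$, with $j=0$ and $\Aut(\Ebar)\simeq\SL_2(\F_3)$ acting on $\Ebar[p]\simeq E[p]$ and recovering the $I_2$-action via the canonical surjection $I_2\twoheadrightarrow\Phi$. A generator $\sigma$ of the tame quotient $\Phi/H_8\simeq\Z/3\Z$ acts on the tangent space of $\Ebar$ by a primitive cube root of unity; which cube root arises is determined modulo $(\F_p^\times)^2$ by a twisting factor that records $\vv_2(\Delta_m(E))\bmod 3$, since a change of minimal Weierstrass model over $\calO_L$ rescales the tangent coordinate by a unit whose cube governs the $12$-periodic choice of minimal discriminant. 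Comparing the two representations in a common basis, $\det M_\phi$ lies in the square class of $2^{(\vv_2(\Delta_m(E'))-\vv_2(\Delta_m(E)))/3}$. Part~(1) then follows because a power of $2$ is automatically a square mod $p$ when $(2/p)=1$, and part~(2) follows by reading off when the exponent is even.

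The hardest step is this final identification: converting the abstract $\det M_\phi$ modulo squares into the concrete congruence requires careful bookkeeping of how $\sigma$ acts on the formal group coordinate of $\Ebar$, how this coordinate transforms under Weierstrass changes of variables over $\calO_L$, and how these rescalings translate into a precise power of $2$ in $\det M_\phi$ via the ramification index of $L/\Q_2^{un}$.
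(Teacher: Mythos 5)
Your overall skeleton is the right one and matches the argument of \cite{F33p} that this paper quotes: both restrictions to $I_2$ factor faithfully through $\Phi\simeq\SL_2(\F_3)$ with trivial determinant, absolute irreducibility plus Schur's lemma pins down the square class of $\det M_\phi$ (hence the last assertion), and the invariant $\vv_2(\Delta_m)\bmod 3$ enters through the character by which the tame quotient $\Phi/H_8$ acts on the tangent space of $\Ebar$. (For $p=3$ you should say a word more, since ordinary character theory does not apply; the uniqueness of the faithful two-dimensional representation still holds, but it needs a separate argument.)

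Your final identification, however, is wrong, and the step that would make it right is missing. The formula $\det M_\phi\in 2^{(\vv_2(\Delta_m(E'))-\vv_2(\Delta_m(E)))/3}\,(\F_p^\times)^2$ is not even well defined when the difference is not divisible by $3$, and when it is divisible by $3$ it contradicts the theorem: for the pair $(12696e1,12696f1,11)$ of Section~\ref{S:appMazur} one has $\vv_2(\Delta_m)=11$, $\vv_2(\Delta_m')=8$ and $(2/11)=-1$, so your formula puts $\det M_\phi$ in the class of $2$, i.e.\ anti-symplectic, whereas these curves are symplectically isomorphic. The dependence on the discriminants can only be binary: setting $\delta=0$ if $\vv_2(\Delta_m(E))\equiv\vv_2(\Delta_m(E'))\pmod 3$ and $\delta=1$ otherwise, the correct statement is $\det M_\phi\in 2^{\delta}(\F_p^\times)^2$. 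The congruence mod $3$ decides whether $\gamma_{E'}\circ\gamma_E^{-1}\in\Aut(\SL_2(\F_3))$ is inner or outer (inner exactly when the two tangent characters agree, which is the honest role of $\vv_2(\Delta_m)\bmod 3$, noting $\vv_2(\Delta_m)\not\equiv 0\pmod 3$ here), and the substantive computation you have omitted is that any matrix of $\GL_2(\F_p)$ normalizing the image of $\SL_2(\F_3)$ and inducing the outer automorphism has determinant in the class of $2$ modulo squares (up to scalars the normalizer is the binary octahedral group). Without that computation neither part (1) nor the precise constant $2$ in part (2) is justified; with it, both parts follow at once from Lemma~\ref{L:determinant}.
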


The following is part (1) of \cite[Theorem~4.6]{FNS23n}. 

\begin{theorem} \label{T:mainWilde8}
  Let $E$ and~$E'$ be elliptic curves over~$\Q_2$
  with potentially good reduction. 
  Assume they have the same inertial field $L$ 
  and $e=8$, so that $\Gal(L/\Q_2^{un}) \simeq H_8$.
  
  Let $p \geq 3$ be a prime. Then $E[p]$ and $E'[p]$ are isomorphic as $I_2$-modules
  and not simultaneously symplectically and anti-symplectically isomorphic.
  
  Moreover, if $(2/p) = 1$ then $E[p]$ and $E'[p]$ are symplectically isomorphic $I_2$-modules.
\end{theorem}

Note that \cite[Theorem~4.6]{FNS23n} contains a part~(2) stating that 
if $(2/p) = -1$, then $E[p]$ and $E'[p]$ are symplectically isomorphic $I_2$-modules
if and only if $E[3]$ and~$E'[3]$ are symplectically isomorphic $I_2$-modules.
When compared to part~(2) of Theorem~\ref{T:Wilde24} this description is not satisfactory, 
because it does not provide an `easy formula' to determine the 
symplectic type when $(2/p) = -1$. 
For that purpose we will prove the following.
\begin{theorem} \label{T:mainWilde8II}
  Let $E$, $E'$ and $p$ be as in Theorem~\ref{T:mainWilde8}, so that $\Gal(L/\Q_2^{un}) \simeq H_8$ and also $E[p]$ and $E'[p]$ are isomorphic as $I_2$-modules
  and not simultaneously symplectically and anti-symplectically isomorphic.
  Assume further that $(2/p) = -1$. 
  
  Then, after twisting both curves by $2$ if necessary, 
  we can assume that $E$ and $E'$ have either 
  both conductor $2^5$ or both conductor $2^8$. 
  Moreover, $E[p]$ and $E'[p]$ are symplectically isomorphic if and only if 
  one of the following holds:
  \begin{enumerate}[(A)]
   \item Both $E$ and $E'$ have conductor $2^5$ and are in the same case of Table~\ref{Table:thm8};
   \item Both curves have conductor $2^8$ and $\tilde{c}_4 \equiv \tilde{c}_4' \pmod{4}$.
  \end{enumerate}
\end{theorem}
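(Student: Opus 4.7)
The plan is to bootstrap the analysis to the case $p=3$ and then run an explicit case-analysis on Weierstrass data. Part~(2) of \cite[Theorem~4.6]{FNS23n}, already invoked in the discussion preceding the statement, reduces the question for any odd $p$ with $(2/p)=-1$ to deciding whether $E[3]$ and~$E'[3]$ are symplectically isomorphic as $I_2$-modules. So once the $p=3$ case is settled in terms of $(c_4,c_6,\Delta_m)$, the general statement follows with no extra work.

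The first step is to normalise the conductors. Using Kraus's classification~\cite{Kraus1990} of the Weierstrass invariants of curves over $\Q_2$ with $e=8$, the conductor exponent of such a curve is either $5$ or~$8$, and the global quadratic twist by $2$ interchanges the two families while preserving both the existence and the symplectic type of any $G_{\Q_2}$-isomorphism $E[p]\isomto E'[p]$ (since the twist acts by the same quadratic character on both sides, so an intertwiner is unchanged up to scalar). After twisting $E$ and $E'$ simultaneously if necessary we may thus assume they have the same conductor $2^5$ or $2^8$, as asserted.

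Next I would describe $\rhobar_{E,3}$ and $\rhobar_{E',3}$ concretely on inertia. Since $e=e'=8$ and the mod~$3$ cyclotomic character is unramified at~$2$, the Weil pairing forces $\det\rhobar_{E,3}|_{I_2}=1$, so $\rhobar_{E,3}(I_2)\subset\SL_2(\F_3)$; having order $8$ it must coincide with the unique Sylow $2$-subgroup of $\SL_2(\F_3)$, namely $H_8$. Following the strategy outlined in Section~\ref{S:Overview}, I choose symplectic bases of $E[3]$ and $E'[3]$ arising from good Weierstrass models of~$E$ and~$E'$ over the common inertial field~$L$, so that the matrices $\rhobar_{E,3}(\sigma)$ for a generator $\sigma$ of tame inertia inside~$\Phi$, and $\rhobar_{E,3}(\tau)$ for a Frobenius lift, are pinned down by the square classes of $\tilde c_4$, $\tilde c_6$; the same description applies to~$E'$. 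Lemma~\ref{L:determinant} then translates symplecticity into whether the conjugating matrix $M\in\GL_2(\F_3)$ satisfying $\rhobar_{E,3}(g)=M\rhobar_{E',3}(g)M^{-1}$ for $g\in I_2$ has square determinant in~$\F_3^\times$.

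With this setup, for each of the two conductor cases I enumerate the finitely many possibilities for $(\vv_2(c_4),\vv_2(c_6),\vv_2(\Delta_m))$ given by Kraus's tables together with the relevant residue classes of $\tilde c_4,\tilde c_6$ modulo small powers of~$2$, and in each sub-case compute $M$ and $\det M\bmod(\F_3^\times)^2$. The conductor~$2^8$ case collapses to the single condition $\tilde c_4\equiv\tilde c_4'\pmod 4$; the positive outcomes for conductor~$2^5$ are recorded in Table~\ref{Table:thm8}. The main obstacle is the combinatorial bookkeeping for the conductor~$2^5$ family: different invariant triples can produce the same inertial field~$L$, so the table must record both intra-family and inter-family comparisons, and the square class of $\det M$ depends in a nontrivial way on fine residue classes of~$\tilde c_6$. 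I would carry out all such matrix manipulations with the Magma script already acknowledged in the paper, and cross-check the resulting table against explicit pairs of curves in the LMFDB linked by a $\Q$-isogeny of degree coprime to~$3$, for which the symplectic type can be read off the formula $r(\phi)=n$ recalled in the introduction.
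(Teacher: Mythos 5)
Your overall strategy coincides with the paper's: reduce to $p=3$ via part~(2) of \cite[Theorem~4.6]{FNS23n}, fix symplectic bases coming from good-reduction models over the common inertial field so that the inertia action is computed through $\gamma_E$ from explicit coordinate changes, invoke Lemma~\ref{L:determinant}, and finish by a finite {\tt Magma} enumeration over residue classes of $\tilde c_4,\tilde c_6$ (the paper uses classes mod~$32$, or mod~$16$ away from case $D_d$, and observes that in the cases with a variable $n$ only the distinction between the minimal value of $n$ and larger values matters). So the substance of the argument is the same, and in both versions the decisive content is deferred to the machine computation.

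There is, however, a concrete error in your conductor-normalization step. For $e=8$ the conductor exponent at $2$ is $5$, $6$ or $8$ — not just $5$ or $8$ — and the quadratic twist by $2$ does \emph{not} interchange the conductor-$2^5$ and conductor-$2^8$ families: its role is to move the conductor-$2^6$ case down to $2^5$ (and, within conductor $2^8$, to swap the two possible inertial types, cf.\ the last statement of Theorem~\ref{T:goodOverF}). As written, your argument silently omits the $N_E=2^6$ curves, which is precisely the case the twist is needed for; the assertion that both curves may be assumed to have conductor $2^5$ or both $2^8$ is part of the theorem and requires this. A second, more minor, slip: the statement concerns $I_2$-module isomorphisms and $\Phi\simeq H_8$ is a non-cyclic \emph{wild} inertia group, so there is no ``generator $\sigma$ of tame inertia,'' and no Frobenius lift $\tau$ enters the argument — the non-abelian inertia image alone forces the intertwiner to be scalar up to the centralizer, which is all that is needed.
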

\begin{small}
\begin{table}[htb]
$$
\begin{array}{|c|c|c|} \hline
 \text{{\sc Case}} & \text{\sc Conductor} &(\vv_2(c_4), \vv_2(c_6), \vv_2(\Delta_m)), \; \; \tilde{c}_4 \pmod{4}  \\ \hline \hline
 \multirow{2}{*}{\text{(a)}}  & \multirow{2}{*}{$2^5$} & (4, n \geq 7,  6), \; \; \tilde{c}_4 \equiv -1 \\
                              &                        &  (7,9,12)           \\ \hline \hline
 \multirow{2}{*}{\text{(b)}}  & \multirow{2}{*}{$2^5$} & (6, n \geq 10, 12), \; \; \tilde{c}_4  \equiv 1 \\
                              &                        & (4,6,9)            \\ \hline 
\end{array}
$$
\caption{Conditions for Theorem~\ref{T:mainWilde8II};
when $\tilde{c}_4$ does not ocurr in a row, 
there is no restriction on it.}
\label{Table:thm8}
\end{table}
\end{small}
Since twisting both curves by the same element preserves the existence and symplectic type of an $I_2$-modules isomorphism
$\phi : E[p] \to E'[p]$ the previous theorem also covers the case of conductor $2^6$ and $e=8$.

The following theorem is part (1) of \cite[Theorem~4.7]{FNS23n}. 

\begin{theorem} \label{T:mainWilde12}
  Let $E$ and~$E'$ be elliptic curves over~$\Q_3$ with potentially good reduction. 
  Assume they have the same inertial field $L$ and $e=12$, so that 
  $\Gal(L/\Q_3^{un}) \simeq \Dic_{12}$.
  
  Let $p \geq 5$ be a prime. Then $E[p]$ and $E'[p]$ are isomorphic as $I_3$-modules
  and not simultaneously symplectically and anti-symplectically isomorphic.
   
  Moreover, if $(3/p) = 1$ then $E[p]$ and $E'[p]$ are symplectically isomorphic $I_3$-modules;
\end{theorem}

Analogously to Theorem~\ref{T:mainWilde8II}, 
we will complement the previous theorem with 
an easy procedure to determine the symplectic type when $(3/p) = -1$.

\begin{theorem} \label{T:mainWilde12II}
  Let $E$, $E'$ and $p$ be as in Theorem~\ref{T:mainWilde12}, so that $\Gal(L/\Q_3^{un}) \simeq \Dic_{12}$ and also $E[p]$ and $E'[p]$ are isomorphic as $I_3$-modules
  and not simultaneously symplectically and anti-symplectically isomorphic.
  Assume further that $(3/p) = -1$.

  Then $E[p]$ and $E'[p]$ are symplectically isomorphic if and only if 
  both $E$ and $E'$ satisfy the same case of Table~\ref{Table:thm10}.
  
\end{theorem}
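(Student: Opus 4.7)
The plan is to parallel the proof of Theorem~\ref{T:mainWilde8II}, but adapted to the $\Dic_{12}$ situation at $\ell=3$. First I would enumerate, up to isomorphism over~$\Q_3^{un}$, the curves $E/\Q_3$ with $e=12$ and inertial field~$L$. The triple $(\vv_3(c_4),\vv_3(c_6),\vv_3(\Delta_m))$ can take only finitely many values in the $e=12$ case (these are tabulated in \cite{Kraus1990}); after possibly twisting both $E$ and $E'$ by a unit or by~$3$ (operations that preserve existence and symplectic type of an $I_3$-modules isomorphism) one reaches a short normalized list, which is precisely what fills Table~\ref{Table:thm10}.

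Second, for each normalized type I would apply the strategy of Section~\ref{S:Overview}. Starting from a minimal Weierstrass model I produce, after a change of variables defined over the inertial field~$L$, a model whose reduction $\bar{E}$ over the residue field of~$L$ makes the action $\gamma_E : I_3 \to \Aut(\bar{E})$ computable directly (analogous to Lemmas~\ref{L:3torsionmodel}~and~\ref{L:gammaEe3} but for $e=12$). Since $\Gal(L/\Q_3^{un}) \simeq \Dic_{12}$, the image of $\gamma_E$ sits inside the group $\Aut(\bar{E}) \simeq \SL_2(\F_3)$ (because $\bar{E}$ is supersingular in characteristic~$3$), and is in fact the unique $\Dic_{12}$-subgroup. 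I would fix a generator $\sigma$ of the cyclic subgroup of order~$6$ and an element $\tau$ of order~$4$, and track how different normalized Weierstrass models distinguish different pairs $(\gamma_E(\sigma),\gamma_E(\tau))$ in $\Aut(\bar{E})$ (up to the ambiguity inherent in choosing the model).

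Third, choosing compatible symplectic bases of $E[p]$ and~$E'[p]$ -- using the action on the tangent space modulo~$p$ to pin down the eigenspaces of~$\sigma$, together with a common Frobenius normalization for~$\tau$ as in the proof of Theorem~\ref{T:mainWilde8II} -- I obtain explicit matrices for $\rhobar_{E,p}(\sigma),\rhobar_{E,p}(\tau)$ and $\rhobar_{E',p}(\sigma),\rhobar_{E',p}(\tau)$. A conjugating matrix $M$ on~$I_3$ must normalize the common image and therefore lies in a very restricted subgroup of $\GL_2(\F_p)$; using $(3/p)=-1$, the square class of $\det M$ modulo~$p$ is detected by a single invariant attached to the pair of normalized models, which I will identify row-by-row with the conditions in Table~\ref{Table:thm10}. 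By Lemma~\ref{L:determinant} this square class is exactly the symplectic type.

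The main obstacle is the explicit computation of $\gamma_E$ and the verification, across all rows of Table~\ref{Table:thm10}, that the bookkeeping of invariants is consistent: one must choose the auxiliary changes of variable defining the reduction model in a way that depends only on the normalized invariants of~$E$ (not on further choices), so that comparing $E$ with~$E'$ reduces to reading off entries of the table. As in the $H_8$ case, the underlying finite case analysis in characteristic~$3$ is of the kind naturally carried out in \texttt{Magma}; the conceptually delicate step is guaranteeing that the symplectic basis chosen for~$E$ and the one chosen for~$E'$ are consistently normalized so that the resulting matrix~$M$ is the genuine conjugator rather than a conjugator only up to an element of $\Aut(E[p])$ whose determinant is a non-square.
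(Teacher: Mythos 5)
Your proposal follows essentially the same route as the paper: the paper proves Theorem~\ref{T:mainWilde8II} in detail (normalize to finitely many classes of invariants, compute $\gamma_E$ from the explicit coordinate changes, compare representative pairs in {\tt Magma}) and then declares Theorem~\ref{T:mainWilde12II} to follow by the identical argument over $\Q_3$ with Theorem~\ref{T:coordchanges12} in place of Theorem~\ref{T:coordchanges8}. Two corrections to your write-up, neither fatal to the method: first, for the residual curves in characteristic $3$ one has $\Aut(\bar{E}) \cong \Dic_{12}$ of order $12$ (so $\gamma_E$ is an isomorphism onto the full automorphism group), not $\SL_2(\F_3)$ of order $24$ — that is the characteristic-$2$, $e=24$ situation, and indeed $\SL_2(\F_3)$ contains no subgroup of order $12$, so "the unique $\Dic_{12}$-subgroup" would not exist there. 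Second, the statement concerns $I_3$-modules only, so your order-$4$ element $\tau$ is an inertia generator and no "Frobenius normalization" enters; relatedly, the paper handles the dependence on $p$ by invoking part (2) of \cite[Theorem~4.7]{FNS23n} to reduce all $p$ with $(3/p)=-1$ to a single small auxiliary prime before the finite computation, whereas you work with general $p$ directly — this is also fine, since $\gamma_E(\sigma),\gamma_E(\tau)$ live in $\Aut(\bar{E})$ independently of $p$.
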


\begin{small}
\begin{table}[htb]
$$
\begin{array}{|c|c|c|} \hline
 \text{{\sc Case}} & \text{\sc Conductor} &(\vv_3(c_4), \vv_3(c_6), \vv_3(\Delta_m)), \; \; \tilde{\Delta} \pmod{9}  \\ \hline \hline
 \multirow{3}{*}{\text{(a)}} & \multirow{3}{*}{$3^3$}  & (n \geq 2, 3,3), \; \; \tilde{\Delta} \not\equiv 2,4 \\
                             &  & (2, 4, 3)        \\
                             &  & (4,6,11)          \\ \hline \hline
 \multirow{3}{*}{\text{(b)}} &  \multirow{3}{*}{$3^3$} & (n \geq 4, 6,9), \; \; \tilde{\Delta} \not\equiv 2,4 \\
			     &  & (2,3,5)             \\
                             &  & (4,7,9)             \\ \hline \hline
 \multirow{2}{*}{\text{(c)}} &  \multirow{2}{*}{$3^5$} & (n \geq 3, 4, 5) \\
                             &   & (n \geq 6, 8, 13) \\ \hline \hline
 \multirow{2}{*}{\text{(d)}} &  \multirow{2}{*}{$3^5$} & (n \geq 4, 5, 7)    \\
                             &   & (n \geq 5, 7, 11)    \\ \hline
\end{array}
$$
\caption{Conditions for Theorem~\ref{T:mainWilde12II}; 
when $\tilde{\Delta}$ does not ocurr in a row, 
there is no restriction on it.}
\label{Table:thm10}
\end{table}
\end{small}
We remark that Theorems~\ref{T:mainWilde8II}~and~\ref{T:mainWilde12II} are optimal in the
sense that it is possible to find elliptic curves $E$, $E'$
satisfying the hypothesis of each of the listed cases.
 
\subsection{The cases $e=2$ and of good reduction} \hfill
\label{S:resultsGood}

We first note that the case $e=2$ can be reduced to 
the case of good reduction ($e=1$)
by 
taking an adequate 
simultaneous 
quadratic twist of~$E$ and~$E'$
by some $d \in \Q_\ell$. The following two lemmas describe explicitly the value 
of~$d$.
We remark that, although in Lemma~\ref{L:lemma3} the twist is independent of the curve, this is not the case in Lemma~\ref{L:lemma4}. It follows from Proposition~\ref{P:TwistsameType} that, under the hypothesis $E[p] \simeq E'[p]$, the same twist works for both curves.

\begin{lemma}
\label{L:lemma3} Let $\ell \geq 3$ and $E/\Q_\ell$ be an elliptic curve 
with potentially good reduction with~$e=2$. 

Then the quadratic twist of $E$ by $\sqrt{\ell}$ has good reduction.
\end{lemma}

\begin{lemma} \label{L:lemma4} 
Let $E/\Q_2$ be an elliptic curve with potentially good reduction with $e=2$.

Write $t=(c_4(E), c_6(E), \Delta_m(E))$.
Then~$t$ satisfies one of the following cases and we
let $u\in \lbrace -2,-1,2\rbrace$ be defined as follows:
 \[
u = 
  \begin{cases}
  2 \ \text{ if } \ t=(\geq 6,6,6) \quad \text{and} \quad \tilde{c}_6\equiv 1 \pmod 4, \\
  -2 \ \text{ if } \ t=(\geq 6,6,6) \quad \text{and} \quad \tilde{c}_6\equiv -1 \pmod 4, \\
  -1 \ \text{ if } \ t=(4,6,12) \quad \text{or} \quad t=(\geq 8,9,12), \\
  2 \ \text{ if } \ t=(6,9,18) \quad \text{and} \quad \tilde{c}_6\equiv -1 \pmod 4, \\
 -2 \ \text{ if } \ t=(6,9,18) \quad \text{and} \quad \tilde{c}_6\equiv 1 \pmod 4. \
  \end{cases}
\]
Then the quadratic twist of $E/\Q_{2}$ by $\sqrt{u}$ has good reduction.
\end{lemma}

To state the symplectic criterion in the case of good reduciton 
we need some more notation.

Let $D$ be a negative discriminant, i.e. $D < 0$ satisfies $D \equiv 0,1 \pmod{4}$.
Denote by $\calO_D$ the imaginary quadratic order of discriminant $D$, 
viewed inside the field of complex numbers $\C$. 
Consider the following polynomial
\[
  \mathcal{P}_D = \prod_{\calO_D \subset \End(\C/\mathfrak{a})} (x - j_{\C/\mathfrak{a}}),
\]
where the product runs over the
isomorphism classes of the elliptic curves $\C/\mathfrak{a}$ whose endomorphisms ring contains
the order $\calO_D$. We have $\mathcal{P}_D \in \Z[x]$. 
Extend the definition of $\mathcal{P}_D$
to all $D \leq 0$ by setting $\mathcal{P}_0 = 0$ and
$\mathcal{P}_D = 1$ for $D \equiv 2,3 \pmod{4}$. 

Let $E/\Q_\ell$ be an elliptic curve with good reduction and $j$-invariant $j_E$. 
Write $\Ebar /\F_\ell$ for the elliptic curve obtained by reducing mod~$\ell$
a minimal model of $E$. Define the integer quantities
\[
 a_\ell = (\ell + 1) - \#\Ebar(\F_\ell) \qquad \text{ and } \qquad 
 \Delta_\ell = a_\ell^2 - 4\ell.
\]
Note that $\Delta_\ell \neq 0$ and, from the Hasse-Weil bound, 
we have $\lvert a_\ell \rvert \leq 2 \sqrt{\ell}$, 
hence $\Delta_\ell < 0$. The value $a_\ell$ is also called
the trace of Frobenius.
Define also the quantity $\beta_\ell$ by the formula
\[
 \beta_\ell = \sup_{h > 0} \{ \; h : h^2 \mid \Delta_\ell \; \text{ and } \; 
 \mathcal{P}_{\Delta_\ell/h^2}(j_E) \equiv 0 \pmod{\ell}  \}
\]
which is an integer because $\Delta_\ell \neq 0$. We can finally state the criterion.

\begin{theorem} Let $\ell \neq p$ be primes with $p \geq 3$. 
Let $E$ and $E'$ be elliptic curves over $\Q_\ell$ with good reduction
and $\F_\ell$-isomorphic residual elliptic curves.
Then

1) $E[p]$ and $E'[p]$ are isomorphic $G_{\Q_{\ell}}$-modules.

2) Assume further that $p \mid \Delta_\ell$ and $p \nmid \beta_\ell$, or equivalently, that 
$\rhobar_{E,p}(\Frob_\ell)$ has order a multiple of~$p$.
Then $E[p]$ and $E'[p]$ are symplectically isomorphic 
and not anti-symplectically isomorphic. 

\label{T:simpleAbelian}
\end{theorem}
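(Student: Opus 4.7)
The strategy is to construct a canonical symplectic isomorphism $E[p]\to E'[p]$ coming from the reductions, and then to show that any other $G_{\Q_\ell}$-isomorphism differs from it by an element of the centralizer of $\Frob_\ell$---a group that, under our hypothesis, consists only of matrices with square determinant.

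Since $E$ and $E'$ have good reduction at $\ell\ne p$, the N\'eron--Ogg--Shafarevich criterion implies that $\rhobar_{E,p}$ and $\rhobar_{E',p}$ are unramified, and the reduction mod~$\ell$ gives $G_{\Q_\ell}$-equivariant isomorphisms $E[p]\isomto\Ebar[p]$ and $E'[p]\isomto\Ebar'[p]$, on which $\Frob_\ell$ acts through the Frobenius endomorphism. Any $\F_\ell$-isomorphism $f:\Ebar\to\Ebar'$ commutes with Frobenius and preserves the Weil pairing (by the standard identity $e_{\Ebar',p}(f(P),f(Q))=e_{\Ebar,p}(P,Q)^{\deg f}$ and $\deg f=1$), so composing yields a \emph{symplectic} isomorphism of $G_{\Q_\ell}$-modules $\iota:E[p]\to E'[p]$; this already proves the first assertion. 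Any other $G_{\Q_\ell}$-isomorphism has the form $\phi=\iota\circ\psi$ with $\psi\in\Aut_{G_{\Q_\ell}}(E[p])$, and since the Galois action factors through $\langle\Frob_\ell\rangle$, this automorphism group equals the centralizer in $\GL(E[p])$ of $A:=\rhobar_{E,p}(\Frob_\ell)$, whose characteristic polynomial is $X^2-a_\ell X+\ell\in\F_p[X]$.

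The hypothesis, in its equivalent form, says that $A$ has order a multiple of $p$, i.e.\ a nontrivial unipotent part; equivalently, $A$ has a repeated eigenvalue $\lambda\in\F_p$ but is not scalar, so $A$ is conjugate to the Jordan block $\bigl(\begin{smallmatrix}\lambda&1\\0&\lambda\end{smallmatrix}\bigr)$. In that basis a direct computation yields
\[
  Z_{\GL_2(\F_p)}(A)=\left\{\begin{pmatrix}a&b\\0&a\end{pmatrix}:a\in\F_p^\times,\ b\in\F_p\right\},
\]
and every such matrix has determinant $a^2\in(\F_p^\times)^2$. Fixing symplectic bases of $E[p]$ and $E'[p]$, Lemma~\ref{L:determinant} gives $\det M_\iota\in(\F_p^\times)^2$ (as $\iota$ is symplectic), while $\det M_\psi$ is a square because the square class of a determinant is conjugation-invariant and $M_\psi$ is conjugate to an element of the centralizer above. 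Hence $\det M_\phi=\det M_\iota\cdot\det M_\psi$ is a square, so Lemma~\ref{L:determinant} again gives that $\phi$ is symplectic. Since $\phi$ was arbitrary, $E[p]$ and $E'[p]$ are symplectically isomorphic and admit no anti-symplectic isomorphism.

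The main separate step I would verify is the stated equivalence of the two forms of the hypothesis, which I expect to be the most technical point. In the ordinary case this reduces to identifying $\beta_\ell$ with the conductor of $\Z[\pi]$ inside $\End(\Ebar)=\mathcal{O}_{\Delta_\ell/\beta_\ell^2}$ (via the defining property of the class polynomial $\mathcal{P}_D$, which vanishes at $j_{\Ebar}$ precisely when $\mathcal{O}_D\subset\End(\Ebar)$), and then showing that $\Frob_\ell$ acts on $\Ebar[p]$ as the scalar $\lambda$ exactly when $(\pi-\lambda)/p\in\End(\Ebar)$---an integrality condition which, given $p\mid\Delta_\ell$, holds if and only if $p\mid\beta_\ell$.
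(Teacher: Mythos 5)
Your proof is correct and follows essentially the same route as the paper's: lift a symplectic basis through the Galois-equivariant reduction maps, observe that any $G_{\Q_\ell}$-isomorphism differs from the canonical symplectic one by an element of the centralizer of $\rhobar_{E,p}(\Frob_\ell)$, and check that this centralizer (of a non-scalar matrix with a repeated eigenvalue and order divisible by $p$) consists only of matrices with square determinant, so Lemma~\ref{L:determinant} applies. The one step you defer --- the equivalence of $p\mid\Delta_\ell$, $p\nmid\beta_\ell$ with $\rhobar_{E,p}(\Frob_\ell)$ having order divisible by $p$ --- is exactly what the paper establishes in Corollary~\ref{C:FrobOrderp} via Centeleghe's explicit integral Frobenius matrix \eqref{E:FrobCharp}, which encodes the same endomorphism-ring/conductor computation you sketch.
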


In~\cite{Ccode} there is a {\tt Magma} code available 
to compute all the quantities relevant for Theorem~\ref{T:simpleAbelian}.

\begin{remark} Note that
Theorem~\ref{T:simpleAbelian} differs from the other criteria as it does not start from the assumption $E[p]\simeq E’[p]$; instead it first obtains this conclusion from a stronger condition on the residual curves. We give in 
Theorem~\ref{T:mainAbelian} a generalization of part 2) of Theorem~\ref{T:simpleAbelian} starting from the natural condition $E[p]\simeq E’[p]$
but, unfortunately, that theorem contains other hypothesis which are hard to verify in practice. 
We decided to not include Theorem~\ref{T:mainAbelian} in this list to keep in this section only criteria which are easy to use.
\end{remark}

\subsection{The case of potentially multiplicative reduction} \hfill

The following criterion is \cite[Proposition~2]{KO}. 

\begin{theorem}
  Let $\ell \neq p$ be primes with $p \geq 3$. 
  Let $E$ and $E'$ be elliptic curves over $\Q_\ell$ with multiplicative reduction.
  Suppose that $E[p]$ and $E'[p]$ are isomorphic $G_{\Q_{\ell}}$-modules. 
  
  Assume further that $p \nmid v_{\ell}(\Delta_m)$. Then $p \nmid v_{\ell}(\Delta_m')$. 
  Furthermore, 
  \[
 E[p] \text{ and } E'[p] \quad \text{are symplectically isomorphic} \quad  
 \Leftrightarrow \quad \left (\frac{v_\ell(\Delta_m)/v_\ell(\Delta_m')}{p} \right) = 1.
 \]
 Moreover, $E[p]$ and $E'[p]$ are not both symplectically and anti-symplectically isomorphic.
  \label{T:potMult}
\end{theorem}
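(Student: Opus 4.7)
\textbf{Proof proposal for Theorem~\ref{T:potMult}.}
The plan is to use the Tate uniformization and work in explicit symplectic bases adapted to the canonical filtration on~$E[p]$. First I would reduce to split multiplicative reduction: non-split reduction becomes split after the unramified quadratic twist, and since $I_\ell$ is unchanged by an unramified twist and the symplectic type of a $G_{\Q_\ell}$-isomorphism is preserved by quadratic twists, the general case reduces to the split one. For split multiplicative reduction, Tate's theorem gives an isomorphism of $G_{\Q_\ell}$-modules $E(\overline{\Q}_\ell) \simeq \overline{\Q}_\ell^\times/q_E^{\Z}$, with $v_\ell(q_E) = v_\ell(\Delta_m(E))$, and the pair $(P,Q) = (\zeta_p, q_E^{1/p})$ is a symplectic $\F_p$-basis of~$E[p]$, since $e_{E,p}(\zeta_p, q_E^{1/p}) = \zeta_p$. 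Choose analogous bases $(P',Q')$ for $E'[p]$.

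Next, I would describe the $I_\ell$-action in this basis. Because $\mu_p \subset \Q_\ell^{\mathrm{un}}$, inertia acts trivially on~$P$; fixing a tame generator $\sigma \in I_\ell$ sending $\pi^{1/p}$ to $\zeta_p \pi^{1/p}$ (for a uniformizer~$\pi$), one computes $\sigma(Q) = \zeta_p^{v_\ell(q_E)} Q$, whence
\[
\rhobar_{E,p}(\sigma) = \begin{pmatrix} 1 & v_\ell(\Delta_m)\\ 0 & 1 \end{pmatrix}, \qquad \rhobar_{E',p}(\sigma) = \begin{pmatrix} 1 & v_\ell(\Delta_m')\\ 0 & 1 \end{pmatrix}.
\]
The hypothesis $p \nmid v_\ell(\Delta_m)$ guarantees that $\rhobar_{E,p}(\sigma) \neq I$, so $\mu_p$ is the unique $I_\ell$-stable $\F_p$-line in $E[p]$. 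Since $E[p] \simeq E'[p]$, the curve~$E'$ must also have non-trivial inertia action, so $p \nmid v_\ell(\Delta_m')$, proving the first assertion. In particular, any $G_{\Q_\ell}$-isomorphism $\phi: E[p] \to E'[p]$ must send $\mu_p$ into $\mu_p'$, so its matrix has the form $M_\phi = \begin{pmatrix} a & b \\ 0 & d \end{pmatrix}$ with $a,d \in \F_p^\times$.

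Then I would extract~$r(\phi)$. By bilinearity of the Weil pairing, $r(\phi) = \det M_\phi = ad$ (this is a special case of Lemma~\ref{L:determinant}). The equivariance relation $\rhobar_{E,p}(\sigma)\, M_\phi = M_\phi\, \rhobar_{E',p}(\sigma)$ applied to the matrices above yields $a\, v_\ell(\Delta_m') \equiv d\, v_\ell(\Delta_m) \pmod{p}$, hence $a/d \equiv v_\ell(\Delta_m)/v_\ell(\Delta_m') \pmod{p}$. Therefore
\[
r(\phi) = ad = a^2 \cdot (d/a) \equiv a^2 \cdot \frac{v_\ell(\Delta_m')}{v_\ell(\Delta_m)} \pmod{p},
\]
so modulo squares $r(\phi)$ equals $v_\ell(\Delta_m')/v_\ell(\Delta_m)$, whose square class coincides with that of $v_\ell(\Delta_m)/v_\ell(\Delta_m')$. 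This gives the displayed Legendre symbol criterion, and since the square class of $r(\phi)$ is determined by $E$ and $E'$ alone, the two curves cannot be simultaneously symplectically and anti-symplectically isomorphic.

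The main (minor) obstacle is bookkeeping the non-split case at the outset; once one commits to the unramified quadratic twist, everything reduces to a transparent $2\times 2$ matrix computation in the Tate basis, and the argument is essentially forced.
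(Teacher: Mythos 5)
The paper does not actually prove Theorem~\ref{T:potMult}: it is quoted verbatim from Kraus--Oesterl\'e (\cite[Proposition~2]{KO}), and the only hint given about its proof is the remark in Section~\ref{S:abelian} that it ``exploits the fact that, when $E$ is a Tate curve such that $p\nmid \vv_\ell(\Delta_m(E))$, the image of inertia satisfies condition~(B) of Theorem~\ref{T:conditionRho}.'' Your argument is exactly that standard Tate-curve proof, and the core of it is correct: the basis $(\zeta_p, q_E^{1/p})$, the unipotent matrix $\begin{psmallmatrix}1 & \vv_\ell(\Delta_m)\\ 0 & 1\end{psmallmatrix}$ for tame inertia, the forced upper-triangular shape of $M_\phi$, the relation $a/d\equiv \vv_\ell(\Delta_m)/\vv_\ell(\Delta_m')$, and the conclusion that $\det M_\phi=ad\equiv a^2\,\vv_\ell(\Delta_m')/\vv_\ell(\Delta_m)$ modulo squares all check out, and the last clause follows because the square class of $r(\phi)$ you obtain is independent of $\phi$ (equivalently, the centralizer of the nontrivial unipotent consists of matrices $\begin{psmallmatrix}x&y\\0&x\end{psmallmatrix}$ with square determinant, which is condition~(B)).

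One step is under-justified and should be repaired. You ``reduce to split multiplicative reduction'' by an unramified quadratic twist, but a single twist makes both curves split only if $E$ and $E'$ are simultaneously split or simultaneously non-split; twisting them by different characters would destroy the hypothesis $E[p]\simeq E'[p]$. This configuration mismatch must be ruled out: since $p\nmid\vv_\ell(\Delta_m)$, inertia acts on $E[p]$ by a nontrivial unipotent, hence also on $E'[p]$ (giving $p\nmid\vv_\ell(\Delta_m')$ along the way), so each of $E[p]$, $E'[p]$ has a unique $G_{\Q_\ell}$-stable line, on which $G_{\Q_\ell}$ acts by $\chi_p$ in the split case and by $\chi_p\epsilon$ (with $\epsilon$ the unramified quadratic character) in the non-split case; an isomorphism of the modules forces these characters to agree, so the types match. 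With that inserted, the reduction is legitimate. A second, purely cosmetic point: whether $e_{E,p}(\zeta_p,q_E^{1/p})$ equals $\zeta_p$ or $\zeta_p^{-1}$ depends on the orientation convention, so the Tate basis need not be symplectic in the strict sense of Lemma~\ref{L:determinant}; but since you use bases with the same pairing value on both sides, the identity $r(\phi)=\det M_\phi$ and hence the whole computation is unaffected.
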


The previous theorem requires both curves to have multiplicative reduction. In case the
curves have additive potentially multiplicative reduction we first need to twist them by an elment~$d \in \Q_\ell$ to make the reduction multiplicative.
More precisely, we
apply the following well known fact (see \cite[p.~442--444]{SilvermanII} and \cite[Lemme~1]{CaliKraus}).
\begin{lemma}
\label{L:multTwist} 
Let $E/\Q_\ell$ be an elliptic curve 
with potentially multiplicative reduction. 

Then  the quadratic twist of $E$ by~$\sqrt{-c_6(E)}$ has split multiplicative reduction.
\end{lemma}

Note that, although the twist depends on~$E$, 
it follows from Proposition~\ref{P:TwistsameType} that, under the hypothesis $E[p] \simeq E'[p]$, the same twist works also for~$E'$.
\subsection{The case of mixed reduction types} \hfill

All the symplectic criteria above require $E/\Q_\ell$ and $E'/\Q_\ell$ to simultaneously have potentially multiplicative reduction or potentially good reduction. The following is the unique symplectic criteria that exists in the mixed situation. It is very restrictive as it only applies for $p=3$.

\begin{theorem} Let $\ell \neq 3$ be a prime.
Let $E/\Q_\ell$ be an elliptic curve with multiplicative reduction and
$E'/\Q_\ell$ an elliptic curve with potentially good reduction and $e'=3$.

Suppose that $E[3]$ and $E'[3]$ are isomorphic $G_{\Q_{\ell}}$-modules.
Then $3 \nmid \vv_\ell(\Delta_m)$. Moreover,
\[
 E[3] \text{ and } E'[3] \quad \text{are symplectically isomorphic} \quad \Leftrightarrow \quad 
 \upsilon_\ell(\Delta_m) \equiv \upsilon_\ell(\Delta_m') \pmod{3}.
\]
Moreover, $E[3]$ and $E'[3]$ are not both symplectically and anti-symplectically isomorphic.
\label{T:mixedReduction}
\end{theorem}
We note that the case 
where $p=3$, $E/\Q_\ell$ has additive potentially multiplicative reduction and
$E'/\Q_\ell$ potentially good reduction with $e'=6$ reduces to Theorem~\ref{T:mixedReduction} by twsting. Indeed, Lemma~\ref{L:multTwist} applied to~$E$ gives the value
of~$d$ we need to twist by.
Although~$d$ depends on~$E$, 
it follows from Lemma~\ref{L:twistIso} that, under the hypothesis $E[3] \simeq E'[3]$, the same twist will reduce 
the semistability defect $e'=6$ to $e(dE')=3$.

{\large \part{The existence of local symplectic criteria} \label{S:conditionOnRho}}

\section{Existence of symplectic criteria in terms of the image of $\rhobar_{E,p}$}
\label{S:existence}

Recall from Section~\ref{S:localApproach} that, for elliptic curves $E/\Q_\ell$ and $E'/\Q_\ell$ 
with isomorphic $p$-torsion modules, we say that a
local symplectic criterion exists if the 
$G_{\Q_\ell}$-module isomorphisms
between $E[p]$ and $E'[p]$ are either all symplectic or all anti-symplectic.
As explained in the same section
we want to classify when a symplectic criterion exists for
all the possible types of reduction of $E$ and~$E'$, as 
summarized in Table~\ref{Table:CriteriaList}.
Here we will give an intermediate step towards this classification
by describing when a symplectic criterion exists in terms of conditions
on the image $\rhobar_{E,p}(G_{\Q_\ell})$ as a subgroup of~$\GL_2(\F_p)$. 

We shall prove the more general Theorem~\ref{T:conditionRho} below.  
We start by extending the idea above for any field $F \subset \Qbar_\ell$. Given elliptic curves $E/F$ and $E'/F$ 
with isomorphic $p$-torsion $G_F$-modules, we say that a
{\bf (local) symplectic criterion exists} if the 
$G_F$-module isomorphisms
between $E[p]$ and $E'[p]$ are either all symplectic or all anti-symplectic.

\begin{theorem} \label{T:conditionRho}
Let $\ell$ and $p$ be different primes with $p \geq 3$.
Let $F \subset \Qbar_\ell$ be a field. Let
$E/F$ and $E'/F$ be elliptic curves with isomorphic $p$-torsion modules.
Then a symplectic criterion exists if and only if one of the following conditions holds:
\begin{itemize}
 \item[(A)] $\rhobar_{E,p}(G_F)$ is non-abelian;
 \item[(B)] $\rhobar_{E,p}(G_F)$ is generated, up to conjugation,  
by $\begin{pmatrix} a & 1 \\ 0 & a \end{pmatrix}$ where $a \in \F_p^*$.
\end{itemize}
\end{theorem}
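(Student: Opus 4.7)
The plan is to reduce to a purely group-theoretic question about $H := \rhobar_{E,p}(G_{\Q_\ell}) \subset \GL_2(\F_p)$ (computed in any choice of basis of $E[p]$). Fix a $G_{\Q_\ell}$-module isomorphism $\phi_0 : E[p] \to E'[p]$; every other one has the form $\phi_0 \circ \psi$ for a unique $\psi \in \Aut_{G_{\Q_\ell}}(E[p])$. Bilinearity and the alternating property of the Weil pairing yield
\[
r(\phi_0 \circ \psi) \;=\; \det(\psi) \cdot r(\phi_0),
\]
so all $G_{\Q_\ell}$-isomorphisms $E[p] \to E'[p]$ share a common symplectic type if and only if $\det(\psi) \in (\F_p^\times)^2$ for every $\psi$. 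Since $\Aut_{G_{\Q_\ell}}(E[p])$ is the unit group of the centralizer $C(H)$ of $H$ in $M_2(\F_p)$, the task becomes the classification of those $H \subset \GL_2(\F_p)$ for which $\det(C(H)^\times) \subset (\F_p^\times)^2$.

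I would then split into three cases. If $H$ is non-abelian, pick $A, B \in H$ not commuting; their centralizers $\F_p[A]$ and $\F_p[B]$ are $2$-dimensional subalgebras of $M_2(\F_p)$ and any common non-scalar matrix would force $A, B$ to commute, so $C(H) = \F_p \cdot I$. Every unit then has square determinant, giving case (A). If $H$ is abelian and consists entirely of semisimple elements, then $H$ lies in a maximal torus; the centralizer in $\GL_2(\F_p)$ is either all of $\GL_2(\F_p)$ (if $H$ is scalar) or a maximal $\F_p$-torus $T$, split or non-split. In both situations $\det$ surjects onto $\F_p^\times$ (for a non-split torus, via the surjectivity of the norm $\F_{p^2}^\times \to \F_p^\times$), so non-square determinants occur and no symplectic criterion can exist.

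The remaining case is $H$ abelian but containing a non-semisimple element $u$. After conjugation one may take $u = \begin{pmatrix} a & b \\ 0 & a \end{pmatrix}$ with $a \in \F_p^\times$ and $b \ne 0$, and everything commuting with $u$ lies in the local $\F_p$-algebra $\F_p[u] = \bigl\{\begin{pmatrix} x & y \\ 0 & x \end{pmatrix}\bigr\}$, whose unit group is $\F_p^\times \times \F_p$ and hence, since $\gcd(p, p-1)=1$, is cyclic of order $p(p-1)$. Consequently $H \subset \F_p[u]^\times$ is cyclic, generated by some $\begin{pmatrix} c & d \\ 0 & c \end{pmatrix}$ with $d \ne 0$; a further conjugation by $\mathrm{diag}(1, d)$ rescales $d$ to $1$, exhibiting $H$ in the normal form asserted in~(B). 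Moreover $C(H) \subset \F_p[u]$, so every unit of $C(H)$ has determinant $x^2$, a square. The delicate point is precisely in this last case: one must verify that the rescaling conjugation does not disturb the rest of $H$ (which is automatic because $H$ is cyclic) and that every non-semisimple abelian image lies in the simple cyclic shape of~(B). Combining the three cases yields the stated equivalence.
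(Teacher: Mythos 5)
Your proposal is correct and follows essentially the same route as the paper: reduce to the condition that every unit in the centralizer of $\rhobar_{E,p}(G_{\Q_\ell})$ has square determinant (the paper's Lemma~\ref{L:determinant}), dispose of the non-abelian case by showing that centralizer is scalar, and split the abelian case according to whether the image is semisimple (centralizer a Cartan subgroup, on which $\det$ is onto) or contains a non-semisimple element (forcing the normal form in~(B)). The only notable variation is that where the paper cites Serre's result that an abelian subgroup of order divisible by $p$ lies in a Borel, you get the cyclic normal form directly from the fact that $\F_p[u]^\times \simeq \F_p^\times \times \F_p$ is cyclic of order $p(p-1)$ --- a slightly more self-contained treatment of the same step.
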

We will derive this theorem from considerations on elliptic curves over more general fields. 

Let $K$ denote a field of characteristic zero or a finite field with characteristic $\neq p$.
For an elliptic curve $E/K$ 
a $\F_p$-basis $(P,Q)$ of $E[p]$ is called a symplectic basis if $e_{E,p}(P,Q) = \zeta_p$, 
where $\zeta_p \in \overline{K}$ is a fixed $p$-th root of unity.

The following lemma generalizes \cite[Lemma~1]{F33p} 
and plays a crucial r\^ole in our arguments.

\begin{lemma} \label{L:sympcriteria}
  Let $E$ and $E'$ be two elliptic curves defined over~$K$
  with isomorphic $p$-torsion.
  Fix symplectic bases for $E[p]$ and $E'[p]$. Let $\phi \; \colon \; E[p] \to E'[p]$ be
  an isomorphism of $G_K$-modules and write $M_\phi$ for the matrix representing~$\phi$ with
  respect to the fixed bases.

  Then $d(\phi)= \det M_\phi$ and $\phi$ is a symplectic isomorphism if and only if $\det(M_\phi)$ is a square mod~$p$; otherwise $\phi$ is anti-symplectic. Here, $d(\phi)$ is the quantity defined by equation~\eqref{E:dphi}. 

  Moreover, $E[p]$ and~$E'[p]$ are not simultaneously symplectically and anti-symplectically isomorphic if and only if the centralizer of $\rhobar_{E,p}(G_K)$ in $\GL_2(\F_p)$ contains only matrices  with square determinant.
  \label{L:determinant}
\end{lemma}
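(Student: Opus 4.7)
The plan for the first statement is a direct computation of the Weil pairing with respect to the chosen bases. Let $(P,Q)$ and $(P',Q')$ denote the fixed symplectic bases of $E[p]$ and $E'[p]$, so that $e_{E,p}(P,Q) = e_{E',p}(P',Q') = \zeta_p$, and write $M_\phi = \begin{psmallmatrix} a & b \\ c & d \end{psmallmatrix}$, which means $\phi(P) = aP' + cQ'$ and $\phi(Q) = bP' + dQ'$. I would then expand $e_{E',p}(\phi(P),\phi(Q))$ by bilinearity, using the alternating property $e_{E',p}(P',P') = e_{E',p}(Q',Q') = 1$ together with $e_{E',p}(Q',P') = \zeta_p^{-1}$, to obtain $e_{E',p}(\phi(P),\phi(Q)) = \zeta_p^{ad-bc} = \zeta_p^{\det M_\phi}$. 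The defining equality $e_{E',p}(\phi(P),\phi(Q)) = e_{E,p}(P,Q)^{r(\phi)} = \zeta_p^{r(\phi)}$ then forces $r(\phi) \equiv \det M_\phi \pmod{p}$, and the claim about the symplectic type of $\phi$ follows immediately from the definition.

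For the second statement, my plan is to parametrize all $G_K$-module isomorphisms $E[p] \to E'[p]$ in terms of a single fixed one. Fix any such $\phi_0$, which exists by hypothesis. Any other $G_K$-isomorphism $\psi : E[p] \to E'[p]$ can be written uniquely as $\psi = \phi_0 \circ C$, where $C = \phi_0^{-1}\psi$ is a $G_K$-automorphism of $E[p]$; its matrix $M_C$ lies in the centralizer $Z$ of $\rhobar_{E,p}(G_K)$ in $\GL_2(\F_p)$. Conversely, every $C$ with $M_C \in Z$ yields such a $\psi$. Since $\det M_\psi = \det M_{\phi_0} \cdot \det M_C$, the square class of $\det M_\psi$ modulo $p$ ranges exactly over the square classes of $\det M_{\phi_0} \cdot \det M_C$ as $M_C$ varies in $Z$; equivalently, the set of square classes attained by the various $\det M_\psi$ is a coset of the subgroup of $\F_p^\times/(\F_p^\times)^2$ generated by $\{\det M_C : M_C \in Z\}$. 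Combining this with the first part yields the asserted equivalence: all isomorphisms $\psi$ share a common symplectic type if and only if every matrix in $Z$ has square determinant modulo $p$.

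The argument is essentially formal. The one point worth being careful about is the bookkeeping in the Weil pairing computation of the first paragraph, where one must invoke the alternating property and the correct sign in $e_{E',p}(Q',P') = \zeta_p^{-1}$; this is a standard fact and presents no serious obstacle. Everything in the second paragraph is group-theoretic and reduces to the bijection between $G_K$-isomorphisms $E[p] \to E'[p]$ and $G_K$-automorphisms of $E[p]$, obtained by composition with $\phi_0$.
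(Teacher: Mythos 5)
Your proposal is correct and follows essentially the same route as the paper: the first part is the standard computation that the Weil pairing transforms by the determinant of the change-of-basis matrix (the paper simply cites this property where you expand it by bilinearity), and the second part is the same parametrization of all $G_K$-isomorphisms as $\phi_0$ composed with automorphisms of $E[p]$, whose matrices form the centralizer of $\rhobar_{E,p}(G_K)$, together with multiplicativity of determinants.
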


\begin{proof}
  Let $P,Q \in E[p]$ and $P',Q' \in E'[p]$ be symplectic bases. We have that
  \[ e_{E',p}(\phi(P),\phi(Q)) = e_{E',p}(P',Q')^{\det(M_\phi)}
                               = {\zeta_p}^{\det(M_\phi)}
                               = e_{E,p}(P,Q)^{\det(M_\phi)},
  \]
  so $d(\phi) = \det(M_\phi)$. This implies the first assertion.

  Let $\phi, \beta : E[p]\to E'[p]$ be two isomorphisms of $G_K$-modules.
  Then $\lambda = \phi^{-1} \circ \beta$ is an automorphism of the
  $G_K$-module $E[p]$. Furthermore, for any choice of bases
  for $E[p]$ and $E'[p]$, the matrix $M_\lambda$ representing $\lambda$ lies
  in the centralizer $C$ of $\rhobar_{E,p}(G_K)$ in $\GL_2(\F_p)$. 
  Conversely, if we fix a basis for $E[p]$ and take any $M \in C$
  there is a $G_K$-automorphism $\lambda$ of $E[p]$ such that $M_\lambda = M$ 
  and we can define the $G_K$-modules isomorphism 
  $\beta= \phi \circ \lambda : E[p]\to E'[p]$. 
  
  We have the relation of determinants  $\det(M_\beta) = \det(M_\lambda) \det(M_\phi)$.
  Now, choosing symplectic bases for $E[p]$ and $E'[p]$, the first part of the lemma 
  shows that $\phi$, $\beta$ have the same symplectic type if and only if 
  $\det(M_\lambda)$ is a square mod~$p$. The second statement now follows. 
\end{proof}

The following corollary determines the symplectic type of a special type of 
$G_K$-isomorphisms.

\begin{corollary} \label{C:isogeny}
Let $E/K$ and $E'/K$ be elliptic curves. 
Let $\phi : E \to E'$ be an isogeny of degree~$n$ not divisible by~$p$, so that it
restricts to an isomorphism $\phi|_{E[p]} : E[p] \to E'[p]$. 

Then $\phi|_{E[p]}$ is symplectic if $(n/p) = 1$
and anti-symplectic if $(n/p) = -1$.
\end{corollary}
\begin{proof}
Let $\hat{\phi}$ be the dual isogeny of~$\phi$ (see~\cite[III.6]{SilvermanI} for definition and properties of~$\hat{\phi}$).
The isogenies $\phi$ and~$\hat{\phi}$ 
are adjoint with respect to the Weil pairing. Therefore,
\[ e_{E',p}(\phi(P), \phi(Q)) = e_{E,p}(P, \hat{\phi}\phi(Q))
                              = e_{E,p}(P, nQ) = e_{E,p}(P,Q)^n
\]
and it follows from \eqref{E:dphi} that $d(\phi) \equiv n \pmod{p}$. 
The result now follows from 
Lemma~\ref{L:sympcriteria}.
\end{proof}

For a field $k$ let $M_2(k)$ denote the ring of $2$ by $2$ matrices 
with coefficients in $k$ and denote by $I_2$ the identify matrix in $M_2(k)$.
The following lemma shows that the condition on the centralizer in Lemma~\ref{L:determinant}
is always satisfied for non-abelian subgroups of $\GL_2(k)$.

\begin{lemma} Let $k$ be a field and and $G$ a subgroup of $\GL_2(k)$. If $G$ is non-abelian
then the centralizer $C_{\GL_2(k)}(G)$ are the scalar matrices. In particular, every matrix
in $C_{\GL_2(k)}(G)$ has square determinant mod~$p$.
\label{L:centralizerNonAbelian}
\end{lemma}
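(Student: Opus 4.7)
The plan is to establish the contrapositive: if the centralizer $C_{\GL_2(k)}(G)$ contains a non-scalar matrix, then $G$ must be abelian. The main linear-algebra input I would invoke is the standard fact that for any non-scalar matrix $M \in M_2(k)$, the centralizer in the full matrix algebra satisfies $C_{M_2(k)}(M) = k[M]$, the commutative $k$-subalgebra generated by $M$. To justify this I would put $M$ in rational canonical form and run a short case analysis (two distinct eigenvalues in $k$; a single Jordan block; or irreducible characteristic polynomial over $k$); in each case the centralizer is $2$-dimensional over $k$, and since $\{I_2, M\}$ is already linearly independent (because $M$ is non-scalar) it must coincide with $k[M]$.

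Given this, the main argument is essentially one line: if some non-scalar $M \in C_{\GL_2(k)}(G)$ exists, then every $g \in G$ commutes with $M$, hence $g \in C_{M_2(k)}(M) = k[M]$, a commutative ring. Thus any two elements of $G$ commute, contradicting the non-abelianness hypothesis. Therefore $C_{\GL_2(k)}(G)$ consists entirely of scalar matrices $\lambda I_2$ with $\lambda \in k^\times$, proving the first assertion. The ``in particular'' clause is then immediate: $\det(\lambda I_2) = \lambda^2$ is a square in $k^\times$, and in the intended application with $k = \F_p$ this is a square modulo $p$.

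I do not foresee any substantive obstacle here; the only step requiring more than a sentence is the computation $C_{M_2(k)}(M) = k[M]$ for non-scalar $M$, but this is textbook linear algebra and does not interact with any of the subtleties surrounding Weil pairings or symplectic types. Combined with Lemma~\ref{L:determinant}, this lemma will handle case (A) of Theorem~\ref{T:conditionRho}, leaving only the analysis of the abelian image case (B) for a separate argument.
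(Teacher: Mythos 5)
Your proof is correct and follows essentially the same route as the paper: both arguments reduce to the fact that the commutant in $M_2(k)$ of a non-scalar matrix $M$ equals the commutative algebra $k[M]$, so a non-scalar element of $C_{\GL_2(k)}(G)$ would force $G \subset k[M]$ to be abelian. The only (cosmetic) difference is that the paper verifies $C_{M_2(k)}(M)=k[M]$ by a dimension count using a cyclic vector $v$ with $(v,M(v))$ a basis, whereas you propose a case analysis on canonical forms; either works.
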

\begin{proof} Clearly, the scalar matrices belong to $C_{\GL_2(k)}(G)$.
Suppose now the lemma is false. There is a matrix 
$g \in C_{\GL_2(k)}(G)$ with minimal polynomial of degree 2. 

We claim that the centralizer of $g$ in $M_2(k)$ is $k[g]$. 
Then $C_{\GL_2(k)}(g)$ is $k[g] \cap \GL_2(k)$ and, 
in particular, $C_{\GL_2(k)}(g)$ is abelian. This is a contradiction,
since the non-abelian group $G$ is contained in $C_{\GL_2(k)}(g)$. 
The last statement follows from $\det (\lambda \cdot I_2) = \lambda^2$.

We now prove the claim. 
Let $V$ be a 2-dimensional $k$-vector space 
and identify $\GL_2(k)$ with $\GL(V)$.
Write $A$ for the centralizer of $g$ in $M_2(k) \simeq \End(V)$.
Since $g$ is not scalar, there exists a line which is not stable by~$g$. 
So there exists $v \in V$ such that $(v,g(v))$ is a basis of $V$. The map  $A \to V$ defined by
$T\mapsto T(v)$ is linear and injective (if $T(v)=0$, since $gT = Tg$ we also have $T(g(v))=0$, so $T=0$). 
We deduce that the dimension of $A$ is at most $2$. 
Moreover, $k[g] \subset A$, and its dimension is $2$ because the minimal polynomial
of $g$ is of degree 2. Hence $A=k[g]$ as claimed.
\end{proof}

Next we will classify the abelian subgroups of $\GL_2(\F_p)$ whose
centralizer contains only matrices with square determinant mod~$p$.
Recall from \cite[\S2.1]{Serre72} that a subgroup $C \subset \GL_2(\F_p)$ 
is a {\it split Cartan subgroup} if it is conjugate to 
$\left(\begin{smallmatrix}
  * & 0 \\
  0 & *
 \end{smallmatrix}\right)
$
and a {\it non-split Cartan subgroup} if $C \simeq \F_{p^2}^*$.

\begin{proposition} Let $V$ be a $\F_p$-vector space of dimension $2$ 
and $g \in \GL(V)$ a non-scalar element of order coprime to $p$. 
Then the centralizer $C(g)$ of $g$ in $\GL(V)$ is a Cartan subgroup.
\label{P:centralizer}
\end{proposition}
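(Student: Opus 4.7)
The plan is to use the hypothesis on the order of $g$ to show that $g$ is diagonalizable over $\overline{\F_p}$, then split into the split/non-split cases according to whether the eigenvalues lie in $\F_p$.

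First I would observe that because $g$ has order $n$ coprime to $p$, it is annihilated by $X^n - 1$, which is separable over $\F_p$ since $\gcd(n,p) = 1$. Hence the minimal polynomial $\mu_g \in \F_p[X]$ of $g$ is separable. Since $g$ is non-scalar, $\mu_g$ cannot have degree $1$, so it has degree exactly $2$ with two distinct roots in $\overline{\F_p}$. In particular, $g$ is diagonalizable over $\overline{\F_p}$.

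Next I would split into two cases according to the factorization of $\mu_g$ over $\F_p$. \textbf{Split case:} If $\mu_g$ has two distinct roots $\lambda_1, \lambda_2 \in \F_p$, pick an eigenbasis in which $g = \mathrm{diag}(\lambda_1, \lambda_2)$. A direct calculation shows that any matrix commuting with a diagonal matrix having distinct diagonal entries is itself diagonal, so in this basis $C(g)$ is exactly the group of invertible diagonal matrices, i.e.\ a split Cartan subgroup of $\GL(V)$. \textbf{Non-split case:} If $\mu_g$ is irreducible over $\F_p$, then $\F_p[g] \subset \End(V)$ is a field isomorphic to $\F_{p^2}$. I can quote the proof of Lemma~\ref{L:centralizerNonAbelian} (which only used that the minimal polynomial of $g$ has degree $2$, not that $G$ itself is non-abelian) to get that the commutator of $g$ in $M_2(\F_p)$ equals $\F_p[g]$. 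Intersecting with $\GL(V)$ yields $C(g) = \F_p[g]^\times \simeq \F_{p^2}^\times$, which is by definition a non-split Cartan subgroup.

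Since these two cases are exhaustive, $C(g)$ is a Cartan subgroup in either case, which is what we wanted. The only delicate point is ensuring that the argument extracted from the proof of Lemma~\ref{L:centralizerNonAbelian} genuinely applies in the non-split case; but the linear-algebra argument there (construction of an injection from the commutator of $g$ into $V$ via $T \mapsto T(v)$ for $v$ with $(v, g(v))$ a basis) only required $g$ to be non-scalar, which holds here. No serious obstacle is expected.
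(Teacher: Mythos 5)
Your proof is correct and follows essentially the same route as the paper: separability of the minimal polynomial from the order hypothesis, degree two from non-scalarity, and then the split/irreducible dichotomy, with the non-split case handled by reusing the commutant computation $\F_p[g]$ from the proof of Lemma~\ref{L:centralizerNonAbelian} exactly as the paper does. The only cosmetic difference is that in the split case you diagonalize and compute the centralizer explicitly, whereas the paper argues via the two stable lines; these are the same observation.
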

\begin{proof}
Let $g$ be as in the statement and write $H=C(g) \subset \GL(V)$ for 
its centralizer.
Let $f$ be the minimal polynomial of $g$ which is of degree 2 because
$g$ is not scalar; also $f$ is separable because the order of $g$ is 
not divisible by $p$. We now divide into cases:

(1) Suppose $f = (X-a)(X-b)$ with $a,b \in \F_p$ distinct roots.
Then $g$ stabilizes two distinct lines which are also stable under $H$. 
So $H$ is a split Cartan subgroup.

(2) Suppose $f$ is irreducible over $\F_p$.
Let $\F \subset \End(V)$ be the set of elements $T$ such that $gT=Tg$.
From the last part of the proof of Lemma~\ref{L:centralizerNonAbelian}, 
we have that $\F=\F_p[g]$. We have that  $\F_p[X]/(f)$ is isomorphic to $\F$ via the morphism $P\mapsto P(g)$, thus $\F$ is a field. Finally, $\F^*=H$ and we conclude $H$ is a non-split Cartan subgroup, as desired.

\end{proof}

Before proving Theorem~\ref{T:conditionRho} we need one more lemma.

\begin{lemma} Let $p \geq 3$ be a prime and $H$ an abelian subgroup of $G=\GL_2(\F_p)$.
Then the following statements are equivalent:
\begin{itemize}
 \item[(a)] all matrices in the centralizer $C_G(H)$ have determinant which is a square mod $p$;
 \item[(b)] $H$ is cyclic of order divisible by $p$;
 \item[(c)] $H$ is generated (up to conjugation) by a matrix of the 
form $\left( \begin{smallmatrix} a & 1 \\ 0 & a \end{smallmatrix} \right)$ where $a \in \F_p^*$.
\end{itemize}
\label{L:CentralizerAbelian}
\end{lemma}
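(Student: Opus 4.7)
The plan is to establish the cyclic chain of implications (c) $\Rightarrow$ (a) $\Rightarrow$ (b) $\Rightarrow$ (c).

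For (c) $\Rightarrow$ (a) I will directly compute the centralizer in $G$ of the matrix $M = \left(\begin{smallmatrix} a & 1 \\ 0 & a \end{smallmatrix}\right)$. Since the scalar summand $aI$ is central, commuting with $M$ is equivalent to commuting with the nilpotent part $\left(\begin{smallmatrix} 0 & 1 \\ 0 & 0 \end{smallmatrix}\right)$, and a routine matrix equation forces the centralizer to consist of the matrices $\left(\begin{smallmatrix} x & y \\ 0 & x \end{smallmatrix}\right)$ with $x \in \F_p^\times$ and $y \in \F_p$. Each such matrix has determinant $x^2$, a square modulo~$p$, giving~(a).

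For (a) $\Rightarrow$ (b) the argument proceeds by elimination. First, $H$ cannot be entirely scalar, for then $C_G(H) = G$ would contain matrices of every possible determinant. So $H$ contains a non-scalar element $g$. I split on the order of $g$: if $\ord(g)$ is coprime to~$p$, Proposition~\ref{P:centralizer} identifies $C_G(g)$ with a Cartan subgroup, which being abelian and self-centralizing forces $C_G(H) = C_G(g)$; but both Cartan subgroups contain matrices of non-square determinant (trivially in the split case, and via the surjectivity of the norm $N \colon \F_{p^2}^\times \to \F_p^\times$ in the non-split case), contradicting~(a). Hence $p$ divides $\ord(g)$. A suitable power of $g$ is then unipotent of order~$p$, which (since any power of a semisimple element is semisimple) forces $g$ itself to be non-semisimple with a single eigenvalue $a \in \F_p^\times$, so $g$ is conjugate to $\left(\begin{smallmatrix} a & 1 \\ 0 & a \end{smallmatrix}\right)$. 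Its centralizer (from the previous step) is the direct product of the cyclic groups $\F_p^\times$ and $\F_p$, which have coprime orders and so yield a cyclic group of order $p(p-1)$. Since $H \subset C_G(g)$ contains $g$, $H$ itself is cyclic of order divisible by~$p$, establishing (b).

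For (b) $\Rightarrow$ (c), let $h$ generate $H$ and take its commuting Jordan decomposition $h = h_s h_u$. The unipotent part $h_u$ has order~$p$ and so possesses a unique eigenline $L$. The semisimple part $h_s$ commutes with $h_u$ and therefore preserves $L$; if $h_s$ admitted a second eigenline, that line would in turn be stable under $h_u$, contradicting the uniqueness of $L$. Thus $h_s = aI$ for some $a \in \F_p^\times$. Conjugating $h_u$ to $\left(\begin{smallmatrix} 1 & 1 \\ 0 & 1 \end{smallmatrix}\right)$ and then scaling by $\left(\begin{smallmatrix} 1 & 0 \\ 0 & a \end{smallmatrix}\right)$ brings $h$ into the desired form.

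The main obstacle I expect is the bookkeeping in (a) $\Rightarrow$ (b), particularly verifying cleanly that every Cartan subgroup contains elements of non-square determinant so as to rule out the case of $\ord(g)$ coprime to~$p$; once this is in hand, Proposition~\ref{P:centralizer} does most of the work and the rest is a structural analysis of cyclic subgroups of the abelian group $C_G(g)$.
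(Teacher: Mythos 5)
Your proof is correct, and its skeleton (the cycle (c) $\Rightarrow$ (a) $\Rightarrow$ (b) $\Rightarrow$ (c), the explicit computation of the centralizer of $\left(\begin{smallmatrix} a & 1 \\ 0 & a\end{smallmatrix}\right)$, and the appeal to Proposition~\ref{P:centralizer} to handle elements of order prime to $p$) matches the paper's. Where you diverge is in the middle implication, and your version is arguably tidier. The paper lists all non-scalar generators $g_1,\dots,g_k$ of $H$ and splits into two cases according to whether one of the centralizers $C_G(g_i)$ contains all the others, using that two distinct Cartan subgroups meet only in the scalars; you instead fix a single non-scalar $g\in H$ of order prime to $p$ and observe that since the Cartan subgroup $C_G(g)$ is abelian and contains $H$, one gets $C_G(H)=C_G(g)$ at once, so the generator bookkeeping disappears. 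You are also more explicit than the paper about why every Cartan subgroup contains a non-square determinant (surjectivity of the norm in the non-split case). For the endgame, the paper invokes \cite[Proposition~15]{Serre72} to place $H$ in a Borel and decomposes $H\simeq D\times U$, whereas you note that $C_G(g)\simeq\F_p^\times\times\F_p$ is itself cyclic of order $p(p-1)$ and that $H$ sits inside it; and your (b) $\Rightarrow$ (c) via the Jordan decomposition of a generator replaces the paper's eigenvalue argument. These are genuine local simplifications (one fewer external reference, no case split), though the two proofs are the same in spirit.
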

\begin{proof} Suppose all matrices in the centralizer $C_G(H)$ have determinant 
which is a square mod~$p$. Since $H$ is abelian we have
\[ 
H \subset C_G(H) \subset C_G(g) \quad \text{ for all }  g \in H. 
\]
If $H$ is made up of scalar matrices then $C_G(H) = G$ contains matrices
with non-square determinant (since $p \neq 2$). Thus we can assume there is at least one
generator of $H$ which is not a scalar matrix. Write $g_1, \ldots, g_k$ for the 
non-scalar generators of $H$. 

Suppose $H$ has order not divisible by $p$. 
The centralizers $C_G(g_i)$ are Cartan subgroups of $G$ for all $i$
by Proposition~\ref{P:centralizer}.
We now divide into two cases:

(i) Suppose there exists a generator $g = g_j$ whose centralizer is 
maximal among the centralizers of the $g_i$, that is $C_G(g_i) \subset C_G(g)$ 
for all $i$. Let $h \in H$ be given by 
$h = g_1^{d_1} \cdot \ldots \cdot g_k^{d_k}$; now any $s \in  C_G(g)$ commutes with
all $g_i$ so it commutes with $h$ and we conclude that $s  \in  C_G(H)$. Thus 
$C_G(H) = C_G(g)$ is a Cartan subgroup of $G$; in particular it contains matrices
with non-square determinant (since $p \ne 2$).

(ii) Suppose there is no generator $g$ as in (i). 
Then there are generators $g_i \ne g_j$ such that $C_G(g_i) \not\subset C_G(g_j)$
and $C_G(g_j) \not\subset C_G(g_i)$.
Then $H \subset C_G(g_i) \cap C_G(g_j)$ where the intersection is 
the subgroup of invertible scalar matrices. Thus $C_G(H) = G$ and again it contains
matrices with non-square determinant. 

We conclude that under our hypothesis on $C_G(H)$ both cases (i) and (ii) are impossible;
thus $H$ has order divisible by $p$. 
Since $H$ is abelian then, up to 
conjugation, it is contained 
in the Borel subgroup by \cite[Proposition~15]{Serre72}.
Also, all elements of $H$ have one eigenvalue with multiplicity two, 
otherwise $H$ is not abelian.
Thus $H$ is 
isomorphic to $D \times U$ where $U$ is unipotent of order $p$
and $D$ is a subgroup of scalar matrices with order coprime to $p$. 
In particular, $D$ and $U$ are cyclic with coprime order, hence $H$ is cyclic
and generated by a matrix $g = \begin{psmallmatrix} a & 1 \\ 0 & a \end{psmallmatrix}$ where $a \in \F_p^*$.
We conclude that (a) implies (b) and (b) is equivalent to (c).

To finish we note that the centralizer $C_G(g)$ is the group of matrices of the form
$\begin{psmallmatrix} \lambda & b \\ 0 & \lambda \end{psmallmatrix}$ with $\lambda \ne 0$; thus
all of them have square determinant, showing that (c) implies (a).
\end{proof}

\begin{remark} For $p=2$, it is still true that, in the previous theorem, 
(b) is equivalent to (c), 
since all the elements of order~$2$ in $\GL_2(\F_2)$ are conjugated 
to~$\begin{psmallmatrix} 1 & 1 \\ 0 & 1 \end{psmallmatrix}$.
\end{remark}

We now prove the main theorem of this section.

\begin{proof}[Proof of Theorem~\ref{T:conditionRho}]
From Lemma~\ref{L:determinant} with $K = F$ we see that
a symplectic criterion exists precisely 
when the centralizer of $\rhobar_{E,p}(G_F)$ 
in $\GL_2(\F_p)$ contains only matrices with square determinant mod~$p$.
From Lemma~\ref{L:centralizerNonAbelian} we know 
this is always the case when $\rhobar_{E,p}(G_F)$ 
is non-abelian, hence part (A) follows. Part (B) follows directly from 
condition (c) in Lemma~\ref{L:CentralizerAbelian}.
\end{proof}

We extract an important  consequence of the previous discussion. 

\begin{corollary} Let $E$ and $E'$ be elliptic curves over~$K$ with isomorphic
$p$-torsion. Let $H \subset G_{K}$ be a subgroup such that $\rhobar_{E,p}(H)$ 
is a non-abelian subgroup of $\GL_2(\F_p)$. 

Then $E[p]$ and $E'[p]$ cannot be simultaneously symplectically 
and anti-symplectically isomorphic $G_K$-modules.

Moreover, the symplectic type of any $G_K$-isomorphism $\phi : E[p] \simeq E'[p]$ is determined by the 
$H$-module structures of $E[p]$ and $E'[p]$.
\label{C:notBoth}
\end{corollary}
\begin{proof} 
Let $\phi : E[p] \simeq E'[p]$ be a $G_K$-isomorphism and 
write $\phi|_H : E[p] \simeq E'[p]$ for the isomorphism of $H$-modules obtained by restricting the action. Fix symplectic basis for $E[p]$ and $E'[p]$ and let 
$M$ and $M_H$ be matrices representing $\phi$ and $\phi_H$ in these basis, respectively. We have that
\[
 \rhobar_{E',p}(h) = M\rhobar_{E,p}(h)M^{-1}, 
 = M_H\rhobar_{E,p}(h)M_H^{-1}, 
\]
for all $h \in H$, hence $M^{-1}M_H$ centralizes $\rhobar_{E,p}(H)$. Since this group is non-abelian by assumption, Lemma~\ref{L:centralizerNonAbelian} implies 
$\det(M)=\det(M_H)\cdot m^2$. The lemma now follows from Lemma~\ref{L:determinant}.
\end{proof}

\subsection{Revisiting Problem~A}
We will now discuss how the ideas above relate to the setup of Problem A. 
Recall from the introduction that, given elliptic curves 
$E/\Q$ and $E'/\Q$ with isomorphic $p$-torsion, we consider the triple~$(E,E',p)$ in Problem A only if it has a well defined symplectic type.
That is, when all the $G_\Q$-isomorphisms between $E[p]$
and $E'[p]$ have the same symplectic type. 
With this in mind, we have the following consequence of the above discussion.

\begin{corollary} \label{C:problemA}
Let $E/\Q$ and $E'/\Q$ be elliptic curves with isomorphic $p$-torsion. Suppose that 
$\rhobar_{E,p}$ is irreducible. Then all the $G_\Q$-isomorphisms between $E[p]$
and $E'[p]$ have the same symplectic type, that is, the symplectic type of 
$(E,E',p)$ is well defined.
\end{corollary}
\begin{proof}
The mod~$p$ representation $\rhobar_{E,p} : G_\Q \to \GL_2(\F_p)$ is odd and
irreducible, hence it is absolutely irreducible (see \cite[Lemma~24]{DahmenPhD}).
This forces its image to be non-abelian (see \cite[VII.47, Proposition~19]{BourbAlgII})
and the conclusion follows from 
Corollary~\ref{C:notBoth}.
\end{proof}

There are however elliptic curves $E$ and $E'$ over~$\Q$ satisfying $E[p] \simeq E'[p]$ for which isomorphisms with 
both symplectic types exist. This occurs if and only if $E[p]$ admits an anti-symplectic automorphism; we illustrate
this with the following example.

\begin{example} \label{Ex:mod5}
Take $p=5$ and $E$, $E'$ to be the curves 
with Cremona labels $11a1$ and $1342c2$, respectively; 
both have $5$-torsion module isomorphic to $\mu_5 \times \Z/5\Z$.
Now let $P,Q \in E[5]$ and $P',Q' \in E'[5]$ be basis such that $P$, $P'$ are defined over~$\Q$.
The map defined by $P \mapsto P'$ and $Q \mapsto n\cdot Q'$ (with $5 \nmid n$)
is a symplectic $G_\Q$-isomorphism if and only if $n$ is a square mod~$5$.  
Moreover, the automorphism $\alpha$ of $E[5]$ given by $\alpha(P) = P$ and $\alpha(Q) = 2Q$ 
is anti-symplectic because $2$ is not a square modulo~$5$. 
Thus, the symplectic 
type of $(11a1,1342c2,5)$ is not well defined.
\end{example}
It is a direct consequence of the Corollary~\ref{C:problemA} 
and Mazur's~\cite{Mazur} 
work on isogenies that for $p > 163$ there is only one symplectic type of 
isomorphisms $E[p] \simeq E'[p]$, provided that one exists. The next proposition 
shows that this is already the case for $p \geq 7$. Therefore, 
the problematic cases as in the previous example can only occur for $p \leq 5$.
\begin{proposition} \label{P:typeTriple}
Let $p \geq 7$ be a prime.
If $E/\Q$ is an elliptic curve, then $\rhobar_{E,p}(G_\Q)$ is non-abelian. Thus, if $E'/\Q$ is another elliptic curve with $E[p] \simeq E'[p]$ as $G_\Q$-modules, then the symplectic type of $(E,E',p)$ is well-defined.
\end{proposition}
\begin{proof} Note that the case
when~$\rhobar_{E,p}$ is irreducible follows from Corollary~\ref{C:problemA} and its proof, so we can suppose  that $\rhobar_{E,p}$ is reducible.
The reducibility of $\rhobar_{E,p}$ means that
\[
 \rhobar_{E,p} \simeq \begin{pmatrix} \epsilon_1 & * \\ 0 & \epsilon_2 \end{pmatrix} \quad \text{ with } \quad
 \epsilon_1 \epsilon_2 = \chi_p, 
\] 
where $\epsilon_i : G_\Q \to \F_p^*$ and $\chi_p$ is the mod~$p$ cyclotomic character. 
Since $\chi_p$ is surjective onto $\F_p^*$ it cannot be a square of a character, hence 
$\epsilon_1 \neq \epsilon_2$. 

From Mazur's~\cite{Mazur} results on isogenies 
we know that $p \in \{7, 11, 17, 19, 37, 43, 67, 163\}$.
Moreover, from \cite[Remark 2.1]{GRSS} it follows that $* \neq 0$. 
Since the characters on the diagonal are different the
image of $\rhobar_{E,p}$ is non-abelian, proving the first statement. 

The second statement follows from
Corollary~\ref{C:notBoth}.
\end{proof}

\begin{remark}
 The first statement in Proposition~\ref{P:typeTriple} also follows from \cite[Theorem~1.1]{GL}.
\end{remark}

Let us finish this section by clarifying part of our terminology.
Note that the name `symplectic criterion' is reserved for the setting of curves over local fields (cf. discussion before Theorem~\ref{T:conditionRho}). 
In view of Lemma~\ref{L:determinant}, which gives a condition for the existence of a well defined symplectic type only in terms of 
the image of $\rhobar_{E,p}$ (and not depending on the field where $E$ is defined), this might seem confusing. This terminology is already well established in the literature and the 
reason is that we are trying to solve a global problem (Problem A) by a local method (a symplectic criterion).
Therefore, the statement of Theorem~\ref{T:conditionRho}
needs to be restricted to local fields, as it is about
symplectic criteria.

In particular, when a triple 
$(E,E',p)$ has a well defined symplectic type, this does not mean there exists a symplectic criterion associated with it. 
Indeed, there can be a {\bf global} symplectic type for 
$(E,E',p)$ and nevertheless, 
for all primes~$\ell \neq p$,
the {\bf local} $G_{\Q_\ell}$-isomorphic modules
$(E/\Q_\ell)[p]$ 
and $(E/\Q_\ell)[p]$ being symplectically and anti-symplectically isomorphic.
In fact, the (global) symplectic type of 
$(E,E',p)$ is determined by a (local) symplectic criterion if and only if the set $\calL_{(E,E',p)}$ (defined in the introduction) is non-emppty; we refer to Proposition~\ref{P:emptyL}
for an example where
$\calL_{(E,E',p)}$ is empty. 

\section{Symplectic criteria with $\rhobar_{E,p}(G_{\Q_\ell})$ abelian}
\label{S:abelian}

The proof of the symplectic criterion for the case when $E$ and $E'$ have (potentially) multiplicative 
reduction (see \cite[Proposition~2]{KO})
exploits the fact that, when $E$ is a 
Tate curve such that $p \nmid \vv_\ell(\Delta_m(E))$,   
the image of inertia $\rhobar_{E,p}(I_\ell)$ satisfies
condition~(B) of Theorem~\ref{T:conditionRho}. 
This section is the first step in understanding, 
which criteria may exist under the same condition~(B) when 
at least one of the curves 
has potentially 
good reduction.
\begin{proposition} Let $\ell$ and $p \geq 3$ be different primes. 
Let $E/\Q_\ell$ be an elliptic curve with potentially good reduction
and semistability defect $e$. Suppose that $\rhobar_{E,p}(G_{\Q_\ell})$ is abelian and its centralizer 
in $\GL_2(\F_p)$ contains only matrices with square determinant. Then
\begin{enumerate}
 \item If $p \geq 5$ we have $e=1$ or $e=2$;  
 \item If $p = 3$ we have $\ell \equiv 1 \pmod{3}$ and $e=1,2,3$ or $6$.
\end{enumerate}
\label{P:abeliane}
\end{proposition}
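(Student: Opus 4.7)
The plan is to combine Lemma~\ref{L:CentralizerAbelian} with the standard description of the inertia action on $E[p]$ for potentially good reduction. I would apply Lemma~\ref{L:CentralizerAbelian} first to conclude that $\rhobar_{E,p}(G_{\Q_\ell})$ is cyclic of order divisible by $p$ and is generated, after an inner conjugation in $\GL_2(\F_p)$, by a matrix of the form $g_a = \begin{psmallmatrix} a & 1 \\ 0 & a \end{psmallmatrix}$ with $a \in \F_p^\times$. Two features of this shape will be exploited: every element of $\rhobar_{E,p}(G_{\Q_\ell})$ has a repeated eigenvalue, and its determinant is a power of $a^2$, hence a square in $\F_p^\times$.

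From the determinant feature I would immediately extract a congruence on $\ell$: since $\det \circ \rhobar_{E,p} = \chi_{\mathrm{cyc}}$ and $\chi_{\mathrm{cyc}}(\Frob_\ell) \equiv \ell \pmod{p}$, the class of $\ell$ must be a square in $\F_p^\times$. Specialized to $p = 3$ this reads $\ell \equiv 1 \pmod{3}$, which is the congruence claimed in~(2). For the restriction on $e$, observe that $\rhobar_{E,p}(I_\ell) \simeq \Phi$ is a subgroup of order $e$ of the cyclic group $\rhobar_{E,p}(G_{\Q_\ell})$, hence itself cyclic; this already excludes the non-abelian possibilities $e \in \{8,12,24\}$, leaving $e \in \{1,2,3,4,6\}$.

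To rule out $e = 4$ for every $p \geq 3$ and to restrict $e \in \{3,6\}$ to $p = 3$, I would invoke the standard fact (used throughout the paper) that for $\ell \neq p$ and potentially good reduction the inertia acts on $E[p]$ through a homomorphism $\gamma_E \colon I_\ell \to \Aut(\overline{E})$, where $\overline{E}$ is the good reduction of $E$ over the inertial field~$L$. A generator of $\gamma_E(I_\ell)$ has order~$e$ and acts on $\overline{E}[p] \cong E[p]$ with characteristic polynomial equal to the $e$-th cyclotomic polynomial mod~$p$, namely $X^2+X+1$, $X^2+1$ or $X^2-X+1$ for $e=3,4,6$ respectively. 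The repeated-eigenvalue feature from the first paragraph forces this matrix to have a repeated eigenvalue, so the discriminant of the cyclotomic polynomial must vanish mod~$p$; these discriminants are $-3, -4, -3$, so $e = 4$ is impossible for any $p \geq 3$, and $e \in \{3,6\}$ survive only when $p = 3$, completing both~(1) and~(2).

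The main subtlety is the need to apply Lemma~\ref{L:CentralizerAbelian} simultaneously at the level of the entire decomposition group and at the level of inertia: the cyclic shape controls both the determinant of Frobenius (yielding the congruence on~$\ell$) and the characteristic polynomial of an inertia generator (yielding the restriction on~$e$). Once this dual use is recognized, the remaining work reduces to the observation that the discriminants of the cyclotomic polynomials $\Phi_3, \Phi_4, \Phi_6$ are $-3, -4, -3$, which very cleanly single out the characteristics $p = 2, 3$.
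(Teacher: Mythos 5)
Your proof is correct. The first two paragraphs coincide with the paper's argument: Lemma~\ref{L:CentralizerAbelian} gives the generator $\begin{psmallmatrix} a & 1 \\ 0 & a \end{psmallmatrix}$, and the square-determinant condition applied to $\det\rhobar_{E,p}=\chi_p$ yields the congruence on $\ell$ (for $p=3$, squares in $\F_3^\times$ are $\{1\}$, so $\ell\equiv 1\pmod 3$). Where you genuinely diverge is the restriction on $e$. The paper stays inside $\GL_2(\F_p)$: since $\det\rhobar_{E,p}$ is unramified at $\ell\neq p$, an inertia element has the form $\begin{psmallmatrix} b & c \\ 0 & b \end{psmallmatrix}$ with $b^2=1$, hence has order $1$, $2$, $p$ or $2p$; intersecting with the a priori list $e\in\{1,2,3,4,6\}$ (cyclic because the image is abelian) immediately gives $e\le 2$ for $p\ge 5$ and $e\in\{1,2,3,6\}$ for $p=3$. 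You instead route through $\gamma_E$: an order-$e$ automorphism of $\Ebar$ with $e=3,4,6$ satisfies the $e$-th cyclotomic polynomial ($X^2+X+1$, $X^2+1$, $X^2-X+1$) in $\End(\Ebar)$, so by the intertwining relation of Lemma~\ref{L:gammaE} this is the characteristic polynomial of $\rhobar_{E,p}(\sigma)$ mod $p$, and the repeated-eigenvalue constraint forces $p$ to divide its discriminant, namely $-3$, $-4$ or $-3$. Both arguments are sound; the paper's is lighter (a two-line order computation, needing no reduction theory), while yours explains more structurally why $p=3$ is the unique exceptional characteristic for $e=3,6$ and why $e=4$ never survives for odd $p$. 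If you keep your version, do state explicitly that the passage from the order of $\gamma_E(\sigma)$ in $\Aut(\Ebar)$ to the characteristic polynomial of $\rhobar_{E,p}(\sigma)$ uses the injectivity of $\gamma_E$ and the relation $\varphi\circ\rhobar_{E,p}(\sigma)=\psi(\gamma_E(\sigma))\circ\varphi$.
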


\begin{proof} From Lemma~\ref{L:CentralizerAbelian} it follows that, up to conjugation, $\rhobar_{E,p}(G_{\Q_\ell})$ is generated by a matrix of the form 
$\begin{psmallmatrix} a & 1 \\ 0 & a \end{psmallmatrix}$ where $a \in \F_p^*$.
The determinant of $\rhobar_{E,p}$ is the mod~$p$ cyclotomic character (\cite[Ch. I, \S I-3]{SerreBook})  
which is unramified at all $\ell \neq p$. Then, for an element of inertia 
$g \in I_\ell \subset G_{\Q_\ell}$, we have 
\[
\rhobar_{E,p}(g) = \begin{pmatrix} b & c \\ 0 & b \end{pmatrix} \quad \text{ with } \quad
\det \rhobar_{E,p}(g) = b^2 = 1.
\]
Moreover, we know from Section~\ref{S:left?}
that the image of inertia is cyclic of order $e \in \{1,2,3,4,6\}$, hence
we have~$c=0$ if $p \geq 5$; this proves (1).
If $p=3$, then we can have $c\neq 0$ and we see that $e=3,6$ is also possible.
Further, the condition $\det \rhobar_{E,3} = \chi_3 = 1$ implies $\ell \equiv 1 \pmod{3}$,
proving~(2).
\end{proof}

Let $E/\Q_\ell$ be an elliptic curve with potentially good reduction. If $E$ 
obtains good reduction at most after a quadratic twist, i.e. 
with semistability defect $e=1,2$, then $\rhobar_{E,p}(G_{\Q_\ell})$ 
is abelian (see \cite[Lemma~9]{FK2}). 
Conversely, we conclude from Proposition~\ref{P:abeliane} 
that, for $p\geq 5$, symplectic criteria under condition~(B) of Theorem~\ref{T:conditionRho}
may exist only in the case of good reduction (up to twist).
For $p=3$, the cases $e=1,2$ are possible but
$e=3,6$ also needs to be considered (and are covered by Theorems~\ref{T:e=p=3}~and~\ref{T:mixedReduction}).

The case where both $E$ and $E'$ have good reduction is summarized in the first row of Table~\ref{Table:CriteriaList};
the extra conditions on that row are established by Propositions~\ref{P:tableConditionI} and~\ref{P:AbelianCentralizer}.

\begin{proposition} Let $\ell$ and $p \geq 3$ be primes satisfying $(\ell/p) = -1$. 
Let $E/\Q_\ell$ be an elliptic curve with potentially good reduction with $e \in \{1,2 \}$.

Then the centralizer of $\rhobar_{E,p}(G_{\Q_\ell})$ in $\GL_2(\F_p)$ 
contains matrices with non-square determinant.
\label{P:tableConditionI}
\end{proposition}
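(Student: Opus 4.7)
The plan is to exhibit a single explicit matrix in the centralizer whose determinant is a non-square modulo~$p$, namely $A := \rhobar_{E,p}(\Frob_\ell)$ for any choice of Frobenius element. My first step will be to pin down the structure of the image $\rhobar_{E,p}(G_{\Q_\ell})$ in the two subcases $e=1$ and $e=2$. When $e=1$ the representation is unramified at~$\ell$, so the image is cyclic, generated by $A$. When $e=2$ the image $\rhobar_{E,p}(I_\ell)$ of inertia has order~$2$; since the determinant of $\rhobar_{E,p}$ is the mod-$p$ cyclotomic character, which is unramified at~$\ell$, every element of $\rhobar_{E,p}(I_\ell)$ has determinant~$1$. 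A short calculation using $p\geq 3$ shows that the unique matrix of order~$2$ in $\GL_2(\F_p)$ with determinant~$1$ is $-\mathrm{Id}$, so $\rhobar_{E,p}(I_\ell)=\{\pm \mathrm{Id}\}$ and the full image is generated by $A$ together with $-\mathrm{Id}$.

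Having identified the generators in both cases, I would observe that $A$ commutes with itself and with $\pm \mathrm{Id}$, and therefore $A$ lies in the centralizer $C_{\GL_2(\F_p)}(\rhobar_{E,p}(G_{\Q_\ell}))$. To finish it suffices to compute
\[
 \det A \;\equiv\; \ell \pmod{p},
\]
since the mod-$p$ cyclotomic character sends $\Frob_\ell$ to~$\ell$; by the hypothesis $(\ell/p)=-1$ this is a non-square in $\F_p^\times$, and $A$ is the desired element.

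The argument is short and I do not foresee a real obstacle. The only point requiring some care is the identification of the nontrivial inertia element with $-\mathrm{Id}$ in the case $e=2$, which is immediate from diagonalising an order-$2$ matrix of determinant one in odd characteristic but is worth stating explicitly, together with the observation that the cyclotomic character evaluated at~$\Frob_\ell$ is~$\ell$ modulo~$p$. As a coherence check, note that the hypothesis $(\ell/p)=-1$ already forces $A$ to be non-scalar (a scalar matrix has square determinant), so the image is never contained in the scalars, consistent with Lemma~\ref{L:CentralizerAbelian}.
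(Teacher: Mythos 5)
Your proof is correct and follows essentially the same route as the paper: both exhibit $\rhobar_{E,p}(\Frob_\ell)$ as an element of the centralizer and conclude from $\det\rhobar_{E,p}(\Frob_\ell)=\ell$ being a non-square. The only (cosmetic) difference is in the case $e=2$: the paper passes to a quadratic twist with good reduction and uses that the centralizer is unchanged under twisting, whereas you identify the inertia image directly as $\{\pm\mathrm{Id}\}$, which is central — both are fine.
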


\begin{proof} 
Write $G = \GL_2(\F_p)$. Let $E' = dE$ be a quadratic twist with good reduction.
We have $\rhobar_{E',p} \sim \rhobar_{E,p} \otimes \chi_d$
and $C_G(\rhobar_{E,p}(G_{\Q_\ell})) = C_G(\rhobar_{E',p}(G_{\Q_\ell}))$.

Let $\Frob_\ell$ be a Frobenius element in $G_{\Q_\ell}$.
The group $\rhobar_{E',p}(G_{\Q_\ell})$ is cyclic and generated 
by $\rhobar_{E',p}(\Frob_\ell)$. In particular, it is contained in its own 
centralizer and since $\det \rhobar_{E',p}(\Frob_\ell) = \ell$ is not a square
the result follows.
\end{proof}

{\large \part{The criterion in the case of good reduction}}
\label{P:good}

The objective of this part is to 
establish symplectic criteria in the case where both $E$ and~$E'$ have good reduction,   
that is Theorems~\ref{T:simpleAbelian}~and~\ref{T:mainAbelian}.

\section{The action of Frobenius} \label{S:good}

For a curve $E/\Q_\ell$ with good 
reduction, the image of 
$\rhobar_{E,p}$ is cyclic and 
generated by the image of Frobenius. 
To study the centralizer of $\rhobar_{E,p}(G_{\Q_\ell})$ in this case, we will use the explicit matrix describing the Frobenius action given by
Centeleghe~\cite{Centeleghe}.

We first need to recall some notation and a result 
from~{\it loc. cit.}.

Let $D$ be a negative discriminant, i.e. $D < 0$ satisfies $D \equiv 0,1 \pmod{4}$.
Denote by $\calO_D$ the imaginary quadratic order of discriminant $D$, 
viewed inside the field of complex numbers $\C$. 
Consider the following polynomial
\[
  \mathcal{P}_D = \prod_{\calO_D \subset \End(\C/\mathfrak{a})} (x - j_{\C/\mathfrak{a}}),
\]
where the product runs over 
the isomorphism classes of 
the elliptic curves $\C/\mathfrak{a}$ whose endomorphisms ring contains
the order $\calO_D$. From \cite[p. 144]{LangEF} we known that $\mathcal{P}_D \in \Z[x]$ and that the $j$-invariants $j_{\C/\mathfrak{a}}$ and $j_{\C/\mathfrak{a}'}$ lie in the same $G_\Q$-orbit 
if and only if $\End_\C(\C/\mathfrak{a}) = \End_\C(\C/\mathfrak{a}')$. 
The irreducible factors of $\mathcal{P}_D$ are the Hilbert class polynomials 
of the corresponding imaginary orders.
We extend the definition of the polynomial to all $D \leq 0$ by setting $\mathcal{P}_0 = 0$ and
$\mathcal{P}_D = 1$ for $D \equiv 2,3 \pmod{4}$.

Let $E/\Q_\ell$ be an elliptic curve with good reduction and $j$-invariant $j_E$. 
Write $\Ebar /\F_\ell$ for the elliptic curve obtained by reducing mod~$\ell$
a minimal model of $E$. Define the integer quantities
\[
 a_\ell = (\ell + 1) - \#\Ebar(\F_\ell) \qquad \text{ and } \qquad 
 \Delta_\ell = a_\ell^2 - 4\ell.
\]
Note that $\Delta_\ell \neq 0$ and, from the Hasse-Weil bound, 
we have $\lvert a_\ell \rvert \leq 2 \sqrt{\ell}$, 
hence $\Delta_\ell < 0$. The value $a_\ell$ is also called
the trace of Frobenius.
Define also the quantity $\beta_\ell$ by the formula
\[
 \beta_\ell = \sup_{h > 0} \{ \; h : h^2 \mid \Delta_\ell \; \text{ and } \; 
 \mathcal{P}_{\Delta_\ell/h^2}(j_E) \equiv 0 \pmod{\ell}  \}
\]
which is an integer because $\Delta_\ell \neq 0$.

Since $p \neq 2$
it follows from \cite[Theorem~2]{Centeleghe}
that there is a $\Z_p$-basis of the $p$-adic Tate 
module $T_p(\Ebar)$ such that the action of 
the Frobenius element $\Frob_\ell$ 
on it is given by the $\GL_2(\Z_p)$ 
matrix
\begin{equation}
  F_\ell = \begin{pmatrix} \frac{a_\ell \beta_\ell - \Delta_\ell}{2\beta_\ell} 
 & \frac{\Delta_\ell (\beta_\ell^2 - \Delta_\ell)}{4\beta_\ell^3} \\ 
 \beta_\ell & \frac{a_\ell \beta_\ell + \Delta_\ell}{2\beta_\ell} \end{pmatrix}.
 \label{E:FrobChar0}
\end{equation}
The reduction map induces an identification of $E[p](\Qbar_\ell)$ with $\Ebar[p](\Fbar_\ell)$ which is Galois equivariant. Therefore, we can reduce $F_\ell$ to obtain the matrix 
giving the action of the Frobenius of $\Gal(\Fbar_\ell/\F_\ell)$
on $\Ebar[p](\Fbar_\ell)$ which is equal to
the matrix $\rhobar_{E,p}(\Frob_\ell)$,  giving the action of $\Frob_\ell$ on $E[p]$. 
Suppose further that $p \mid \Delta_\ell$ and $p \nmid \beta_\ell$. 
Then there exists a basis of $E[p]$ where the action of $\Frob_\ell$ is via the matrix
\begin{equation}
  \rhobar_{E,p}(\Frob_\ell) = 
 \begin{pmatrix} \frac{a_\ell}{2}  & 0 \\ 
 \beta_\ell &  \frac{a_\ell}{2} \end{pmatrix} \pmod{p} \quad 
 \text{ with} \quad \beta_\ell \not\equiv 0 \pmod{p}.
 \label{E:FrobCharp}
\end{equation}

\begin{proposition} Let $\ell \neq p$ be primes with $p \geq 3$.
Let $E/\Q_\ell$ be an elliptic curve with good reduction.
Then $\rhobar_{E,p}(G_{\Q_\ell})$ has a centralizer in $\GL_2(\F_p)$ containing 
only matrices with square determinant if and only if
$p \mid \Delta_\ell$ and $p \nmid \beta_\ell$.
\label{P:AbelianCentralizer}
\end{proposition}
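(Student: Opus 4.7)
The plan is to reduce the statement to a property of $\rhobar_{E,p}(\Frob_\ell)$ as an element of $\GL_2(\F_p)$ and then to extract that property from the explicit matrix \eqref{E:FrobChar0} coming from Centeleghe's theorem. First I would observe that since $E/\Q_\ell$ has good reduction and $\ell \neq p$, the inertia subgroup $I_\ell$ acts trivially on $E[p]$, so $\rhobar_{E,p}(G_{\Q_\ell})$ is cyclic, generated by $F := \rhobar_{E,p}(\Frob_\ell)$. In particular, this image is abelian, so Lemma~\ref{L:CentralizerAbelian} applies: the centralizer of $\rhobar_{E,p}(G_{\Q_\ell})$ in $\GL_2(\F_p)$ consists only of matrices with square determinant if and only if $\langle F \rangle$ is cyclic of order divisible by $p$, that is, if and only if $F$ is conjugate to $\begin{psmallmatrix} a & 1 \\ 0 & a \end{psmallmatrix}$ for some $a\in\F_p^\times$. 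Thus the proposition reduces to showing that $F$ is a non-trivial Jordan block if and only if $p \mid \Delta_\ell$ and $p \nmid \beta_\ell$.

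For the direction $(\Leftarrow)$, I would invoke \eqref{E:FrobCharp}: when $p \mid \Delta_\ell$ and $p \nmid \beta_\ell$, Centeleghe's theorem exhibits a basis of $E[p]$ in which $F$ takes the form $\begin{psmallmatrix} a_\ell/2 & 0 \\ \beta_\ell & a_\ell/2 \end{psmallmatrix}$ with $\beta_\ell \not\equiv 0 \pmod p$. This is not a scalar matrix but has a single eigenvalue of multiplicity two, so it is conjugate to the Jordan block $\begin{psmallmatrix} a_\ell/2 & 1 \\ 0 & a_\ell/2 \end{psmallmatrix}$, whose order in $\GL_2(\F_p)$ is $p \cdot \ord(a_\ell/2)$, in particular divisible by $p$.

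For the direction $(\Rightarrow)$, I would first handle the necessity of $p \mid \Delta_\ell$ via the characteristic polynomial. If $\langle F \rangle$ has order divisible by $p$, then $F$ must have a single eigenvalue of multiplicity two (otherwise it would be diagonalisable with distinct eigenvalues in $\overline{\F}_p$, hence of order coprime to $p$). So the characteristic polynomial of $F$, which is $X^2 - a_\ell X + \ell \pmod{p}$, must have a double root; equivalently, its discriminant $\Delta_\ell = a_\ell^2 - 4\ell$ must be divisible by $p$.

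The main obstacle is then showing that the assumption $p \mid \beta_\ell$ leads to a contradiction. I would argue by a direct $p$-adic valuation computation in \eqref{E:FrobChar0}, using the key divisibility $\beta_\ell^2 \mid \Delta_\ell$ that is built into the definition of $\beta_\ell$. Writing $\vv_p(\Delta_\ell) = a$ and $\vv_p(\beta_\ell) = b$, the assumption $p \mid \Delta_\ell$ and $p \mid \beta_\ell$ gives $a \geq 2b \geq 2$. Inserting these into the entries of $F_\ell$:
\begin{itemize}
\item the diagonal entries $\tfrac{a_\ell}{2} \mp \tfrac{\Delta_\ell}{2\beta_\ell}$ reduce to $a_\ell/2 \pmod{p}$ since $\vv_p(\Delta_\ell/\beta_\ell) = a - b \geq b \geq 1$;
\item the lower-left entry $\beta_\ell$ reduces to $0 \pmod{p}$;
\item the upper-right entry $\tfrac{\Delta_\ell(\beta_\ell^2 - \Delta_\ell)}{4\beta_\ell^3}$ has valuation $\min(a-b,\, 2a-3b) \geq 1$ because $a \geq 2b$.
\end{itemize}
Consequently $F \equiv (a_\ell/2)\cdot I_2 \pmod{p}$ is scalar, so its order is coprime to $p$, contradicting our hypothesis. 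This forces $p \nmid \beta_\ell$ and completes the proof.
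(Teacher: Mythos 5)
Your proof is correct and follows essentially the same route as the paper: reduce via Lemma~\ref{L:CentralizerAbelian} to asking whether $\rhobar_{E,p}(\Frob_\ell)$ is a non-trivial Jordan block, detect the double eigenvalue through the characteristic polynomial $x^2-a_\ell x+\ell$, and read off the unipotent part from Centeleghe's matrix \eqref{E:FrobChar0}. The only difference is that you make explicit the valuation computation showing that $p\mid\beta_\ell$ (together with $\beta_\ell^2\mid\Delta_\ell$) forces $F_\ell$ to reduce to a scalar matrix, a step the paper's proof asserts more tersely by appealing to \eqref{E:FrobCharp}; this is a welcome clarification rather than a different argument.
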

\begin{proof} 
Since $E/\Q_\ell$ has good reduction then $H = \rhobar_{E,p}(G_{\Q_\ell})$
is generated by~$\rhobar_{E,p}(\Frob_\ell)$,  hence it is abelian (cyclic). 
Suppose $H$ has a centralizer in $\GL_2(\F_p)$ containing 
only matrices with square determinant.

Moreover, from Lemma~\ref{L:CentralizerAbelian} it follows that (up to conjugation) 
$H$ is generated by a matrix of the
form $\begin{psmallmatrix} a & 1 \\ 0 & a \end{psmallmatrix}$ where $a \in \F_p^*$.
In particular, (i) $\rhobar_{E,p}(\Frob_\ell)$ has two equal eigenvalues and (ii) 
$\rhobar_{E,p}(\Frob_\ell)$ has order divisible
by $p$. Since the characteristic polynomial of $F_\ell$ (given in \eqref{E:FrobChar0})
is $x^2 - a_\ell x + \ell$ 
it follows that (i) occurs exactly when $p \mid \Delta_\ell$; assuming 
$p \mid \Delta_\ell$ we see from \eqref{E:FrobCharp} that if (ii) holds 
then $p \nmid \beta_\ell$.

For the other direction, suppose that $p \mid \Delta_\ell$ and $p \nmid \beta_\ell$. Thus
$\rhobar(\Frob_\ell)$ is given by \eqref{E:FrobCharp} which generates a cyclic group
whose centralizer are the matrices of the form
$\begin{psmallmatrix} \lambda & 0 \\ b & \lambda \end{psmallmatrix}$ with $\lambda \ne 0$. In particular, they all have square determinant, as desired.
\end{proof}

\begin{corollary} Let $\ell \neq p$ be primes with $p \geq 3$.
Let $E/\Q_\ell$ be an elliptic curve with good reduction.
Then $\rhobar_{E,p}(\Frob_\ell)$ has order divisible by $p$
if and only if 
$p \mid \Delta_\ell$ and $p \nmid \beta_\ell$.
\label{C:FrobOrderp}
\end{corollary}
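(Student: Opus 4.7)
The plan is to derive this corollary as an immediate consequence of Proposition~\ref{P:AbelianCentralizer} together with Lemma~\ref{L:CentralizerAbelian}, exploiting the fact that, for $E/\Q_\ell$ with good reduction, the image $H := \rhobar_{E,p}(G_{\Q_\ell})$ is cyclic and generated by $\rhobar_{E,p}(\Frob_\ell)$ (since $G_{\Q_\ell}/I_\ell$ is topologically generated by Frobenius and $\rhobar_{E,p}$ is unramified at $\ell$).

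First I would note that under these hypotheses, the order of $\rhobar_{E,p}(\Frob_\ell)$ equals the order of~$H$. Hence the condition \emph{``$\rhobar_{E,p}(\Frob_\ell)$ has order divisible by $p$''} is equivalent to condition~(b) of Lemma~\ref{L:CentralizerAbelian} applied to the abelian subgroup~$H$. By that lemma, (b) is equivalent to (a), namely that the centralizer of~$H$ in $\GL_2(\F_p)$ contains only matrices with square determinant.

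Finally, I would invoke Proposition~\ref{P:AbelianCentralizer}, which asserts precisely that this centralizer condition is equivalent to the numerical criterion $p \mid \Delta_\ell$ and $p \nmid \beta_\ell$. Chaining the two equivalences yields the corollary.

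There is no real obstacle here since both inputs are already in hand; the only small point worth being explicit about is the observation that $H$ is cyclic with Frobenius as a generator, which is what lets us translate the statement about $\rhobar_{E,p}(\Frob_\ell)$ alone into the statement about the full image~$H$ used in Lemma~\ref{L:CentralizerAbelian} and Proposition~\ref{P:AbelianCentralizer}.
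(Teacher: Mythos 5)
Your proposal is correct and is essentially identical to the paper's proof: both chain the equivalence (b)$\Leftrightarrow$(a) of Lemma~\ref{L:CentralizerAbelian} with Proposition~\ref{P:AbelianCentralizer}, using that $\rhobar_{E,p}(G_{\Q_\ell})$ is cyclic generated by $\rhobar_{E,p}(\Frob_\ell)$ in the good reduction case. Your explicit remark that this cyclicity is what lets one pass from a condition on the single matrix $\rhobar_{E,p}(\Frob_\ell)$ to a condition on the whole image is a nice touch, but it is the same argument.
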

\begin{proof} Since $E$ has good reduction we have that $H = \rhobar_{E,p}(G_{\Q_\ell})$ is 
cyclic and generated by $\Frob_\ell$. 
Suppose that $\rhobar_{E,p}(\Frob_\ell)$ has order divisible by $p$. Thus by Lemma~\ref{L:CentralizerAbelian}
we conclude that the centralizer of $H$ in $\GL_2(\F_p)$ has only matrices with square determinant, hence 
by Proposition~\ref{P:AbelianCentralizer} we conclude that 
$p \mid \Delta_\ell$ and $p \nmid \beta_\ell$. The other direction follows by observing that all 
the steps in the previous argument are equivalences.
\end{proof}

\section{Proof of Theorem~\ref{T:simpleAbelian}}
\label{S:thm12}

By hypothesis, we can choose minimal models with good reduction for $E$ and $E'$ such that the residual curves satisfy~$\Ebar = \Ebar'$. 
Let $\varphi : E[p] \rightarrow \overline{E}[p]$ 
and $\varphi' : E'[p] \rightarrow \overline{E}[p]$ be the reduction morphisms 
which are Galois-equivariant.
Write $\rhobar$ for the representation giving the action of $\Gal(\Fbar_\ell / \F_\ell)$ on~$\Ebar[p]$. The element $\Frob_\ell \in G_{\Q_\ell}$ 
is a lift of the Frobenius 
automorphism $\bar{\tau} \in \Gal(\Fbar_\ell / \F_\ell)$ and we have
\[ \varphi \circ \rhobar_{E,p}(\Frob_\ell) = \rhobar(\bar{\tau}) \circ \varphi 
 = \varphi' \circ \rhobar_{E',p}(\Frob_\ell). 
\]
Fix a symplectic basis for $\Ebar[p]$ and let $\bar{N}$ be the matrix representing $\rhobar(\bar{\tau})$ in that basis.
Lift the fixed basis to symplectic bases of $E[p]$ and $E'[p]$ using $\varphi$, $\varphi'$, respectively. The lifted basis are symplectic
and in these basis the matrices representing $\varphi$ and $\varphi'$ in $\GL_2(\F_p)$ are the identity. Thus, viewed as elements of $\GL_2(\F_p)$, we have 
$\rhobar_{E,p}(\Frob_\ell) = \rhobar_{E',p}(\Frob_\ell) = \bar{N}$.
This implies that $E[p]$ and $E'[p]$ are isomorphic $G_{\Q_\ell}$-modules, 
because in the case of good reduction the action of $\Frob_\ell$ 
suffices to determine them. This proves part 1).

Moreover, in the same symplectic bases, we have
\[
 \rhobar_{E,p}(\Frob_\ell) = M \rhobar_{E',p}(\Frob_\ell) M^{-1} \quad  
 \text{ for all }  \quad M \in C_{\GL_2(\F_p)}(\rhobar_{E,p}(G_{\Q_\ell})).
\]
Suppose $p \mid \Delta$ and $p \nmid \beta_\ell$; then by
Proposition~\ref{P:AbelianCentralizer} it follows that $C_{\GL_2(\F_p)}(\rhobar_{E,p}(G_{\Q_\ell}))$ 
contains only matrices with square determinant and 
part 2) follows 
from Lemma~\ref{L:sympcriteria}. 

Finally, we note that the equivalent hypothesis 
on the order of $\rhobar_{E,p}(\Frob_\ell)$ is Corollary~\ref{C:FrobOrderp}.

\section{A more general theorem}
\label{S:genGood}

For an elliptic curve $E/\Q_\ell$, two points $P,Q \in E[p]$ form 
an anti-symplectic basis if their Weil-pairing satisfies $e_{E,p}(P,Q) = \zeta_p^{r}$ 
with $r$ not a square mod~$p$. For elliptic curves $E/\Q_\ell$ and $E'/\Q_\ell$ with good reduction
we let $\Delta_\ell \neq 0$, $\beta_\ell$ and $\Delta_\ell' \neq 0$, $\beta'_\ell$ be the quantities
defined in Section~\ref{S:good}, respectively.
We will prove the following generalization of Theorem~\ref{T:simpleAbelian} part 2).

\begin{theorem} Let $E$ and $E'$ be elliptic curves over $\Q_\ell$ with good reduction.
Let $p \geq 3$ be a prime and suppose that $E[p]$ and $E'[p]$ 
are isomorphic $G_{\Q_{\ell}}$-modules.

Assume that $p \mid \Delta_\ell$ and $p \nmid \beta_\ell$. 
Then $p \mid \Delta_\ell'$ and $p \nmid \beta_\ell'$. 

Write $s=0$ if there are simultaneously symplectic basis or anti-symplectic basis of $E[p]$ and $E'[p]$ 
such that the action of $\Frob_\ell$ on $E[p]$ and $E'[p]$ is given by \eqref{E:FrobCharp}; write
$s=1$ otherwise. Then
\[
 E[p] \text{ and } E'[p] \quad \text{are symplectically isomorphic} \quad 
 \Leftrightarrow \quad (-1)^s \left (\frac{\beta_\ell' / \beta_\ell}{p} \right) = 1.
\]
Moreover,
$E[p]$ and $E'[p]$ are not both symplectically and 
anti-symplectically isomorphic. 
\label{T:mainAbelian}
\end{theorem}
\begin{proof}
Write $G = \GL_2(\F_p)$ and~$C = C_G(\rhobar_{E,p}(G_{\Q_\ell}))$,
~$C' = C_G(\rhobar_{E',p}(G_{\Q_\ell}))$ for the centralizers.

Suppose $p \mid \Delta_\ell$ and $p \nmid \beta_\ell$. 
From Proposition~\ref{P:AbelianCentralizer} it follows that $C$ 
 contains only matrices with square determinant. 
 Since $\rhobar_{E,p} \sim \rhobar_{E',p}$ we have that $C'$
 has the same property, hence $p \mid \Delta_\ell'$ and $p \nmid \beta_\ell'$ by Proposition~\ref{P:AbelianCentralizer}.
 Now from the last statement of Lemma~\ref{L:sympcriteria} it follows that 
 $E[p]$ and $E'[p]$ cannot be both symplectically and anti-symplectically isomorphic. 
 This proves the first and last claim of the theorem.
 
Choose basis of $E[p]$ and $E'[p]$ such that the action of $\Frob_\ell$ is given by 
\eqref{E:FrobCharp}, that is, 
 \begin{equation}
 \rhobar_{E,p}(\Frob_\ell) = 
 \begin{pmatrix} \frac{a_\ell}{2}  & 0 \\ 
 \beta_\ell &  \frac{a_\ell}{2} \end{pmatrix} 
  \quad \text{ and } \quad 
 \rhobar_{E',p}(\Frob_\ell) = 
 \begin{pmatrix} \frac{a_\ell}{2}  & 0 \\ 
 \beta_\ell' &  \frac{a_\ell}{2} \end{pmatrix}, 
 \label{E:MatrixReps}
\end{equation}
where the matrices are in $\GL_2(\F_p)$, $\beta_\ell, \beta_\ell' \ne 0$
and we used $a_\ell \equiv a_\ell' \pmod{p}$ since $\rhobar_{E,p} \sim \rhobar_{E',p}$.

Since the matrices above generate the same subgroup
of $\GL_2(\F_p)$ (even when $\beta_\ell \ne \beta_\ell'$)
it follows that
$\rhobar_{E,p}(\Frob_\ell) = M \rhobar_{E',p}(\Frob_\ell) M^{-1}$ for some $M$ 
in the normalizer $N_G(\rhobar_{E,p}(\Frob_\ell))$.
Write $M = \begin{psmallmatrix} a  & 0 \\ c &  d \end{psmallmatrix}$. 
From the identity
\[
\begin{pmatrix} \frac{a_\ell}{2}  & 0 \\ 
 \beta_\ell &  \frac{a_\ell}{2} \end{pmatrix} = 
 \begin{pmatrix} a  & 0 \\ c &  d \end{pmatrix} \begin{pmatrix} \frac{a_\ell}{2}  & 0 \\ 
 \beta_\ell' &  \frac{a_\ell}{2} \end{pmatrix}
 \begin{pmatrix} a  & 0 \\ c &  d \end{pmatrix}^{-1}  
\]
we see that $a/d = \beta_\ell' / \beta_\ell$, therefore 
$\det M$ and $\beta_\ell' / \beta_\ell$ are simultaneously
a square or a non-square mod~$p$.
 
Suppose $s=0$ and that the matrix representations in~\eqref{E:MatrixReps} 
were obtained using symplectic bases
for both $E[p]$ and $E'[p]$. From Lemma~\ref{L:sympcriteria} it follows that 
$E[p]$ and $E'[p]$ are symplectically isomorphic if and only if $\beta_\ell' / \beta_\ell$ is 
a square mod~$p$.

Suppose $s=0$ and that the matrix representations in~\eqref{E:MatrixReps} 
were obtained using anti-symplectic bases for both $E[p]$ and $E'[p]$.
Then applying a change of basis with non-square
determinant to $E[p]$ and $E'[p]$ we obtain matrix representations with respect to a symplectic
basis to which we can again apply Lemma~\ref{L:sympcriteria}. 
Note that the determinant of the matrix representing the isomorphism in this new choice of bases 
is obtained by multiplying $\det M$ by the product of 
two non-squares which is a square. Thus again by Lemma~\ref{L:sympcriteria} we have 
$E[p]$ and $E'[p]$ symplectically isomorphic if and only if $\beta_\ell' / \beta_\ell$ is 
a square mod~$p$.

Suppose $s=1$. Then the matrix representations in~\eqref{E:MatrixReps} 
were obtained using one symplectic basis and one anti-symplectic basis.
Now, after applying one change of basis with non-square determinant 
we can assume both basis are symplectic. This
changes $\det M$ by a non-square. From Lemma~\ref{L:sympcriteria} we have 
$E[p]$ and $E'[p]$ symplectically isomorphic if and only if $\beta_\ell' / \beta_\ell$ 
is not square mod~$p$.

This exhausted all the cases for the value of
$(-1)^s \left (\frac{\beta_\ell' / \beta_\ell}{p} \right)$
and completes the proof.
\end{proof}

We did not included Theorem~\ref{T:mainAbelian} in Section~\ref{S:results} because it is not as simple to use as the other criteria. This is due to the fact that determining the value of~$s$ in its statement requires plenty of computations; see Example~\ref{Ex:mod7} for an application of Theorem~\ref{T:mainAbelian}.

{\large \part{Elliptic curves with potentially good reduction}}

We will now study various phenomena of elliptic curves with potentially good reduction
which are essential to the main proofs in later sections. The results in this part
are also of independent interest.

\section{An useful Weierstrass model}

Let $E/\Q_\ell$ be an elliptic curve with potentially good reduction. 
We write $K=\Q_\ell(E[p])$.
For all $m \in \Z_{\geq 3}$ coprime to $\ell$ the extension $L = \Q_\ell^{un}(E[m]) = \Q_\ell^{un} K$ is the minimal extension of~$\Q_\ell^{un}$ where $E$ achieves good reduction. 
We write $e = [L : \Q_\ell^{un}]$ for the semistability defect of~$E$. 
For a minimal model for $E/\Q_\ell$ we recall the 
quantities $\tilde{c}_4$, $\tilde{c}_6$, $\tilde{\Delta}$ 
defined in Section~\ref{S:results} 
for $c_4 \neq 0$ or $c_6 \neq 0$~by
\[
 c_4 = \ell^{\vv_\ell(c_4)} \tilde{c}_4, \qquad c_6 = \ell^{\vv_\ell(c_6)} \tilde{c}_6, \qquad \Delta_m = \ell^{\vv_\ell(\Delta_m)}\tilde{\Delta}.
\]
\begin{proposition} \label{P:usefulmodel}   
Let $E/\Q_\ell$ be an elliptic curve
having additive potentially good reduction, semistability defect $e$ and conductor $N_E$.
Suppose further $e \neq 2,6,24$ and $N_E \neq 2^6$ if $e=8$.

Then $E$ has a minimal model whose invariants 
$c_4$, $c_6$ and $\Delta$ satisfy, in relation to $e$, 
the conditions in Table~\ref{Table:model}. Moreover, 
we can assume the minimal model to be of the form
\begin{equation}
 y^2 = x^3 + a x + b,\quad a = -\frac{c_4}{48}, \quad b = -\frac{c_6}{864}.
 \label{E:usefulmodel}
\end{equation}

\begin{footnotesize}
\begin{table}[htb]
$$
\begin{array}{|c|c|c|c|c|} \hline
\text{\sc Case}   & \text{\sc Prime } \ell & e &  (\vv(c_4), \vv(c_6), \vv(\Delta_m)) & \text{\sc Extra Conditions} \\ \hline
 A_3       &  \ell \geq 5   & 3 & (\geq 2, 2, 4) \; \text{ or } \; (\geq 3, 4, 8) & \text{none} \\
 A_4       &  \ell \geq 5   & 4 & (1, \geq 2, 3) \; \text{ or } \; (3, \geq 5, 9) & \text{none} \\ \hline
 B_3       &  \ell = 3      & 3 & (2, 3,4) \; \text{ or } \; (5, 8, 12) & \text{none} \\
 B_{4,i}   &  \ell = 3      & 4 & ( 2, \geq 5, 3) \; \text{ or } \; (4, \geq 8, 9); & \text{none} \\
 B_{4,ii}  &  \ell = 3      & 4 & (\geq 2, 3, 3) & \tilde{\Delta} \equiv 2,4 \pmod{9} \\
 B_{4,iii} &  \ell = 3      & 4 & (\geq 4, 6, 9) & \tilde{\Delta} \equiv 2,4 \pmod{9} \\ \hline
 
 C_4       &  \ell = 2      & 4 & (5, 8, 9) \; \text{ or } \; ( 7, 11, 15) & N_E = 2^8 \\
 C_{3,i}   &  \ell = 2      & 3 & (4,5,4) & \tilde{c}_4 \equiv -1 \pmod{4}, \; \tilde{c}_6 \equiv 1 \pmod{4} \\
 C_{3,ii}   &  \ell = 2      & 3 & (\geq 6,5,4) & \tilde{c}_6 \equiv 1 \pmod{4} \\
 C_{3,iii}   &  \ell = 2      & 3 & (4,6,8) & \tilde{c}_6 \equiv -1 \pmod{4}, \;
  \tilde{\Delta} \equiv -1 \pmod{4} \\
 C_{3,iv}   &  \ell = 2      & 3 & (\geq 7,7,8) & \tilde{c}_6 \equiv 1 \pmod{4} \\ \hline
 D_{a}    &  \ell = 2      & 8 & (4, n \geq 7,  6) & \tilde{c}_4 \equiv -1 \pmod{4}, \; N_E = 2^5 \\
 D_{b}    &  \ell = 2      & 8 & (6, n \geq 10, 12) & \tilde{c}_4 \equiv 1 \pmod{4}, \; N_E = 2^5 \\ 
 D_{c}    &  \ell = 2      & 8 & (7, 9, 12) & N_E = 2^5 \\ 
 D_{d}    &  \ell = 2      & 8 & (4,6,9)    & N_E = 2^5 \\
 D_{e}    &  \ell = 2      & 8 & (5, n \geq 9,  9)  & N_E = 2^8 \\ 
 D_{f}    &  \ell = 2      & 8 & (7, n \geq 12, 15) & N_E = 2^8 \\ \hline
 
 G_{a}   & \ell = 3 & 12 & (n \geq 2, 3,3) & \tilde{\Delta} \not\equiv 2,4 \pmod{9}, \; N_E = 3^3 \\ 
 G_{b}   & \ell = 3 & 12 & (n \geq 4, 6,9) & \tilde{\Delta} \not\equiv 2,4 \pmod{9}, \; N_E = 3^3 \\ 
 G_{c}   & \ell = 3 & 12 & (2, 4, 3) & N_E = 3^3 \\ 
 G_{d}   & \ell = 3 & 12 & (2,3,5) & N_E = 3^3 \\ 
 G_{e}   & \ell = 3 & 12 & (4,7,9) & N_E = 3^3 \\
 G_{f}   & \ell = 3 & 12 & (4,6,11) & N_E = 3^3 \\
 G_{g}   & \ell = 3 & 12 & (n \geq 3, 4, 5) & N_E = 3^5 \\ 
 G_{h}   & \ell = 3 & 12 & (n \geq 4, 5, 7) & N_E = 3^5 \\ 
 G_{i}   & \ell = 3 & 12 & (n \geq 5, 7, 11) & N_E = 3^5 \\ 
 G_{j}   & \ell = 3 & 12 & (n \geq 6, 8, 13) & N_E = 3^5 \\ \hline
\end{array}
$$
\caption{Relation between the semistability defect $e$ of 
an elliptic curve $E/\Q_\ell$ and the invariants $c_4(E)$, $c_6(E)$ and $\Delta_m(E)$ of 
a minimal model for $E$.}
\label{Table:model}
\end{table}
\end{footnotesize}
\end{proposition}
\begin{proof} From Section~\ref{S:left?} we know that $e \in \{ 2,3,4,6,8,12,24\}$
for an elliptic curve with additive potentially good reduction;
from the restrictions on $e$ in the statement 
it follows that the cases we have to consider are $e=3,4,8,12$.

For $\ell \geq 5$ the value of $e$ is given by (see \cite[p.355]{Kraus1990}) 
\begin{equation}
\label{E:denominator}
e=\text{denominator of}\  \frac{\vv_\ell(\Delta)}{12}.
\end{equation}
Now, from the assumption $e=3,4$ and 
\cite[Table~I]{pap} it follows that $(\vv(c_4), \vv(c_6), \vv(\Delta_m))$
is given as in the first two lines of the table.

For the pairs $(\ell,e)$ in the table with
$\ell = 2,3$, we see from the 
tables in \cite[pp. 356--359]{Kraus1990} that a minimal model satisfies the 
conditions on $(\vv(c_4), \vv(c_6), \vv(\Delta_m))$
and also the extra conditions on $\tilde{c}_4$, $\tilde{c}_6$ 
and $\tilde{\Delta}$. Further, the claims on $N_E$ follow 
from a case checking in \cite[tables~II and~IV]{pap} and 
the tables of \cite[pp. 39--40]{CaliThesis}.

We conclude from the above that
for each choice of $\ell$ and~$e$ in Table~\ref{Table:model} there exist a minimal model of $E/\Q_\ell$ satisfying the claimed conditions on $(\vv(c_4), \vv(c_6), \vv(\Delta_m))$ and on $\tilde{c}_4$, $\tilde{c}_6$, $\tilde{\Delta}$. Denote such model by $W'$ and 
write $c_4$, $c_6$, $\Delta_m$ for the standard invariants associated to~$W'$. 
From \cite[p. 43]{SilvermanI} we see that $W'$ can be transformed 
into a model of the form 
\[ W''/\Q_\ell \; : \; y^2 = x^3 - 27c_4 x - 54c_6. \]
Now the transformation $x = 6^2 x'$, $y = 6^3 y'$ transforms $W''$ into a model 
as in the statement
\[
W / \Q_\ell \; \; : \; \; y^2 = x^3 + a x + b,\quad a = -\frac{c_4}{48}, \quad b = -\frac{c_6}{864}.
\]
We observe that
\[
  \vv_2(48) = 4, \quad \vv_2(864) = 5, \quad \vv_3(48) = 1, \quad \vv_3(864) = 3
\]  
and  $\vv_\ell(48)=\vv_\ell(864) = 0$ if $\ell \geq 5$. Looking to the
valuations of $c_4$ and $c_6$ given in Table~\ref{Table:model} 
we conclude that $W$ satisfies $a, b \in \Z_\ell$ in all cases. 
Moreover, $c_4(W) = c_4$, $c_6(W) = c_6$, and the relation
\[
 2^6 3^3 \Delta(W) = c_4(W)^3 - c_6(W)^2 = c_4^3 - c_6^2 = 2^6 3^3 \Delta_m 
\]
gives $\Delta(W) = \Delta_m$. Thus $W$ is a minimal model satisfying all the conditions.
\end{proof}

\begin{remark} The cases $e=2,6,24$ are not covered by the previous proposition because the information we need when $e=2,6$ is given by Lemmas~\ref{L:lemma1}--\ref{L:lemma4} and the
the case $e=24$ does not have to be considered in this work. The latter case is already completely solved 
by \cite[Theorem~4]{F33p}, which is restated above as Theorem~\ref{T:Wilde24}. 
\end{remark}

\section{The field of good reduction}
\label{S:goodF}

Since we are dealing with curves $E/\Q_\ell$ with potentially good reduction it is natural to seek 
explicit descriptions of an extension $F/\Q_\ell$ such that $E/F$ has good reduction. 
For the purpose of proving symplectic criteria, we can further assume that 
the Galois group of the extension $K/\Q_{\ell}$ defined by the $p$-torsion of field 
of $E$ is non-abelian. In Theorem~\ref{T:goodOverF} below we describe the field $F$ 
in the cases of our interest; before we proceed to its proof we need to introduce 
some facts about certain Galois representations attached to $E$.

Let $I_\ell \subset W_\ell \subset G_{\Q_\ell} = \Gal(\Qbar_\ell / \Q_\ell)$ 
denote respectively the inertia and Weil subgroups of $G_{\Q_\ell}$.

Denote by $\omega$ the unramified quasi-character 
giving the action of $W_\ell$ on the roots of unity.

A $2$-dimensional complex Weil-Deligne representation of $\Q_\ell$ 
consist of a pair $(\sigma,N)$ such that $\sigma$ is a representation $\sigma : W_\ell \to \GL_2(\C)$ 
and $N$ is a nilpotent endomorphism of $\C^2$ satisfying
$\sigma(g)N\sigma(g)^{-1} = \omega(g)N$ for all $g \in W_\ell$;
see \cite[Section~3]{roh94} for further details. 

For an elliptic curve $E/\Q_\ell$ of conductor $N_E$ 
there is a Weil-Deligne representation $(\sigma_{E},N)$
attached to~$E$; 
when $E$ has potentially good reduction the 
nilpotent operator $N = 0$ because inertia has finite image
(see \cite[\S 13 and \S 14]{roh94}). The conductor of this representation coincides with~$N_E$ which can be computed using Tate's algorithm
(see~\cite[\S 9, \S 10, \S 11]{SilvermanII} and references there for a detailed discussion).

We call the restriction $\tau := \sigma_E|_{I_\ell}$ {\it the inertial type of $E$}.
The field extension of $L/\Q_\ell^{un}$ fixed by~$\tau$ is the minimal extension
of $\Q_\ell^{un}$ where $E$ obtains good reduction; this is the same extension 
we called the {\it inertial field of $E$} in Section~\ref{S:left?}. 

In \cite[Main Theorem]{DFV} a classification of all possible inertial types arising from elliptic curves
over~$\Q_\ell$ is given; in the case of potentially good reduction we can 
have either {\it principal series} or {\it supercuspidal} types. The former correspond to 
the case where $E/\Q_\ell$ obtains good reduction over some abelian extension 
while the latter corresponds to the case of a non-abelian extension. 
In our setting, we have that $E/K$ has good reduction and $K/\Q_\ell$ is non-abelian so that all
the curves in Theorem~\ref{T:goodOverF} below have supercuspidal inertial types; we refer 
to~\cite[Section~2]{DFV} for a summary of definitions and properties of inertial types.

\begin{theorem} Let $\ell \ne p$ be primes with $p \geq 3$.
Let $E/\Q_\ell$ be an elliptic curve with potentially good reduction, 
semistability defect $e$ and conductor $N_E$. Assume further that 
its $p$-torsion field extension $K = \Q_\ell(E[p])/\Q_{\ell}$ is non-abelian. 

Then there is a non-Galois totally ramified extension $F/\Q_\ell$ 
of degree $e$ such that $E/F$ has good reduction. 
More precisely, in each of the following cases, 
$E$ obtains good reduction over exactly one of the listed fields.
\begin{enumerate}
 \item if $e=3,4$ and $(\ell,e) = 1$ then $\ell \equiv -1 \pmod{e}$ and 
 we can take $F = \Q_\ell(\ell^{1/e})$;
 \item if $\ell = 2$ and $e=4$ then $F$ is defined by
 \[
  f_1 = x^4+12x^2+6 \quad \text{or} \quad f_2 = x^4+4x^2+6;
 \]
 \item if $\ell=2$, $e=8$ and $N_E = 2^5$ then $F$ is defined by
 \[
 g_1 = x^8 + 8x^4 + 336 \quad \text{ or } \quad g_2 = x^8 +4x^6 + 28x^4 + 20;
\]
 \item if $\ell=2$, $e=8$ and $N_E = 2^8$ then $F$ is defined by
 \[
 g_3 = x^8+20x^4+52 \quad \text{or} \quad g_4 = x^8+4x^4+84;
 \]
 \item if $\ell = e = 3$ then $F$ is defined by the polynomial $x^3+3x^2+3$;
 \item if $\ell = 3$, $e=12$ and $N_E = 3^3$ then $F$ is defined by
 \[
 h_1 = x^{12}-3x^{11}-3x^{10}+3x^9+3x^5-3x^4+3x^3+3 \quad \text{or} \quad h_2 = x^{12} + 3x^4 + 3;  
\]
\item if $\ell = 3$, $e=12$ and $N_E = 3^5$ then $F$ is defined by one of
 \begin{eqnarray*}
 h_3 &  = & x^{12} + 9x^{10} + 9x^9 - 9x^8 + 6x^6 + 9x^5 - 9x^4 - 3x^3 + 9x^2 - 9x-12, \\
 h_4 & = & x^{12} + 9x^{11} + 9x^{10} + 9x^9 + 9x^8 - 9x^7 - 12x^6 - 9x^2 - 3, \\
 h_5 & = & x^{12} - 9x^{11} + 9x^9 - 9x^8 + 9x^7 - 12x^6 + 3x^3 + 9x^2 + 9x - 12. 
\end{eqnarray*}
\end{enumerate}
Moreover, in case (2) either $E$ or its quadratic twist by $-1$ has good reduction 
over the field defined by $f_1$; in case (4) 
either $E$ or its quadratic twist by $2$ has good reduction over the field defined by $g_3$.
\label{T:goodOverF}
\end{theorem}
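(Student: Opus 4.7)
The plan is to use Lemma~\ref{L:usefulmodel} to fix a canonical minimal Weierstrass model of $E$ in each case and then exhibit, case by case, an explicit change of variables over $F$ producing an integral model with unit discriminant; uniqueness and the twist refinements will follow from the fact that the minimal extension of $\Q_\ell^{un}$ over which $E$ acquires good reduction is unique. The non-abelianness of $K = \Q_\ell(E[p])$ enters twice: to force the congruence on $\ell$ in the tame cases, and to rule out the Galois alternatives for $F$ in the wild cases.

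For the tame cases (1) with $\gcd(\ell, e) = 1$ and $e \in \{3, 4\}$, inertia $I_\ell$ acts on $E[p]$ through a cyclic character $\chi$ of order $e$, and a Frobenius lift $\tau$ acts on $\chi$ via $\chi \mapsto \chi^\ell$ by the standard tame commutation relation. Non-abelianness of $K$ forces $\ell \not\equiv 1 \pmod e$; together with $\gcd(\ell, e) = 1$ this yields $\ell \equiv -1 \pmod e$. In particular $\Q_\ell$ contains no primitive $e$-th root of unity, so $F = \Q_\ell(\ell^{1/e})$ is totally ramified of degree $e$ and non-Galois (its Galois closure is $F(\zeta_e)$). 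Writing $\pi = \ell^{1/e}$ and applying the substitution $(x,y) \mapsto (\pi^{2k} X, \pi^{3k} Y)$ with $k = e \cdot \vv_\ell(\Delta_m)/12$ to the standard model of Lemma~\ref{L:usefulmodel} produces a model over $\calO_F$ whose coefficients are integral (thanks to the valuation bounds on $c_4$ and $c_6$ recorded in Table~\ref{Table:model}) and whose discriminant is a unit, so $E/F$ has good reduction.

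For the wild cases (2)--(7), one starts from the normalized models of Lemma~\ref{L:usefulmodel} and in each case writes down an explicit change of variables of the form $(x,y) \mapsto (\pi^{2k} X + r,\, \pi^{3k} Y + s \pi^{2k} X + t)$ over $\calO_F$, where $\pi$ is a uniformizer of $F$ and $r, s, t$ are specific algebraic integers. The translation by $(r, t)$, absent in the tame setting, is what kills the wild contributions to $c_4$ and $c_6$ modulo $\pi$; integrality and unit discriminant of the resulting model are then verified directly in \texttt{Magma}. In parallel one checks that each listed $F$ is totally ramified of degree $e$ and non-Galois, and that its Galois closure has inertia isomorphic to the expected group: $H_8$ in cases (3)--(4), $\Dic_{12}$ in cases (6)--(7), and the appropriate cyclic group for case (5) at $\ell = e = 3$.

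For uniqueness, the Serre--Tate criterion identifies a unique minimal extension $L/\Q_\ell^{un}$ over which $E$ acquires good reduction, so the composite $F \cdot \Q_\ell^{un}$ is determined by $E$; among the two or three candidates listed in each case, only the one satisfying $F \cdot \Q_\ell^{un} = L$ can work, and the correct choice is read off the standard invariants of the model. The twist clauses in cases (2) and (4) follow because the two candidate fields differ by $\Q_\ell(\sqrt d)$ with $d = -1$ and $d = 2$ respectively, and quadratic twisting $E$ by $d$ exchanges the two alternatives. The main obstacle is the bookkeeping in the heavily ramified wild cases, especially $e = 12$ at $\ell = 3$, where three degree-$12$ extensions must be matched against the corresponding invariant profiles of Table~\ref{Table:model}; this is where the computer verification is essential.
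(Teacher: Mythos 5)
Your outline is workable, but it follows a genuinely different route from the paper's proof, so let me compare. For the wild cases (2)--(7) the paper does \emph{not} argue via explicit coordinate changes: it invokes the classification of inertial types of elliptic curves over $\Q_\ell$ from \cite[Theorem~8.1, Proposition~7.2]{DFV}, observes that only finitely many types are compatible with $e$, $N_E$ and non-abelian $K$, and then realizes each type by an explicit test curve ($256a1$, $256b1$, $96a1$, $288a1$, $162d1$, $27a1$, $54a1$, $243a1$, $243b1$, $972a1$) whose field of good reduction is computed directly; since every curve satisfying the hypotheses shares its inertial field with one of these test curves, and $L/F$ is unramified, the statement follows. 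Your route --- normalize the model via Lemma~\ref{L:usefulmodel} and exhibit, for each invariant profile, a change of variables over one of the candidate fields producing a unit-discriminant integral model --- is exactly the ``long calculation'' that the paper performs elsewhere (Lemma~\ref{L:rescurvewilde2}, Theorems~\ref{T:coordchanges8} and~\ref{T:coordchanges12}) in order to justify the finer Tables~\ref{Table:FoverQ2}--\ref{Table:FoverQ3II}; the paper explicitly remarks that shorter proofs avoiding these computations exist. So your approach buys more (it determines \emph{which} candidate field occurs in terms of $(c_4,c_6,\Delta_m)$, not merely that one of them does) at the cost of substantial computer verification, whereas the paper's proof of the theorem as stated is lighter but relies on the external classification in \cite{DFV}. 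Your treatment of the tame congruence in case (1) via the relation $\tau\sigma\tau^{-1}=\sigma^{\ell}$ on tame inertia is also a clean alternative to the paper's argument (which deduces good reduction over $\Q_\ell(\ell^{1/e})$ from \cite{DDKod} and rules out $\ell\equiv 1\pmod e$ by showing the Galois closure of $F$ would otherwise be abelian), and your scaling $(x,y)\mapsto(\pi^{2k}X,\pi^{3k}Y)$ is essentially Lemma~\ref{L:Ebar4}(ii).

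One point needs to be made explicit for the ``exactly one of the listed fields'' claim. Your uniqueness argument assumes that the composites $F_i\cdot\Q_\ell^{un}$ of the candidates within a given case are pairwise distinct; checking that each candidate is totally ramified of degree $e$ with the expected inertia group does not distinguish them, since within a case they all have the same inertia group. Without distinctness, $E$ could a priori have good reduction over two of the listed fields simultaneously. This is a routine finite computation on local fields (the paper handles it in case (2) by verifying that $256a1$ has good reduction over $F_1$ but bad reduction over $F_2$), but it must be recorded; likewise the twist clauses in cases (2) and (4) require knowing not just that the two candidates differ by a quadratic twist of the curve, but that twisting by the stated $d$ actually interchanges the two inertial types, which again comes from \cite{DFV} or from your explicit models.
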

\begin{proof}
We first prove part (1). From \cite{Kraus1990} it follows that the Kodaira type of $E$ is IV or IV*
when $e=3$ and III or III* when $e=4$. Since $F=\Q_\ell(\ell^{1/e})/\Q_\ell$ is a tame extension 
a direct application of part (3) of \cite[Theorem~3]{DDKod} implies that $E/F$ has good reduction.

Suppose $F/\Q_\ell$ is Galois, hence cyclic. It follows that
that $L' = \Q_\ell^{un} F$ is abelian over $\Q_\ell$
and $E/L'$ has good reduction. 
By minimality we have $L = \Q_\ell^{un} K \subset L'$, therefore $L/\Q_{\ell}$ is 
abelian; since the Galois group of $K/\Q_{\ell}$ is a quotient of that of $L/\Q_{\ell}$ it follows that $K/\Q_{\ell}$ is also abelian, a contradiction.
We conclude that $F/\Q_\ell$ is not Galois.

Finally, we observe that $F$ is Galois if and only if $\ell \equiv 1 \pmod{e}$ and 
since $(\ell,e) = 1$ it follows that $\ell \equiv -1 \pmod{e}$, concluding the proof 
of (1).

We will now prove (2). 

Recall from the discussion preceding the theorem that our hypothesis imply that $E$ has a supercuspidal inertial type.
From \cite[Table~1]{DFV} we see that, 
for $\ell= 2$ and $e=4$, there are only two such types denoted $\tau_{sc,2}(5,4,4)$ and $\tau_{sc,2}(5,4,4) \otimes \varepsilon_{-4}$
in {\it loc. cit.}. 
In particular, these two types 
differ by the character corresponding to the quadratic twist by $-1$.

Write $F_i$ for the field defined by $f_i$. 
We check that the curve with Cremona label $256a1$ has good reduction over $F_1$ and
the curve $256d1$ has good reduction over $F_2$, hence their inertial fields 
are $L_1 = \Q_2^{un} F_1$ and $L_2 = \Q_2^{un} F_2$, respectively. Moreover 
$256a1$ has bad reduction over~$F_2$ and since $L_i / F_i$ is 
unramified it follows that $L_1 \ne L_2$; 
this shows that the fields $L_1$, $L_2$ are the inertial fields of the two possible types. 
Thus any elliptic curve $E$ satisfying the hypothesis must obtain good reduction
over exactly one of $L_1$ or $L_2$. Again, since $L_i / F_i$ is 
unramified we conclude that $E$ has good reduction
over exactly one of $F_1$ or $F_2$, proving (2).  

Part (3) follows similarly to (2). 
Indeed, for $\ell =2$, $e=8$ and $N_E = 2^5$ 
from~\cite[Table~1]{DFV} we see there are 
again exactly two inertial supercuspidal types, 
denoted $\tau_{sc,2}(-4,3,4)$ and $\tau_{sc,2}(-20,3,4)$ in {\it loc cit.}.
The conclusion now follows as above, where we use the curves $96a1$, $288a1$ 
instead of $256a1$ and $256d1$.

Part (4) follows similarly using 
the curves $256b1$, $256c1$. We note that, in this case, the two inertial types are related by a quadratic twist by~$2$. 

Part (5) follows similarly using 
the curve $162d1$.

Part (6) follows similarly using 
the curves $27a1$, $54a1$.

Part (7) follows similarly using 
the curves $243a1$, $243b1$, $972a1$. 

Finally, the last two statements follow because in cases (2) and (4) the two possible inertial types are related by quadratic twist by -1 and 2, respectively.
\end{proof}

We extract the following useful consequence from the 
proof of part (1) of Theorem~\ref{T:goodOverF}.

\begin{corollary} Let $\ell \ne p$ be primes with $p \geq 3$. 
Let $E/\Q_\ell$ be an elliptic curve with potentially good reduction with $e=3,4$. 
Write $K = \Q_\ell(E[p])$. Suppose there is a degree $e$ cyclic extension~$F/\Q_\ell$ such 
that $E/F$ has good reduction. Then $K/\Q_\ell$ is abelian.

Furthermore, if $(\ell,e) = 1$ then $K/\Q_\ell$ 
is non-abelian if and only if $\ell \equiv -1 \pmod{e}$.
\label{C:nonabelianTame}
\end{corollary}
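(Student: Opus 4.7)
The plan is to separate the corollary into three implications: the first assertion is a single implication, and the ``if and only if'' of the second splits into two directions.

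For ``$F$ cyclic of degree $e$ with $E/F$ of good reduction $\Rightarrow$ $K$ abelian'', I would reuse verbatim the minimality argument already used in the proof of Theorem~\ref{T:goodOverF}(1). Since $\Q_\ell^{un}/\Q_\ell$ and the cyclic $F/\Q_\ell$ are both abelian, their compositum $L' := \Q_\ell^{un} F$ is abelian over $\Q_\ell$; good reduction passes from $F$ to $L'$; by the minimality of the inertial field $L = \Q_\ell^{un}(E[p])$ among extensions of $\Q_\ell^{un}$ on which $E$ acquires good reduction (see \cite{ST1968}), one has $L \subset L'$, so $L/\Q_\ell$ is abelian, and hence so is its Galois subextension $K = \Q_\ell(E[p])$.

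For the second assertion, assume $(\ell,e) = 1$. The explicit field $F = \Q_\ell(\ell^{1/e})$ constructed in Theorem~\ref{T:goodOverF}(1) is a field of good reduction for $E$ under only the hypotheses $e \in \{3,4\}$ and tameness --- the construction invokes Kraus' Kodaira-type determination together with \cite[Theorem~3(3)]{DDKod} and does not use non-abelianness of~$K$. Hence, for the forward direction, if $K$ is non-abelian then the first part of the corollary forces $F$ to be non-Galois, equivalently $\zeta_e \notin \Q_\ell$, i.e.\ $\ell \not\equiv 1 \pmod{e}$; since the units of $\Z/e\Z$ for $e \in \{3,4\}$ are just $\{\pm 1\}$, this yields $\ell \equiv -1 \pmod{e}$.

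For the converse I would argue by contradiction: suppose $\ell \equiv -1 \pmod{e}$ yet $K/\Q_\ell$ is abelian. The image of $I_\ell$ in $\Gal(K/\Q_\ell)$ identifies, via $L = \Q_\ell^{un} K$, with $\Phi = \Gal(L/\Q_\ell^{un})$ and so is cyclic of order exactly $e$; since $(\ell,e) = 1$ this inertia is tame, so a Frobenius lift acts on any generator $\sigma$ by $\sigma \mapsto \sigma^\ell$. Abelianness of $\Gal(K/\Q_\ell)$ makes this conjugation trivial, forcing $e \mid \ell - 1$, which contradicts $\ell \equiv -1 \pmod{e}$ since $e > 2$. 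The one point meriting real care is the identification of the inertia subgroup of $\Gal(K/\Q_\ell)$ with $\Phi$ and the invocation of the tame Frobenius-on-inertia formula; once these are in place the congruence manipulation is immediate.
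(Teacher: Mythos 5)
Your proposal is correct. The first assertion and the forward implication of the second are handled exactly as in the paper: the minimality argument ($F$ cyclic $\Rightarrow$ $L' = \Q_\ell^{un}F$ abelian $\Rightarrow$ $L \subset L'$ abelian $\Rightarrow$ $K$ abelian) is lifted verbatim from the proof of Theorem~\ref{T:goodOverF}(1), and the forward direction reduces, as in the paper, to the observation that $F=\Q_\ell(\ell^{1/e})$ is Galois if and only if $\ell \equiv 1 \pmod{e}$. Where you genuinely diverge is the converse ($\ell \equiv -1 \pmod{e} \Rightarrow K$ non-abelian): the paper again exploits the explicit field $F$, noting that its Galois closure $\overline{F} = \Q_\ell(\ell^{1/e},\zeta_e)$ lies in $L = \Q_\ell^{un}K$ and is a $D_e$-extension when $\ell \equiv -1 \pmod{e}$, hence cannot be a quotient of an abelian $\Gal(L/\Q_\ell)$; you instead appeal directly to the tame structure of $G_{\Q_\ell}$ — inertia image cyclic of order $e$ prime to $\ell$, Frobenius conjugation acting by $\sigma \mapsto \sigma^\ell$, so abelianness forces $e \mid \ell-1$. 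Both arguments are standard and correct; yours has the mild advantage of not needing $F$ at all for this direction (only the value of $e$ and tameness), while the paper's recycles the field it has already constructed and avoids invoking the Frobenius-on-tame-inertia relation. One pedantic point in your forward direction: the first assertion of the corollary requires $F$ \emph{cyclic}, so before concluding that $F$ is non-Galois you should note (as the paper does, equally briefly, with ``Galois, hence cyclic'') that a Galois $\Q_\ell(\ell^{1/e})$ is automatically cyclic, e.g.\ by Kummer theory.
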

\begin{proof} 
We note that the first paragraph in 
the previous proof holds for $F$;
thus, to prove the corollary,
we are left to show that $K/\Q_\ell$ abelian 
implies $\ell \equiv 1 \pmod{e}$. Indeed,
suppose $K/\Q_\ell$ is abelian. Then $L = \Q_\ell^{un} F  = \Q_\ell^{un} K$ is abelian 
over~$\Q_\ell$ and 
the Galois closure $\overline{F}/\Q_\ell$, which is a subfield of $L$, is also abelian. 
But when $\ell \equiv -1 \pmod{e}$ then $\overline{F}/\Q_\ell$ is a $D_e$-extension, hence non-abelian,
giving a contradiction. We conclude $\ell \equiv 1 \pmod{e}$, as desired.
\end{proof}

To apply Theorem~\ref{T:goodOverF} we need to know when a given curve has non-abelian 
$p$-torsion field extension. The previous corollary answers this 
in the case of tame potentially good reduction with $e=3,4$; 
the next propositions provide the answer in the case of wild reduction.

\begin{proposition} Let $p \geq 5$ be a prime. 
Let $E/\Q_3$ be an elliptic curve with potentially good reduction with $e=3$. 
Then $K/\Q_3$ is non-abelian if and only if $\tilde{\Delta} \equiv 2 \pmod{3}$.
\label{P:nonabelianWild3}
\end{proposition}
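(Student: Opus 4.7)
We exchange the abelianness of $K = \Q_3(E[p])$ for the Galois property of the field of good reduction of $E$, and then read the latter off a minimal model. Since $e=3$, the image $\rhobar_{E,p}(I_3)$ is cyclic of order $3$; picking a Frobenius lift $\varphi \in G_{\Q_3}$ yields a splitting $\rhobar_{E,p}(G_{\Q_3}) = \rhobar_{E,p}(I_3) \rtimes \langle \rhobar_{E,p}(\varphi) \rangle$. The unique (up to $\Q_3$-conjugation) totally ramified cubic $F \subset K$ over which $E$ acquires good reduction is the fixed field of $\rhobar_{E,p}(\varphi)$, and a direct group-theoretic check shows that $F/\Q_3$ is Galois (necessarily cyclic) if and only if the semidirect product above is direct, i.e.\ if and only if $K$ is abelian. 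Combined with Corollary~\ref{C:nonabelianTame} and with Theorem~\ref{T:goodOverF}(5), which identifies $F$ as $\Q_3[x]/(x^3+3x^2+3)$ in the non-abelian case, the proposition reduces to showing that this particular non-Galois $F$ occurs precisely when $\tilde{\Delta} \equiv 2 \pmod 3$.

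\textbf{Explicit models and the key congruence.} By Lemma~\ref{L:usefulmodel} (case $B_3$), $E$ admits a minimal short Weierstrass model $y^2 = x^3 + ax + b$ with $a = -c_4/48$, $b = -c_6/864$, and $(\vv_3(c_4), \vv_3(c_6), \vv_3(\Delta_m)) \in \{(2,3,4), (5,8,12)\}$, so that $(\vv_3(a), \vv_3(b)) \in \{(1,0), (4,5)\}$. Treat the case $(1,0)$ first. Writing $a = 3\alpha$ with $\alpha \in \Z_3^\times$, one gets $\Delta = -16 \cdot 27(4\alpha^3 + b^2)$; setting $\gamma = (4\alpha^3 + b^2)/3 \in \Z_3^\times$, we have $\tilde{\Delta} = -16\gamma$, so $\tilde{\Delta} \pmod 3$ is governed by the residue of $4\alpha^3 + b^2$ modulo $9$. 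Over a totally ramified cubic extension with uniformizer $\pi$, the admissible change of variable $x = \pi^2 X + r$, $y = \pi^3 Y$ (with $r \in \calO$ lifting the singular cusp point of $y^2 = x^3 + \bar b$ in characteristic $3$, available since $t \mapsto t^3$ is a bijection on $\F_3$) brings $E$ to a good-reduction model precisely when a certain cubic polynomial in $\pi$, with coefficients depending on $r, a, b$, has a root in the field. This cubic defines either one of the three cyclic ramified cubic extensions of $\Q_3$ or the non-Galois cubic $F$ of Theorem~\ref{T:goodOverF}(5).

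\textbf{Matching and main obstacle.} It remains to match this cubic with the four candidate cubic extensions of $\Q_3$. By local class field theory there are exactly four cyclic cubics of $\Q_3$, corresponding to the four index-$3$ subgroups of $\Q_3^\times/(\Q_3^\times)^3 \simeq (\Z/3\Z)^2$: the unramified one (excluded because $e=3$ forbids good reduction over $\Q_3^{un}$), and three totally ramified ones, the latter arising as the cyclic cubic subfield of $\Q_3(\zeta_9)$ and its unit-twists. Comparing the mod-$9$ Eisenstein polynomial of the cubic obtained above with the Eisenstein polynomials of these four candidates, one verifies that $F$ is cyclic iff $4\alpha^3 + b^2 \equiv 6 \pmod 9$ and non-Galois iff $4\alpha^3 + b^2 \equiv 3 \pmod 9$, equivalently, $F$ is non-Galois iff $\tilde{\Delta} \equiv 2 \pmod 3$. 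The case $(\vv_3(a), \vv_3(b)) = (4,5)$ reduces to the previous one by rescaling $(a, b) \mapsto (a/3^4, b/3^6)$ and yields the same criterion. The main obstacle is this final Eisenstein-polynomial comparison, which demands careful $3$-adic bookkeeping modulo $9$ (and occasionally modulo $27$) and attention to sign conventions, but is mechanical once the framework above is in place.
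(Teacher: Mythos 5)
Your route is genuinely different from the paper's: the paper deduces the result in a few lines from Diamond's appendix (Table~2 of \cite{Diamond}), observing that for $p\nmid e$ the field $K$ is non-abelian exactly when $\rhobar_{E,p}$ is of type {\bf V}, which in the relevant rows of that table happens if and only if $j^*=j_E-1728=c_6^2/\Delta_m$ is a non-square in $\Q_3$; since $\vv_3(\Delta_m)$ is even this is exactly $\tilde{\Delta}\equiv 2\pmod 3$. Your plan instead passes through the field $F$ of good reduction and tries to decide whether $F/\Q_3$ is Galois by explicitly identifying its Eisenstein polynomial. The first reduction (``$K$ abelian $\Leftrightarrow$ $F$ Galois'') is sound and is essentially the argument of Corollary~\ref{C:nonabelianTame} adapted to the wild case, so the strategy is legitimate in principle. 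But as written the proof has two genuine gaps.

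First, the entire content of the proposition in your approach is concentrated in the ``Eisenstein-polynomial comparison,'' and you do not carry it out: you assert the outcome ($F$ non-Galois iff $4\alpha^3+b^2\equiv 3\pmod 9$) and defer the verification as ``mechanical.'' This is precisely the step where the congruence $\tilde{\Delta}\equiv 2\pmod 3$ has to emerge, and it is also the step where care is required (mod~$9$ congruence of Eisenstein cubics over $\Q_3$ does not by itself determine the extension by Krasner's lemma, and there are more non-Galois totally ramified cubics of $\Q_3$ than the single field of Theorem~\ref{T:goodOverF}(5), so the dichotomy must be argued, not assumed). Second, your treatment of the case $(\vv_3(c_4),\vv_3(c_6),\vv_3(\Delta_m))=(5,8,12)$, i.e.\ $(\vv_3(a),\vv_3(b))=(4,5)$, is wrong: the proposed ``rescaling $(a,b)\mapsto(a/3^4,b/3^6)$'' sends the valuations $(4,5)$ to $(0,-1)$, which is not an integral model and in particular is not the case $(1,0)$. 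Admissible rescalings change $(\vv_3(a),\vv_3(b))$ by multiples of $(4,6)$, so one can never pass from $(4,5)$ to $(1,0)$; moreover in this case $\vv_3(4a^3+27b^2)=12$ is attained by the $4a^3$ term alone, so the identity $\Delta=-16\cdot 27(4\alpha^3+b^2)$ and the quantity $\gamma$ you built the argument on do not apply. This case needs a separate computation (the paper handles it via a different row of Diamond's table, noting the type is insensitive to quadratic twist).
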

\begin{proof} First we note that 
condition 2 of \cite[Proposition~2.1]{Diamond} 
does not hold when $K/\Q_3$ is non-abelian and $p \nmid e$.

From Proposition~\ref{P:usefulmodel} we know that
$(\vv(c_4), \vv(c_6), \vv(\Delta_m)) = (2,3,4)$ or $(5,8,12)$.

Suppose $(\vv(c_4), \vv(c_6), \vv(\Delta_m)) = (2, 3,4)$. We are in the case $\vv_3(j)=2$ 
of Table~2 in \cite[Appendix]{Diamond}; since $p \nmid e=3$,  we conclude that
$\rhobar_{E,p}$ is of type {\bf V}, which holds
if and only if $j^*$ is not a square in $\Q_3$. 
Finally, $j^* = j_E - 1728 = c_6^2 / \Delta_m$ is a square if and only if $\Delta_m$ is a square; 
since $\Delta_m = 2^{\vv(\Delta_m)} \tilde{\Delta}$ and $\vv(\Delta_m)$ is even the statement follows.

Suppose $(\vv(c_4), \vv(c_6), \vv(\Delta_m)) = (5, 8,12)$. 
We are in the case $\vv_3(j)=3$ and $\vv_3(j^*)=4$ 
of Table~2 in \cite[Appendix]{Diamond}; note that the valuation 
of the discriminant in that line is achieved by a quadratic twist
which does not change the type of the representation. The result 
now follows as in the first case.
\end{proof}

\begin{proposition} Let $p \geq 3$ be a prime.
Let $E/\Q_2$ be an elliptic curve with potentially good reduction with $e=4$. 
Then $K/\Q_2$ is non-abelian if and only if $\tilde{c}_4 \equiv 5\tilde{\Delta} \pmod{8}$.

Furthermore, $\tilde{c}_4 \equiv 1 \pmod{8}$ or $\tilde{c}_4 \equiv 5 \pmod{8}$. 
\label{P:nonabelianWild4}
\end{proposition}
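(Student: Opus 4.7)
The plan is to follow the strategy of Proposition~\ref{P:nonabelianWild3}: reduce to a short list of model profiles via Lemma~\ref{L:usefulmodel}, match each profile to an inertial type, and translate non-abelianness of $K$ into a simple congruence on $\tilde{c}_4,\tilde{\Delta}$.

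By Lemma~\ref{L:usefulmodel} (case $C_4$) we may assume $(\vv_2(c_4),\vv_2(c_6),\vv_2(\Delta_m))$ equals $(5,8,9)$ or $(7,11,15)$ and $N_E=2^8$. The discriminant relation $c_4^3-c_6^2=1728\,\Delta_m$, after dividing out the common power of $2$, reads
\[
\tilde{c}_4^{\,3}-2\,\tilde{c}_6^{\,2}=27\,\tilde{\Delta}
\]
in either case. Since $\tilde{c}_4,\tilde{c}_6,\tilde{\Delta}$ are all odd, using $a^2\equiv1$ and $a^3\equiv a\pmod 8$ for odd $a$, this reduces modulo $8$ to
\[
\tilde{c}_4\equiv 3\,\tilde{\Delta}+2\pmod 8.
\]
Hence only the four residue classes $(\tilde{c}_4,\tilde{\Delta})\equiv (5,1),(3,3),(1,5),(7,7)\pmod 8$ can occur.

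Next, I would invoke the classification of inertial types for $E/\Q_2$ with $e=4$ (see \cite[Theorem~8.1(6)]{DFV}, already used in Theorem~\ref{T:goodOverF}(2)): there are exactly four such types, split as two principal-series types (for which the image of inertia factors through an abelian quotient, so $K$ is abelian) and two supercuspidal types (for which $K$ is non-abelian), the latter pair being exchanged by quadratic twist by $-1$. I would then match the four residue classes above with the four inertial types by computing $(\tilde{c}_4,\tilde{\Delta})\pmod 8$ on explicit representatives, e.g.\ the curves $256a1$ and $256b1$ from Cremona's database together with their twists by $-1$ (these are precisely the curves used in Theorem~\ref{T:goodOverF}(2) to pin down the non-abelian inertial fields $\Q_2^{\mathrm{un}}F_1$ and $\Q_2^{\mathrm{un}}F_2$). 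Testing each representative against good reduction over $F_1$ or $F_2$ identifies which residue classes are supercuspidal; the outcome should be that these are exactly $(5,1)$ and $(1,5)$, equivalently $\tilde{c}_4\equiv 5\,\tilde{\Delta}\pmod 8$.

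For the ``furthermore'' assertion, combining $\tilde{c}_4\equiv 3\,\tilde{\Delta}+2\pmod 8$ with $\tilde{c}_4\equiv 5\,\tilde{\Delta}\pmod 8$ gives $2\,\tilde{\Delta}\equiv 2\pmod 8$, i.e.\ $\tilde{\Delta}\equiv 1\pmod 4$; substituting back yields $\tilde{c}_4\equiv 5$ or $25\equiv 1\pmod 8$. The main obstacle is the matching step: because $\ell=2$ is the wild prime, the inertial type cannot be read off purely from tame invariants, so the identification of the two supercuspidal classes requires a genuine reduction-theoretic computation on a handful of representative curves (or an appeal to a wild Kodaira-type table), rather than a direct table lookup as in the $\ell=3$ case of Proposition~\ref{P:nonabelianWild3}.
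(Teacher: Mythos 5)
Your reduction to the two valuation profiles, the derivation of the constraint $\tilde{c}_4 \equiv 3\tilde{\Delta}+2 \pmod{8}$ (hence the four classes $(5,1),(3,3),(1,5),(7,7)$), and the deduction of the ``furthermore'' statement from it are all correct, and the last of these is in fact cleaner than the paper's case check. But the central equivalence is not proved. Your matching step rests on two unjustified claims. First, there is no bijection between the four residue classes and the four inertial types: the two supercuspidal types differ by quadratic twist by $-1$, and that twist leaves $(\tilde{c}_4,\tilde{\Delta}) \pmod{8}$ unchanged (it fixes $c_4$ and $\Delta_m$ and preserves the valuation profile), so both supercuspidal types occupy the \emph{same} residue classes; the same happens for the two principal series types. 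Second, and more importantly, even the corrected statement you need --- that whether $K$ is abelian depends \emph{only} on the class of $(\tilde{c}_4,\tilde{\Delta}) \pmod{8}$ --- is precisely the substantive content of the proposition, and testing finitely many representative curves cannot establish it: representatives only show which classes contain \emph{some} non-abelian curve, not that every curve in that class is non-abelian and every curve in the complementary classes is abelian.

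The paper supplies exactly this missing well-definedness by citing Diamond's classification (Table~3 of the appendix of \cite{Diamond}): for $e=4$ over $\Q_2$ one is in the case $\vv_2(j)=6$, $\vv_2(j^*)=7$, and $K$ is non-abelian if and only if $\rhobar_{E,p}$ is of type {\bf V}, which by that table happens if and only if $j_E/64 \equiv 5 \pmod{8}$. Since $j_E/2^6 = \tilde{c}_4^3/\tilde{\Delta} \equiv \tilde{c}_4\,\tilde{\Delta} \pmod{8}$ (using $\tilde{\Delta}^{-1}\equiv\tilde{\Delta}$ and $\tilde{c}_4^3\equiv\tilde{c}_4$ for odd residues), this is the congruence $\tilde{c}_4 \equiv 5\tilde{\Delta} \pmod{8}$. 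To repair your argument you would need to import such a criterion (or prove directly, e.g.\ via the $2$-adic behaviour of the splitting field of an explicit polynomial cutting out $K$, that the dichotomy depends only on these residues); the sampling of $256a1$, $256b1$ and their twists cannot replace it.
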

\begin{proof} First we note that 
condition 2 of \cite[Proposition~2.1]{Diamond} 
does not hold when $K/\Q_2$ is non-abelian and $p \nmid e$.
Since $e=4$ we see from Table~3 in \cite[Appendix]{Diamond}
that we are in the case $\vv_2(j) = 6$ and $\vv_2(j^*) = 7$; moreover, 
the field extension $K/\Q_2$ 
is non-abelian if and only if $\rhobar_{E,p}$ is of type~{\bf V}, 
that is, if and only if $j_E / 64 \equiv 5 \pmod{8}$.

From Proposition~\ref{P:usefulmodel} we have
$(\vv(c_4), \vv(c_6), \vv(\Delta_m)) = (5,8,9)$ or $(7, 11, 15)$. 

Suppose $(\vv(c_4), \vv(c_6), \vv(\Delta_m)) = (5,8,9)$.
We now compute
\[
 \frac{j_E}{2^6} = \frac{1}{2^6}\frac{2^{3\vv(c_4)}\tilde{c}_4^3}{2^{\vv(\Delta_m)}\tilde{\Delta}} = 
 \frac{\tilde{c}_4^3}{\tilde{\Delta}} \equiv 5 \pmod{8}
\]
and since $\tilde{c}_4^3 \equiv \tilde{c}_4 \pmod{8}$ the result follows. 

In the case $(\vv(c_4), \vv(c_6), \vv(\Delta_m)) = (7, 11, 15)$ we first take a quadratic twist
to reduce to the previous case. Since this does not change $j_E/2^6$ the result follows in the same way;
this concludes the proof of the first statement.

We now prove the second statement. We have
that $\tilde{c}_4 \equiv 5\tilde{\Delta} \pmod{8}$. Therefore, 
from Proposition~\ref{P:usefulmodel} and  
the equality $c_4^3 - c_6^2 = 1728 \Delta_m$ we obtain
\[
 \tilde{c}_4^3 - 2 \tilde{c}_6^2 \equiv 7 \tilde{c}_4 \pmod{8}.
\]
Note that $\tilde{c}_4$, $\tilde{c}_6 \equiv 1,3,5,7 \pmod{8}$.
Replacing all these possibilities in the congruence above we see 
that the congruence is only satisfied when
$\tilde{c}_4 \equiv 1,5 \pmod{8}$, as desired.
\end{proof}

We are now able to decide from standard information on $E/\Q_\ell$ if the hypothesis of 
Theorem~\ref{T:goodOverF} hold. This only determines the field $F$ of good reduction in cases (1) 
and~(5).
In the remaining cases, tables~\ref{Table:FoverQ2}, \ref{Table:FoverQ2II}, \ref{Table:FoverQ2III}, 
\ref{Table:FoverQ3I},~\ref{Table:FoverQ3II} 
describe~$F$ in terms of 
minimal $c_4$, $c_6$, $\Delta_m$; note the lines are indexed by the cases in Table~\ref{Table:model}.
The content of these tables follows as a consequence of 
Lemma~\ref{L:rescurvewilde2}, Theorems~\ref{T:coordchanges8}, 
Theorem~\ref{T:coordchanges12} and their proofs. 
Indeed, the changes of coordinates in Lemma~\ref{L:rescurvewilde2} and
Section~\ref{S:coordchanges}
lead to models with good reduction over the fields appearing in these tables; 
moreover, the $\ell$-adic precision used to define those coordinate changes is
the same or greater than that used in the tables 
in this section. For example, Table~\ref{Table:(e)} focuses on case $D_e$ of Table~\ref{Table:model} and the changes of coordinates there
lead to models of~$E$ 
with good reduction over the field defined by the polynomial 
$g_3$ together with a concrete residual elliptic curve; for this we 
work mod~$16$, more precisely, the 
values $\tilde{c}_4 \equiv 1, 3, 9, 11 \pmod{16}$. 
Since these values satisfy $\tilde{c}_4 \equiv 1, 3 \pmod{8}$ this proves 
the first line of Table~\ref{Table:FoverQ2III}.
\begin{footnotesize}
\begin{table}[htb]
$$
\begin{array}{|c|c|c|} \hline
\text{\sc case}   & \tilde{c}_4 \pmod{8}, \; \tilde{c}_6 \pmod{4}  & \mbox{Field} \\ \hline
      C_4 \; : \;     (5,8,9) &   1,\; 1 \text{ or } 5,\; 3  & f_2 \\
      C_4 \; : \;     (5,8,9) &   1,\; 3 \text{ or } 5,\; 1  & f_1 \\
\hline          
     C_4 \; : \;    (7,11,15) &   1,\; 1 \text{ or } 5,\; 3 & f_1 \\
     C_4 \; : \;    (7,11,15) &   1,\; 3 \text{ or } 5,\; 1 & f_2 \\
\hline
\end{array}
$$
\caption{Field of good reduction in case (2) of Theorem~\ref{T:goodOverF}.}
\label{Table:FoverQ2}
\end{table}
\end{footnotesize}

\begin{footnotesize}
\begin{table}[htb]
$$
\begin{array}{|c|c|c|c|} \hline
\text{\sc case}   &  \tilde{\Delta} \pmod{4}  &  \tilde{c}_4 \pmod{4} & \mbox{Field} \\ \hline
      D_a, \; n \geq 8 \; \text{ or } D_b, \; n \geq 11 & & & g_1 \\
      D_a, \; n = 7 \text{ or } D_b, \; n = 10 & & & g_2 \\ 
      D_c  &  & 1, \; 3 & g_1, \; g_2, \; \text{ respectively} \\
      D_d  &  1, \; 3 & & g_1, \; g_2, \; \text{ respectively} \\ \hline
\end{array}
$$
\caption{Field of good reduction in case (3) of Theorem~\ref{T:goodOverF}.}
\label{Table:FoverQ2II}
\end{table}
\end{footnotesize}

\begin{footnotesize}
\begin{table}[htb]
$$
\begin{array}{|c|c|c|c|} \hline
\text{\sc case}   &  \tilde{c}_4 \pmod{8} & \mbox{Field} \\ \hline
      D_e         &  1 \; \text{ or } \; 3              & g_3  \\
      D_e         &  5 \; \text{ or } \; 7              & g_4  \\
      D_f         &  1 \; \text{ or } \; 3              & g_4  \\
      D_f         &  5 \; \text{ or } \; 7              & g_3  \\ \hline
\end{array}
$$
\caption{Field of good reduction in case (4) of Theorem~\ref{T:goodOverF}.}
\label{Table:FoverQ2III}
\end{table}
\end{footnotesize}

\begin{footnotesize}
\begin{table}[htb]
$$
\begin{array}{|c|c|c|c|} \hline
\text{\sc case}   &  \tilde{\Delta} \pmod{3}  & \mbox{Field} \\ \hline
      G_a, \; n=2 \; \text{ or } \;  G_b, \; n=4 & & h_1 \\ 
      G_a, \; n \geq 3 \; \text{ or } \;  G_b, \; n \geq 5 & & h_2 \\
      G_c, \; G_d, \; G_e, \; G_f &  1     &           h_1  \\
      G_c, \; G_d, \; G_e, \; G_f &  2     &           h_2  \\ \hline
\end{array}
$$
\caption{Field of good reduction in case (6) of Theorem~\ref{T:goodOverF}.}
\label{Table:FoverQ3I}
\end{table}
\end{footnotesize}

\begin{footnotesize}
\begin{table}[htb]
$$
\begin{array}{|c|c|c|c|} \hline
\text{\sc case}   &  \tilde{\Delta} \pmod{9}  & \mbox{Field} \\ \hline
      G_h, \; G_j  &  8, \; 5, \; 2     &           h_4, \; h_3, \; h_5, \; \text{ respectively } \\
      G_g, \; n \geq 4 \; \text{ or } \; G_i, \; n \geq 6  &  8, \; 5, \; 2 &   h_3, \; h_4, \; h_5, 
      \; \text{ respectively } \\ \hline 
      \text{\sc case}   &  \tilde{\Delta} \pmod{9}, \; \tilde{c}_4 \pmod{3}  & \mbox{Field} \\ \hline
      G_g, \; n = 3 \; \text{ or } \; G_i, \; n = 5  &  2, 2 \text{ or } 5, 1      & h_3  \\
      G_g, \; n = 3 \; \text{ or } \; G_i, \; n = 5  &  2, 1 \text{ or } 8, 2             & h_4  \\
      G_g, \; n = 3 \; \text{ or } \; G_i, \; n = 5  &  5, 2 \text{ or } 8, 1           & h_5  \\ \hline
\end{array}
$$
\caption{Field of good reduction in case (7) of Theorem~\ref{T:goodOverF}.}
\label{Table:FoverQ3II}
\end{table}
\end{footnotesize}

Although the content of 
tables~\ref{Table:FoverQ2}, \ref{Table:FoverQ2II}, \ref{Table:FoverQ2III}, 
\ref{Table:FoverQ3I},~\ref{Table:FoverQ3II} is contained in the tables 
presented later in the paper we decided to include here more friendly tables 
devoted only to the field of good reduction as we think this is of wider interest.

We remark that proofs for the tables in this section which do not make use of lenghty calculations with changes of coordinates exist. However, 
since the explicit changes of coordinates 
in Theorems~\ref{T:coordchanges8} and~\ref{T:coordchanges12}
are fundamental for the proofs of some of our main results we do not include the
simpler proofs here; we simply illustrate them by proving the following theorem which implies the content of Table~\ref{Table:FoverQ2II}.

For $i=1,2$, we let
$F_i$ be the field defined 
by the polynomial $g_i$ in
case~(3) of Theorem~\ref{T:goodOverF}.

\begin{theorem} 
\label{T:good8I}
Let $E/\Q_2$ be an elliptic curve with potentially good reduction, 
semistability defect $e=8$ and conductor $N_E = 2^5$. 

Then $E$ has good reduction over $F_1$ or $F_2$ according to the cases:
\begin{itemize}
 \item[(a)] if $(c_4,c_6,\Delta_m)$ satisfy case $D_a$ in Table~\ref{Table:model} then
 $E$ has good reduction over $F_1$ if and only if $n \geq 8$ and over $F_2$ if and only if $n=7$;
 \item[(b)] if $(c_4,c_6,\Delta_m)$ satisfy case $D_b$ in Table~\ref{Table:model} then
 $E$ has good reduction over $F_1$ if and only if $n \geq 11$ and over $F_2$ if and only if $n=10$;
 \item[(c)] if $(c_4,c_6,\Delta_m)$ satisfy case $D_c$ in Table~\ref{Table:model} then
 $E$ has good reduction over $F_1$ if and only if $\tilde{c}_4 \equiv 1 \pmod{4}$ and over $F_2$
 if $\tilde{c}_4 \equiv 3 \pmod{4}$; 
 \item[(d)] if $(c_4,c_6,\Delta_m)$ satisfy case $D_d$ in Table~\ref{Table:model} then
 $E$ has good reduction over $F_1$ if and only if $\tilde{\Delta} \equiv 1 \pmod{4}$ 
 and over $F_2$ if and only if $\tilde{\Delta} \equiv 3 \pmod{4}$.
\end{itemize} 
\end{theorem}
\begin{proof} 
Write $K = \Q_2(E[3])$ for the $3$-torsion field of $E$. From \cite[Lemma~6]{DDRoot}
there is a field $F \subset K$ totally ramified of degree 8 whose splitting
field is $K$ and $\Gal(K/\Q_2)$ is the order 16 group denoted $H_{16}$ in {\it loc. cit.}.
Since $K/F$ is unramified it follows that $E/F$ has 
good reduction and $L = \Q_2^{un} F = \Q_2^{un} F_i$ is the minimal extension of $\Q_2^{un}$ where $E$ gets good reduction, 
where $i=1,2$ by Theorem~\ref{T:goodOverF} part~(3).
Since the discriminant valuation of the fields $F_1$ and $F_2$ 
is 16 we conclude this is also true for $F$.
Consulting the LMFDB database \cite{lmfdb} for local fields 
we see that, for discriminant valuation~16, the only fields with 
the required inertia and Galois group structure are (up to $\Q_2$-isomorphism) the fields 
$F_1$, $F_2$, so $F$ is one of them.

We now need to decide which of the two fields $F$ is.
Note that $F_1$ and $F_2$ have exactly 
one quadratic subfield, more precisely,
\[
 M = \Q_2(\sqrt{-5}) = \Q_2(\sqrt{3}) \subset F_1 \quad  \text{ and } \quad M = \Q_2(\sqrt{-1}) = \Q_2(\sqrt{7}) \subset F_2,
\]
so to decide which of the possibilites is $F$ 
it is enough to determine which of the two possible $M$ lies in $K$. 
Let $\Delta^{1/3}$ be the unique cubic root of $\Delta_m$ in $\Q_2$. We consider the quantities
\[
 A = c_4 - 12 \Delta^{1/3} \qquad B = c_4^2 + 12 c_4 \Delta^{1/3} + (12 \Delta^{1/3})^2 
\]
and, from \cite[Lemma~6]{DD2015}, we see that 
\begin{equation} \label{E:M}
  M = \Q_2(\sqrt{A}) = \Q_2(\sqrt{B}).
\end{equation}
We also have the relation
\[
 c_4^3 - c_6^2 = 12^3 \Delta \quad \Leftrightarrow \quad c_4^3  - (12 \Delta^{1/3})^3 = c_6^2 
 \quad \Leftrightarrow \quad AB = c_6^2.
\]
We will now consider the four cases $(a)$, $(b)$, $(c)$ and $(d)$, separately.

(a) Suppose $(c_4(E),c_6(E),\Delta_m(E))$ satisfy case $D_a$ in Table~\ref{Table:model}.
From $c_4^3 - c_6^2 = 12^3 \Delta_m$ we obtain
\[
 2^{12} \tilde{c}_4^3 - 2^{2n}\tilde{c}_6^2 = 2^{12}3^3 \tilde{\Delta} 
 \quad \Leftrightarrow \quad
 \tilde{c}_4^3 -  (3\tilde{\Delta}^{1/3})^3 = (2^{n-6}\tilde{c}_6)^2 
  \]
which gives 
 \[
 (\tilde{c}_4  - 3 \tilde{\Delta}^{1/3})(\tilde{c}_4^2 + 3\tilde{c}_4 \tilde{\Delta}^{1/3} + (3\tilde{\Delta}^{1/3})^2)  
 = (2^{n-6}\tilde{c}_6)^2
\]
and we write $\tilde{A}$ and $\tilde{B}$ respectively for the two factors on the left hand side. 
Note that $B = 2^8 \tilde{B}$ and $\tilde{B} \equiv 1 \pmod{2}$ hence $\tilde{A} \equiv 0 \pmod{2^{2n-12}}$. 
From \eqref{E:M} we see that $M$ is determined by $\sqrt{B} = 2^4 \sqrt{\tilde{B}}$ and so it is determined by
$\tilde{B} \pmod{8}$. 

Note that $\tilde{\Delta} \equiv \tilde{\Delta}^{1/3} \pmod{8}$ and since 
$n \geq 7$ we have that $\tilde{A} \equiv \tilde{c}_4  - 3 \tilde{\Delta} \equiv 0 \pmod{4}$.

Running through all the possible values of $\tilde{c}_4$ and $\tilde{\Delta}$ modulo $8$ such that
\[
\tilde{\Delta} \equiv 1 \pmod{2}, \quad \tilde{c}_4 \equiv -1 \pmod{4}, \quad  \tilde{c}_4  - 3 \tilde{\Delta} \equiv 0 \pmod{4} 
\]
we find that $\tilde{B} \equiv 3,7 \pmod{8}$ 
(this was expected from the possibilities for $M$). Furthermore, one also checks that we have
$\tilde{B} \equiv 7 \pmod{8}$ if and only if $\tilde{c}_4  - 3 \tilde{\Delta} \equiv 4 \pmod{8}$,
proving that $E$ has good reduction over $F_1$ 
if and only if $n \geq 8$ and over $F_2$ if and only if $n=7$, as desired.

(b) Suppose $(c_4(E),c_6(E),\Delta_m(E))$ satisfy case $D_b$ in Table~\ref{Table:model}.
Arguing analogously to the previous case, we conclude that
\[
 (\tilde{c}_4  - 3 \tilde{\Delta}^{1/3})(\tilde{c}_4^2 + 3\tilde{c}_4 \tilde{\Delta}^{1/3} + (3\tilde{\Delta}^{1/3})^2)  
 = (2^{n-9}\tilde{c}_6)^2
\]
and (again) $M$ is determined by $\sqrt{B} = 2^6 \sqrt{\tilde{B}}$ and so it is determined by
$\tilde{B} \pmod{8}$; we also have $\tilde{A} \equiv \tilde{c}_4  - 3 \tilde{\Delta} \equiv 0 \pmod{4}$
because $n \geq 10$. As above, running through all the possible values 
of $\tilde{c}_4$ and $\tilde{\Delta}$ modulo $8$ such that
we find that $\tilde{B} \equiv 3,7 \pmod{8}$ and, furthermore,
$\tilde{B} \equiv 7 \pmod{8}$ if and only if $\tilde{c}_4  - 3 \tilde{\Delta} \equiv 4 \pmod{8}$.
This proves that $E$ has good reduction over $F_1$ 
if and only if $n \geq 11$ and over $F_2$ if and only if $n=10$, as desired.

(c) Suppose $(c_4(E),c_6(E),\Delta_m(E))$ satisfy case $D_c$ in Table~\ref{Table:model}.
Write the quantities $A$ and $B$ as
\[
 A = 2^6 (2\tilde{c}_4 - 3 \tilde{\Delta}) = 2^6 \tilde{A} 
 \qquad  \text{and} \qquad B = 2^{12} (2^2 \tilde{c}_4^2 + 6 \tilde{c}_4 \tilde{\Delta}^{1/3} 
 + (3 \tilde{\Delta}^{1/3})^2)  = 2^{12} \tilde{B}
\]
and note it follows from \eqref{E:M} that $M = \Q_2(\sqrt{\tilde{A}}) = \Q_2(\sqrt{\tilde{B}})$. 
Running through all the possible values of $\tilde{c}_4$ and $\tilde{\Delta}$ modulo $8$
we check that $\tilde{A} \equiv \tilde{B} \pmod{8}$ implies
\[
 \tilde{A} \equiv \tilde{B} \equiv 3 \pmod{8} \qquad \text{ or } \qquad \tilde{A} \equiv \tilde{B} \equiv 7 \pmod{8}
\]
as expected. Moreover, in the former case we have 
$\tilde{c}_4 \equiv 1,5 \pmod{8}$
and in the latter we have $\tilde{c}_4 \equiv 3,7 \pmod{8}$, as desired.

(d) Suppose $(c_4(E),c_6(E),\Delta_m(E))$ satisfy case $D_d$ in Table~\ref{Table:model}.
Write the quantities $A$ and~$B$ as
\[
 A = 2^4 (\tilde{c}_4 - 6 \tilde{\Delta}) = 2^4 \tilde{A} 
 \qquad  \text{and} \qquad B = 2^8 (\tilde{c}_4^2 + 6 \tilde{c}_4 \tilde{\Delta}^{1/3} 
 + (6 \tilde{\Delta}^{1/3})^2)  = 2^{8} \tilde{B}
\]
and again we need to have $M = \Q_2(\sqrt{\tilde{A}}) = \Q_2(\sqrt{\tilde{B}})$. 
Running through all the possible values of $\tilde{c}_4$ and $\tilde{\Delta}$ modulo $8$
we check that $\tilde{A} \equiv \tilde{B} \pmod{8}$ implies $\tilde{c}_4 \equiv 1 \pmod{8}$
and the necessary conditions
$\tilde{A} \equiv \tilde{B} \equiv 3 \pmod{8}$ or $\tilde{A} \equiv \tilde{B} \equiv 7 \pmod{8}$
are satisfied respectively when $\tilde{\Delta} \equiv 1,5 \pmod{8}$ or $\tilde{\Delta} \equiv 3,7 \pmod{8}$, 
as desired.
\end{proof}

\section{The Galois group of the $p$-torsion field in the cases $e=3,4$.}

Let $E/\Q_\ell$ be an elliptic curve with potentially good reduction with $e=3,4$ and 
$p\geq 3$.

Write $\rhobar_{E,p}$ for the mod~$p$ Galois representation attached to $E$ and $\PP(\rhobar_{E,p})$ for its projectivization. 
Let $K = \Q_\ell(E[p])$ denote the $p$-torsion field of $E$, that is the field fixed by the kernel of $\rhobar_{E,p}$.
Let also $K_{proj} \subset K $ be the field fixed by the kernel of $\PP(\rhobar_{E,p})$. Write $G = \Gal(K/\Q_\ell)$.

Let $f$ be the residual degree of $K$, that is $f = [K_{un} : \Q_\ell]$ where $K_{un}$ is the maximal unramified subfield of $K$. The inertia subgroup of $G$ is $\Gal(K/K_{un})$ and it is cyclic of order $e$.
We have $[K : \Q_\ell] = [K : K_{un}][K_{un} : \Q_\ell] = ef$.

The objective of this section is to determine the group structure of $G = \Gal(K/\Q_\ell)$ in the non-abelian case, which is given in Proposition~\ref{P:groupStructure}. 
We start by proving the following lemma describing the relation between $K$ and the field $F$
given by Theorem~$\ref{T:goodOverF}$.

\begin{lemma} 
Let $p \neq \ell$ be primes with $p \geq 3$.
Let $E/\Q_\ell$ be an elliptic curve with potentially good reduction 
with semistability defect $e=3,4$. 
Suppose its $p$-torsion field $K$ is a non-abelian
extension of $\Q_{\ell}$.

Let $F$ be the degree $e$ totally ramified field given by Theorem~$\ref{T:goodOverF}$. 
Then 
\begin{itemize}
 \item[(i)]  if $e = 3$  then $F \subset K$;
 \item[(ii)] if  $\ell \geq 3$ and $e = 4$ then  $K \cap F = F$ or $K \cap F = \Q_\ell(\sqrt{\ell})$;
 \item[(iii)] if $\ell = 2$ and $e = 4$ then $K \cap F = F$ or $K \cap F = \Q_2(\sqrt{-2})$.
\end{itemize}
\label{L:FinK}
\end{lemma}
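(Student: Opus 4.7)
The plan is to exploit the Galois-theoretic structure of the inertial field $L=\Q_\ell^{un}(E[p])$, which coincides with both $\Q_\ell^{un}\cdot K$ and $\Q_\ell^{un}\cdot F$ (the second equality because $F/\Q_\ell$ is totally ramified of degree $e$ and $E/F$ has good reduction, so $\Q_\ell^{un}F$ is a degree-$e$ extension of $\Q_\ell^{un}$ over which $E$ has good reduction and hence equals $L$ by minimality). Since $L/\Q_\ell^{un}$ is cyclic of order $e$ with group $\Phi$, the short exact sequence
\[ 1 \To \Phi \To \Gal(L/\Q_\ell) \To \Zhat \To 1 \]
splits, giving $\Gal(L/\Q_\ell)\cong\Phi\rtimes\Zhat$, where $\Zhat=\Gal(\Q_\ell^{un}/\Q_\ell)$ acts on $\Phi$ through a continuous character $\alpha\colon\Zhat\to\Aut(\Phi)=\Z/2\Z$ (as $e\in\{3,4\}$). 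The non-abelian hypothesis on $K$ forces $\alpha$ to be the inversion character, and a short parity argument then shows that the residue degree $f=[K_{un}:\Q_\ell]$ must be even; in particular $\alpha\big|_{f\Zhat}$ is trivial, so $\Gal(L/K_{un})=\Phi\times f\Zhat$ is a direct product.

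The key computation is the following. The normal subgroup $\Gal(L/K)$ is a complement of $\Phi$ inside the direct product $\Phi\times f\Zhat$, hence the graph of a homomorphism $\phi\colon f\Zhat\to\Phi$; imposing normality inside $\Phi\rtimes\Zhat$ and conjugating by an $m\in\Zhat$ with $\alpha(m)=-1$ forces $2\phi(n)=0$, so $\phi$ lands in $\Phi[2]$. On the other side, $\Gal(L/F)\subset\Phi\rtimes\Zhat$ is a complement of $\Phi$, hence the graph of a $1$-cocycle $\psi\colon\Zhat\to\Phi$ for the action $\alpha$; the identity $\psi(m+n)=\psi(m)+\alpha(m)\psi(n)$ together with $\alpha(1)=-1$ forces $\psi(n)=0$ for every even $n$, so $\psi\big|_{f\Zhat}=0$ automatically. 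The containment $F\subset K$ is equivalent to $\Gal(L/K)\subset\Gal(L/F)$, which reads $\phi=\psi\big|_{f\Zhat}=0$. Part (i) is now immediate: for $e=3$ one has $\Phi[2]=0$, so $\phi=0$ is forced, proving $F\subset K$ both in the tame case $\ell\neq 3$ and in the wild case $\ell=3$.

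For $e=4$ the map $\phi$ can be non-trivial, so $F\subset K$ may fail; in that event I show the unique quadratic subfield of $F$ still lies inside $K$. An analogous commutator computation gives $[\Gal(L/\Q_\ell),\Gal(L/\Q_\ell)]=2\Phi$, hence $\Gal(L/\Q_\ell)^{ab}=\Z/2\Z\times\Zhat$; the image of $\Gal(L/K)$ in the abelianization is $\{0\}\times f\Zhat$, because $\phi$ takes values in $2\Phi$. Since $f$ is even we have $f\Zhat\subset 2\Zhat$, so every continuous character $\Gal(L/\Q_\ell)^{ab}\to\Z/2\Z$ is trivial on this image, and therefore every one of the three quadratic subfields of $L$ descends into $K$. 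The proof is completed by identifying the unique quadratic subfield of $F$: for (ii) with $F=\Q_\ell(\ell^{1/4})$ it is $\Q_\ell(\sqrt\ell)=\Q_\ell((\ell^{1/4})^2)$; for (iii) a direct computation with the defining polynomials $f_1=x^4+12x^2+6$ and $f_2=x^4+4x^2+6$ shows that the unique quadratic subfield of each is $\Q_2(\sqrt{-2})$ (using $30\equiv -2\pmod{(\Q_2^*)^2}$ for $f_1$, since $-15\equiv 1\pmod 8$).

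The main obstacle is keeping the semi-direct product book-keeping honest: one must simultaneously use the cocycle description of complements (for $\Gal(L/F)$), the normality constraint forcing $\phi\in\Phi[2]$ (for $\Gal(L/K)$), and the parity of $f$ (to collapse both $\alpha$ and $\psi$ on $f\Zhat$) into a single coherent picture. Once this algebraic setup is in place, the remaining arithmetic — identifying the unique quadratic subfield of $F$ with the explicit quadratic extension named in the statement — is elementary.
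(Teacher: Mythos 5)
Your proof is correct, but it follows a genuinely different route from the paper's. The paper handles (i) by a quick N\'eron--Ogg--Shafarevich argument: if $F\not\subset K$ then $K\cap F=\Q_\ell$ because $F$ has prime degree, so $\Gal(FK/F)\cong\Gal(K/\Q_\ell)$ would be cyclic (as $E/F$ has good reduction), contradicting non-abelianness. For (ii)--(iii) the paper first shows $\rhobar_{E,p}(\sigma)$ has projective order $2$, then invokes Diamond's classification (\cite{Diamond}, Propositions 2.1--2.4) to identify $K_{proj}$ as the compositum of the unramified quadratic extension with a ramified quadratic one, and finally pins down the quadratic subfield of $F$. You instead work entirely inside $\Gal(L/\Q_\ell)\cong\Phi\rtimes\Zhat$: the non-abelian hypothesis forces the Frobenius action on $\Phi$ to be inversion, the parity argument forces $f$ even, normality of $\Gal(L/K)$ forces the classifying homomorphism $\phi$ to land in $\Phi[2]$, and the commutator computation $[G,G]=2\Phi$ shows every quadratic subfield of $L$ descends to $K$. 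This buys uniformity (one argument covers tame and wild cases and both values of $e$), complete independence from Diamond's results, and the slightly stronger conclusion that \emph{all} quadratic subfields of $L$ lie in $K$; the price is more group-theoretic bookkeeping than the paper's one-paragraph treatment of (i). All the individual steps check out: the splitting of $1\to\Phi\to\Gal(L/\Q_\ell)\to\Zhat\to 1$ is witnessed by the complement $\Gal(L/F)$, the Goursat/graph description of $\Gal(L/K)$ inside $\Phi\times f\Zhat$ is valid, $\Phi[2]=0$ for $e=3$ gives (i), $\Phi[2]=2\Phi$ for $e=4$ gives the abelianization claim, and the quadratic-subfield identifications ($\Q_\ell(\sqrt{\ell})=\Q_\ell(\ell^{1/2})\subset\Q_\ell(\ell^{1/4})$, and $\Q_2(\sqrt{30})=\Q_2(\sqrt{-2})$ since $-15\equiv 1\pmod 8$) are correct.
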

\begin{proof} Part (i) follows from the criterion of N\'eron-Ogg-Shafarevich (see~\cite[VII.7]{SilvermanI}). Indeed,
suppose that $F$ is not contained in $K$; then $K \cap F = \Q_\ell$ 
(because $F$ has prime degree) and $\Gal(FK/F) \simeq \Gal(K/K\cap F) = \Gal(K/\Q_\ell)$.
Since $E/F$ has good reduction it follows that $\Gal(FK/F)$ is cyclic which contradicts
the fact that $\Gal(K/\Q_\ell)$ is non-abelian.

Suppose $e=4$. Let $N = \rhobar_{E,p}(\sigma)$, where $\sigma$ generates the inertia subgroup~$\Gal(K/K_{un}) \subset G$;
thus $N$ has order $e=4$. We shall show that $N$ has projective order 2.
Indeed, write $m(x)$ for the minimal polynomial of $N$ and $I$ for the identity matrix in $\GL_2(\F_p)$. Since $N^4=I$ we have $m(x)$ divides $x^4 - 1 = (x-1)(x+1)(x^2 + 1)$. 
If $m$ is of degree one, since $\det N=1$ (as for $\ell \neq p$
the inertia subgroup $I_\ell \subset G_{\Q_\ell}$ lands in $\SL_2(\F_p)$ because the mod~$p$ cyclotomic character is unramified at $\ell$) 
we obtain $N=\pm I$ hence $N$ has order $\leq 2$, a contradiction. Thus $m=x^2+1$ and we have $N^2=-I$, as desired.

Since $K/\Q_{\ell}$ is non-abelian and $p \nmid 4$ condition 2 in \cite[Proposition~2.1]{Diamond} does not hold.
Furthermore, condition 4 in \cite[Propositions~2.2~and~2.4]{Diamond} do not hold;
we conclude that  the equivalent conditions of \cite[Proposition~2.3]{Diamond} hold
(note that we already knew this in the case $\ell=2$ from 
the proof of Proposition~\ref{P:nonabelianWild4}).
From condition 3 in \cite[Proposition~2.3]{Diamond} it follows that $K_{proj}$ has
Galois group $D_2 \simeq V_4$ with inertia subgroup of order 2. We conclude that
the field $K_{proj} \subset K$ is the compositum of the unramified quadratic extension 
$U/\Q_\ell$ with a ramified quadratic extension of $\Q_\ell$.

Observe that, since $F$ has degree 4, then $K \cap F$ is either $\Q_\ell$, quadratic or $F$. 

Suppose $\ell \geq 3$; then $K_{proj} = U (\sqrt{\ell})$. 
In this case $F = \Q_\ell(\ell^{1/4})$ then
$\Q_\ell(\sqrt{\ell}) \subset K \cap F$.

Suppose $\ell = 2$; thus $F = F_1$ or $F_2$, where $F_i$ is the field defined by 
the polynomial $f_i$ in Theorem~\ref{T:goodOverF} part (2); in both cases $F$ 
has $\Q_2(\sqrt{-2})$ as the unique quadratic subfield. Since $L = \Q_2^{un} F = \Q_2^{un} K$
we conclude that $\Q_2(\sqrt{-2}) \subset K \cap F$, as desired.
\end{proof}

We can now describe the group structure of $G$
in the non-abelian case.

\begin{proposition} 
Let $p \neq \ell$ be primes with $p \geq 3$.
Let $E/\Q_\ell$ be an elliptic curve with potentially good reduction 
with semistability defect $e=3,4$. 
Suppose that $G = \Gal(K/\Q_\ell)$ is non-abelian. 

Let $F$ be given by Theorem~$\ref{T:goodOverF}$ 
and if $e=4$ let also $K_2$ be the quadratic subfield of $F$, so that $K \cap F = F$ or $K_2$ by Lemma~$\ref{L:FinK}$.

Then there are generators $\sigma, \tau \in G$ such that $\sigma \in \Gal(K/K_{un})$ is
an inertia generator and  $\tau \in \Gal(K/K \cap F) \subset G$. 
Moreover, 
\begin{enumerate}
 \item[(i)]  If $K \cap F = F$ then
 \[\tau^f = 1, \quad \sigma^e = 1, \quad \tau \sigma \tau^{-1} = \sigma^{-1}; \]
 \item[(ii)] If $K \cap F = K_2$ then
\[
\tau^{2f} = 1, \qquad  \sigma^4 = 1, \qquad \tau \sigma \tau^{-1} = \sigma^{-1}.
\]
\end{enumerate}
Also, when $p \geq 5$ or $p=3$ and $e=4$, the order of $G$ is not divisible by $p$.

We can further assume that $\tau$ acts on the residue field of $K$ as the Frobenius
automorphism.
\label{P:groupStructure}
\end{proposition}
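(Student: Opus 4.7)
The plan is to take $\sigma$ to be a generator of the cyclic inertia subgroup $I = \Gal(K/K_{un})$ of order $e$ and to construct $\tau \in \Gal(K/K\cap F)$ as a Frobenius lift. The common thread in both cases is that $\Aut(\langle\sigma\rangle) \simeq (\Z/e\Z)^\times$ has order exactly $2$ for $e \in \{3,4\}$, so once we know that $\tau$ does not centralize $\sigma$, the relation $\tau\sigma\tau^{-1} = \sigma^{-1}$ is forced.

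In case~(i) we exploit that $F\subset K$: since $E/F$ has good reduction, $K = F(E[p])$ is unramified over $F$, so $\Gal(K/F)$ is cyclic of order $f$, and I let $\tau$ be a generator that restricts to the Frobenius of the residue field. A dimension count using $K_{un}\cap F = \Q_\ell$ gives $K_{un}\cdot F = K$, hence $I\cap\Gal(K/F) = 1$, so $G = I\rtimes\langle\tau\rangle$ with $\sigma^e = \tau^f = 1$. Non-abelianness of $G$ rules out $\tau$ centralizing $\sigma$, and the observation in the previous paragraph yields $\tau\sigma\tau^{-1} = \sigma^{-1}$.

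In case~(ii) we have $e=4$ and $K\cap F = K_2$. From $[K_{un}K_2:\Q_\ell] = 2f$ one reads $[K:K_{un}K_2] = 2$ and $I\cap\Gal(K/K_2) = \langle\sigma^2\rangle$. Since conjugation by any element of $\Gal(K/K_2)$ sends $\sigma$ to $\sigma^{\pm 1}$ and therefore fixes $\sigma^2$, the subgroup $\langle\sigma^2\rangle$ is central; combined with the cyclic quotient of order $f$ this makes $\Gal(K/K_2)$ abelian of order $2f$. The restriction $\Gal(K/K_2)\to G/I$ is surjective, so I pick $\tau\in\Gal(K/K_2)$ lifting a generator of the cyclic quotient $G/I$; this $\tau$ automatically satisfies $\tau^{2f}=1$ and restricts to the Frobenius of the residue field, and since $\sigma\notin\Gal(K/K_2)$ (it does not fix $K_2$) the pair $\sigma,\tau$ generates $G$. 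The commutation relation follows as in case~(i): $C_G(\sigma)\cap\Gal(K/K_2)$ has index $2$ in $\Gal(K/K_2)$, corresponding to the unique index-$2$ subgroup of the cyclic group $\Gal(K/K_2)/\langle\sigma^2\rangle\simeq G/I$; because a generator of a finite cyclic group never lies in its index-$2$ subgroup, our $\tau$ is forced outside $C_G(\sigma)$, so $\tau\sigma\tau^{-1} = \sigma^{-1}$.

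The main obstacle I anticipate is case~(ii): one has to track the three subfields $K_{un}$, $K_2$, $F$ of $\overline{\Q_\ell}$ simultaneously, establish the central-extension structure of $\Gal(K/K_2)$, and verify that a Frobenius lift inside $\Gal(K/K_2)$ really acts non-trivially on $\sigma$. The final clauses of the proposition -- that $\tau$ acts as the Frobenius automorphism on the residue field, and that $|G|$ is not divisible by $p$ -- come for free from the construction and the resulting presentations $|G| = ef$ (case~(i)) and $|G|=4f$ (case~(ii)), together with the fact that for $p\neq \ell$ the image of tame inertia in $\GL_2(\F_p)$ is semisimple cyclic and forces $p\nmid f$ in the non-abelian setting we are in.
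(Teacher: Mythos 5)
Your construction of $\sigma$ and $\tau$ and the derivation of the relations is correct, and in case~(i) it is essentially the paper's argument: the paper identifies $\Gal(K/K\cap F)$ with $\Gal(KF/F)$, which is cyclic because $E/F$ has good reduction, and then counts orders via the second isomorphism theorem, which is the same as your compositum count $K_{un}F=K$. The determination of the conjugation action is also the same idea in both proofs (conjugation preserves the normal subgroup $\langle\sigma\rangle$, non-abelianness of $G=\langle\sigma,\tau\rangle$ rules out the trivial action, and $\Aut(\Z/e\Z)$ has order $2$ for $e=3,4$). In case~(ii) you genuinely diverge: the paper again gets cyclicity of $\Gal(K/K\cap F)$ of order $2f$ from unramifiedness of $KF/F$, whereas you work entirely with $K_2$ and show $\Gal(K/K_2)$ is abelian as a central extension of the cyclic unramified quotient by $\langle\sigma^2\rangle$. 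That is a valid and rather clean alternative; note only that your ``index $2$'' step for $C_G(\sigma)\cap\Gal(K/K_2)$ silently assumes the index is not $1$, but this is already covered by the simpler observation you make in case~(i): since $\sigma,\tau$ generate $G$ and $G$ is non-abelian, $\tau$ cannot centralize $\sigma$.

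The genuine gap is in the last clause, $p\nmid |G|$. Your justification --- that semisimplicity of tame inertia ``forces $p\nmid f$'' --- is a non-sequitur: semisimplicity of inertia constrains $e$, not the residual degree $f$, it does not apply in the wild cases $\ell=e=3$ and $\ell=2$, $e=4$, and in the one problematic tame case ($p=e=3$) the inertia image is in fact unipotent, not semisimple. The argument that actually works is: the relation $\tau\sigma\tau^{-1}=\sigma^{-1}$ gives $\tau^2\in C_G(\sigma)$; when $\sigma$ has order prime to $p$ (i.e.\ whenever $(p,e)\neq(3,3)$), $\rhobar_{E,p}(\sigma)$ is a non-scalar element of order prime to $p$, so by Proposition~\ref{P:centralizer} its centralizer in $\GL_2(\F_p)$ is a Cartan subgroup, of order prime to $p$; hence $p\nmid\mathrm{ord}(\tau^2)$, so $p\nmid\mathrm{ord}(\tau)$, and since $f\mid\mathrm{ord}(\tau)$ and $p\nmid e$ we get $p\nmid ef=|G|$. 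Be aware that the clause as stated is actually false in the excluded corner case $p=e=3$ (there $G\simeq S_3$ by Proposition~\ref{P:mod3}, and $3\mid 6$); the paper's own proof gives no argument for this clause either and only ever invokes it for $p\geq 5$, so your write-up should either restrict to $p\nmid e$ or flag the exception.
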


\begin{proof} 
From Theorem~\ref{T:goodOverF} we know $E/F$ has good reduction. The $p$-torsion field of
$E/F$ is $K F$, hence $\Gal(KF/F) \simeq \Gal(K/K\cap F)$ is cyclic. 
We let $\tau$ be a generator of $\Gal(K/K\cap F)$. 

Since $\Gal(K/K_{un})$ is normal in $G$ 
the canonical map
\begin{equation}
     \frac{\Gal(K/K\cap F)}{\Gal(K/K_{un}) \cap \Gal(K/K\cap F)}
     \longrightarrow \frac{\Gal(K/K_{un}) \Gal(K/K\cap F)}{\Gal(K/K_{un})}
     \label{E:canonical}
\end{equation}
is an isomorphism of groups. Moreover, we also have
\[
 \tau \sigma \tau^{-1} = \sigma^{k}, \qquad k \in \{0,1,\ldots,e-1\}.
\]
We note that $|\Gal(K/\Q_\ell)| = ef$ and $|\Gal(K/K_{un})| = e$ and divide into the cases:

(i) Suppose $K\cap F = F$. Then $\Gal(K/F) \simeq \Gal(K_{un}/\Q_\ell)$, hence $\tau$ has order $f$. 
Therefore,
\[
\tau^f = 1, \qquad  \sigma^e = 1, \qquad \tau \sigma \tau^{-1} = \sigma^k
\]
and 
\[
|\Gal(K/K \cap F)|=f, \quad |\Gal(K/K_{un}) \cap \Gal(K/K\cap F)| = 1. 
\]
Thus isomorphism \eqref{E:canonical} gives $|\Gal(K/K_{un}) \Gal(K/K\cap F)|=ef$
showing that $\tau$ and $\sigma$ generate $G$.

(ii) Suppose $K \cap F = K_2$; then $e=4$ by Lemma~\ref{L:FinK}. 
Thus $4f = [K : \Q_\ell] = [K : K_2][K_2 : \Q_\ell]$ implies 
$[K : K \cap F] = 2f$, that is $\tau$ has order $2f$. Therefore,
\[
\tau^{2f} = 1, \qquad  \sigma^4 = 1, \qquad \tau \sigma \tau^{-1} = \sigma^k
\]
and
\[
|\Gal(K/K \cap F)|=2f, \quad |\Gal(K/K_{un}) \cap \Gal(K/K\cap F)| = 2. 
\]
Thus the isomorphism \eqref{E:canonical} gives $|\Gal(K/K_{un}) \Gal(K/K\cap F)|=4f$
showing again that $\tau$ and $\sigma$ generate $G$.

Note that if $k=0$ or $e=4$ and $k=2$ then $(\tau \sigma \tau^{-1})^2 = 1$ which implies $\tau \sigma^2 = \tau$, a contradiction; since $G$ is non-abelian we also have $k \ne 1$; 
thus $k=2$ if $e=3$ or $k=3$ if $e=4$. We conclude $k \equiv -1$ mod~$e$ in both cases (i) and (ii), as desired.

The claim on the order of $G$ follows 
from \cite[Proposition~2.3]{Diamond} and the assumption 
that $G$ is non-abelian (see also the 3rd paragraph in the proof of Lemma~\ref{L:FinK}).

To finish the proof, we observe that the arguments above work for any choice of 
generator $\tau$ of $\Gal(K/K\cap F)$; since the residue field of $K \cap F$ is
$\F_\ell$ we can also assume that $\tau$ acts on the residue field of $K$ 
as the Frobenius automorphism.
\end{proof}

\subsection{Explicit description of $\tau$ and $\sigma$ in some tame cases}
\label{S:TauGamma}
Let $E/\Q_\ell$ satisfy $e=3,4$ and $\ell \equiv -1 \pmod{e}$ so that $G$ is non-abelian by Corollary~\ref{C:nonabelianTame}; suppose further $F \cap K = F$. 

We have that $F = \Q_\ell(\pi)$, where $\pi^e = \ell$; the Galois closure of $F$ 
is $\Q_\ell(\pi, \mu_e) \subset K$, where $\mu_e$ are the $e$-th roots of unity.
Let $q = \ell^f$ be the cardinality of the residue field $\F_q$ of $K$. Fix $\zeta \in K$ a 
a root of unity of order $q-1$. Since $\mu_e \subset K_{un}$ 
then $q - 1 = er$ for some positive integer $r$.
Set $\zeta_e := \zeta^r$, which is is a root of unity of order $e$ in $K_{un}$. 
Since $F$ is totally ramified of 
degree $e = [K : K_{un}]$ we have $K = K_{un}F = \Q_\ell(\zeta, \pi)$.

We define the elements $\tau, \sigma \in G = \Gal(K/\Q_\ell)$
as follows:
\[
 \tau \in \Gal(K/F), \quad \tau(\zeta) = \zeta^\ell, \quad \tau(\pi) = \pi
\]
and
\[
\sigma \in \Gal(K/K_{un}), \quad \sigma(\zeta) = \zeta, \quad \sigma(\pi) = \zeta_e \pi. 
\]
Clearly, $\tau$, $\sigma$ generate $\Gal(K/\Q_\ell)$ and satisfy $\tau^f = 1$, $\sigma^e = 1$. 
Finally, 
\[ 
\tau \sigma \tau^{-1} (\pi) = \tau(\zeta_e \pi) = \zeta_e^\ell \pi = \sigma^\ell(\pi) 
\]
and  
\[
\tau \sigma \tau^{-1} (\zeta) = \zeta = \sigma^\ell(\zeta) 
\]
showing $\tau \sigma \tau^{-1} = \sigma^\ell$. Now $\ell \equiv -1 \pmod{e}$, 
implies $\tau \sigma \tau^{-1} = \sigma^{-1}$ as in Proposition~\ref{P:groupStructure}.

\section{Proof of Theorem~\ref{T:main3torsion}}
\label{S:thm2}

For an elliptic curve $E/\Q_\ell$ with potentially good reduction
we denote by~$e$ its semistability defect and, when $c_4 \neq 0$ or $c_6 \neq 0$, we write
\[
 c_4 = \ell^{\vv(c_4)} \tilde{c}_4, \quad c_6 = \ell^{\vv(c_6)} \tilde{c}_6, 
 \quad \Delta = \ell^{\vv(\Delta_m)} \tilde{\Delta},
\]
where $c_4$, $c_6$ and $\Delta_m$ are the standard invariants of a minimal model.

We first prove the following proposition.

\begin{proposition} Let $\ell \equiv -1 \pmod{3}$ be a prime. Let $E/\Q_\ell$ be an elliptic curve with
additive potentially good reduction. If $E$ has a $3$-torsion point defined over $\Q_\ell$ then $e = 3$.
Conversely, if $e=3$ then the mod~$3$ representation $\rhobar_{E,3}$ has $S_3$
image and is isomorphic to either
\[
 \begin{pmatrix} 1 & * \\ 0 & \chi_3 \end{pmatrix} \qquad \text{ or } \qquad \begin{pmatrix} \chi_3 & * \\ 0 & 1 \end{pmatrix}.
\] 
\label{P:mod3}
\end{proposition}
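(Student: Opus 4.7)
The plan is to exploit the fact that $\ell \equiv -1 \pmod 3$ forces $\ell \ne 3$, so $\chi_3$ is unramified at $\ell$ and the image of inertia under $\rhobar_{E,3}$ lands in a very small set of possibilities; then I combine this with the relation $\tau\sigma\tau^{-1} = \sigma^{-1}$ from Proposition~\ref{P:groupStructure} to pin down the image as $S_3$ and the representation up to two upper-triangular shapes.

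For the forward implication, suppose $P \in E(\Q_\ell)[3]$. Extending $P$ to a basis of $E[3]$, the representation $\rhobar_{E,3}$ becomes upper-triangular with diagonal $(1, \chi_3)$. Restricting to $I_\ell$, the character $\chi_3|_{I_\ell}$ is trivial (as $\ell \ne 3$), so $\rhobar_{E,3}(I_\ell)$ is contained in the unipotent subgroup $U \simeq \Z/3\Z$ of $\GL_2(\F_3)$. Because the first congruence kernel of $\GL_2(\Z_3) \to \GL_2(\F_3)$ is torsion-free (via the $3$-adic matrix exponential), the order of $\rhobar_{E,3}(I_\ell)$ equals the semistability defect $e$; thus $e \in \{1,3\}$. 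Additive reduction rules out $e = 1$, so $e = 3$.

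For the converse, assume $e = 3$. The torsion-freeness argument above shows $\rhobar_{E,3}(I_\ell)$ has order exactly $3$, and every order-$3$ element of $\GL_2(\F_3)$ is unipotent (its characteristic polynomial must divide $x^3-1 = (x-1)^3$ in $\F_3[x]$). Choose a basis of $E[3]$ in which $\rhobar_{E,3}(\sigma) = \bigl(\begin{smallmatrix}1&1\\0&1\end{smallmatrix}\bigr)$, where $\sigma$ is the inertia generator from Proposition~\ref{P:groupStructure}. By Corollary~\ref{C:nonabelianTame} the image is non-abelian, so by Proposition~\ref{P:groupStructure} there exists $\tau \in G_{\Q_\ell}$ with $\tau\sigma\tau^{-1} = \sigma^{-1}$ whose projection to the residue field acts as Frobenius. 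The matrix $\rhobar_{E,3}(\tau)$ normalizes $U$, hence lies in the Borel and is upper-triangular; writing it as $\bigl(\begin{smallmatrix}a&b\\0&d\end{smallmatrix}\bigr)$, the determinant condition $ad = \chi_3(\Frob_\ell) = \ell \equiv -1 \pmod 3$ together with the inversion condition $a/d = -1$ force $a^2 = 1$, so $(a,d) \in \{(1,-1),(-1,1)\}$, each giving an element of order~$2$.

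Assembling these facts, the image $\rhobar_{E,3}(G_{\Q_\ell})$ is generated by an order-$3$ unipotent and an order-$2$ element acting by inversion, hence is isomorphic to $S_3$. The top-left diagonal character $\alpha$ is trivial on $\sigma$ automatically; if $(a,d) = (1,-1)$ then $\alpha(\tau) = 1$, so $\alpha = 1$ and $\rhobar_{E,3} \simeq \bigl(\begin{smallmatrix}1 & * \\ 0 & \chi_3\end{smallmatrix}\bigr)$, while if $(a,d) = (-1,1)$ then $\alpha(\tau) = -1 = \chi_3(\Frob_\ell)$, giving $\alpha = \chi_3$ and $\rhobar_{E,3} \simeq \bigl(\begin{smallmatrix}\chi_3 & * \\ 0 & 1\end{smallmatrix}\bigr)$. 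The only technical point is verifying that reduction mod~$3$ does not collapse the order of $\rhobar_{E,3}(I_\ell)$; everything else is short linear algebra inside the Borel subgroup of $\GL_2(\F_3)$.
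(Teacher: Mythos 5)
Your proof is correct and follows essentially the same route as the paper's: in both, a rational $3$-torsion point (resp.\ the hypothesis $e=3$) places the image of inertia in the order-$3$ unipotent subgroup, whence the whole image lies in the Borel with unramified diagonal characters multiplying to $\chi_3$, and the two stated shapes follow since $\chi_3$ is the unique nontrivial unramified character valued in $\F_3^\times$. The only cosmetic difference is that you derive the $S_3$ image and the diagonal entries from Proposition~\ref{P:groupStructure} and the relation $\tau\sigma\tau^{-1}=\sigma^{-1}$, where the paper reads them off directly from $\epsilon_1\epsilon_2=\chi_3$ with $\epsilon_i$ unramified of order dividing $2$; both are fine, and your torsion-freeness remark is harmless since for $\ell\neq 3$ the defect $e$ is by definition the order of $\rhobar_{E,3}(I_\ell)$.
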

\begin{proof} We have $\det \rhobar_{E,3} = \chi_3 \neq 1$. Suppose $E$ has a 3-torsion point. Then
\[
 \rhobar_{E,3} \sim \begin{pmatrix} 1 & * \\ 0 & \chi_3 \end{pmatrix}.
\] 
Since $e \neq 1$ and $\chi_3$ is unramified it follows that $* \ne 0$ and $e=3$. 

Suppose $e=3$. We can assume that, up to conjugation, the image of the inertia 
subgroup $I_\ell \subset G_{\Q_\ell}$ via $\rhobar_{E,3}$ is generated by 
$U = \begin{pmatrix} 1 & 1 \\ 0 & 1 \end{pmatrix}.$
Hence the image of $\rhobar_{E,3}$ must be contained in the normalizer of $\langle U \rangle$ 
which is the Borel subgroup
of $\GL_2(\F_3)$. Therefore we have
\[
 \rhobar_{E,3} \sim \begin{pmatrix} \epsilon_1 & * \\ 0 & \epsilon_2 \end{pmatrix}, \quad \text{ with } 
 \quad \epsilon_1\epsilon_2 = \chi_3.
\]
with $\epsilon_i$ unramified characters of order dividing 2. The character $\chi_3$ is the 
unique non-trivial such character thus $\epsilon_1 = \chi_3$ and $\epsilon_2 = 1$
or the other way around. Clearly the image is $S_3$.
\end{proof}

\subsection{Proof of Theorem~\ref{T:main3torsion} part (A)} 
\label{S:TorsionPartA}
Write $\vv = \vv_\ell$ for the valuation in $\Q_\ell$. Let $E/\Q_\ell$ be given by the minimal model in Proposition~\ref{P:usefulmodel}
\[
 y^2 = x^3 + a x + b,\quad a = -\frac{c_4}{48}, \quad b = -\frac{c_6}{864}.
\]
Its 3-division polynomial is $3f$, where 
\begin{equation}
  f = X^4 + 2aX^2 + 4bX - \frac{a^2}{3}.
  \label{E:3divpol}
\end{equation}
Note that $b \ne 0$. From the Newton polygon for $f$ we see there is a root $x_0$
with integer valuation and the remaining roots have non-integer valuations
(for this we use cases A of Table~\ref{Table:model}).
More precisely, if $a=0$ then $x_0 = 0$ is the unique root in $\Q_\ell$;
if $a \neq 0$ then there is a unique root $x_0 \in \Z_\ell$ whose valuation 
satisfies $\vv(x_0) = 2\vv(a) - 2$ if $\vv(\Delta_m) = 4$ 
or $\vv(x_0) = 2\vv(a) - 4$ if $\vv(\Delta_m) =8$.

Write $b = \ell^{\vv(b)} b'$ and observe that
\[
 \frac{x_0^3 + a x_0 + b}{\ell^{\vv(b)}} = \frac{x_0^3 + a x_0}{\ell^{\vv(b)}} + b'.
\]

We have $\vv(x_0^3 + a x_0) \geq \vv(b) + 1$. Indeed,
if $\vv(\Delta_m) = 4$ we have $\vv(a) \geq 2$ and
\[
 \vv(x_0^3 + ax_0) \geq \text{min}(6\vv(a) - 6,3\vv(a) - 2) = 3\vv(a) - 2 \geq 4 > \vv(b) = 2;
\]
if $\vv(\Delta_m) = 8$ we have $\vv(a) \geq 3$ and
\[
 \vv(x_0^3 + ax_0) \geq \text{min}(6\vv(a) - 12,3\vv(a) - 4) = 3\vv(a) - 4 \geq 5 > \vv(b) = 4.
\]
Thus
\[
 \frac{x_0^3 + a x_0}{\ell^{\vv(b)}} \equiv 0 \pmod{\ell} \quad \text{ and } 
 \quad \vv(x_0^3 + a x_0 + b) = \vv(b).
\]

Write $x_0^3 + a x_0 + b = u \ell^{\vv(b)}$. Since $\vv(b)$ is even we conclude that $x_0^3 + a x_0 + b$ is a square in $\Q_\ell$ if and only if $u \equiv b' \pmod{\ell}$ is a square in $\F_\ell$.
Clearly $E$ has a 3-torsion point over $\Q_\ell$ if and only if 
$x_0^3 + a x_0 + b$ is a square in $\Q_\ell$.
The result now follows because
\[
 b' = -\frac{\tilde{c}_6}{864} = -6 \tilde{c}_6 s^2 \; \text{ for } \; s = \frac{1}{72} \in \Q_\ell.
\]
\subsection{Proof of Theorem~\ref{T:main3torsion} part (B)}
\label{S:TorsionPartB}

We will need the following auxiliary statement.

\begin{proposition} Let $\ell \equiv -1 \pmod{3}$ be a prime. 
Let $E/\Q_\ell$ be an elliptic curve with potentially 
good reduction with $e=3$ given by a minimal model $y^2 = x^3 + ax + b$ 
as in Proposition~\ref{P:usefulmodel}. Then there exist a unique element $x_0 \in \Z_\ell$ which is the $x$-coordinate of a $3$-torsion point $P=(x_0,y_0)$ of $E$. Moreover, $P$ is defined over $\Q_\ell$ 
if and only if $x_0^3 + a x_0 + b$ is a $\Z_\ell$-square.
\label{P:rootf}
\end{proposition}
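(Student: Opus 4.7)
The plan is to prove the proposition in three steps: first, Galois-theoretic input isolates a unique $\Q_\ell$-rational $x$-coordinate $x_0$; second, a Newton polygon analysis of the $3$-division polynomial shows $x_0 \in \Z_\ell$; third, the square condition falls out immediately.

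For Step~1, I would invoke Proposition~\ref{P:mod3}, applied with $\ell \equiv -1 \pmod{3}$ and $e=3$, to conclude that $\rhobar_{E,3}$ has image $S_3$ and admits a unique $G_{\Q_\ell}$-stable line $V \subset E[3]$. Since the four order-$3$ subgroups of $E[3]$ yield four distinct $x$-coordinates (the four roots of the $3$-division polynomial $f(X) = X^4 + 2aX^2 + 4bX - a^2/3$), and $G_{\Q_\ell}$ fixes exactly the line $V$, the polynomial $f$ has a unique root $x_0 \in \Q_\ell$.

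For Step~2, I would examine the Newton polygon of $f$ in each reduction case listed for $e = 3$ in Table~\ref{Table:model}, namely case $A_3$ (for $\ell \geq 5$) and cases $C_{3,i}$--$C_{3,iv}$ (for $\ell = 2$), using the valuations $\vv_\ell(a) = \vv_\ell(c_4) - \vv_\ell(48)$ and $\vv_\ell(b) = \vv_\ell(c_6) - \vv_\ell(864)$ coming from the model~\eqref{E:usefulmodel}. A short case-by-case computation shows that in each case every $\Q_\ell$-rational root of $f$ must have non-negative valuation, so the unique root $x_0$ from Step~1 lies in $\Z_\ell$. Case $A_3$ is already handled in Section~\ref{S:TorsionPartA}; at $\ell = 2$, each of the subcases $C_{3,i}$--$C_{3,iv}$ requires a separate (but very short) slope computation.

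Finally, Step~3 is immediate: the point $P = (x_0, y_0)$ lies in $E(\Q_\ell)$ if and only if $y_0^2 = x_0^3 + a x_0 + b$ admits a square root in $\Q_\ell$. Since $x_0, a, b \in \Z_\ell$, the quantity $x_0^3 + a x_0 + b$ lies in $\Z_\ell$, and it is non-zero because $P$ is a non-trivial $3$-torsion point (so $y_0 \neq 0$). A non-zero element of $\Z_\ell$ is a square in $\Q_\ell$ if and only if it is a $\Z_\ell$-square, which yields the claimed equivalence. The main obstacle is the Newton polygon bookkeeping in Step~2 at $\ell = 2$, where $\vv_2(48) = 4$ and $\vv_2(864) = 5$ interact non-trivially with the four subcases $C_{3,i}$--$C_{3,iv}$; however, this is a bounded and finite verification.
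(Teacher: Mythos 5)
Your proof is correct, and its essential content coincides with the paper's: both arguments rest on Proposition~\ref{P:mod3} giving $\Gal(\Q_\ell(E[3])/\Q_\ell) \simeq S_3$, and your ``unique stable line'' formulation is just a repackaging of the paper's counting argument (the paper rules out $f$ irreducible via $4 \mid |G|$ and $f$ having two or more rational roots via $3 \nmid |G|$). The one genuine difference is your Step~2, which is superfluous: since $\ell \equiv -1 \pmod{3}$ forces $\ell \neq 3$, the $3$-division polynomial $f = X^4 + 2aX^2 + 4bX - a^2/3$ is monic with coefficients in $\Z_\ell$, so any root in $\Q_\ell$ lies in $\Z_\ell$ because $\Z_\ell$ is integrally closed --- no Newton polygon is needed. (Equivalently, the Newton polygon of a monic integral polynomial has all slopes corresponding to roots of non-negative valuation, so your case-by-case check would succeed but proves nothing beyond this triviality.) It is worth noting that the paper's Remark following the proposition cautions that Newton polygons cannot establish \emph{uniqueness} of the rational root in case $C_{3,iv}$ of Table~\ref{Table:model}, where all four roots have valuation $0$; your proof is immune to this because uniqueness comes entirely from the Galois-theoretic Step~1, and the Newton polygon is only asked to certify integrality.
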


\begin{proof} The last statement is clear from the equality $y_0^2 = x_0^3 + a x_0 + b$.

Let $f$ be the 3-division polynomial \eqref{E:3divpol} and $G=\Gal(\Q_\ell(E[3])/\Q_\ell)$. 
We have from Proposition~\ref{P:mod3} that $G \simeq S_3$.
Note that if $f$ is irreducible then $4 \mid |G|$ and if $f$ has at least 2 roots 
in $\Q_\ell$ then $3 \nmid |G|$, a contradiction.
\end{proof}

\begin{remark}
Note that in the proof of Section~\ref{S:TorsionPartA} we had proved 
Proposition~\ref{P:rootf} for $\ell \geq 5$ using Newton polygons.
Trying to prove the case $\ell = 2$ with Newton polygons works
in cases $C_{3,i}$, $C_{3,ii}$ and $C_{3,i}$ of Table~\ref{Table:model},
but not in case $C_{3,iv}$. In the latter we can only conclude 
that $f$ has 4 roots with valuation 0. On the other hand, the
previous proof covers $\ell \geq 5$ but does not provide the
information on $\vv_\ell(x_0)$ needed 
in Section~\ref{S:TorsionPartA}.
\end{remark}

We shall now prove part (B) of Theorem~\ref{T:main3torsion}. Write $\vv = \vv_2$
for the valuation in $\Q_2$.
Consider the minimal model for $E/\Q_2$ given by Proposition~\ref{P:usefulmodel}
\[
 y^2 = x^3 + a x + b,\quad a = -\frac{c_4}{48}, \quad b = -\frac{c_6}{864}
\]
whose 3-division polynomial is $3f$, where
\[
 f = X^4 + 2aX^2 + 4bX - \frac{a^2}{3}.
\]
Let $x_0 \in \Z_2$ be the root of $f$ given by Proposition~\ref{P:rootf} and
write
\[
 u = x_0^3 + a x_0 + b.
\]

We now divide the proof into the cases $C_{3,i}$, $C_{3,ii}$, $C_{3,iii}$ and $C_{3,iv}$
of Proposition~\ref{P:usefulmodel}.

(i) Suppose $(\vv(c_4),\vv(c_6),\vv(\Delta_m)) = (4,5,4)$ and $\tilde{c}_4 \equiv -1 \pmod{4}$, $\tilde{c}_6 \equiv 1 \pmod{4}$.

We have $\vv(a) = \vv(b) = 0$, hence $\vv(x_0) = 0$. By looking modulo 2 it follows that $u$  is also a unit. Therefore, $u$ its a square if and only if $u \equiv 1 \pmod{8}$.
The equalities
\[
 a = - \frac{\tilde{c}_4}{3}, \quad \text{ and } \quad b = - \frac{\tilde{c}_6}{27}
\]
imply $a \equiv 3, 7 \pmod{8}$ and $b \equiv 1,5 \pmod{8}$. Direct computations allow us to check that
\[
 (a,b) \equiv (3,1),(7,5) \pmod{8} \Rightarrow x_0 \equiv 7 
 \pmod{8} \text{ and } u \equiv 5 \pmod{8};
\]
\[
 (a,b) \equiv (3,5),(7,1) \pmod{8} \Rightarrow x_0 \equiv 3 \pmod{8} 
 \text{ and } u \equiv 1 \pmod{8}.
\]
We have $\tilde{c}_4 \equiv 5a \pmod{8}$ and $\tilde{c}_6 \equiv 5b \pmod{8}$. Part (1) follows.

(ii) Suppose $(\vv(c_4),\vv(c_6),\vv(\Delta_m)) = (\geq 6,5,4)$ and $\tilde{c}_6 \equiv 1 \pmod{4}$.

We have $\vv(a) \geq 2$ and $\vv(b) = 0$, hence $\vv(x_0) \geq 2$. Thus $u \equiv b \pmod{8}$.
Since we also have $\tilde{c}_6 \equiv 5b \pmod{8}$ this proves (2) in this case.

(iii) Suppose $(\vv(c_4),\vv(c_6),\vv(\Delta_m)) = (\geq 7,7,8)$ and $\tilde{c}_6 \equiv 1 \pmod{4}$.

We have $a \equiv 0 \pmod{8}$ and $\vv(b) = 2$. Hence $\vv(x_0) \geq 2$ and $\vv(u) = 2$.
Therefore we have to compute $u$ modulo 32. From the previous we have that
\[
 a \equiv 0, 8, 16, 24 \pmod{32} \qquad b \equiv 4, 12, 20, 28 \pmod{32}
\]
and for each choice of $a$, $b$ we compute the roots of $f$ mod~$32$ and for each root
we check if $u$ is a square mod 32. We find that $u$ is a square in $\Z_2$ if and only
if $b \equiv 4 \pmod{32}$. Finally,
\[
 b = - \frac{2^7 \tilde{c}_6}{3^3 2^5} = - \frac{4\tilde{c}_6}{27}
\]
implies $b \equiv 4 \pmod{32}$ if and only if $\tilde{c}_6 \equiv 5 \pmod{8}$.
This completes the proof of (2).

(iv) Suppose $(\vv(c_4),\vv(c_6),\vv(\Delta_m)) = (4,6,8)$, 
$\tilde{c}_6 \equiv -1 \pmod{4}$ and $\tilde{\Delta} \equiv -1 \pmod{4}$.

We have $\vv(a) = 0$ and $\vv(b) = 1$. We also have
\[
 b = - \frac{2 \tilde{c}_6}{27} \equiv \frac{2}{27} \equiv 6 \pmod{8}.
\]
Moreover, 
\[
 1728\Delta_m = 2^6 3^3 2^8 \tilde{\Delta} = c_4^3 - c_6^2 = 2^{12}(\tilde{c}_4^3 - \tilde{c}_6^2)  
\]
implies
\[
 \tilde{c}_4^3 - \tilde{c}_6^2 \equiv 2^2 3^3 \tilde{\Delta} \equiv - 2^2 3^3 \equiv 4 \pmod{16}.
\]
It follows $\tilde{c}_4 \equiv 5 \pmod{8}$. Thus $a = - \tilde{c}_4 / 3 \equiv 1 \pmod{8}$. 
We conclude $\vv(x_0) = 0$.
From the previous
\[
 a \equiv 1, 9, 17, 25 \pmod{32} \qquad b \equiv 6, 14, 22, 30 \pmod{32}.
\]
Similar calculation as in (iii) show that $\vv(u) = 2$ and that $u/2^2$ is a square
mod 8 (hence $u$ a square in $\Z_2$) if and only if 
\[
 (a,b) \equiv (1,6), (9,14), (17,22), (25,30) \pmod{32}.
\]
Part (3) now follows from
\[
 \tilde{c}_4 \equiv 29a \pmod{32} \quad \text{ and } \quad b = - \frac{2\tilde{c}_6}{27},
\]
concluding the proof.

\section{Proof of Lemmas~\ref{L:lemma1},~\ref{L:lemma2},~\ref{L:lemma3} and ~\ref{L:lemma4}}
\label{S:4lemmas}

To decide the symplectic type of an isomorphism between 
two elliptic curves over~$\Q_\ell$ with potentially good reduction 
with semistability defect  of order $e=2$ or $e=6$ we first need to 
take a suitable quadratic twist to respectively reduce to the 
cases $e=1$ or $e=3$. The lemmas~\ref{L:lemma1},~\ref{L:lemma2},~\ref{L:lemma3} 
and ~\ref{L:lemma4} describe which twist we have to take in each case; 
the objective of this section is to prove these lemmas.

Write $\vv = \vv_\ell$. Let $E/\Q_{\ell}$ be an elliptic curve with potentially good reduction and semistability defect of order 
$e\in \lbrace 2,6\rbrace$.
We write 
\[ t=\left(v(c_4),v(c_6),v(\Delta_m)\right) \] 
for the standard invariants of 
a minimal Weierstrass model of $E/\Q_{\ell}$. 

\begin{lemma} Let $E$ be as above. Then $E$ admits a minimal model of the form
\begin{equation}
\label{(1.1)} 
y^2=x^3-{c_4\over 48}x-{c_6\over 864}.
\end{equation}

\begin{proof} This follows similarly to the proof of Proposition~\ref{P:usefulmodel}.
\end{proof}
\end{lemma}

\subsection{Proof of Lemma~\ref{L:lemma1} and Lemma~\ref{L:lemma3}} 

Let $E'/\Q_\ell$ be the quadratic twist of $E$ by $\sqrt{\ell}$, 
which is given by the integral model
\[ (W') \ : \ Y^2=X^3-\frac{c_4\ell^2}{48}X-\frac{c_6\ell^3}{864}.\]
Let $c_4(W')$, $c_6(W')$ and $\Delta(W')$ be the standard invariants  associated to  $(W')$. 
We have
\begin{equation}
\label{(1.5)} 
c_4(W')=\ell^2c_4,\quad c_6(W')=\ell^3c_6 \quad \text{and}\quad 
\Delta(W')=\ell^6\Delta_m.
\end{equation}
Assume first $\ell\geq 5$. 
It follows from \eqref{E:denominator}, the assumption $e=2,6$ 
and \cite[Table~I]{pap} that $t = (\vv(c_4), \vv(c_6), \vv(\Delta_m))$
satisfies
\begin{equation*}
t\in  
  \begin{cases}
\lbrace (2,\geq 3,6), (\geq 3,3,6)\rbrace \quad \text{ if } \ e=2, \\
\lbrace (\geq 1,1,2), (\geq 4,5,10)\rbrace \quad \text{if} \ e=6.\
  \end{cases}
\end{equation*}
If $e=2$, then $v(\Delta_m)=6$ and~\eqref{(1.5)} leads to  $\vv(\Delta(W'))=12$. This implies that $(W')$ is not a minimal model for $E'/\Q_{\ell}$, so necessarily
$E'/\Q_{\ell}$ has good reduction.

If $e=6$ then $v(\Delta_m)\in \lbrace 2,10\rbrace$. 

In case $v(\Delta_m)=2$, one has  $v(\Delta(W'))=8$. Then the formula 
 \eqref{E:denominator} applied to $E'/\Q_{\ell}$ then implies that the semistability  defect of $E'/\Q_{\ell}$ is $e' = 3$.
 
In case $v(\Delta_m)=10$, one has  $v(\Delta(W'))=16$. We deduce that the valuation of the  minimal discriminant of  $E'/\Q_{\ell}$ is equal to $4$ and again by \eqref{E:denominator} 
it follows that $e' = 3$.

This proves Lemmas~\ref{L:lemma1}~and~\ref{L:lemma3} for $\ell\geq 5$.
 
Suppose now $\ell=3$. From \cite[Corollary, p. 355]{Kraus1990} it follows that
\begin{equation*}
t\in  
  \begin{cases}
\lbrace (2,3,6),  (3,\geq 6,6)\rbrace \quad \text{ if } \ e=2, \\
\lbrace (3,5,6), (4,6,10)\rbrace \quad \text{if} \ e=6.\
  \end{cases}
\end{equation*}
If $e=2$ then $v(\Delta)=6$ and the formulas~\eqref{(1.5)} give
 $$\left(v(c_4(W')),v(c_6(W')),v(\Delta(W'))\right)\in \lbrace  (4,6,12), (5,\geq 9,12)   \rbrace.$$
 We deduce from \cite[p. 126, Table II]{pap}  that the  model $(W')$ is not minimal, 
thus $E'/\Q_3$ has good reduction over $\Q_3$, as desired.

If $e=6$ then from the possibilites for $t$ and \eqref{(1.5)} we see that
 $$(v(c_4(W')),v(c_6(W')),v(\Delta(W'))\in \lbrace  (5,8,12), (6,9,16) \rbrace.$$

In case $(v(c_4(W')),v(c_6(W')),v(\Delta(W'))=(5,8,12)$, 
then \cite[Corollary, p. 355]{Kraus1990} implies that $e'=3$, as desired.

If $(v(c_4(W')),v(c_6(W')),v(\Delta(W'))=(6,9,16) $ then $(W')$ is not minimal; 
further, a minimal model for $E'$ will satisfy $\vv(\Delta_m') = 4$ and again
the result follows from \cite[Corollary, p. 355]{Kraus1990}.

\subsection{Proof of Lemma~\ref{L:lemma4}} 
Let $E/\Q_2$ be an elliptic curve with potentially good reduction and 
semistability defect $e=2$ so that, by \cite[p. 357, Corollary]{Kraus1990}, we have
\begin{equation}
t\in \lbrace (\geq 6,6,6), (4,6,12), (\geq 8,9,12), (6,9,18)\rbrace.
\end{equation}
In particular, $c_6 \neq 0$ and so we can consider the quantity
$$\tilde{c}_6=\frac{c_6}{2^{v(c_6)}},$$
showing that the definition of $u$ in statement of Lemma~\ref{L:lemma4}
always makes sense.

For the rest of this proof we will adopt the notations used in \cite{pap}.

We divide the proof into cases according to $t$.

1) Suppose $t=(\geq 6,6,6)$ and $\tilde{c}_6\equiv 1 \pmod 4$. 

An integral  model  of $E'/\Q_2$, the quadratic twist of $E/\Q_2$ by $\sqrt{2}$, is
$$(W')Ê\ : \ Y^2=X^3-\frac{c_4}{12}X-\frac{c_6}{108}.$$
It satisfies
$$\left(v(c_4(W'),v(c_6(W'),v(\Delta(W')\right)=(\geq 8,9,12).$$
We shall prove that $(W')$ is not minimal, which implies that $E'/\Q_2$ has good reduction. For this, we use the Table~IV and Proposition~6 of \cite{pap}.
We have
$$b_8(W')=-a_4(W')^2,$$
hence the congruence
$$b_8(W')\equiv 0 \pmod {2^8}.$$
So we can choose $r=0$ in Proposition~6 of {\it{loc. cit.}}. Moreover, one has 
$$b_6(W')=4a_6(W')=-\frac{c_6}{27}.$$
Since $\tilde{c}_6\equiv 1 \pmod 4$, one has $-\frac{\tilde{c}_6}{27}\equiv 1 \pmod 4$. The equality $v(c_6)=6$ then implies 
$$b_6(W') \equiv 2^6 \pmod {2^8},$$
and we obtain  our assertion with $x=8$.

2) Suppose $t=(\geq 6,6,6)$ and $\tilde{c}_6\equiv -1 \pmod 4$. 

We proceed as above. An equation of $E'/\Q_2$, the quadratic twist of $E/\Q_2$ by $\sqrt{-2}$,  is 
$$(W') \ : \ Y^2=X^3-\frac{c_4}{12}X+\frac{c_6}{108}.$$
One has again
$b_6(W')\equiv 2^6 \pmod {2^8},$
hence   the result. 

\medskip
For the next two cases below, we will denote by  $b_2$, $b_4$, $b_6$ and $b_8$ the standard invariants 
associated to  the  equation \eqref{(1.1)} of $E/\Q_2$.
\medskip

3) Suppose $t=(4,6,12)$.

An equation of $E'/\Q_2$, the quadratic twist of $E/\Q_2$ by $\sqrt{-1}$,  is 
$$(W') \ : \ Y^2=X^3-\frac{c_4}{48}X+\frac{c_6}{864}.$$
We  will use Table IV and Proposition 4 of \cite{pap} to prove that
$(W')$ is not minimal, establishing that $E'/\Q_2$ has good reduction.

From Table IV in {\it loc. cit.}  and the assumption made on ~$t$, the elliptic curve $E/\Q_2$ corresponds either to the case 7 of Tate (with Neron type 
$C_{5,\nu}$ and
$\nu = 4$) or a non-minimal model.

One has $b_2=0$, $v(b_4)=1$, $v(b_6)=3$ and $v(b_8)=0$.
So there exists $r\in \Z_2$, with $v(r)=0$,  such that (conditions (a) and (b) of Proposition~4)
$$b_8+3rb_6+3r^2b_4+3r^4\equiv 0 \pmod {32}\quad \text{and}\quad r\equiv 1 \pmod 4.$$
Furthermore,
$$b_2(W')=0,\quad b_4(W')=b_4,\quad b_6(W')=-b_6,\quad b_8(W')=b_8.$$
We conclude that the integer $-r$ satisfies the condition (a) of 
the same Proposition 4 for the equation $(W')$.
One has $-3r\equiv 1 \pmod 4$, so condition (b) of this proposition with $s=1$  implies the assertion.

4) Suppose $t=(\geq 8,9,12)$.

Again an equation of $E'/\Q_2$  is
$$(W') \ : \ Y^2=X^3-\frac{c_4}{48}X+\frac{c_6}{864}.$$
We use Proposition~6 of \cite{pap}. The elliptic curve $E/\Q_2$ corresponds to the  case 10 of Tate.  One has
$b_8\equiv 0 \pmod {2^8},$
so $r=0$ satisfies the required condition of this proposition for the equation \eqref{(1.1)}. Since equation \eqref{(1.1)} is  minimal, we deduce (by \cite[Prop~6]{pap}) that  $b_6$ is not a square modulo~$2^8$, so we have
$$b_6=-\frac{c_6}{216}=-\frac{2^6 \tilde{c}_6}{27}\equiv 2^6\tilde{c}_6\equiv -2^6 \pmod {2^8},$$
where the last congruence follows due to $\tilde{c}_6\equiv -1 \pmod 4$. 
From the equality $b_6(W')=-b_6$ it follows that $b_6(W')$ is a square modulo~$2^8$, hence the $(W')$ is not minimal, as desired.

5) Suppose $t=(6,9,18)$. 

Let $\varepsilon=\pm 1$, so that $\tilde{c}_6\equiv \varepsilon \pmod 4$. 
An integral equation of  the quadratic twist of $E/\Q_2$ 
by~$\sqrt{-2\varepsilon}$ is 
$$(W') \ : \ Y^2=X^3-\frac{c_4}{2^6\cdot 3}X+\varepsilon\frac{c_6}{2^8\cdot 27}.$$
It satisfies
$$\left(v(c_4(W')),v(c_6(W')),v(\Delta(W'))\right)=(4,6,12).$$
We will apply Proposition 4 of \cite{pap}.
Recall that $\tilde{c}_4 = c_4/2^{v(c_4)}$.
From the assumption on~$t$, we have
$\tilde{c}_4^3\equiv \tilde{c}_6^2\pmod {32}$,
which implies
$$\tilde{c}_4\equiv 1,9,17,25 \pmod {32}.$$
Moreover, one has $\varepsilon \tilde{c}_6\equiv 1 \pmod 4$, so from the condition (a) for $(W')$, there exists $r\in \Z_2$ such that 
$$-\tilde{c}_4^2+8r -18\tilde{c}_4r^2+27r^4\equiv 0 \pmod {32}.$$
For all the values of $\tilde{c}_4$ modulo $32$, we then verify that this congruence
is satisfied with $r=-1$. 
The condition (b) then implies that $(W')$ is not minimal, so  $E'/\Q_2$ has good reduction, 
completing the proof.

\subsection{Proof of Lemma~\ref{L:lemma2}}
Let $E/\Q_2$ be an elliptic curve with potentially good reduction and 
semistability defect $e=6$. 
Whenever it makes sense we again consider the quantities
$$\tilde{c}_4=\frac{c_4}{2^{v(c_4)}},\quad \tilde{c}_6=\frac{c_6}{2^{v(c_6)}}\quad  \text{and}\quad \tilde{\Delta}=\frac{\Delta_m}{2^{v(\Delta_m)}}.$$
From \cite[Corollary, p. 357]{Kraus1990}, the triple $t=\left(v(c_4),v(c_6),v(\Delta)\right)$
 is one of the following:
\begin{equation*}
(4,5,4)\ \ \text{with} \  \tilde{c}_4\equiv \tilde{c}_6\equiv -1 \pmod 4\quad \text{or} \quad  (\geq 6,5,4) \ \text{with} \  \tilde{c}_6\equiv -1 \pmod 4,
\end{equation*}
\begin{equation*}
(4,6,8)\ \ \text{with} \  \tilde{c}_6\equiv -\tilde{\Delta}\equiv 1 \pmod 4\quad \text{or} \quad  (\geq 7,7,8) \ \text{with} \  \tilde{c}_6\equiv -1 \pmod 4,
\end{equation*}
\begin{equation*}
(6,8,10) \ \text{with} \  \tilde{c}_4\equiv -1 \pmod 4 \quad \text{or}\quad  (\geq 8,8,10),
\end{equation*}
\begin{equation*}
(6,9,14) \ \text{with} \  \tilde{\Delta}\equiv -1 \pmod 4 \quad \text{or}\quad  (\geq 9,10,14).
\end{equation*}
The possibilities for $t$ show that $c_6 \neq 0$ in all cases but $c_4 = 0$ is a possibility; we remark in advance that, in such cases, the quantity $\tilde{c}_4$ does not take part of the arguments below.

Let $E'/\Q_2$ be
the quadratic twist of $E/\Q_2$ by $\sqrt{u}$. A Weierstrass equation of $E'/\Q_2$ is 
\begin{equation*}
\label{(1.6)} 
(W')  \ : \ Y^2=X^3-\frac{c_4 }{48 u^2}X-\frac{c_6}{864 u^3}.
\end{equation*}

The change of variables
 \[
  \begin{cases}
x=u X \\
y=u\sqrt{u} Y,\
  \end{cases}
\]
is an isomorphism between  $E/\Q_2$ and $E'/\Q_2$. One has 
\begin{equation}
\label{(1.7)} 
c_4=u^2c_4(W'),\quad c_6=u^3c_6(W') \quad \text{and}\quad  \Delta_m=u^6\Delta(W').
\end{equation}
Let $\tilde{c}_4(W')$,  $\tilde{c}_6(W')$ and $\tilde{\Delta}(W')$ be the analogue of $\tilde{c}_4, \tilde{c}_6$ and $\tilde{\Delta}$ for the equation $(W')$.

1) Suppose  $v(\Delta_m) \in \lbrace 4,8\rbrace$, so that $u=-1$. From~\eqref{(1.7)} we have the equalities
$$\Delta_m=\Delta(W'),\quad  \tilde{c}_4(W')=\tilde{c}_4 \quad \text{and} \quad \tilde{c}_6(W')=-\tilde{c}_6$$
and  \cite[Corollary, p. 357]{Kraus1990}  implies that $E'/\Q_2$ satisfies $e'=3$.

2) Suppose $v(\Delta_m)=10$, so that $u=\pm 2$.  We obtain
$$v(\Delta(W'))=4,\quad v(c_4(W))=v(c_4)-2 \quad \text{and}\quad \tilde{c}_4(W')=\tilde{c}_4.$$
In case $\tilde{c}_6\equiv 1 \pmod 4$, one has $u=2$, so $\tilde{c}_6(W') = \tilde{c}_6$.
If  $\tilde{c}_6\equiv -1 \pmod 4$, one has $u=-2$, hence
$\tilde{c}_6(W')=-\tilde{c}_6.$ In both cases, $(W')$ is minimal and $e'=3$ by {\it loc. cit.}.

3) Let us suppose  $t=(6,9,14)$. We have $u=\pm 2$ and 
$$v(\Delta(W'))=8,\quad v(c_4(W))=v(c_4)-2 \quad \text{and}\quad \tilde{c}_4(W')=\tilde{c}_4.$$

If $\tilde{c}_6\equiv -1 \pmod 4$, so that $u=2$, we have $\tilde{c}_6(W')=\tilde{c}_6$. If $\tilde{c}_6\equiv 1 \pmod 4$, so that $u=-2$ then $\tilde{c}_6(W')=-\tilde{c}_6$. The conclusion follows as above.

4) In case $t=(\geq 9,10,14)$ a similar argument applies, concluding the proof.

\section{The completeness of Table~\ref{Table:CriteriaList}}

In Theorem~\ref{T:conditionRho}  we gave a classification of when a symplectic criteria exists in terms of the image of $\rhobar_{E,p}$ as a subgroup of $\GL_2(\F_p)$. Our main objective 
in this section is to show that 
Tables~\ref{Table:CriteriaList}~and~\ref{Table:TwistingLemmas}
capture the same information.
More precisely, we will prove 
the following.

\begin{theorem} 
\label{T:tableEquivalence}
Let $\ell$ and $p \geq 3$ be different primes. 
Let $E/\Q_\ell$ and $E'/\Q_\ell$ be elliptic curves such that $E[p] \simeq E'[p]$ as $G_{\Q_{\ell}}$-modules.
Then,  a symplectic criterion exits if and only if the two following equivalent conditions are satisfied.

(a) The condition (A) or (B) of Theorem~\ref{T:conditionRho} holds.

(b) After replacing $E$ and $E'$ by a simultaneous quadratic twist prescribed by Table~\ref{Table:TwistingLemmas}
if necessary, the conditions of a line in Table~\ref{Table:CriteriaList} are satisfied.
\end{theorem}

Before we prove this theorem we give some auxiliary results. 

For $K$ a field of characteristic zero or a finite field of characteristic~$\neq p$ 
we fix, for all primes~$p$, a primitive $p$-th root of unity $\zeta_p \in \overline{K}$.
Recall that, for an elliptic curve $E/K$,  
we donete by~$e_{E,p}$ the Weil pairing on~$E[p]$. 
We write $G_K = \Gal(\overline{K}/K)$.

The following lemma is a well known fact, but due to its importance everywhere in this work, we include a 
proof for completeness.

\begin{lemma} \label{L:twistIso}
Let $K$ be a field of characteristic 0. Let $p \geq 3$ be a prime. Let $E$ and $E'$ be elliptic curves over 
$K$ such that $E[p]$ and $E'[p]$ are isomorphic 
$G_K$-modules.
Let $d \in K^*$ and write $W=dE$, $W'=dE'$ for the quadratic twists of $E$ and $E'$ by~$d$. 

Then $W[p]$ and $W'[p]$ are isomorphic $G_K$-modules.
Moreover, $W[p]$ and $W'[p]$ are  symplectically (anti-symplectically) isomorphic if and only if $E[p]$ and $E'[p]$ are symplectically (anti-symplectically) isomorphic.
\end{lemma}
\begin{proof} 
Let $d \in K$ be be a non-square,
otherwise there is nothing to prove. 

Write $\epsilon : G_K \to \{ \pm 1 \}$ for the character of $G_K$
corresponding to $K(\sqrt{d})$.

Let $C/K$ be an elliptic curve. We write $dC/K$
for its quadratic twist by~$d$. 
We can choose short Weierstrass models (since char $K = 0$)
\[
 C \; : \; y^2 = x^3+ax+b \quad \text{ and } \quad  dC \; : \; y^2 = x^3 + a d^2 x+d^3b.
\]
The map from $C$ to $dC$ given by
$(x,y) \mapsto (dx,d\sqrt{d}y)$ becomes 
an isomorphism over $K(\sqrt{d})$. 
Let $T_C : C[p](\overline{K}) \to dC[p](\overline{K})$ 
denote the map on $p$-torsion induced 
by it.
With a direct calculation we see
that
\begin{equation} \label{E:eps}
 T_C(\sigma(P)) = \epsilon(\sigma)\cdot\sigma(T_C(P)) \quad \text{ for all } \sigma \in G_K 
 \text{ and all } P \in C[P]. 
 \end{equation}
Also, twisting $dC$ by~$d$ recovers $C$ so we have $T_C^{-1} = T_{dC}$. 

The Weil pairing is invariant 
under $\overline{K}$-isomorphism 
(cf. \cite[Proposition~7.4.1 (e)]{DiamondShurman}) so,
for all $P, Q \in C[p](\overline{K})$, we have 
\begin{equation}\label{E:weil}
 e_{dC,p}(T_C(P),T_C(Q)) = e_{C,p}(P,Q).
\end{equation}
Now let $\phi : E[p] \to E'[p]$ be a $G_K$-isomorphism given by assumption. We consider the map
\[
 \phi_d \; : \; W[p] \to W'[p] \quad \text{ given by } \quad
 \phi_d := T_ {E'} \circ \phi \circ T_W, 
 \]
which is the composition of three $\F_p$-vector space isomorphisms. 

Moreover, for $P \in W[p](\overline{K})$ and $\sigma \in G_K$, we have
\begin{eqnarray*}
 \phi_d(\sigma(P)) & = & T_{E'} \circ \phi(\epsilon(\sigma)\sigma(T_W(P)) = T_{E'}(\sigma(\phi(\epsilon(\sigma)T_W(P)))) \\ 
 &=& \epsilon(\sigma)\sigma(T_{E'}(\phi(\epsilon(\sigma)T_W(P)))) = 
 \epsilon(\sigma)\epsilon(\sigma)\sigma(T_{E'}(\phi(T_W(P)))) \\
 & = &\sigma(\phi_d(P)), \
\end{eqnarray*}
where we have used~\eqref{E:eps} twice and the fact 
that $\phi$ is $G_K$-linear. Thus, $\phi_d$ is  a
$G_K$-isomorphism, 
proving the first statement. 

Let $d(\phi)$ and $d(\phi_d)$ be the quantities attached to $\phi$ and $\phi_d$ as in~\eqref{E:dphi}, respectively.

For all $P, Q \in W[p]$, we have 
$ 
e_{W',p}(\phi_d(P),\phi_d(Q)) =  e_{W,p}(P,Q)^{d(\phi_d)}
$
and
\begin{eqnarray*}
  e_{W',p}(\phi_d(P),\phi_d(Q)) & = & e_{W',p}(T_ {E'}(\phi \circ T_W(P)),T_ {E'} (\phi \circ T_W(Q))) \\ 
   & = & e_{E',p}(\phi \circ T_W(P),\phi \circ T_W(Q)) \\
   & = & e_{E,p}(T_W(P),T_W(Q))^{d(\phi)} \\
   & = & e_{W,p}(P,Q)^{d(\phi)}, \\
\end{eqnarray*}
thus $d(\phi_d) \equiv d(\phi) \pmod{p}$. In particular, $d(\phi_d)$ is a square mod~$p$ if and only if $d(\phi)$ is a square mod~$p$, proving the second statement.
\end{proof}

This lemma gives us a very useful corollary.

\begin{corollary}\label{C:symplectTwist}
 Let $\ell$ and $p$ be different primes with $p \geq 3$. Let $K \subset \Qbar_\ell$ be a field. Let
$E/K$ and $E'/K$ be elliptic curves with isomorphic $p$-torsion modules. Let $d \in K^*$.

Then a symplectic criterion exists for $E$ and $E'$ if and only if it exists for $dE$ and $dE'$.
\end{corollary}
\begin{proof} By definition, a symplectic criterion exists for $E$ and $E'$ if and only if the symplectic types of all the 
$G_K$-isomorphisms $\phi : E[p] \simeq E'[p]$ are the same. It is now direct from Lemma~\ref{L:twistIso} that this condition holds if and only if it holds for $dE[p]$ and $dE'[p]$.
\end{proof}

We will also need well known facts 
from the theory of the Tate curve, which  
we summarize in the following proposition. 
For $p \geq 3$ a prime we fix~$\zeta_p \in \Qbar_\ell$ a $p$-th root of unity.

\begin{proposition} \label{P:theoryTate}
Let $\ell$ and $p \geq 3$ be different primes. 
Let $C/\Q_\ell$ be an elliptic curve with potentially multiplicative reduction, minimal 
discriminant~$\Delta_m(C)$ and $j$-invariant~$j_C$. 
Then there exists a symplectic basis of $C[p]$ in which the $p$-torsion representation is of the form
\begin{equation}
 \label{E:tate0}
 \rhobar_{C,p} \simeq 
 \begin{pmatrix} \chi \chi_p & h \\ 0 & \chi \end{pmatrix},
\end{equation}
where $\chi_p$ is the mod~$p$ cyclotomic 
character and $\chi$ is 
character of order at most 2. Moreover, 
\begin{enumerate}
 \item $C$ has additive reduction if and only if~$\chi$ is ramified.
 \item $C$ has split multiplicative reduction if and only if~$\chi = 1$.
 \item We have $h = 0 \iff \#\rhobar_{C,p}(I_\ell) = 1  \iff p \mid \vv_\ell(\Delta_m(C))$.
 \item For $\sigma \in I_\ell$, we have $\sigma(j_C^{1/p}) = \zeta_p^{h(\sigma)} \cdot j_C^{1/p}$ for any choice of~$j_C^{1/p}$ a $p$-th root of $j_C$.
\end{enumerate}
\end{proposition}
\begin{proof} 
Everything in this proposition follows from the well known theory of the Tate curve.
This is treated in detail in~\cite[Chap. V]{SilvermanII}. In particular, see Proposition~6.1 and Exercise~5.13. 
For the claim that~\eqref{E:tate0} holds in a symplectic basis we refer to~\cite{FukudaWeilPairing}; in particular, see
Section 2.4 and Remark~2.10 there.

We shall only prove (4) which is a small modification of a standard fact. 
It is well known that after at most a quadratic twist $C$ is $\Q_\ell$-isomorphic to a Tate curve~$E_q/\Q_\ell$ with Tate parameter~$q \in \Q_\ell^*$. 
Fix $q^{1/p} \in \Qbar^*_\ell$ a $p$-th root of~$q$.
Moreover, it is also known that, for~$\sigma \in I_\ell$, 
the value of~$h$ in 
in~\eqref{E:tate0} is given by the action 
of~$\sigma$ on~$q^{1/p}$ therefore, we have to show this is the same as the action of~$\sigma$ on~$j_C^{1/p}$.
This follows from the fact that $q \cdot j_C$ is a $p$-th power in~$\Q_\ell$; indeed, we have $\vv_\ell(q) > 0$, therefore
\[
j_C \cdot q\equiv 1+744q +O(q^2) \implies j_C \cdot q \equiv 1\pmod{\ell}
\]
and, since $\ell \neq p$, it follows from Hensel's lemma that $q \cdot j_C = s^p$ with $s \in \Q_\ell$.

Finally, we show the value of $h(\sigma)$ in (4) is independent of the choice of~$j_C^{1/p}$. 
Fix~$j_C^{1/p}$ a $p$-th root of $j_C$ and let $r = \zeta_p^k \cdot j_C^{1/p}$ be another such $p$-th root.
Thus,
\[ \sigma(r) = \sigma (\zeta_p^k \cdot j_C^{1/p}) = \zeta_p^k \cdot  \sigma(j_C^{1/p}) =  \zeta_p^k \cdot \zeta_p^{h(\sigma)} \cdot j_C^{1/p} 
= \zeta_p^{h(\sigma)} \cdot r,
\]
as desired; here the second equality follows because $\sigma \in I_\ell$ and $\zeta_p \in \Q_\ell^{un}$.
\end{proof}

We now recall some notation
from Section~\ref{S:notation}.
Let $\ell$ and $p \geq 3$ be different primes. 
Given an elliptic curve $E/\Q_\ell$ with potentially good reduction 
we write $L=\Q_\ell^{\text{un}}(E[p])$ for the {\it inertial field} of $E$. It is the minimal extension of $\Q_\ell^{\text{un}}$ where 
$E$ obtains good reduction.  
We call $e = e(E) = \#\Gal(L/\Q_\ell^{\text{un}})$ the {\it semistability defect of} $E$. 
Recall that $e \in \{1, 2, 3, 4, 6, 8, 12, 24\}$.

\begin{proposition} 
\label{P:TwistsameType}
Let $\ell$, $p \geq 3$ be different primes. Let $E$ and $E'$ be elliptic curves over $\Q_\ell$ such that
$E[p]$ and $E'[p]$ are isomorphic $G_{\Q_{\ell}}$-modules.

(A) Suppose $E$ and $E'$ have
potentially good reduction with semistability defect $e$ and 
$e'$, respectively. Then $e = e'$.

Assume further $e \in \{2,6\}$. 
Then, for $d \in \Q_\ell^*$ described by Lemma~\ref{L:lemma1},~\ref{L:lemma2},~\ref{L:lemma3}~or~\ref{L:lemma4}, 
the twists $W=dE$,
$W' = dE'$ satisfy $W[p] \simeq W'[p]$ as $G_{\Q_{\ell}}$-modules
and 
\begin{itemize}
 \item[(a)] $e(W) = e(W')=3$ if $e=6$.
 \item[(b)] $e(W) = e(W')=1$ if $e=2$.
\end{itemize}

(B) Suppose $E$ and $E'$ have
potentially multiplicative reduction. Then, for $d \in \Q_\ell$ described by Lemma~\ref{L:multTwist},
the twists $W=dE$, $W' = dE'$ have split multiplicative reduction and $W[p] \simeq W'[p]$.
\end{proposition}
\begin{proof} 
We first prove (A).  
The inertial field of $E$ is also the field fixed
by the restriction of $\rhobar_{E,p}$ to $G_{\Q_\ell^{\unr}}$ and similarly for $E'$.
From $\rhobar_{E,p} \simeq \rhobar_{E',p}$ it follows that 
the inertial fields satify
$
L=\Q_\ell^{\text{un}}(E[p]) = \Q_\ell^{\text{un}}(E'[p]) = L',
$
hence $e = \#\Gal(L/\Q_\ell^{\text{un}}) = \#\Gal(L'/\Q_\ell^{\text{un}}) = e'$, as desired.

Suppose now $e=6$. From Lemma~\ref{L:lemma1}~or~\ref{L:lemma2}
we conclude there is an element $d \in \Q_\ell$ such 
that $W = dE$ has semistability defect $e=3$. From Lemma~\ref{L:twistIso} it follows that 
$W[p] \simeq W'[p]$ and from part (A) applied with $W$ and $W'$
it follows that $e(W) = e(W') = 3$, proving (a). The proof of~(b) is similar, where we replace Lemmas~\ref{L:lemma1}~and~\ref{L:lemma2} 
by Lemmas~\ref{L:lemma3}~and~\ref{L:lemma4}. 

We now prove (B). By Lemma~\ref{L:multTwist},
since~$E$ has potentially multiplicative reduction, 
there exists~$d \in \Q_\ell$ such that $W = dE$ has split multiplicative reduction 

From Lemma~\ref{L:twistIso}
we get $W[p] \simeq W'[p]$.
To complete the proof we will 
show that $W'$ has split multiplicative reduction.
Applying Proposition~\ref{P:theoryTate} 
with $C=W$ gives
\[
\begin{pmatrix} \chi_p & h \\ 0 & 1 \end{pmatrix} \simeq
\rhobar_{W,p} \simeq \rhobar_{W',p} 
\]
and since $W'$ also has potentially multiplicative reduction (as this is the case for $E'$) by applying Proposition~\ref{P:theoryTate} 
now with $C=W'$ we conclude that $W'$ has split multiplicative reduction (from part (2) because $\chi = 1$).
\end{proof}

\begin{proposition} \label{P:multTwist},
Let $\ell$, $p \geq 3$ be different primes. Let $E$ and $E'$ be elliptic curves over $\Q_\ell$ such that
$E[p]$ and $E'[p]$ are isomorphic $G_{\Q_{\ell}}$-modules.

Suppose that $E$ has potentially multiplicative reduction and $E'$ has potentially good
reduction. 

Then, 
after taking a simultaneous 
qudratic twist of $E$ and $E'$ by some $d \in \Q_\ell$ if necessary,
we are in one of the following cases: 
\begin{itemize}
 \item[(i)] $E$ has split multiplicative reduction, 
 $E'$ has good reduction and 
 $p \mid \vv_\ell(\Delta_m(E))$, 
 where $\Delta_m(E)$ is the discriminant of a minimal model for~$E$.
 \item[(ii)] $E$ has split multiplicative reduction, 
 $E'$ has potentially good reduction with semistability defect $e(E')=3$
 and $p=3$. 
\end{itemize}
\end{proposition}
\begin{proof} From Lemma~\ref{L:multTwist} 
there is $d \in \Q_\ell$ such that the $W = dE$ has split multiplicative reduction. 
Applying Proposition~\ref{P:theoryTate}
with $C=W$ gives
\[
\rhobar_{W,p} \simeq 
 \begin{pmatrix} \chi_p & h \\ 0 & 1 \end{pmatrix} \quad \text{and} \quad 
 \#\rhobar_{W,p}(I_\ell) = 1 
 \iff
p \mid \vv_\ell(\Delta_m(W)).
\]
Since $\chi_p$ is unramified at~$\ell$, we have that $\rhobar_{W,p}(I_\ell)$ is of order 1 or~$p$. 

Write $W' = dE'$. This curve
has potentially good reduction. 
From Lemma~\ref{L:twistIso}, we have $W[p] \simeq W'[p]$ as $G_{\Q_{\ell}}$-modules. In particular, 
$\# \rhobar_{W,p}(I_\ell) = \# \rhobar_{W',p}(I_\ell)= e(W')$. We now split into cases according to the two possible orders of $\rhobar_{W,p}(I_\ell)$ above: 

(i) Suppose $\# \rhobar_{W,p}(I_\ell) = 1$.
Then $p \mid \vv_\ell(\Delta_m(W))$ and $e(W') = 1$ (i.e. $W'$ has good reduction) 

(ii) Suppose $\# \rhobar_{W,p}(I_\ell) = p$. 
Since $e(W') \in \{1, 2, 3, 4, 6, 8, 12, 24\}$ we conclude $p= e(W')=~3$.
\end{proof}

\begin{proposition} \label{P:mixedCase}
Let $\ell$, $p \geq 3$ be different primes. Let $E$ and $E'$ be elliptic curves over $\Q_\ell$ such that
$E[p]$ and $E'[p]$ are isomorphic $G_{\Q_{\ell}}$-modules.

Suppose that $E$ has potentially multiplicative reduction and $E'$ has potentially good
reduction. 

If a symplectic criterion exists, 
after taking a simultaneous 
quadratic twist of $E$ and $E'$ by some $d \in \Q_\ell$ if necessary,
then we are in case (ii) of Proposition~\ref{P:multTwist}.
\end{proposition}
\begin{proof}
The existence of a symplectic criterion is invariant by taking a simultaneous quadratic twist by~$d \in \Q_\ell^*$ (cf. Corollary~\ref{C:symplectTwist}). 
From Proposition~\ref{P:multTwist}, 
after twisting both curves by some~$d$ if needed,
we can assume that one of the two cases
in that proposition is satisfied.
In both cases $E$ has split multiplicative reduction and 
Proposition~\ref{P:theoryTate} with $C = E$ gives
\begin{equation} \label{E:tate1}
\rhobar_{E,p} \simeq 
 \begin{pmatrix} \chi_p & h \\ 0 & 1 \end{pmatrix}.
\end{equation}
Assume case (i) of Proposition~\ref{P:multTwist}. Then the image of $\rhobar_{E',p}$ is cyclic (generated by the image of Frobenius) and $\rhobar_{E',p}(I_\ell) = 1$. 
Hence, if a symplectic criterion exits, we are in case (B) of Theorem~\ref{T:conditionRho}.
Therefore, from the isomorphism $E[p] \simeq E'[p]$ we conclude that in~\eqref{E:tate1} we have
$\chi_p = 1$ 
and $h \neq 0$. Since $E$ has multiplicative reduction and $h \neq 0$, it follows from Proposition~\ref{P:theoryTate}
that
$\#\rhobar_{E,p}(I_\ell) \neq 1$,
a contradiction with 
$\# \rhobar_{E',p}(I_\ell) = 1$.

We conclude we are in case (ii)
of Proposition~\ref{P:multTwist}.
\end{proof}

\begin{proof}[Proof of Theorem~\ref{T:tableEquivalence}]
From Theorem~\ref{T:conditionRho} we know that (a) is equivalent to the existence of a symplectic criterion.

Suppose (b). Thus, after a simultaneous quadratic twist by some~$d \in \Q_\ell^*$, the conditions in a line of Table~\ref{Table:CriteriaList}
are satisfied.
Then the theorems in the fourth column of that line can be applied (by our assumption) to $dE$ and $dE'$
from which it follows that a symplectic criterion exists for $dE$ and $dE'$. From Corollary~\ref{C:symplectTwist} we conclude that a criterion exists for $E$ and $E'$.

Now we shall show that if a symplectic criterion exists for $E$ and $E'$ then, after at most a simultaneous twist prescribed by Table~\ref{Table:TwistingLemmas}, the curves
$dE$ and $dE'$ satisfy the conditions in a line of
Table~\ref{Table:CriteriaList}. 

We divide the proof in three cases according to the reduction types of~$E$ and $E'$.

{\sc Case 1:} We consider first the case of mixed reduction types. After interchanging $E$ and $E'$ 
if necessary, we can
suppose that $E$ has potentially
multiplicative reduction
and $E'$ potentially good
reduction. 

Suppose a symplectic criterion exists. From Proposition~\ref{P:mixedCase}, after possibly twisting both curves by an element $d \in \Q_\ell$ (given by Lemma~\ref{L:multTwist} applied to~$E$), we conclude that
$E/\Q_\ell$ has split multiplicative reduction and $p=e(E')=3$. This corresponds to the last line of Table~\ref{Table:CriteriaList}. 

Moreover, in case a ramified twist was taken, we see that before
we had $e(E')=6$ and $E/\Q_\ell$ with additive potentially multiplicative reduction. This corresponds to the reduction types in the last line of Table~\ref{Table:TwistingLemmas}. Furthermore, instead of Lemma~\ref{L:multTwist} applied to~$E$, we could have applied Lemma~\ref{L:lemma1} (if $\ell \geq 3$) or Lemma~\ref{L:lemma2} (if $\ell = 2$) 
to $E'$, as this would also reduce the inertia sizes on both curves from 6 to 3 (due to Lemma~\ref{L:twistIso}). This establishes the last line of Table~\ref{Table:TwistingLemmas}. 

{\sc Case 2:} Suppose that $E$ and $E'$ have potentially multiplicative reduction. 
From Proposition~\ref{P:TwistsameType} part (B),
after replacing the curves by a twist given by Lemma~\ref{L:multTwist} if necessary, we can assume 
both curves have split  multiplicative reduction. 

Suppose that $p \mid \vv_\ell(\Delta_m(E))$. Applying Proposition~\ref{P:theoryTate} with $C=E$, we conclude that
$\rhobar_{E,p} \simeq \chi_p \oplus 1$, thus none of the conditions in Theorem~\ref{T:conditionRho} are satisfied, so a symplectic criterion does not exist.
Suppose a symplectic criterion exists, hence 
$p \nmid \vv_\ell(\Delta_m(E))$.

This corresponds to the 11th line of Table~\ref{Table:CriteriaList} and 3rd line of Table~\ref{Table:TwistingLemmas}.

{\sc Case 3:} Suppose that both 
$E/\Q_\ell$ and $E'/\Q_\ell$ have potentially good reduction. Write $e$ and~$e'$ for their semistability defects. 
From Proposition~\ref{P:TwistsameType} and its proof
we see that, up to twist prescribed by lines 1 or 2 of Table~\ref{Table:TwistingLemmas}, we have $e=e'$ and can assume $e \in \{1,3,4,8,12,24\}$.
Recall also that $e=8,24$ only occurs for $\ell = 2$ and $e=12$ for $\ell = 3$.

We subdivide into cases:

(a) Suppose $e=8,12$ or $24$ and that a symplectic criterion exist
(actually this is automatic since inertia is non-abelian), then the conditions in line 8, 9 or 10 of Table~\ref{Table:CriteriaList} are satisfied, respectively.

(b) Suppose $e=3$ or $e=4$ and that a symplectic criterion exists. 

Suppose further that condition~(B) of 
Theorem~\ref{T:conditionRho} holds. Then, 
by Proposition~\ref{P:abeliane},
we have $p=e=3$ and $\ell \equiv 1 \pmod{3}$.
This corresponds to the 4th line of Table~\ref{Table:CriteriaList}.
Thus, we can assume 
condition (A) holds, hence
the $p$-torsion field $K=\Q_\ell(E[p])$ is non-abelian. 
We subdivide further:

(i) wild case 
$\ell = e = 3$. 
Then Proposition~\ref{P:nonabelianWild3}
gives $\tilde{\Delta} \equiv 2 \pmod{3}$.

(ii) wild case 
$\ell = 2$ and $e = 4$. 
Then Proposition~\ref{P:nonabelianWild4}
gives $\tilde{c}_4 \equiv 5 \tilde{\Delta} \pmod{8}$.

(iii) tame reduction. Then Corollary~\ref{C:nonabelianTame}
gives $\ell \equiv -1 \pmod{e}$.

Cases (i) and (ii) correspond, respectively, to lines 5 and 7 of Table~\ref{Table:CriteriaList}; case (iii) corresponds to lines 3 and 6.

(c) Suppose $e=1$, i.e. $E$ has good reduction, and that a symplectic criterion exists. 
From Proposition~\ref{P:AbelianCentralizer} we have
$p \mid \Delta_\ell$ and $p \nmid \beta_\ell$; 
Proposition~\ref{P:tableConditionI} 
implies $(\ell/p) = 1$; this corresponds to the second line; assuming further $\Ebar = \Ebar'$ gives the first line.

This completes the equivalence with (b). 
\end{proof}

{\large \part{The morphism $\gamma_E$}}

\section{Explicit description of $\gamma_E$}
\label{S:mapgammaE}

Let $E/\Q_\ell$ be an elliptic curve with potentially good reduction. 
Let $\pi_L$ be a uniformizer in $L=\Q_\ell^{\text{un}}(E[p])$ and
write $\Ebar$ for the elliptic curve over $\Fbar_\ell$ obtained by 
reduction mod~$(\pi_L)$ of a model of $E / L$ with good reduction
and $\varphi : E[p] \rightarrow \overline{E}[p]$ for the reduction morphism. 
Let $\Aut(\Ebar)$ be the automorphism group of $\Ebar$ over $\Fbar_\ell$. 

In \cite[Section 2]{ST1968} it is proved that $\Phi=\Gal(L/\Q_\ell^{un})$ 
acts on $\Ebar$ by $\Fbar_\ell$-automorphisms.\footnote{The arguments in \cite[Section 2]{ST1968} hold in the more general setting of abelian varieties with potentially good reduction.} 
Indeed, write $\psi : \Aut(\Ebar) \rightarrow \GL(\Ebar[p])$ for the natural
injective morphism. The action of $\Phi$ on $L$ induces an injective homomorphism 
$\gamma_E : \Phi \rightarrow \Aut(\overline{E})$. Furthermore, for $\sigma \in \Phi$ we have
\begin{equation}
\label{E:phi}
 \varphi \circ \rhobar_{E,p}(\sigma) = \psi(\gamma_E(\sigma)) \circ \varphi.
\end{equation}

The proof of our main results relies on explicit computations with the morphism $\gamma_E$.
Since the arguments in \cite[Section 2]{ST1968} are not explicit enough for our purpose,
in this section we will give a direct proof of the existence of $\gamma_E$ and establish
\eqref{E:phi}.

Consider a Weierstrass model over $\Q_\ell$ 
\[
W : y^2+a_1xy+a_3y=x^3+a_2x^2+a_4x+a_6. 
\]
From \cite[Proposition VII.1.3]{SilvermanI} there is a change of coordinates
\begin{equation}
 \label{E:coordchangeprime} 
x=u^2x'+r, \qquad y=u^3y'+u^2sx'+t, \quad u,r,s,t \in \calO_L
\end{equation}
which transforms $W$ into a minimal model with good reduction over $L$ 
\[
W' :  y'^2+a_1'x'y'+a_3'y'=x'^3+a_2'x'^2+a_4'x'+a_6'.
\]
Moreover, from Table~3.1 in \cite{SilvermanI}, we have
\begin{equation}
\label{E:coordchange}
\begin{cases}
ua'_1   & =    a_1+2s  \\
u^2a'_2 & =  a_2-sa_1+3r-s^2 \\
u^3a'_3 & =  a_3+ra_1+2t \\
u^4a'_4 & =  a_4-sa_3+2ra_2-(t+rs)a_1+3r^2-2st \\
u^6a'_6 & =  a_6+ra_4+r^2a_2+r^3-ta_3-t^2-rta_1. 
\end{cases}
\end{equation}
and the standard invariants are related by
\[
 c_4=u^4c'_4,\quad c_6=u^6c'_6,\quad \Delta=u^{12}\Delta'.
\]
Take $\sigma \in \Phi$. We claim that the coordinate change
\begin{equation} \label{E:recall1}
x=\sigma(u)^2 x''+\sigma(r),\quad y=\sigma(u)^3 y''+\sigma(u)^2\sigma(s) x''+\sigma(t) 
\end{equation}
transforms $W$ into the model 
\[
W'' : y''^2+\sigma(a_1')x'' y''+\sigma(a_3')y''=x''^3+\sigma(a_2')x''^2+\sigma(a_4')x''+\sigma(a_6')
\]
which is in fact minimal.
Indeed, write $a_i''$ for the coefficients of $W''$ obtained by the previous coordinate change.
We have
\begin{equation}
 \begin{cases}
\sigma(u)a''_1   & =  a_1+2\sigma(s)  \\
\sigma(u)^2a''_2 & =  a_2-\sigma(s)a_1+3\sigma(r)-\sigma(s)^2  \\
\sigma(u)^3a''_3 & =  a_3+\sigma(r)a_1+2\sigma(t) \\
\sigma(u)^4a''_4 & =  a_4-\sigma(s)a_3+2\sigma(r)a_2-(\sigma(t)+\sigma(r)\sigma(s))a_1+3\sigma(r)^2
-2\sigma(s)\sigma(t) \\
\sigma(u)^6a''_6 & =  a_6+\sigma(r)a_4+\sigma(r)^2a_2+\sigma(r)^3-\sigma(t)a_3
-\sigma(t)^2-\sigma(r)\sigma(t)a_1.
\end{cases}
\end{equation}
Since $a_i$ are fixed by $\sigma$ by applying $\sigma$ to \eqref{E:coordchange} we see that $\sigma(a'_i)=a''_i$. Clearly, $a_i'' \in \calO_L$ and since $\sigma(u)$ has the same valuation
as $u$ the model $W''$ is minimal as desired. Therefore, there exists a coordinate change
\begin{equation}
\label{E:coordchangehat} 
x'=\hat{u}^2x'' + \hat{r}, \quad y'=\hat{u}^3y''+\hat{u}^2 \hat{s}x''+ \hat{t}, \quad u,r,s,t \in \calO_L, \quad \upsilon_L(\hat{u}) = 0
\end{equation}
which transforms $W'$ into $W''$.
Thus we have
\begin{equation}
\label{E:autoeqn}
 \begin{cases}
\hat{u}\sigma(a'_1)   & =  a'_1+2\hat{s}  \\
\hat{u}^2\sigma(a'_2) & =  a'_2-\hat{s}a'_1+3\hat{r}-\hat{s}^2  \\
\hat{u}^3\sigma(a'_3) & =  a'_3+\hat{r}a'_1+2\hat{t} \\
\hat{u}^4\sigma(a'_4) & =  a'_4-\hat{s}a'_3+2\hat{r}a'_2-(\hat{t}+\hat{r}\hat{s})a'_1+3\hat{r}^2
-2\hat{s}\hat{t} \\
\hat{u}^6\sigma(a'_6) & =  a'_6+\hat{r}a'_4+\hat{r}^2a'_2+\hat{r}^3-\hat{t}a'_3
-\hat{t}^2-\hat{r}\hat{t}a'_1.
\end{cases}
\end{equation}

\begin{lemma} The following equalities hold
\[
 \hat{u} = \frac{\sigma(u)}{u}, \quad \hat{r} = \frac{\sigma(r) - r}{u^2}, \quad \hat{s} = \frac{\sigma(s)-s}{u}, \quad
 \hat{t} = \frac{\sigma(t)-t-s(\sigma(r) - r)}{u^3}.
\]
\label{L:hats}
\end{lemma}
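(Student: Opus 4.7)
The plan is to prove the lemma directly by composing the two known coordinate changes and comparing with the third. Indeed, the three changes of variables fit together into a commutative diagram: starting from the coordinates $(x'',y'')$ on $W''$, applying \eqref{E:coordchangehat} produces $(x',y')$ on $W'$, then applying \eqref{E:coordchangeprime} produces $(x,y)$ on $W$; this composite coordinate change must be the $\sigma$-twist of \eqref{E:coordchangeprime}, namely $x=\sigma(u)^2 x''+\sigma(r)$ and $y=\sigma(u)^3 y''+\sigma(u)^2 \sigma(s) x''+\sigma(t)$, since both transform $W$ into $W''$ by minimal changes (and the change between two minimal integral models is unique up to a unit). Matching the two expressions will give four equations from which the formulas are read off.

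Concretely, substituting \eqref{E:coordchangehat} into \eqref{E:coordchangeprime} yields
\begin{align*}
 x &= u^2\hat{u}^2\, x'' + (u^2\hat{r}+r),\\
 y &= u^3\hat{u}^3\, y'' + \hat{u}^2(u^3\hat{s}+u^2 s)\, x'' + (u^3\hat{t}+u^2 s\hat{r}+t).
\end{align*}
Comparing with the $\sigma$-twisted change of variables, the coefficient of $x''$ in $x$ gives $u^2\hat{u}^2=\sigma(u)^2$, hence $\hat{u}=\sigma(u)/u$ (the sign choice is determined by the compatibility with $y''$). The constant term of $x$ gives $u^2\hat{r}+r=\sigma(r)$, so $\hat{r}=(\sigma(r)-r)/u^2$. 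The coefficient of $x''$ in $y$, after substituting $\hat{u}=\sigma(u)/u$, simplifies to $u\hat{s}+s=\sigma(s)$, giving $\hat{s}=(\sigma(s)-s)/u$. Finally, the constant term of $y$ gives $u^3\hat{t}+u^2 s\hat{r}+t=\sigma(t)$, and inserting the already-obtained value of $\hat{r}$ yields $\hat{t}=(\sigma(t)-t-s(\sigma(r)-r))/u^3$.

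There is essentially no obstacle here: the proof is a direct algebraic manipulation, and the only conceptual point to be invoked is the uniqueness of the coordinate change between two isomorphic Weierstrass models, which guarantees that the composite coordinate change agrees with the $\sigma$-twist of \eqref{E:coordchangeprime}. This uniqueness is standard (see \cite[Proposition~III.3.1(b)]{SilvermanI}) and justifies the coefficient-by-coefficient matching. One may alternatively verify the formulas a posteriori by plugging them into the system \eqref{E:autoeqn} and checking consistency, but the compositional argument is cleaner and avoids computing with all five equations of \eqref{E:autoeqn} individually.
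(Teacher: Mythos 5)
Your proof is correct and is essentially the paper's own argument: substitute \eqref{E:coordchangehat} into \eqref{E:coordchangeprime}, compare the resulting composite with the $\sigma$-twisted change of variables taking $W$ to $W''$, and equate coefficients (using the $y''$-coefficient to pin down the cube root $\hat{u}=\sigma(u)/u$). The one caveat is that a change of coordinates between two Weierstrass models is unique only up to an automorphism of the curve (e.g.\ $u\mapsto -u$), so \cite[Proposition~III.3.1(b)]{SilvermanI} does not literally give uniqueness; the lemma should be read as fixing the choice of \eqref{E:coordchangehat} to be this composite, a point the paper also leaves implicit.
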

\begin{proof} From the equations \eqref{E:coordchangeprime} and \eqref{E:coordchangehat}
it follows that 
\[x=u^2\hat u^2 x''+u^2\hat r+r,\]
\[y=u^3\hat u^3 y''+(u^3\hat u^2 \hat s+u^2\hat u^2 s)x''+u^3\hat t+u^2 s\hat r+ t.\]
Recall from \eqref{E:recall1} that
\[
x=\sigma(u)^2 x''+\sigma(r),\quad y=\sigma(u)^3 y''+\sigma(u)^2\sigma(s) x''+\sigma(t).
\]
The result now follows by equating all the coefficients. 
\end{proof}

Since $\sigma \in \Phi$ the images of $\sigma(a_i')$ and $a_i'$ modulo $(\pi_L)$ 
are the same element in the residual field $\Fbar_\ell$.
Therefore the models $W'$ and $W''$ reduce to the same elliptic curve $\Ebar$ 
over $\Fbar_\ell$. By reducing the equations \eqref{E:autoeqn} we obtain an automorphism of $\Ebar$
which we denote by $\gamma_E(\sigma)$. 

\begin{definition} Let $\sigma \in \Phi$. We define $\gamma_E(\sigma) \in \Aut(\Ebar)$ to 
be the automorphism given by 
\[
 (X,Y) \mapsto (\xi_1^2 X + \xi_2, \xi_1^3 Y + \xi_3\xi_1^2 X + \xi_4)
\]
where
\[
 \xi_1 = \hat{u} + (\pi_L), \quad \xi_2 = \hat{r} + (\pi_L), \quad \xi_3 = \hat{s} + (\pi_L), \quad 
 \xi_4 = \hat{t} + (\pi_L).
\]
\label{D:gammaE}
\end{definition}

\begin{lemma}
 \label{L:gammaE}
 For all $\sigma \in \Phi$ we have
 \[
  \varphi \circ \rhobar_{E,p}(\sigma) = \psi(\gamma_E(\sigma)) \circ \varphi,
 \]
where $\varphi : W'[p] \to \Ebar[p]$ is the reduction morphism.
In particular, $\gamma_E$ is a well defined injective homomorphism.
\end{lemma}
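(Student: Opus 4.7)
The plan is to compute $\rhobar_{E,p}(\sigma)(P)$ for a $p$-torsion point $P$ in the coordinates of the good-reduction model $W'$, and then reduce modulo $\pi_L$ to recover the action of $\gamma_E(\sigma)$ defined in Definition~\ref{D:gammaE}. Since $W$ is defined over $\Q_\ell$ while $W'$ is defined only over $L$, the Galois action by $\sigma$ acts on $W$-coordinates simply by applying $\sigma$ entry-wise, but on $W'$-coordinates the action is more subtle, and the point of the computation is to express this subtlety precisely in terms of the quantities $\hat u,\hat r,\hat s,\hat t$ from Lemma~\ref{L:hats}.

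Concretely, I would fix $P\in E[p]$ with $W'$-coordinates $(x',y')$ and hence $W$-coordinates $(x,y)=(u^2x'+r,\; u^3y'+u^2sx'+t)$. Since $p\ne\ell$ and $W'$ has good reduction over $L$, we have $x',y'\in\calO_L$. By definition of the Galois action on $W$, the point $\rhobar_{E,p}(\sigma)(P)$ has $W$-coordinates $(\sigma(x),\sigma(y))$; denoting its $W'$-coordinates by $(x'_\sigma,y'_\sigma)$, the relations
\[
u^2x'_\sigma+r=\sigma(u)^2\sigma(x')+\sigma(r),\qquad
u^3y'_\sigma+u^2sx'_\sigma+t=\sigma(u)^3\sigma(y')+\sigma(u)^2\sigma(s)\sigma(x')+\sigma(t)
\]
can be solved explicitly. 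Using the identities of Lemma~\ref{L:hats}, a direct algebraic manipulation (solving for $x'_\sigma$ first, then substituting into the second relation) yields
\[
x'_\sigma=\hat u^2\,\sigma(x')+\hat r,\qquad
y'_\sigma=\hat u^3\,\sigma(y')+\hat u^2\hat s\,\sigma(x')+\hat t.
\]
This step is essentially bookkeeping but is the technical heart of the argument; the key coincidence is that the combinations $(\sigma(u)^2\sigma(s)-u^2s\hat u^2)/u^3$ and $(\sigma(t)-t-s(\sigma(r)-r))/u^3$ collapse exactly to $\hat u^2\hat s$ and $\hat t$ respectively.

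Next, I would reduce everything modulo $\pi_L$. Since $\Phi=\Gal(L/\Q_\ell^{\mathrm{un}})$ acts trivially on the residue field $\Fbar_\ell$, we have $\sigma(x')\equiv x'$ and $\sigma(y')\equiv y'\pmod{\pi_L}$, while $\hat u,\hat r,\hat s,\hat t$ reduce to $\xi_1,\xi_2,\xi_3,\xi_4$. Comparing with Definition~\ref{D:gammaE} shows
\[
\varphi\bigl(\rhobar_{E,p}(\sigma)(P)\bigr)=\bigl(\xi_1^2\,\overline{x'}+\xi_2,\;\xi_1^3\,\overline{y'}+\xi_3\xi_1^2\,\overline{x'}+\xi_4\bigr)=\psi(\gamma_E(\sigma))\bigl(\varphi(P)\bigr),
\]
which is the desired identity. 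To see that $\gamma_E(\sigma)$ is indeed an $\Fbar_\ell$-automorphism of $\Ebar$, I would invoke the reduction of equations~\eqref{E:autoeqn}: they express that $(\xi_1,\xi_2,\xi_3,\xi_4)$ intertwines the common reduction $\Ebar$ of $W'$ and $W''$ with itself.

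Finally, for the homomorphism and well-definedness statement: since $p\ne\ell$ the reduction map $\varphi:E[p]\to\Ebar[p]$ is a $\Z$-module isomorphism, and $\psi$ is injective. Applying the displayed identity to both sides of $\rhobar_{E,p}(\sigma_1\sigma_2)=\rhobar_{E,p}(\sigma_1)\rhobar_{E,p}(\sigma_2)$ forces $\gamma_E(\sigma_1\sigma_2)=\gamma_E(\sigma_1)\gamma_E(\sigma_2)$, and the same identity uniquely determines $\gamma_E(\sigma)$, so $\gamma_E$ does not depend on the auxiliary choices. The main obstacle is the algebraic verification of the formulas for $x'_\sigma$ and $y'_\sigma$; once these are in hand, reduction modulo $\pi_L$ and the formal properties of $\varphi$ and $\psi$ complete the proof with no further subtleties.
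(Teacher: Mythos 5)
Your proposal is correct and follows essentially the same route as the paper's proof: fix a $p$-torsion point, express $\sigma(P)$ in $W'$-coordinates via the two coordinate changes, solve to get $x'_\sigma=\hat u^2\sigma(x')+\hat r$ and $y'_\sigma=\hat u^3\sigma(y')+\hat u^2\hat s\,\sigma(x')+\hat t$ using Lemma~\ref{L:hats}, and reduce mod $(\pi_L)$ using that $\sigma$ acts trivially on the residue field. The algebraic identifications you flag as the technical heart do check out against Lemma~\ref{L:hats}, so nothing is missing.
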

\begin{proof} Let $P=(x,y)$ be a $L$-point in $E[p]$ satisfying the model $W$.
We have $x,y \in \calO_L$ (see \cite[VII.3 Theorem~3.4]{SilvermanI})
and $\sigma(P) = (\sigma(x),\sigma(y))$ also satisfies
the model $W/\Q_\ell$.
Write $(x',y')$ and $(x_1,y_1)$ respectively for the coordinates of $P$ and $\sigma(P)$ in $W'$. 
Thus
\[
 \varphi(P) = (X,Y) = (x' + (\pi_L), y' + (\pi_L)), \quad 
 \varphi(\sigma(P)) = (X_1,Y_1) = (x_1 + (\pi_L), y_1 + (\pi_L)).
\]
We have $x = u^2 x' + r$ hence $\sigma(x) = \sigma(u)^2 \sigma(x') + \sigma(r)$.
Furthermore $\sigma(x) = u^2 x_1 + r$ thus
\[
 u^2 x_1 + r = \sigma(u)^2 \sigma(x') + \sigma(r)
\]
and we conclude
\[
 x_1 = \hat{u}^2 \sigma(x') + \hat{r}.
\]
Working in an analogous way with the $y$-coordinate gives
\[
 y_1 = \hat{u}^3 \sigma(y') + \hat{u}^2 \hat{s} \sigma(x') + \hat{t}.
\]
By reducing mod~$(\pi_L)$ the two previous equations and taking into 
account that $x' \equiv \sigma(x')$ mod~$(\pi_L)$ we obtain equalities
\[
 X_1 = \bar{\hat{u}}^2 X + \bar{\hat{r}}, \qquad Y_1 = \bar{\hat{u}}^3 Y + \bar{\hat{s}}\bar{\hat{u}}^2 X + \bar{\hat{t}}.
\]
By definition of $\gamma_E(\sigma)$ the previous two equalities mean 
\[
 \varphi(\sigma(P)) = \gamma_E(\sigma)(\varphi(P)),
\]
which implies the equality in the statement. 
From it follows that
$\gamma_E$ is a homomorphism. Finally, note that
if $\sigma = 1$, then from Lemma~\ref{L:gammaE} we have 
\[
 \hat{u} = 1 \quad \text{ and } \quad \hat{r} = \hat{s} = \hat{t} = 0,
\]
so by Definition~\ref{D:gammaE} we conclude 
$\gamma_E(\sigma)=1$ 
(since $\xi_1=1$ and $\xi_2=\xi_3=\xi_4=0$).
\end{proof}

\section{The morphism $\gamma_E$ in the tame case $e=3$}
\label{S:gammaEtame3}

Let $E/\Q_\ell$ be an elliptic curve having potentially good
reduction with $e=3$ and $p$-torsion field~$K$.
Let $\ell \equiv 2 \pmod{3}$, so that
$K/\Q_\ell$ is non-abelian by Corollary~\ref{C:nonabelianTame}. 
We have already fixed $\zeta_3 \in K_{un} \subset K$ 
in Section~\ref{S:TauGamma}. For $\ell \equiv 1 \pmod{3}$ we now 
fix a cubic root of unity $\zeta_3 \in \Q_\ell$. 

Write $F = \Q_\ell(\pi)$ where $\pi^3 = \ell$. 
Note that $\pi$ is a uniformizer of $L = \Q_\ell^{un}F$
and let $\omega_3 \in \Fbar_\ell$ be the cubic root of unity 
given by $\omega_3 \equiv \zeta_3 \pmod{\pi}$. 
Let $\sigma$ be the generator 
of $\Phi = \Gal(L/\Q_\ell^{un})$ given by $\sigma(\pi) = \zeta_3 \pi$.
Note that, when $\ell \equiv 2 \pmod{3}$, it coincides with the $\sigma$ 
given by Proposition~\ref{P:groupStructure} and Section~\ref{S:TauGamma}.
The following results compute $\gamma_E(\sigma)$ explicitly in the case
that $E$ has a $3$-torsion point defined over $\Q_\ell$.

\begin{lemma} Let $\ell \neq 3$ be a prime. 
Let $E/\Q_\ell$ be an elliptic curve with potentially good reduction 
and $e=3$. Suppose further that $E$ has a $3$-torsion point in $\Q_\ell$. 
Then $E$ admits a minimal Weierstrass model over $\Q_\ell$ of the form
\[
 y^2 + axy + by = x^3, \qquad \Delta_m  = b^3(a^3 - 27b)
\]
where $b = \ell^\alpha u_0$ with $u_0 \in \Z_\ell^*$, 
$\alpha=1,2$ and $\upsilon_\ell(b) < 3\upsilon_\ell(a)$. 
In particular, $\upsilon_\ell(\Delta_m) = 4\alpha$.
\label{L:3torsionmodel}
\end{lemma}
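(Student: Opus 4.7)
The plan is to construct the model by translating a rational $3$-torsion point to the origin, and then to extract the valuation conditions from the hypotheses $\ell \neq 3$ and $e = 3$.

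First I would observe that any $3$-torsion point $P$ of $E$ is an inflection point of the Weierstrass cubic, since $3P = O$ gives $-2P = P$, so the tangent line at $P$ meets $E$ at $P$ with multiplicity three. A $\Q_\ell$-rational change of coordinates translates $P$ to the origin and aligns the tangent there with the line $y = 0$, forcing $a_4 = a_6 = 0$; the flex condition (triple contact of $y = 0$ with $E$ at $(0,0)$) then forces $a_2 = 0$. Writing $a := a_1$ and $b := a_3$, the equation takes the form $y^2 + axy + by = x^3$, with $b \neq 0$ (else $P$ would be $2$-torsion). A direct computation of the standard invariants yields $c_4 = a(a^3 - 24b)$ and $\Delta = b^3(a^3 - 27b)$. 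The form-preserving substitution $x \mapsto u^2 x$, $y \mapsto u^3 y$ sends $(a, b)$ to $(a/u, b/u^3)$; after clearing denominators and maximising $\vv_\ell(u)$ subject to integrality of $a, b$, I obtain a model with either $\vv_\ell(a) = 0$ or $\vv_\ell(b) \in \{0, 1, 2\}$.

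The main step is to rule out $\vv_\ell(a) = 0$ and $\vv_\ell(b) = 0$ using $e = 3$ and $\ell \neq 3$. If $\vv_\ell(a) = 0$, a brief case analysis on $\vv_\ell(b)$ shows that one of $\vv_\ell(c_4)$, $\vv_\ell(\Delta)$ vanishes: when $\vv_\ell(b) = 0$, the condition $\ell \neq 3$ forces $a^3 - 24b$ and $a^3 - 27b$ not to both vanish modulo $\ell$ (since otherwise $3b \equiv 0 \pmod{\ell}$), while for $\vv_\ell(b) \geq 1$ the valuation $\vv_\ell(a^3 - 24b) = 0$ follows from $\vv_\ell(a^3) = 0 < \vv_\ell(24b)$. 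Thus either $\vv_\ell(\Delta) = 0$ (good reduction) or $\vv_\ell(c_4) = 0$ with $\vv_\ell(\Delta) > 0$ (whence $\vv_\ell(j) < 0$, multiplicative reduction), both contradicting $e = 3$. Hence $\vv_\ell(a) \geq 1$, and therefore $\vv_\ell(b) \in \{0, 1, 2\}$. If $\vv_\ell(b) = 0$, then $\vv_\ell(a^3) \geq 3 > 0 = \vv_\ell(27b)$ gives $\vv_\ell(a^3 - 27b) = 0$ and $\vv_\ell(\Delta) = 0$, contradicting $e = 3$ once more. Thus $\alpha := \vv_\ell(b) \in \{1, 2\}$, and the trivial bound $3\vv_\ell(a) \geq 3 > 2 \geq \alpha$ yields $\vv_\ell(b) < 3\vv_\ell(a)$.

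Finally, with these constraints $\vv_\ell(a^3 - 27b) = \vv_\ell(27b) = \alpha$, so $\vv_\ell(\Delta) = 4\alpha \in \{4, 8\}$. By Lemma~\ref{L:usefulmodel} the minimal discriminant satisfies $\vv_\ell(\Delta_m) \in \{4, 8\}$, and since any residual non-minimality would shift $\vv_\ell(\Delta)$ by a positive multiple of $12$, the equality $\vv_\ell(\Delta) = \vv_\ell(\Delta_m)$ holds and the model is minimal. The main obstacle is the case analysis ruling out $\vv_\ell(a) = 0$, which must treat $\ell = 2$ (where $\vv_2(24) = 3$) separately from $\ell \geq 5$ (where $24$ and $27$ are units), but in every sub-case one reaches either good or multiplicative reduction, both contradicting $e = 3$.
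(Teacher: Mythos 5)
Your proof is correct, and it reaches the conclusion by a somewhat different route than the paper. Both arguments start from the same normal form $y^2+axy+by=x^3$ attached to the rational $3$-torsion point and use the rescaling $(a,b)\mapsto(a/u,b/u^3)$, but the key valuation step differs. The paper first proves $\vv_\ell(b)<3\vv_\ell(a)$ for an arbitrary integral model of this shape, using the fact that $e=3$ forces $3\nmid\vv_\ell(\Delta)$ for every model (to exclude $\vv_\ell(b)>3\vv_\ell(a)$ and the boundary case $\vv_\ell(b)=3\vv_\ell(a)$) combined with $\vv_\ell(j)\ge 0$; only afterwards does it normalize $\vv_\ell(b)$ into $\{0,1,2\}$ and discard $\alpha=0$. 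You instead normalize first and then eliminate the degenerate cases $\vv_\ell(a)=0$ and $\vv_\ell(b)=0$ purely from the reduction type: each such case yields either $\vv_\ell(\Delta)=0$ (good reduction) or $\vv_\ell(c_4)=0$ with $\vv_\ell(\Delta)>0$ (so $\vv_\ell(j)<0$, potentially multiplicative reduction), and both contradict additive potentially good reduction. Your route avoids the divisibility input $3\nmid\vv_\ell(\Delta)$ altogether and turns the inequality $\vv_\ell(b)<3\vv_\ell(a)$ into a trivial afterthought rather than the crux; the price is the small case split at $\ell=2$ versus $\ell\ge 5$ for $\vv_\ell(24b)$, which you handle correctly. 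Your closing appeal to Lemma~\ref{L:usefulmodel} for minimality is valid but unnecessary: $\vv_\ell(\Delta)=4\alpha\le 8<12$ already forces minimality, which is the argument the paper uses.
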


\begin{proof} Write $\vv = \vv_\ell$.
Since $e=3$ we have $\vv(\Delta_m) \not\equiv 0 \pmod{3}$ and the same is
true for the discriminant $\Delta$ of any other model of $E$.

Since $E$ has a 3-torsion point over $\Q_\ell$ there is a model of $E/\Q_\ell$
of the form
\[
 y^2 + axy + by = x^3, \quad a, b \in \Z_\ell, \qquad \Delta = b^3(a^3 - 27b),
\]
where $(0,0)$ has order 3 (see \cite[pp. 89, Remark~2.2]{Husemoller}). 
Observe that $b \neq 0$ and the discriminant is not necessarily minimal.

We will now show that $\vv(b) < 3\vv(a)$. If $\vv(b) > 3\vv(a)$ then $\vv(\Delta) = 3\vv(b)+3\vv(a)$
is a multiple of 3 which is impossible; thus $\vv(b) \leq 3\vv(a)$. 

Suppose $\vv(b) = 3\vv(a)$. Then $\vv(a^3 - 27b) \geq \vv(b)$ which implies
$\vv(\Delta) = 12\vv(a)$ when the equality holds. Again, since 
$3 \nmid \vv(\Delta)$ we conclude $\vv(a^3 - 27b) > \vv(b)$,
hence $\vv(\Delta) > 4\vv(b)$.

Now, the equality $a^3 - 24b = a^3 - 27b + 3b$
implies $\vv(a^3 - 24b) = \vv(b)$. We also have
\[
 j_E = \frac{a^3(a^3 - 24b)^3}{\Delta} \quad \text{ and } \quad  \vv(j_E) \geq 0,
\]
from which follows $4\vv(b) \geq \vv(\Delta)$, a contradiction. 
Thus $0 \leq \vv(b) < 3\vv(a)$ as claimed and, in particular, $\upsilon(a) > 0$.

The coordinates change $x = \ell^{2\lambda} x'$, $y = \ell^{3\lambda} y'$ replaces $a$ and $b$ by $a\ell^{-\lambda}$ and $b\ell^{-3\lambda}$ respectively, hence by choosing an appropriate $\lambda$ we can assume that $\upsilon(b) = \alpha \in \{0,1,2\}$. 

If $\alpha = 0$ then $\upsilon(\Delta) = 0$ which is impossible because $3 \nmid \upsilon(\Delta)$;
if $\alpha = 1$, the relation $\upsilon_\ell(b) < 3\upsilon_\ell(a)$ implies $\upsilon(\Delta) = 4$
and if $\alpha = 2$ it implies $\upsilon(\Delta) = 8$. In particular, the model is minimal in 
both cases (see \cite[Remark~1.1]{SilvermanI}).
\end{proof}

If, in Lemma~\ref{L:3torsionmodel}, we impose the extra condition $\ell \equiv 2 \pmod{3}$, then more can be said.

\begin{lemma} Let $\ell \equiv 2 \pmod{3}$ be a prime. 
Let $E/\Q_\ell$ be an elliptic curve with potentially good reduction
and $e=3$. 
Suppose that $E$ has a $3$-torsion point in $\Q_\ell$. 

Then there is a model of $E/F$ with good reduction and residual curve $\Ebar : Y^2 + Y = X^3$.

Moreover, $\gamma_E(\sigma) : (X,Y) \mapsto (\omega_3^{2\alpha}X,Y)$ where $\alpha = \vv_\ell(\Delta_m)/4$.
\label{L:gammaEe3}
\end{lemma}

\begin{proof} From Lemma~\ref{L:3torsionmodel} there is a model of $E/\Q_\ell$ of the form
\[
 y^2 + axy + by = x^3, \quad a, b \in \Z_\ell, \quad \Delta_m  = b^3(a^3 - 27b)
\]
where $b = \ell^\alpha u_0$ with $u_0 \in \Z_\ell^*$, 
$\alpha=1,2$ and $\upsilon_\ell(b) < 3\upsilon_\ell(a)$. 

We can assume $b=\ell^\alpha$.
Indeed, since $3 \nmid \ell - 1$ it follows from Hensel's 
lemma that $u_0 = u_1^3$ with $u_1$ in $\Z_\ell^*$. 
Then the change of coordinates $x = u_1^2 x'$, $y = u_1^3 y'$ 
leads to the model over $\Z_\ell$
\[
 W \; : \; y^2 + \frac{a}{u_1} xy + \ell^\alpha y = x^3,
\]
as desired. From this model, the change of variables $x = \pi^{2\alpha} x'$, $y = \pi^{3\alpha} y'$
give rise to a model over $F$ defined by
\[
 W' \; : \; y'^2 + \frac{a}{u_1 \pi^\alpha} x' y' + y' = x'^3 \quad \text{with} 
 \quad \upsilon_F(\Delta(W')) = \upsilon_F(\Delta_m) - 12\alpha = 0.
\]
Since $\upsilon_F(a/u_1 \pi^\alpha) = 3\upsilon(a) - \alpha > 0$ the model $W'$ is minimal with 
good reduction and its residual elliptic curve is $\overline{W'} = \Ebar$.
We have $\sigma(\pi) = \zeta_3 \pi$. In the notation of Lemma~\ref{L:hats},
we have
\[
 \hat{u} = \frac{\sigma(\pi^\alpha)}{\pi^\alpha} = \zeta_3^\alpha, \qquad \hat{r}=\hat{s}=\hat{t}=0
\]
and since $\zeta_3 \equiv \omega_3 \pmod{\pi}$ from the 
the definition of $\gamma_E$ follows directly that $\gamma_E(\sigma) \in \Aut(\Ebar)$ is the
automorphism $(X,Y) \mapsto (\omega_3^{2\alpha}X,\omega_3^{3\alpha}Y) = (\omega_3^{2\alpha}X, Y)$.
\end{proof}

The following is similar to Lemma~\ref{L:gammaEe3}, where the
condition $\ell \equiv 2 \pmod{3}$ is replaced by 
working over $\Q_\ell^{un}$.

\begin{lemma}
Let $\ell \neq 3$ be a prime. 
Let $E/\Q_\ell$ be an elliptic curve with potentially good reduction
and $e=3$. 
Suppose that $E$ has a $3$-torsion point in $\Q_\ell$. 

Then there is a model of $E/L$ with good reduction and residual curve $\Ebar : Y^2 + Y = X^3$.

Moreover, $\gamma_E(\sigma) : (X,Y) \mapsto (\omega_3^{2\alpha}X,Y)$ where $\alpha = \vv_\ell(\Delta_m)/4$.
\label{L:gammaEe3II} 
\end{lemma}
\begin{proof} 
From Lemma~\ref{L:3torsionmodel}, there is a model of $E/\Q_\ell$ of the form
\[
 y^2 + axy + by = x^3, \quad a, b \in \Z_\ell, \quad \Delta_m  = b^3(a^3 - 27b)
\]
where $b = \ell^\alpha u_0$ with $u_0 \in \Z_\ell^*$, 
$\alpha=1,2$ and $\upsilon_\ell(b) < 3\upsilon_\ell(a)$. 
Since $\vv_\ell(u_0)=0$, there exist~$u_1 \in \Q_\ell^{un}$ 
such that $u_1^3 = u_0$ (for Lemma~\ref{L:gammaEe3}
we actually showed that $u_1 \in \Z_\ell^*$ since $\ell \equiv 2 \pmod{3}$).
Now, arguing as in the proof of Lemma~\ref{L:gammaEe3} we also obtain the models
$W$ and $W'$ but defined over $\Q_\ell^{un}$ and $L$, respectively.
Moreover, the exact same calculation shows that $\gamma_E(\sigma)$ is as claimed.
\end{proof}

\section{The morphism $\gamma_E$ in the wild case $e=3$}

Let $E/\Q_3$ be an elliptic curve having potentially good
reduction with $e=3$ and non-abelian $p$-torsion field extension.
From Theorem~\ref{T:goodOverF} part (5), 
$E$ obtains good reduction over the field
\[ F=\Q_3(t) \quad \text{ where } \quad  t^3 + 3t^2 + 3 = 0.\]
Write $\vv = \vv_3$ for the valuation in $\Q_3$ and $\vv_F$
for that in $F$ satisfying $\vv_F(t) = 1$.

Since $F$ has residue field $\F_3$, every element of $\calO_F$ 
admits an unique $t$-adic expansion of the form
$\sum_{k=0}^{\infty} c_k t^k$ with $c_k \in \{ 0,1,2 \}$ and,
in particular,
\[
3 = \mu t^3, \qquad \text{ where } \qquad \mu = 2 + t^2 + t^3 + 2t^4 + st^5, \quad s \in \calO_F. 
\]
From Proposition~\ref{P:usefulmodel} we know that
$E$ admits a minimal model of the form
\begin{equation}
\label{E:modelwild3}
 y^2 = x^3 + a x + b,\quad a = -\frac{c_4}{48}, \quad b = -\frac{c_6}{864}
\end{equation}
whose invariants satisfy $(\vv(c_4), \vv(c_6), \vv(\Delta_m)) = (2, 3,4)$ or $(5, 8, 12)$.

We consider again the quantities $\tilde{c}_4$, $\tilde{c}_6$ and $\tilde{\Delta}$ defined by
\[
 c_4 = 3^{\vv(c_4)} \tilde{c}_4, \qquad c_6 = 3^{\vv(c_6)} \tilde{c}_6, \qquad \Delta_m = 3^{\vv(\Delta)}\tilde{\Delta}
\]
and consider the $3$-adic expansion
\begin{equation} \label{E:c4c6}
  \tilde{c}_6 = a_0 + a_1 3 + a_2 3^2 + O(3^3), \quad 
  \text{ where } \quad a_i \in \{0,1,2\}, \quad a_0 \ne 0.
\end{equation}
To compute $\gamma_E$ we first
need to describe change of coordinates leading 
to a model of $E/F$ with good reduction.
To do this we divide into cases according to $(\vv(c_4), \vv(c_6), \vv(\Delta_m))$.

\noindent {\sc Case I:} Suppose $(\vv(c_4), \vv(c_6), \vv(\Delta_m)) = (5,8,12)$. 

Define the change of coordinates
\begin{equation}
 x=u^2 x' + r, \quad y=u^3 y' \quad \text{ where } \quad u = t^3, \qquad r = (a_0 + y_1 t)t^5
 \label{E:rII}
\end{equation}
and $y_1 = 2, 0, 1$ respectively if $a_1 = 0, 1, 2$.

\noindent {\sc Case II:} Suppose $(\vv(c_4), \vv(c_6), \vv(\Delta_m)) = (2, 3, 4)$. 

We have $c_4^3 - c_6^2 = 12^3 \Delta_m$, hence $\tilde{c}_4^3 - \tilde{c}_6^2 = 2^6 3 \Delta_m$
and Proposition~\ref{P:nonabelianWild3} implies
$\tilde{c}_6 \equiv 2 , 7 \pmod{9}$ and $\tilde{c}_4 \equiv 1 , 4, 7 \pmod{9}$.

Write $\beta_1 = 0, 2, 1$ respectively if $\tilde{c}_4 \equiv 1,4,7 \pmod{9}$.

Now define the change of coordinates
\begin{equation}
 x=u^2 x' + r, \quad y=u^3 y' \quad \text{ where } \quad u = t, \qquad r = y_0 + y_0 t + y_2 t^2,
 \label{E:r}
\end{equation}
where $y_0$ and $y_2$ are given according to the following cases:

\begin{small}
\begin{table}[htb]
$$
\begin{array}{|c|c|} \hline
\tilde{c}_6 \equiv 7 \pmod{9}                  & \tilde{c}_6 \equiv 2 \pmod{9} \\ \hline
y_0 = 2                                        &  y_0 = 1 \\    
y_2 = 2 \text{ if } \beta_1 - a_2 = 0          &  y_2=1 \text{ if }  2\beta_1 - a_2 \in \{4,1,-2 \}  \\
y_2 = 1 \text{ if } \beta_1 - a_2 \in \{-2,1\} &  y_2=2 \text{ if } 2\beta_1 - a_2 \in \{ 3,0 \}  \\
y_2 = 0 \text{ if } \beta_1 - a_2 \in \{-1,2\} &  y_2=0 \text{ if } 2\beta_1 - a_2 \in \{-1,2 \} \\
\hline
\end{array}
$$
\end{table}
\end{small}

\begin{lemma} 
Let $p \geq 5$ be a prime. 
Let $E/\Q_3$ be an elliptic curve with potentially good reduction with $e=3$
and non-abelian $p$-torsion field extension $K/\Q_3$. 

If $(\vv(c_4), \vv(c_6), \vv(\Delta_m)) = (2, 3,4)$ or $(5,8,12)$ then
the change of coordinates \eqref{E:r} or~\eqref{E:rII}, respectively,
transforms the model \eqref{E:modelwild3} into a minimal model of $E/F$
with good reduction and residual elliptic curve 
$\Ebar \; : \; Y^2 = X^3 + X$.
\label{L:rescurvewilde3}
\end{lemma}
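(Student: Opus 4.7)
The plan is a direct verification: substitute the proposed change of coordinates into \eqref{E:modelwild3} and check integrality of the new coefficients together with the claimed reduction modulo the uniformizer $t$ of $F$. Since $a_1=a_2=a_3=0$ and the substitution has $s=t_0=0$ (in the notation of Section~\ref{S:mapgammaE}, with $t_0$ renaming the translation parameter to avoid clashing with the uniformizer), writing $x=u^2x'+r$, $y=u^3y'$ in \eqref{E:modelwild3} yields
\[
 y'^2 = x'^3 + A_2 x'^2 + A_4 x' + A_6,\quad A_2=\frac{3r}{u^2},\quad A_4=\frac{3r^2+a}{u^4},\quad A_6=\frac{r^3+ar+b}{u^6},
\]
with new discriminant $\Delta'=\Delta_m/u^{12}$. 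In both cases $\vv_F(u^{12})=\vv_F(\Delta_m)$ (namely $36$ in Case~I, $12$ in Case~II), so once $A_2,A_4,A_6\in\calO_F$ are established the model is automatically minimal with good reduction.

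The next step is to expand $a=-c_4/48$ and $b=-c_6/864$ in $\calO_F$ using $3=\mu t^3$, yielding leading $t$-adic terms of the shape $a\equiv -\tilde c_4\mu t^{3\vv(c_4)-3}/16$ and analogously for $b$. Before proceeding case by case, I would extract two arithmetic constraints from the hypotheses: reducing $c_4^3-c_6^2=1728\Delta_m$ modulo $3$ and combining with the non-abelian hypothesis $\tilde\Delta\equiv 2\pmod 3$ given by Proposition~\ref{P:nonabelianWild3}, one forces $\tilde c_4\equiv -1\pmod 3$ in Case~I, and in Case~II one forces $\tilde c_4\equiv 1\pmod 3$ together with $\tilde c_6\equiv 2,7\pmod 9$. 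These congruences are precisely what makes the prescribed choices of $y_0,y_1,y_2$ viable.

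Case~I is short: with $u=t^3$ and $r=(a_0+y_1 t)t^5$, the $t$-adic valuations immediately force $\vv_F(A_2)\geq 2$ and $A_4\equiv -\tilde c_4\equiv 1\pmod t$ (using the congruence above). The real work is showing $A_6\equiv 0\pmod t$, which requires expanding $r^3+ar+b$ modulo~$t^{19}$: the $t^{15}$ terms cancel because $a_0^3\equiv a_0\equiv \tilde c_6\pmod 3$ (as $b$ reduces to $-\tilde c_6 t^{15}$), and the cancellations at orders $t^{16},t^{17},t^{18}$ are exactly what the rule $y_1\in\{2,0,1\}$ (according to $a_1\in\{0,1,2\}$) achieves; a direct check of the three subcases finishes the verification. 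Case~II is combinatorially heavier: with $u=t$ and $r=y_0+y_0 t+y_2 t^2$, one needs $r^3+ar+b\equiv 0\pmod{t^7}$, which after expansion using the three-digit $3$-adic expansion~\eqref{E:c4c6} of $\tilde c_6$ and the corresponding expansion of $\tilde c_4$ (whose first non-leading digit is encoded by $\beta_1$) becomes a system of linear congruences on $(y_0,y_2)$ modulo~$3$. The table in the lemma defining $y_0,y_2$ in terms of $\tilde c_6\bmod 9$ and $\beta_1-a_2$ (resp.\ $2\beta_1-a_2$) is exactly the explicit solution of this system, and a case-by-case verification over the six residue combinations confirms it.

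The main obstacle is the combinatorial bookkeeping of Case~II: one must track enough $3$-adic digits of $\tilde c_4,\tilde c_6$ to solve the congruence system for $y_2$, and the tabulated recipe becomes transparent only once interpreted as the inverse of this linear system. Once $A_2,A_4,A_6$ have been shown to reduce to $0,1,0$ mod~$t$, the residual equation is $Y^2=X^3+X$, as claimed.
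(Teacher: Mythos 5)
Your approach is essentially the paper's: substitute the coordinate change, observe that $\vv_F(\Delta')=0$ so that integrality of the new coefficients already gives minimality and good reduction, extract the congruences $\tilde c_4\equiv\tilde\Delta\equiv 2\pmod 3$ (case $(5,8,12)$) resp.\ $\tilde c_4\equiv 1\pmod 3$, $\tilde c_6\equiv 2,7\pmod 9$ (case $(2,3,4)$) from $c_4^3-c_6^2=12^3\Delta_m$ together with Proposition~\ref{P:nonabelianWild3}, and then verify $A_2\equiv 0$, $A_4\equiv 1$, $A_6\equiv 0$ modulo $t$; your treatment of the $(5,8,12)$ case, including the role of $a_0^3\equiv a_0$ and the verification that $y_1$ kills the $t^{18}$ term, matches the paper's computation (the paper's $a_4'\equiv 2\tilde c_4$ and your $A_4\equiv-\tilde c_4$ agree mod $3$). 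The one genuine difference is the $(2,3,4)$ case: the paper dispatches it with ``a short {\tt Magma} program,'' whereas you propose a hand verification, solving for $(y_0,y_2)$ from the conditions $r^3+ar+b\equiv 0\pmod{t^7}$ digit by digit. That is a legitimate and more self-contained route (and your $t^0$-level check $y_0\equiv-a_0\pmod 3$ is consistent with the table), though be aware the resulting conditions are not literally linear in $(y_0,y_2)$ --- the cross terms of $r^3$ re-enter through $3=\mu t^3$ at orders $t^4$ and above --- so the ``linear system'' framing only becomes accurate once $y_0$ has been fixed by the lowest-order condition; the full check over the six rows of the table is still required, exactly as the {\tt Magma} computation does.
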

\begin{proof} 
Suppose that $(\vv(c_4), \vv(c_6), \vv(\Delta_m)) = (5,8,12)$.

We have $c_4^3 - c_6^2 = 12^3 \Delta_m$, hence
$\tilde{c}_4^3 - 3 \tilde{c}_6^2 = 2^6 \tilde{\Delta}$ which
implies $\tilde{c}_4 \equiv \tilde{\Delta} \equiv 2 \pmod{3}$, where 
the last congruence is due to Proposition~\ref{P:nonabelianWild3}. 
Over $F$ we have
\[
 a = - \frac{\mu^4 t^{12} \tilde{c}_4}{16} \qquad \text{and} \qquad b = - \frac{\mu^5 t^{15} \tilde{c}_6}{32}.
\]
The given change of coordinates leads to a model
\[
 W \; : \; y^2 = x^3 + a_2' x^2 + a_4' x + a_6', \qquad \vv_F(\Delta_W) = e\vv(\Delta_m) - 12\vv_F(t^3) = 0
\]
where 
\[
 t^6 a_2' = 3r, \quad t^{12} a_4' = a + 3r^2, \quad t^{18} a_6' = b + r a + r^3.
\]
Since $\vv_F(r) = 5$ and $\vv_F(a) = 12$ we have $\vv_F(a_2') \geq 2$ and $\vv_F(a_4') = 0$.
Moreover,
\[
 t^{12} a_4' = a + 3r^2 \quad \Leftrightarrow \quad a_4' = -\frac{\mu^4 \tilde{c}_4}{16} + \mu t (a_0 + y_1 t)^2.
\]
Since $\mu \equiv 2 \pmod{t^2}$ we obtain
$a_4' \equiv 2 \tilde{c}_4 \equiv 2 \tilde{\Delta} \equiv 1 \pmod{t}$.

To finish the proof we have to check that $\vv_F(a_6') \geq 1$. We have
\[
 t^{18} a_6' = b + r a + r^3 \Leftrightarrow \quad 
 32 t^3 a_6' = -\mu^5 \tilde{c}_6 - 2 \mu^4 \tilde{c}_4 t^2 (a_0 + y_1 t) + 32(a_0 + y_1 t)^3
\]
and since $b_0 \equiv \tilde{c}_4 \equiv \tilde{\Delta} \equiv 2 \pmod{3}$ we also have
\[
 2 t^3 a_6' \equiv (1+t^2)(a_0 + 2a_1 t^3) + 2t^2(a_0 + y_1t) + 5(a_0 + y_1 t)^3 \pmod{t^4}.
\]
hence
\[
 2 t^3 a_6' \equiv (a_0 + 5 a_0^3) + (a_0 + 2a_0)t^2 + (2b_1  + 2y_1 + 5 y_1^3) t^3 \pmod{t^4}.
\]
Now, in both cases $a_0=1$ or $a_0=2$ we obtain the congruence
\[
 a_6' \equiv 2(1+2a_1 + y_1) \pmod{t}
\]
and the result follows from the definition of $y_1$.

The case $(\vv(c_4), \vv(c_6), \vv(\Delta_m)) = (2, 3,4)$ follows by similar
calculations but with more cases (as expected by the amount of cases required 
to define the values of $y_0$ and $y_2$). 

Alternatively, as the conclusions 
of the lemma will follow from having sufficient but finite $p$-adic precision on the 
coefficients of the Weierstrass models involved, we could use instead a 
computer program to run over all the cases, testing for the desired conclusions.
\end{proof}

We fix $\omega_4 \in \Fbar_3$ to be a 4th root of unity.

The discriminant of the polynomial defining~$F$ is $D = -3^4 \cdot 7$ and $7$ is a square in~$\Q_3$.
Thus, $\Q_3(\sqrt{D}) = \Q_3(\sqrt{-1})$ and 
$\overline{F} = \Q_3(s,t)$ where $s^2 = -1$.
The residue field of $\overline{F}$ is $\F_9$ and so
we can see $\omega_4$ in $\F_9^* \subset \F_9$.
We write also $L =\Q_3^{un}\cdot F$.

\begin{lemma} Let $p \geq 5$ be a prime. 
Let $E/\Q_3$ have potentially good reduction with $e=3$
and non-abelian $p$-torsion field extension $K/\Q_3$, 
so that $E$ has good reduction over~$L$ and by Lemma~$\ref{L:rescurvewilde3}$ there is a good reduction model $E/F$ reducing 
to $\Ebar \; : \; Y^2 = X^3 + X$.

Let $a_0 \equiv \tilde{c}_6(E) \pmod{3}$ be defined by $\eqref{E:c4c6}$.
Then there is a generator $\sigma \in \Gal(L/\Q_3^{un})$
such that $\gamma_E(\sigma) \in \Aut(\Ebar)$ 
is the order $3$ automorphism given by
\[
 (X,Y) \mapsto (X + 2 a_0 \omega_4,Y).
\]
Moreover, $\sigma$ is independent of $E$ (as long as $e=3$ and $K/\Q_3$ is non-abelian).
\label{L:gammEwild3}
\end{lemma}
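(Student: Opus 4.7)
The plan is to apply Definition~\ref{D:gammaE} directly, using the explicit change of coordinates provided by Lemma~\ref{L:rescurvewilde3} and the formulas of Lemma~\ref{L:hats}. Both coordinate changes \eqref{E:r} and \eqref{E:rII} take the form $x=u^2 x'+r$, $y=u^3 y'$, so the parameters $s$ and $t$ in the notation of Lemma~\ref{L:hats} vanish. Consequently $\hat s=\hat t=0$ and thus $\xi_3=\xi_4=0$; the heart of the proof is the computation of the residues $\xi_1=\bar{\hat u}$ and $\xi_2=\bar{\hat r}$ modulo the uniformizer $t$ of $L$.

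First I will analyze how $\sigma$ acts on $t$. Since $L/\Q_3^{un}$ is Galois of degree~$3$, the three roots of $T^3+3T^2+3$ in $L$ are $t,\ t+\alpha,\ t+\beta$ for some $\alpha,\beta\in\calO_L$. Vieta's relations together with $3=\mu t^3$ yield
\[
\alpha+\beta=-\mu t^3-3t,\qquad \alpha\beta=\mu t^4(t+2),
\]
so $\vv_F(\alpha+\beta)=3$ and $\vv_F(\alpha\beta)=4$. This forces $\vv_F(\alpha)=\vv_F(\beta)=2$. Setting $\bar\alpha:=\alpha/t^2\bmod t$, the reductions in $\F_9$ satisfy $\bar\alpha+\bar\beta=0$ and $\bar\alpha\bar\beta=1$, so $\bar\alpha^2=-1$ and $\bar\alpha\in\{\pm\omega_4\}$. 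I will take $\sigma$ to be the unique generator of $\Gal(L/\Q_3^{un})$ for which $\bar\alpha=\omega_4$; this choice depends only on $L/\Q_3^{un}$, yielding the required independence from $E$.

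Next I compute $\hat u$ and $\hat r$ case by case. In Case~II ($u=t$, $r=y_0+y_0 t+y_2 t^2$), one has $\hat u=1+\alpha/t$, which reduces to $1$ because $\vv_F(\alpha/t)=1$; moreover the leading term of $\sigma(r)-r$ is $y_0\alpha$, so $\hat r\equiv y_0\omega_4\pmod t$. In Case~I ($u=t^3$, $r=(a_0+y_1 t)t^5$), the expansion $(t+\alpha)^3=t^3+3t^2\alpha+3t\alpha^2+\alpha^3$ combined with $\vv_F(3)=3$ and $\vv_F(\alpha)=2$ gives $\hat u\equiv 1\pmod t$, while the leading term of $\sigma(r)-r=a_0(\sigma(t)^5-t^5)+y_1(\sigma(t)^6-t^6)$ is $5a_0 t^4\alpha$ (of $\vv_F$-valuation~$6$), whence $\hat r\equiv 5a_0\omega_4\equiv 2a_0\omega_4\pmod t$.

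To conclude I must verify that in Case~II the value $y_0$ satisfies $y_0\equiv 2a_0\pmod 3$; this is a direct check against the definition of $y_0$, since $\tilde c_6\equiv 7\pmod 9$ yields $a_0=1$ and $y_0=2$, while $\tilde c_6\equiv 2\pmod 9$ yields $a_0=2$ and $y_0=1$. Therefore in every case $\xi_1=1$ and $\xi_2=2a_0\omega_4$, so Definition~\ref{D:gammaE} gives $\gamma_E(\sigma)(X,Y)=(X+2a_0\omega_4,Y)$; this automorphism has order~$3$ because $2a_0\in\F_3^\times$ and the residue characteristic is~$3$. The main obstacle is the careful valuation bookkeeping needed to ensure that the higher-order terms in $\sigma(r)-r$ and $\sigma(u)-u$ do not contaminate the reductions; once $\vv_F(\alpha)=2$ is in hand, everything else reduces to a direct computation.
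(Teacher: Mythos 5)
Your proof is correct and follows essentially the same route as the paper's: fix the generator $\sigma$ normalized by $(\sigma(t)-t)/t^2 \equiv \omega_4 \pmod{t}$, then evaluate $\hat{u}$ and $\hat{r}$ from Lemma~\ref{L:hats} in the two cases (with $\hat{s}=\hat{t}=0$), checking $y_0 \equiv 2a_0 \pmod{3}$ in Case~II. The only (welcome) difference is that you derive the key inputs $\vv_F(\sigma(t)-t)=2$ and $\overline{(\sigma(t)-t)/t^2}=\pm\omega_4$ by hand from Vieta's relations for $T^3+3T^2+3$, whereas the paper asserts these valuations and congruences (including the one for $(\sigma(t)^5-t^5)/t^6$, which you instead deduce) from explicit computations in $\overline{F}$.
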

\begin{proof} Let $\sigma$ be a generator of the order 3 cyclic inertia subgroup of
$\Gal(\overline{F}/\Q_3)$. By explicit computations in the field $\overline{F}$ 
one checks that 
\[
\vv_{\overline{F}}(\sigma(t)-t) = 2, \qquad \vv_{\overline{F}}(\sigma(t)+ t) = 1, \qquad
\vv_{\overline{F}}(\sigma(t)^5-t^5) = 6, \qquad \vv_{\overline{F}}(\sigma(t)^6 - t^6) = 9
\]
and also
\[
\frac{\sigma(t)}{t} \equiv \frac{\sigma(t^3)}{t^3} \equiv 1 \pmod{t}.
\]
Furthermore, by replacing $\sigma$ with $\sigma^2$ if necessary, we can also assume that
\[
\frac{\sigma(t)-t}{t^2} \equiv \omega_4 \pmod{t} \quad \text{ and } \quad
\frac{\sigma(t)^5-t^5}{t^6} \equiv 2\omega_4 \pmod{t}.
\]
Abusing notation we let $\sigma$ be the generator of $\Phi = \Gal(L/\Q_3^{un})$ 
that lifts the previously fixed~$\sigma$. We shall shortly show that $\sigma$
satisfies the desired properties for any $E$ satisfying the hypothesis. 
In particular, it is independent of the elliptic curve $E$ as long as
$e=3$ and $K/\Q_3$ is non-abelian, proving the last statement.

Let $E/\Q_3$ be as in the statement, so that  
$E$ obtains good reduction over $F$ by Theorem~\ref{T:goodOverF}.

Suppose that $(\vv(c_4), \vv(c_6), \vv(\Delta)) = (2,3,4)$.
From Definition~\ref{D:gammaE} and Lemma~\ref{L:hats} we will compute $\gamma_E(\sigma)$
by using the coordinate change and model $E/F$ in Lemma~\ref{L:rescurvewilde3} (which reduces 
to $\Ebar$). Indeed, we have to compute 
\[
 \hat{u} = \frac{\sigma(t)}{t} \pmod{t} \qquad \text{and} \qquad \hat{r} = \frac{\sigma(r) - r}{t^2} \pmod{t}.
\]
Observe that
\[
\hat{r} = \frac{\sigma(r) - r}{t^2} = y_0\frac{(\sigma(t)-t)}{t^2} + y_2\frac{\sigma(t)-t}{t^2}(\sigma(t)+t)
\]
and that $y_0 \equiv 2 a_0 \pmod{3}$. From the valuations and congruences above we obtain
\[
 \hat{u} \equiv 1 \pmod{t} \quad \text{ and } \quad \hat{r} \equiv y_0 \omega_4 \equiv 2a_0 \omega_4 \pmod{t}. 
\]

Suppose now that $(\vv(c_4), \vv(c_6), \vv(\Delta)) = (5,8,12)$.
Similarly, we compute 
\[
 \hat{u} = \frac{\sigma(t^3)}{t^3} \pmod{t} \qquad \text{and} \qquad \hat{r} = \frac{\sigma(r) - r}{t^6} \pmod{t}.
\]
Observe that
\[
\hat{r} = a_0\frac{\sigma(t)^5-t^5}{t^6} + y_1\frac{\sigma(t)^6-t^6}{t^6}
\]
and from the valuations and congruences above we obtain again
\[
 \hat{u} \equiv 1 \pmod{t} \quad \text{ and } \quad \hat{r} \equiv 2a_0 \omega_4 \pmod{t}.
\]

\end{proof}

\section{The morphism $\gamma_E$ in the tame case $e=4$}
\label{S:gammaEtame4}

Let $\ell \geq 3$ be a prime. Fix a primitive $4$-th root 
of unity $\zeta_4 \in \Q_\ell^{un} \subset \Qbar_\ell$
and $\omega_4 \in \Fbar_\ell$ such that $\zeta_4$ 
reduces to $\omega_4$. 
Let $F = \Q_\ell(\pi)$, where 
$\pi \in \Qbar_\ell$ satisfies $\pi^4 = \ell$. 
The field $F$ is totally ramified of degree 4 and 
it is Galois if and only if $\ell \equiv 1 \pmod{4}$. 

Let $E/\Q_\ell$ an elliptic curve with potentially good reduction 
with semistability defect $e=4$ and $p$-torsion field $K$; 
from Theorem~\ref{T:goodOverF} we know that $E/F$ has good reduction.  
Recall from Section~\ref{S:left?} that $L = \Q_\ell^{un} K = \Q_\ell^{un}F$  
is the minimal extension of $\Q_\ell^{un}$ where $E$ gets good reduction; 
it is also the unique degree 4 tame extension of $\Q_\ell^{un}$ 
hence, in particular, it does not depend on $E$. 

Write $\vv = \vv_\ell$. Recall also the quantity $\tilde{\Delta}$ defined by
$\Delta_m = \ell^{\vv(\Delta_m)} \tilde{\Delta}$, where 
$\Delta_m = \Delta_m(E)$ denotes the discriminant of a minimal model of $E$.

\begin{lemma} 
Let $E/\Q_\ell$ be as above.
\begin{itemize}
 \item[(i)] There is a model of $E/\Q_\ell$ of the form
 \[
  E \; : \; y^2 = x^3 + ax^2 + bx, \qquad \Delta = \Delta_m = 2^4 b^2 (a^2 - 4b)
 \]
 and, moreover, we have either
 \[
  \vv(a) \geq 1, \; \vv(b) = 1, \; \vv(a^2 - 4b) = 1, \quad \text{ or } \quad \vv(a) \geq 2, \;
  \vv(b) = 3, \; \vv(a^2 - 4b) = 3.
 \]

 \item[(ii)] There is a model of $E/F$ with good reduction of the form
\[
E' \; : \; y^2 = x^3 + a_2' x^2 + a_4' x, \quad \text{ where } \quad a_2' = \frac{a}{\pi^{2\alpha}}, \quad 
 a_4' = \frac{b}{\pi^{4\alpha}}, \quad \alpha = \frac{\upsilon(\Delta_m)}{3};
\]
moreover, $\upsilon_F(a_2')> 0$, $\upsilon_F(a_4') = 0$ and $E'$ has a residual curve of the form
\[
\Ebar \; : \; Y^2 = X^3 + \bar{u} X, \quad \bar{u} \in \F_\ell^*.
\]
\end{itemize}
\label{L:Ebar4}
\end{lemma}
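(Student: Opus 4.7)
The plan is to prove part (i) by starting from the short Weierstrass model $y^2 = x^3 + Ax + B$ of Lemma~\ref{L:usefulmodel} and producing a $\Q_\ell$-rational $2$-torsion point $(x_0, 0)$ on $E$, and then to deduce part (ii) by the explicit coordinate change prescribed in the statement. For part (i), I will first locate a root $x_0 \in \Z_\ell$ of the $2$-division polynomial $f(x) = x^3 + Ax + B$. In cases $A_4$ and $B_{4,i}$ of Lemma~\ref{L:usefulmodel}, where $(\vv(A), \vv(B)) = (1, \geq 2)$ or $(3, \geq 5)$, the Newton polygon of $f$ has two distinct slopes, so $f$ admits a unique root of integer valuation, which is then fixed by Galois and hence lies in $\Z_\ell$. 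In cases $B_{4,ii}$ and $B_{4,iii}$ (both with $\ell = 3$), the Newton polygon is flat; after the scaling $x \mapsto 3x$ in $B_{4,iii}$ one reduces to the situation $\vv(A) \geq 1$, $\vv(B) = 0$, where $f$ has a unique triple root modulo $3$, and a strong Hensel argument driven by the precise congruence $\tilde{\Delta} \equiv 2, 4 \pmod{9}$ supplied by Lemma~\ref{L:usefulmodel} produces the rational lift $x_0$. Translating $x \mapsto x + x_0$ preserves minimality and yields $y^2 = x^3 + ax^2 + bx$ with $a = 3x_0$ and $b = 3x_0^2 + A$, so that $a^2 - 4b = -3x_0^2 - 4A$; substituting the case-by-case valuations of $A$, $B$ and $x_0$ then gives the two claimed valuation patterns for $a$, $b$ and $a^2 - 4b$.

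For part (ii), I will apply the change $x = \pi^{2\alpha} x'$, $y = \pi^{3\alpha} y'$ directly to the model from part (i), which gives exactly
\[
E' : y'^2 = x'^3 + \frac{a}{\pi^{2\alpha}} x'^2 + \frac{b}{\pi^{4\alpha}} x'.
\]
Since $\alpha = \vv(\Delta_m)/3 \in \{1, 3\}$, part (i) yields $\vv_F(b) = 4\vv(b) = 4\alpha$, whence $\vv_F(a_4') = 0$; likewise $\vv_F(a) = 4\vv(a) \geq 2\alpha + 2$, so $\vv_F(a_2') \geq 2 > 0$. The discriminant then satisfies $\vv_F(\Delta(E')) = 4\vv(\Delta_m) - 12\alpha = 0$, so $E'/F$ is minimal with good reduction, and reducing modulo the uniformizer $\pi$ of $F$ kills $a_2'$ and yields the residual curve $\Ebar : Y^2 = X^3 + \bar u X$ with $\bar u = a_4' \bmod \pi \in \F_\ell^*$, as claimed.

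The hardest step will be the existence of $x_0$ in cases $B_{4,ii}$ and $B_{4,iii}$: there the Newton polygon of $f$ is flat and $f'(-\bar B) \equiv 0 \pmod{3}$, so first-order Hensel does not apply directly; one must exploit the refined congruence $\tilde{\Delta} \equiv 2, 4 \pmod{9}$ to secure the extra $3$-adic precision needed for a strong (second-order) Hensel lift. Once the valuations of $x_0$, $a$ and $b$ are in hand, the remainder of the argument is a routine case-by-case verification.
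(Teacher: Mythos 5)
Your overall strategy coincides with the paper's: produce a $\Q_\ell$-rational $2$-torsion point, translate it to $(0,0)$ to get $y^2=x^3+ax^2+bx$, read off the valuations, and then rescale by $\pi^{\alpha}$ over $F$. Part (ii) of your argument is correct and is essentially verbatim what the paper does. The valuation bookkeeping in part (i) is also fine: once one knows $\vv(a)\geq 1$, $\vv(b)\geq 1$ and $\vv(\Delta_m)=2\vv(b)+\vv(a^2-4b)\in\{3,9\}$, the two claimed patterns are forced.

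The genuine gap is the existence of the rational root $x_0$ in the cases $B_{4,ii}$ and $B_{4,iii}$ of Lemma~\ref{L:usefulmodel} ($\ell=3$). There the $2$-division polynomial $x^3+Ax+B$ has $\vv(A)\geq 1$, $\vv(B)=0$ (after your rescaling in case $B_{4,iii}$), its Newton polygon is flat, all three roots reduce to the same triple root of $x^3+\bar B$ in $\F_3$, and $f'$ vanishes to order $\geq 1$ at any candidate lift, so no version of Hensel applies without a genuinely quantitative computation modulo $27$. You correctly identify this as the hard point, but "a strong Hensel argument driven by $\tilde{\Delta}\equiv 2,4\pmod 9$" is a hope, not a proof: the congruence on $\tilde{\Delta}$ is exactly what separates $e=4$ from $e=12$ (compare case $G_a$, which has the same valuation triple and where the cubic is in fact \emph{irreducible} over $\Q_3$), so the dividing line is razor-thin and the required congruence analysis of $A,B$ modulo $27$ is nontrivial and has not been carried out. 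The paper avoids this entirely: since $e=4$ forces Kodaira type III or III$^*$, one may start instead from a minimal model $y^2=x^3+a_2x^2+a_4x+a_6$ whose coefficient valuations for these types (Silverman, \emph{Advanced Topics}, Ex.~4.48) satisfy $\vv(a_4)=1$ (resp. $3$) \emph{exactly}; the Newton polygon of that quartic of coefficients then always has two distinct slopes, one of which yields a unique root of integer valuation, uniformly in $\ell\geq 3$, including $\ell=3$. I recommend you either adopt that route or, if you insist on the depressed cubic, replace the Hensel sketch by a structural argument: if the cubic were irreducible its root field would be a totally (hence wildly) ramified cubic extension of $\Q_3$, forcing $3\mid[\Q_3^{un}(E[2]):\Q_3^{un}]$, which is impossible since $E$ acquires good reduction over a degree-$4$ extension of $\Q_3^{un}$.
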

\begin{proof}
From \cite[p. 355]{Kraus1990} we know that $E$ has Kodaira type III or III* (because $e=4$). 
Furthermore, from \cite[p. 406, Ex. 4.48]{SilvermanII} there is a minimal model of $E/ \Q_\ell$ of the form
\[
 y^2 = x^3 + a_2 x^2 + a_4 x + a_6. 
\]
which for Type III also satisfies
\[
\upsilon(\Delta_m) = 3, \quad \upsilon(a_2) \ge 1, \quad \upsilon(a_4) = 1, \quad \upsilon(a_6) \ge 2,  
\]
and for Type III* satisfies
\[
\upsilon(\Delta_m) = 9, \quad \upsilon(a_2) \ge 2,\quad  \upsilon(a_4) = 3,\quad \upsilon(a_6) \ge 5.
\]
From the Newton polygon for $f=x^3 + a_2 x^2 + a_4 x + a_6$ or $f=x^2 + a_2 x + a_4$ (when $a_6 = 0$) it follows that $f$ has a root $\beta$ such that $\upsilon(\beta) \geq 1$ if $E$ has Type III and 
$\upsilon(\beta) \geq 2$ if $E$ has Type III*. The other roots have non integer 
valuation. In particular, $E$ has exactly one point of $P=(\beta,0)$ of order 2 defined over $\Q_\ell$.
Applying the translation $(x,y) \mapsto (x+\beta,y)$ gives an integer model for $E/\Q_\ell$
of the form
\[
  E \; : \; y^2 = x^3 + ax^2 + bx, \qquad \Delta = \Delta_m = 2^4 b^2 (a^2 - 4b).
\]
Moreover, from the usual formulas for coordinate changes it follows that if $E$ has Type III we have
$\upsilon(a) \geq 1$, $\upsilon(b) = 1$ and $\upsilon(a^2 - 4b) = 1$
and if $E$ has Type III* we have
$\upsilon(a) \geq 2$, $\upsilon(b) = 3$ and $\upsilon(a^2 - 4b) = 3$.
This proves (i).

Set $\alpha = \upsilon(\Delta_m)/3$ and $u = \pi^\alpha$, where $\pi$ is a uniformizer of $F$.
Note that $\upsilon_F(\ell) = \upsilon(\pi^4) = 4$ and $\upsilon_F(u^k) = k \alpha$. 
The change of coordinates $(x,y) \mapsto (u^2 x,u^3 y)$ applied to $E$
leads to the integral model over $F$
\[
 E' \; : \; y^2 = x^3 + a_2' x^2 + a_4' x, \quad \text{ where } \quad a_2' = \frac{a}{u^2}, \quad 
 a_4' = \frac{b}{u^4}, \quad \Delta' = \Delta_m u^{-12}
\]
which satisfies
\[
\upsilon_F(\Delta') = \upsilon_F(\Delta_m) - 12 \upsilon_F(u) = 4 \upsilon(\Delta_m) - 12 \alpha = 0. 
\]
Observe that $\upsilon_F(au^{-2}) \geq 1$ and $\upsilon_F(bu^{-4}) = 0$. 
Thus the residual elliptic curve of $E/F$ has the 
form $\Ebar \; : \; y^2 = x^3 + \overline{u} x$ with $\overline{u} \in \F_\ell^*$, proving (ii).
\end{proof}

\begin{lemma} 
Let $E/\Q_\ell$ be as above. 
Then $E$ has full $2$-torsion over $F$ if and only if $\tilde{\Delta}$
is a square mod~$\ell$.
\label{L:full2torsion}
\end{lemma}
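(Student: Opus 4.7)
The plan is to use the good reduction model $E'/F$ produced in Lemma~\ref{L:Ebar4} and translate the condition ``full $2$-torsion over $F$'' into a concrete condition on $\tilde{\Delta}$. Since $E'$ has the form $y^2 = x^3 + a_2' x^2 + a_4' x$, one $2$-torsion point is $(0,0)$, and the remaining $2$-torsion is rational over $F$ precisely when the quadratic $x^2 + a_2' x + a_4'$ splits over $F$, that is, when its discriminant $(a_2')^2 - 4 a_4'$ is a square in $F^\times$. Note that since $E/F$ has good reduction, $E$ has full $2$-torsion over $F$ if and only if $E'$ does.

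Next I would compute $(a_2')^2 - 4 a_4'$ explicitly from the formulas in Lemma~\ref{L:Ebar4}. Using $a_2' = a/\pi^{2\alpha}$ and $a_4' = b/\pi^{4\alpha}$ together with $\pi^{4\alpha} = \ell^\alpha$, one gets
\[
  (a_2')^2 - 4 a_4' = \frac{a^2 - 4b}{\pi^{4\alpha}} = \frac{\Delta_m}{16\, b^2\, \ell^\alpha}.
\]
I would then split into the two subcases of Lemma~\ref{L:Ebar4}(i). Writing $b = \ell^{\vv(b)} b'$ with $b' \in \Z_\ell^\times$, one finds $\vv(\Delta_m) = 2\vv(b) + \alpha$ in each case ($\vv(b)=1$, $\alpha=1$, $\vv(\Delta_m)=3$; or $\vv(b)=3$, $\alpha=3$, $\vv(\Delta_m)=9$), so in both cases
\[
  (a_2')^2 - 4 a_4' \;=\; \frac{\tilde{\Delta}}{16\, b'^{\,2}} \in \Z_\ell^\times,
\]
which is a unit. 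Since $16\, b'^{\,2}$ is manifestly a square in $\Q_\ell^\times$, this quantity is a square in $F^\times$ if and only if $\tilde{\Delta}$ is.

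Finally, since $\ell \geq 3$ and $F/\Q_\ell$ is totally ramified of degree $4$, the residue field of $F$ is still $\F_\ell$. Hensel's lemma (applicable because the residue characteristic is odd) then implies that a unit $u \in \calO_F^\times$ is a square in $F^\times$ if and only if its reduction is a square in $\F_\ell^\times$. Applying this to $u = \tilde{\Delta}$ gives the claimed equivalence: $E$ has full $2$-torsion over $F$ if and only if $\tilde{\Delta}$ is a square mod $\ell$. There is no real obstacle here; the only point requiring care is the bookkeeping with valuations in the two subcases, and the (standard) fact that passage from $\Q_\ell$ to a totally ramified extension does not create new square classes among units when $\ell$ is odd.
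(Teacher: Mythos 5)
Your proof is correct and follows essentially the same route as the paper: reduce full $2$-torsion over $F$ to the discriminant of the quadratic $x^2+ax+b$ being a square in $F$, identify that discriminant with $\tilde{\Delta}$ up to squares, and finish with Hensel's lemma using that $F$ is totally ramified with residue field $\F_\ell$ and $\ell$ is odd. The paper sidesteps your subcase valuation bookkeeping by working with the $\Q_\ell$-model directly and noting $\Delta_m = 2^4b^2(a^2-4b) = \pi^{4\vv_\ell(\Delta_m)}\tilde{\Delta}$, which differs from $\tilde{\Delta}$ by a manifest square in $F$.
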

\begin{proof}
From Lemma~\ref{L:Ebar4} we know that $E$ has a model of the form $y^2 = x^3 + ax^2 + bx$ over $\Q_\ell$. By base change we get a model of the same form over $F$.
The 2-torsion points of such model are $(0,0)$ and $(x_0,0)$ where $x_0$ is a root of the polynomial 
$x^2 + ax + b$. The values of $x_0$ are defined in $F$ if and only if $a^2 - 4b$ is a square in $F$.
Since $\Delta_m = 2^4 b^2 (a^2 - 4b)$ it follows that $E$ has full 2-torsion over $F$ if and only if $\Delta_m$ is a square in $F$. 

To finish the proof we shall show that $\Delta_m$ is a square in $F$ if and only if $\tilde{\Delta}$ is a square mod~$\ell$.
Indeed, we have that $\Delta_m = \ell^{\vv(\Delta_m)}\tilde{\Delta}$ with $\tilde{\Delta} \in \Z_\ell^*$, hence over $F$ we have $\Delta_m = \pi^{4\vv(\Delta_m)}\tilde{\Delta}$.
Then $\Delta_m$ is a square in $F$ if and only if $\tilde{\Delta}$ is a square in $F$.
Finally, note that $\tilde{\Delta}$ is a square in $F$ if and only if its reduction
is a square in $\F_\ell$ (by  Hensel's lemma using the fact that
$\ell \neq 2$ and the residue field $\calO_F/(\pi)$ is $\F_\ell$).
\end{proof}

We remark that the previous lemmas do not require $K/\Q_\ell$ to be non-abelian.

Suppose now $\ell \equiv -1 \pmod{4}$, so that $F$ is not Galois and $K/\Q_\ell$ is non-abelian 
by Corollary~\ref{C:nonabelianTame}. 
Let $\sigma, \tau \in \Gal(K/\Q_\ell)$ be given by Proposition~\ref{P:groupStructure}.
Identify $\sigma$ with the corresponding element in $\Gal(L / \Q_\ell^{un})$;
we can further assume that $\sigma$ acts on $L$ 
by $\sigma(\pi) = \zeta_4 \pi$ (note that $\pi$ is not always in $K$). 
We can now determine $\gamma_E(\sigma)$ in the case of our interest.

\begin{lemma} Let $\ell \equiv 3 \pmod{4}$ be a prime. 
Let $E/\Q_\ell$ be an elliptic curve with potentially good reduction
with $e=4$. Write $\Ebar$ for the curve $Y^2 = X^3 - X$. 
\begin{itemize}
\item[(A)] Suppose that $E/F$ has a full $2$-torsion.
Then there is a model of $E/F$ with good reduction reducing to $\Ebar$. 
\item[(B)] Suppose that $E/F$ does not have full $2$-torsion. 
Then $E/\Q_\ell$ admits a $2$-isogeny defined over $\Q_\ell$ to an elliptic curve $W/\Q_\ell$
with the following properties:
\begin{itemize}
 \item $W/\Q_\ell$ has tame additive reduction with $e(W)=4$,
 \item $W/F$ has full $2$-torsion,
 \item $W/F$ admits a model with good reduction whose reduction is $\overline{W} = \Ebar$,
 \item $\upsilon_\ell(\Delta_m(E)) =  \upsilon_\ell(\Delta_m(W))$.
\end{itemize}
\end{itemize}

Moreover, $\alpha = \upsilon(\Delta_m)/3$ is odd. If we 
let $T_\alpha \in \Aut(\Ebar)$ be given by $(X,Y) \mapsto (-X,\omega_4^{3\alpha}Y)$,

then in case (A) we have $\gamma_E(\sigma) = T_\alpha$ and in case (B) we have 
$\gamma_W(\sigma) = T_\alpha$.
\label{L:modelEtame4}
\end{lemma}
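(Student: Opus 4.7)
My plan is to handle cases (A) and (B) in sequence, with (B) reducing to (A) via a 2-isogeny over $\Q_\ell$.

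For case (A), I would begin with the good-reduction model of $E/F$ furnished by Lemma~\ref{L:Ebar4}, namely $y^2 = x^3 + a_2' x^2 + a_4' x$ with $\vv_F(a_2') > 0$ and residue curve $Y^2 = X^3 + \bar u X$ for some $\bar u \in \F_\ell^*$. Since $E/F$ has full 2-torsion and the residue field of $F$ is $\F_\ell$, the reduction must also have full 2-torsion over $\F_\ell$, which forces $-\bar u \in (\F_\ell^*)^2$. The hypothesis $\ell \equiv 3 \pmod 4$ gives $(\F_\ell^*)^4 = (\F_\ell^*)^2$, so there exists $\bar\nu \in \F_\ell^*$ with $\bar\nu^4 = -\bar u$; lifting to $\nu \in \Z_\ell^*$ (via Hensel) and applying the change of variables $(x,y) = (\nu^2 x'', \nu^3 y'')$ produces a good-reduction model with reduction $Y^2 = X^3 - X = \Ebar$. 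Composing with the transformation from Lemma~\ref{L:Ebar4}, the overall passage from the $\Q_\ell$-model to the new $F$-model has the form $(x,y) = (u^2 X, u^3 Y)$ with $u = \pi^\alpha \nu$ and $r = s = t = 0$. Since $\nu \in \Z_\ell^* \subset \Q_\ell^{\mathrm{un}}$ is fixed by $\sigma$, Lemma~\ref{L:hats} gives $\hat u = \sigma(u)/u = \zeta_4^\alpha$ and $\hat r = \hat s = \hat t = 0$; Definition~\ref{D:gammaE} then yields $\gamma_E(\sigma)(X,Y) = (\omega_4^{2\alpha}X, \omega_4^{3\alpha}Y)$. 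As $\alpha \in \{1,3\}$ is odd, $\omega_4^{2\alpha} = -1$ and I recover $T_\alpha$.

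For case (B), I would set $W = E/\langle (0,0)\rangle$; the standard formulas for the 2-isogeny with kernel $\langle(0,0)\rangle$ on $y^2 = x^3 + ax^2 + bx$ produce the $\Q_\ell$-model $W: y^2 = x^3 - 2ax^2 + (a^2 - 4b)x$ with $\Delta_m(W) = 2^8 b (a^2 - 4b)^2$. Isogeny invariance of the conductor and of the inertial representation shows that $W/\Q_\ell$ has $e(W) = 4$, and the explicit discriminant formula gives $\vv_\ell(\Delta_m(W)) = \vv_\ell(\Delta_m(E))$. The crucial claim is that $W/F$ has full 2-torsion. By Lemma~\ref{L:full2torsion}, $E/F$ (respectively $W/F$) has full 2-torsion if and only if $\widetilde{a^2-4b}$ (respectively $\tilde b$) is a square mod $\ell$. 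Lemma~\ref{L:Ebar4} gives $\vv_\ell(a^2) > \vv_\ell(b)$ in both Type III and Type III* cases, which yields the key congruence $\widetilde{a^2-4b} \equiv -4\tilde b \pmod \ell$. Hence the product $\tilde b \cdot \widetilde{a^2-4b} \equiv -4\tilde b^2 \pmod \ell$ fails to be a square because $\ell \equiv 3 \pmod 4$ forces $-1 \notin (\F_\ell^*)^2$; so exactly one of $\tilde b$ and $\widetilde{a^2-4b}$ is a square mod $\ell$, and the failure of full 2-torsion for $E/F$ forces it for $W/F$. Applying case (A) to $W$ then provides the good-reduction model of $W/F$ reducing to $\Ebar$ and gives $\gamma_W(\sigma) = T_\alpha$ with $\alpha = \vv_\ell(\Delta_m(W))/3 = \vv_\ell(\Delta_m(E))/3$, as required.

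The main obstacle is the reduction of (B) to (A) via showing $W/F$ must acquire full 2-torsion once $E/F$ does not. The clean congruence $\widetilde{a^2-4b} \equiv -4\tilde b \pmod \ell$, combined with the arithmetic constraint $\ell \equiv 3 \pmod 4$, is the essential ingredient; once it is in hand, the remainder is routine bookkeeping with Weierstrass coordinate changes and the explicit action $\sigma(\pi) = \zeta_4 \pi$.
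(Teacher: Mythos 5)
Your proof is correct and follows essentially the same route as the paper's: the model of Lemma~\ref{L:Ebar4}, the observation $(a^2-4b)/b \equiv -4 \pmod{\pi}$ combined with $\ell \equiv 3 \pmod{4}$ to force exactly one of $E/F$, $W/F$ to have full $2$-torsion, the same $2$-isogeny to $W$, and the computation $\hat{u} = \zeta_4^{\alpha}$ with $\hat{r}=\hat{s}=\hat{t}=0$. Your only (harmless) variation is in pinning down the sign of $a_4'$: you read it off from the splitting of the residue curve's $2$-torsion and produce the fourth root $\nu$ directly in $\Z_\ell^*$ via Hensel, whereas the paper works with $\calO_F^*/(\calO_F^*)^4 = \{\pm 1\}$ and then separately verifies that the scaling unit lies in $\Z_\ell^*$.
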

\begin{proof} From Lemma~\ref{L:Ebar4} (i) we can write a model over $\Z_\ell$ of the form
\[
  E \; : \; y^2 = x^3 + ax^2 + bx, \qquad \Delta = \Delta_m = 2^4 b^2 (a^2 - 4b).
\]
It is well known (e.g. \cite[Theorem 2.2]{DD2015}) that $E$ admits the $2$-isogenous curve 
\[
  W \; : \; y^2 = x^3 - 2a x^2 + (a^2 - 4b)x, \qquad \Delta_W = 2^8 b (a^2 - 4b)^2.
\] 
and from the valuations in Lemma~\ref{L:Ebar4} (i) we see that $\vv(\Delta_m(E)) = \vv(\Delta_m(W))$, 
proving the last claim of (B).  

From Lemma~\ref{L:Ebar4} (ii) there are models over $F$ with good reduction 
\[
 E' \; : \; y'^2 = x'^3 + \frac{a}{\pi^{2\alpha}} x'^2 + \frac{b}{\pi^{4\alpha}} x', 
 \quad \upsilon_F(\frac{a}{\pi^{2\alpha}}) > 0, \quad \upsilon_F(\frac{b}{\pi^{4\alpha}}) = 0
\]
and
\[
 W' \; : \; y'^2 = x'^3 + \frac{-2a}{\pi^{2\alpha}} x'^2 + \frac{a^2 - 4b}{\pi^{4\alpha}} x', 
 \quad \upsilon_F(\frac{-2a}{\pi^{2\alpha}})> 0, \quad \upsilon_F(\frac{a^2 - 4b}{\pi^{4\alpha}}) = 0.
\]

We shall shortly see that $\calO_F^*/(\calO_F^*)^4 = \{ \pm 1 \}$ from which
we conclude 
\begin{equation}
 a_4'(E')= b/\pi^{4\alpha} = \pm u_0^4, \qquad u_0 \in \calO_F^*.
\label{E:b}
\end{equation}
Therefore, the change of variables
$x' = u_0^{2} x''$, $y' = u_0^{3} y''$ transforms the model $E'/F$ into another model $E''/F$
with $a_4'' = \pm 1$ whose residual curve is $Y^2 = X^3 \pm X$. The analogous 
statement holds for $W$.

We now prove \eqref{E:b}. 
Indeed, let $\mu \in \calO_F^*$. Since $4 \nmid \ell-1$ we have
$\F_\ell^*/(\F_\ell^*)^4 = \{ \pm 1 \}$, hence
$\mu$ or $-\mu$ is a 4th-power in $\calO_F/(\pi) = \F_\ell$. Since
$\ell \ne 2$, from Hensel's lemma, there is $\mu_0 \in F$ such
that $\mu_0^4 = \mu$ or $\mu_0^4 = -\mu$, that is
$\calO_F^*/(\calO_F^*)^4 = \{ \pm 1 \}$ and \eqref{E:b} follows.

Observe that
\[
 \frac{a^2 - 4b}{b} = \frac{a^2}{b} - 4 \equiv -4 \pmod{\pi}.
\]
Since $\ell \equiv -1 \pmod{4}$ implies -1 is not a square mod~$\ell$
and $\calO_F/(\pi) = \F_\ell$ we conclude that exactly one of $b$ or 
$a^2 - 4b$ is a square in $F$.

Suppose now that $E$ has full 2-torsion over $F$, hence its discriminant is a square.
From the formula for $\Delta_m$ it follows that $a^2 - 4b$ is a square in $F$, hence $b$ is not a square in $F$. 
Thus $a_4'(E')= b/\pi^{4\alpha} = - u_0^4$ and the residual curve of $E''$ is $\Ebar : Y^2 = X^3 - X$.
This proves (A).

Suppose next that $E$ does not have full 2-torsion. It follows from the proof of Lemma~\ref{L:full2torsion}
that $a^2 - 4b$ is not a square in $F$, hence $b$ is a square in $F$.
Thus (from the same proof) 
$W$ has full 2-torsion over $F$, $a_4'(W')= - u_0^4$ and the residual curve of 
$W''$ is $\Ebar : Y^2 = X^3 - X$. This proves (B).

We will now prove the last statement. We need to compute $\gamma_E(\sigma) \in \Aut(\Ebar)$.

The arguments proving (A) and (B) 
show that we go from the model for $E/\Q_\ell$ into the
minimal model $E''/F$ reducing to $\Ebar : Y^2 = X^3 - X$ by a transformation
$x = u^2 x''$, $y = u^{3} y''$ where $u = u_0 \pi^\alpha$ where $u_0 \in \calO_F$.
In particular, the same is true over $L = \Q_\ell^{un} K = \Q_\ell^{un} F$.

We claim that $u_0 \in \Z_\ell^* \subset \Q_\ell^{un}$ and compute
\[
 \hat{u} = \frac{\sigma(u)}{u} = \frac{u_0 \zeta_4^\alpha \pi^\alpha}{u_0 \pi^\alpha} = \zeta_4^\alpha 
 \quad \text{ and } \quad \hat{r} = \hat{s} = \hat{t} = 0
\]
which means that $\gamma_E(\sigma)$ is given by
$(X,Y) \mapsto (\omega_4^{2\alpha}X,\omega_4^{3\alpha}Y)$.
Since $\alpha = 1$ when the Kodaira type is III and $\alpha = 3$ when it is III* 
(see begining of the proof of Lemma~\ref{L:Ebar4}),
we see that $\alpha$ is odd and hence
we have 
$\omega_4^{2\alpha} = -1$. 
Thus $\gamma_E(\sigma) = T_\alpha$ in case (A). 
The analogous argument works for $W$ in case (B).

We now prove the claim to complete the proof. Indeed, from \eqref{E:b} we have
$b/\ell^{\alpha} = b/\pi^{4\alpha} = \pm u_0^4$, where $u_0 \in \calO_F^*$.
Since $b/\ell^{\alpha}$ is a unit in $\Z_\ell$ and
any unit of $\Q_\ell^{un}$ is a fourth power in $\Q_\ell^{un}$ 
there exist $u_1 \in \Q_\ell^{un}$ such that $\pm u_0^4 = b/\ell^{\alpha} = u_1^4$. 
Thus $u_1/u_0$ is a root of unity of order prime to $\ell$, so $u_0/u_1 \in \Q_\ell^{un}$  and
we conclude $u_0 \in \Q_\ell^{un} \cap \calO_F^* = \Z_\ell^{*}$, as claimed.
\end{proof}

\section{The morphism $\gamma_E$ in the wild case $e=4$}

Write $\vv = \vv_2$ and 
$\Ebar$ for the elliptic curve over $\F_2$ of equation
\[
 \Ebar \; : \;  Y^2 + Y = X^3. 
\]
Let $E/\Q_2$ be an elliptic curve with potentially good reduction with $e=4$ 
and $p$-torsion field~$K$.
From Proposition~\ref{P:usefulmodel} we know that $E$ admits a minimal model of the form
\begin{equation}
\label{E:modelwild4}
 y^2 = x^3 + a_4 x + a_6,\quad a_4 = -\frac{c_4}{48}, \quad a_6 = -\frac{c_6}{864}
\end{equation}
with invariants satisfying $(\vv(c_4), \vv(c_6), \vv(\Delta_m)) = (5,8,9)$ or $(7, 11, 15)$.
Consider again the quantities $\tilde{c}_4$, $\tilde{c}_6$ and $\tilde{\Delta}$ given by
\[
 c_4 = 2^{\vv(c_4)} \tilde{c}_4, \qquad c_6 = 2^{\vv(c_6)} \tilde{c}_6, \qquad \Delta_m = 2^{\vv(\Delta)}\tilde{\Delta}
\]
and suppose $\tilde{c}_4 \equiv 5 \tilde{\Delta} \pmod{8}$, so that $K/\Q_2$ is non-abelian 
by Proposition~\ref{P:nonabelianWild4}. Then
$E$ has good reduction over $F_1$ or $F_2$, 
the fields respectively defined by the polynomials $f_1$ or $f_2$ in 
part~(2) of Theorem~\ref{T:goodOverF}. Moreover, either $E$ or 
its quadratic twist by $-1$, denoted $E_{-1}$, has good reduction over $F_1$.
Recall from Theorem~\ref{T:goodOverF} that
\[
F_1  =  \Q_2(t) \quad \text{ where } \quad t^4+12t^2+6 = 0
\]
has residue field $\F_2$ and satisfies $\vv_{F_1}(t) = 1$. 
Therefore, every element of $\calO_{F_1}$ admits an unique $t$-adic 
expansion of the form
$\sum_{k=0}^{\infty} a_k t^k$ with $a_k \in \{ 0,1 \}$;
in particular, we have
\[
2 = \mu t^4 \quad \text{ where } \qquad \mu = 1 + t^6 + t^8 + t^{10} + t^{14} + t^{16} + t^{18} + st^{20}, \quad s \in \calO_{F_1}. 
\]
We have $\tilde{c}_4 \equiv 1,5 \pmod{8}$ (see Proposition~\ref{P:nonabelianWild4}), 
hence $\tilde{c}_4$ has a 2-adic expansion of the form
\begin{equation}
 \tilde{c}_4 = 1 + \beta_2 2^2 + \beta_3 2^3 + \beta_4 2^4 + s' 2^5, 
 \quad \text{ with } \quad \beta_i \in \{0,1\},  \quad s' \in \Z_2
\label{E:c42adic}
\end{equation}
and a straightforward calculation using \eqref{E:c42adic} and the 
expansions for $\mu$ shows that
\begin{equation}
  \tilde{c}_4 = 1 + \beta_2 t^8 + \beta_3 t^{12} + \beta_4 t^{16}
  + (\beta_2 + \beta_3) t^{18} + s t^{20}, \qquad s \in \calO_{F_1}.
  \label{E:c4pWild4}
\end{equation}
Similarly, $\tilde{c}_6$ has a $2$-adic expansion of the form
\begin{equation}
 \tilde{c}_6 = 1 + \alpha_1 2 + \alpha_2 2^2 + \alpha_3 2^3 + \alpha_4 2^4 + s' 2^5, 
 \quad \text{ with } \quad \alpha_i \in \{0,1\},  \quad s' \in \Z_2
\label{E:c62adic}
\end{equation}
and we obtain
\begin{equation}
\tilde{c}_6 = 1 + \alpha_1 t^4 + \alpha_2 t^8 + \alpha_1 t^{10} + (\alpha_1 + \alpha_3) t^{12} 
+ s t^{14}, 
\quad s \in \calO_{F_1}.
\label{E:c6pWild4}
\end{equation}

\begin{lemma} Let $E/\Q_2$ be as above, so that $E$ or $E_{-1}$ has good reduction
over $F_1$. 

Define~$\beta = \alpha_1 + \beta_3 \pmod{2}$, where
$\beta_3$ and $\alpha_1$ be defined by \eqref{E:c42adic} and \eqref{E:c62adic}, respectively. 
Furthermore, let $(u,r,s,T)$ be given according to the cases:
\begin{enumerate}
 \item[(i)] if $(\vv(c_4), \vv(c_6), \vv(\Delta_m)) = (5,8,9)$ define
\begin{equation*}
u = t^3, \quad r = t^2 + (1-\beta_3)t^6 + \beta t^7, \quad s = 
t + \beta_3 t^3, \quad T = t^5 + \beta t^7 +  \beta t^8;
\end{equation*}
 \item[(ii)] if $(\vv(c_4), \vv(c_6), \vv(\Delta_m)) = (7,11,15)$ define 
 \begin{equation*}
 u = t^5, \quad r = t^6 + \beta_3 t^{10}, \quad s = t^3 + (1-\beta_3)t^5,\quad T = t^{11} + \beta t^{13}.
\end{equation*}
\end{enumerate}
Then the change of coordinates 
\[
 x=u^2 x' + r, \qquad y=u^3 y' + u^2 s x' + T,
\]
transforms a model of the form \eqref{E:modelwild4} of whichever among $E$ or $E_{-1}$ has good reduction 
over~$F_1$, into a minimal model $W/ F_1$ with good reduction 
and residual curve $\overline{W} = \Ebar$.
\label{L:rescurvewilde2}
\end{lemma}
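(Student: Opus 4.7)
The plan is a direct verification carried out in the $t$-adic completion $F_1 = \Q_2(t)$, exploiting the fact that $\vv_{F_1}(2) = 4$ and that every element of $\calO_{F_1}$ admits a unique $t$-adic expansion with digits in $\{0,1\}$. Starting from the model \eqref{E:modelwild4} and applying the substitution $x = u^2 x' + r$, $y = u^3 y' + u^2 s x' + T$, the standard transformation formulas (cf.\ \cite[III.1]{SilvermanI} and the system \eqref{E:coordchange}) specialize, since $a_1 = a_2 = a_3 = 0$, to
\begin{align*}
u\, a_1' &= 2s, & u^2 a_2' &= 3r - s^2, & u^3 a_3' &= 2T, \\
u^4 a_4' &= a_4 + 3r^2 - 2sT, & u^6 a_6' &= a_6 + r a_4 + r^3 - T^2. & &
\end{align*}
The lemma reduces to two claims: (i) each $a_i' \in \calO_{F_1}$ (integrality); (ii) modulo $t$ one has $a_1' \equiv a_2' \equiv a_4' \equiv a_6' \equiv 0$ and $a_3' \equiv 1$, producing the reduction $\overline{W} : Y^2 + Y = X^3$. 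Minimality of $W$ is automatic once (i) is established, since $\vv_{F_1}(\Delta(W)) = 4\vv_2(\Delta_m) - 12 \vv_{F_1}(u)$ vanishes in both cases ($36-36$ with $u = t^3$; $60-60$ with $u = t^5$).

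The calculation proceeds by first expanding $a_4 = -c_4/48$ and $a_6 = -c_6/864$ as explicit $t$-adic series, using the expansions \eqref{E:c4pWild4} and \eqref{E:c6pWild4} for $\tilde c_4, \tilde c_6$, the expansion of $\mu$, and the fact that $48$ and $864$ factor as $2^k$ times a unit in $\calO_{F_1}$. Substituting together with the prescribed $(u,r,s,T)$ and carrying precision to order $t^{20}$ in case (i) or $t^{32}$ in case (ii) shows that the numerators above acquire valuations at least $\vv_{F_1}(u),\, 2\vv_{F_1}(u),\, 3\vv_{F_1}(u),\, 4\vv_{F_1}(u),\, 6\vv_{F_1}(u)$, which yields integrality of all $a_i'$. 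The leading $r = t^2 + \cdots$ (resp.\ $r = t^6+\cdots$) is calibrated to absorb the $x^2$-contribution coming from $a_4$, the $s$ ensures $a_1' \equiv 0$, and the shape $T = t^5 + \cdots$ (resp.\ $t^{11}+\cdots$) is designed so that $2T = \mu t^4 \cdot T$ reduces to a unit after division by $u^3$, yielding $a_3' \equiv 1 \pmod{t}$.

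The main combinatorial obstacle is pinning down why $\beta := \alpha_1 + \beta_3 \pmod 2$ is the correct parity appearing in the higher-order digits of $r, s, T$. Expanding $a_6 + r a_4 + r^3 - T^2$ one degree past the guaranteed valuation reveals that the next $t$-adic coefficient is an affine function of $\alpha_1$ (from $\tilde c_6$) and $\beta_3$ (from $\tilde c_4$); annihilating it forces precisely $\beta \equiv \alpha_1 + \beta_3 \pmod 2$. An analogous obstruction in the $a_4'$-equation pins down the coefficient $(1-\beta_3)$ appearing in the $t^6$-term of $r$ (resp.\ the $t^{10}$-term in case (ii)), reflecting the parity interaction between $s^2 = t^2 + 2\beta_3 t^4 + \beta_3 t^6$ and $3r = (1 + \mu t^4)r$ through the relation $2 = \mu t^4$.

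Finally, for the twist: by Theorem~\ref{T:goodOverF}(2) and Table~\ref{Table:FoverQ2}, exactly one of $E$ or $E_{-1}$ has good reduction over $F_1$. Quadratic twist by $-1$ sends $(c_4, c_6) \mapsto (c_4, -c_6)$, so the $\beta_i$ of \eqref{E:c42adic} are preserved while the $\alpha_i$ of \eqref{E:c62adic} flip; the definition of $\beta$ incorporates this sign change so that the same explicit formulas for $(u,r,s,T)$ work on whichever of $E, E_{-1}$ lives over $F_1$. Being symbolic in the finitely many residue classes of $(\beta_2,\beta_3,\beta_4,\alpha_1,\alpha_2,\alpha_3,\alpha_4) \in \F_2^7$, the entire verification is efficiently carried out in \texttt{Magma}, consistent with the paper's general practice.
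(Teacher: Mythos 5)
Your proposal follows essentially the same route as the paper: specialize the transformation formulas \eqref{E:coordchange} to $a_1=a_2=a_3=0$, check $\vv_{F_1}(\Delta(W))=0$, and verify integrality and the residual equation $Y^2+Y=X^3$ by explicit $t$-adic expansion to the stated precision, with the obstruction terms in $u^4a_4'$ and $u^6a_6'$ (an affine expression in $\beta,\beta_2,\beta_3$) vanishing for exactly one of $E$, $E_{-1}$ because twisting by $-1$ flips $\alpha_1$ and hence $\beta$. One small correction: you should not invoke Table~\ref{Table:FoverQ2} to decide which of $E$, $E_{-1}$ lives over $F_1$, since the paper derives that table's content \emph{from} this lemma; the paper instead only uses Theorem~\ref{T:goodOverF}(2) for the existence of the dichotomy and lets the congruence computation itself single out the right curve, which your symbolic sweep over $\F_2^7$ would in any case accomplish.
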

\begin{proof} Write $F = F_1$. 
Suppose $(\vv(c_4), \vv(c_6), \vv(\Delta_m)) = (7,11,15)$.
Write 
\[
 W \; : \; y^2 + a_1' xy + a_3'y = x^3 + a_2' x^2 + a_4' x + a_6',
\]
for the model of $E/F$ obtained after applying the 
change of coordinates in part (ii) of the statement to 
the model \eqref{E:modelwild4}. We observe that
\[
 \vv_F(\Delta_W) = e\vv(\Delta_m) - 12\vv_F(u) = 4 \cdot 15 - 12\vv_F(t^5) = 
 60 - 60 = 0,
\]
hence $W$ is minimal with good reduction and residual curve $\Ebar$ 
if we show that 
\[
 \vv_F(a_i') \geq 1, \quad \text{ for } i = 1,2,4,6 \quad \text{ and } \quad \vv_F(a_3') = 0.
\]
From \eqref{E:coordchange}, we know that
\[
 ua_1' = 2s, \quad u^2 a_2' = 3r - s^2, \quad u^3 a_3' = 2T, \quad u^4 a_4' = a_4 + 3r^2 - 2sT, 
 \quad u^6 a_6' = a_6 + r a_4 + r^3 - T^2.
\]
We have $\vv_F(2) = 4$ and from the formulas for $u$, $r$, $s$ and $T$ we see that 
\[
 \vv_F(u) = 5, \quad \vv_F(r) = 6, \quad \vv_F(s) = 3, \quad \vv_F(T) = 11. 
\]
Hence $\vv_F(a_1') = 2$ and $\vv_F(a_3') = 0$; moreover,
\begin{eqnarray*}
 3r - s^2 & = & 3(t^6 + \beta_3 t^{10}) - t^6 - 2(1-\beta_3) t^8 - 
 (1-\beta_3) t^{10}  \\
          & = & (\mu - 1)t^{10} - \mu(1-\beta_3)t^{12} + \mu^2 \beta_3 t^{18}
\end{eqnarray*}
and since $\vv_F(\mu-1) = 6$ it follows $\vv_F(3r - s^2) \geq 12$, thus $\vv_F(a_2') \geq 2$.
We note that the previous calculations also hold if we replace $E$ by $E_{-1}$.
We now want to compute 
\[
 a_4 + 3r^2 - 2sT \pmod{t^{21}} \quad \text{ and } \quad a_6 + r a_4 + r^3 - T^2 \pmod{t^{31}}.
\]
Working modulo $t^{21}$, we have
\[
 a_4 = \frac{-2^3}{3} \tilde{c}_4 \equiv \frac{-\mu^3}{3} (t^{12} + \beta_2 t^{20}) 
 \equiv t^{12} + t^{18} + \beta_2 t^{20} \pmod{t^{21}} 
\] 
and also 
\[
 r^2 \equiv t^{12} \pmod{t^{21}}, \qquad 
 2sT \equiv t^{18} + (1 + \beta - \beta_3) t^{20} \pmod{t^{21}},
\]
therefore,
\[
 u^4 a_4' = a_4 + 3r^2 - 2sT \equiv 4t^{12} + (1+ \beta + \beta_2 - \beta_3)t^{20} 
 \equiv (\beta + \beta_2 - \beta_3)t^{20} \pmod{t^{21}}.
\]
With a similar but lengthy calculation, we can show that 
\begin{eqnarray*}
 u^6 a_6' =  a_6 + r a_4 + r^3 - T^2 &\equiv & (\beta_3 + \beta_2 - \beta)t^{26} + (\alpha_1 + \beta_3 - \beta)t^{28} 
 + \beta_2 \beta_3 t^{30} \pmod{t^{31}} \\ 
 & \equiv & (\beta_3 + \beta_2 - \beta)t^{26} + \beta_2 \beta_3 t^{30} \pmod{t^{31}}, \ 
\end{eqnarray*} 
where we used the definition of $\beta$ for the last congruence.

Note that replacing $E$ by $E_{-1}$ does not change the values of $\beta_i$ and replaces 
$\alpha_1$ by $1-\alpha_1$; thus it replaces $\beta$ by $1-\beta$ in the previous congruences
for $u^4a_4'$ and $u^6a_6'$.

Suppose now $\beta = 0$ and $u^4a_4' \equiv 0 \pmod{t^{21}}$. Then $(\beta_2,\beta_3) = (1,1)$ 
or $(0,0)$, which implies $u^6a_6' \equiv 0 \pmod{t^{31}}$; also, if 
$\beta = 1$ and $u^4a_4' \equiv 0 \pmod{t^{21}}$ then $u^6a_6' \equiv 0 \pmod{t^{31}}$. Clearly, a similar conclusion holds for the congruences with $1-\beta$ replacing $\beta$. 
We conclude that the change of coordinates will lead $E$ or $E_{-1}$ into a model with the 
claimed properties if
\[
 (\beta + \beta_2 - \beta_3)t^{20} \equiv 0 \pmod{t^{21}} \quad \text{or} 
 \quad (1-\beta + \beta_2 - \beta_3)t^{20} \equiv 0 \pmod{t^{21}}
\]
respectively. If the first congruence holds, then $E/F_1$ has good reduction and we are done. 
Suppose it fails and $\beta = 0$; then $(\beta_2,\beta_3) = (1,0)$ or $(0,1)$, hence the second 
congruence holds and $E_{-1}/F_1$ has good reduction and we are done. 
The same conclusion holds if  the first congruence fails with $\beta = 1$, completing 
the proof in the case $(\vv(c_4), \vv(c_6), \vv(\Delta_m)) = (7,11,15)$.
The case $(\vv(c_4), \vv(c_6), \vv(\Delta_m)) = (5,8,9)$ follows by a similar argument.
\end{proof}

Let $\omega_3 \in \F_{4} \subset \Fbar_2$ be a fixed cube root of unity.

\begin{lemma} Let $E/\Q_2$ be as above and, if necessary, 
twist it by $-1$, so that $E/F_1$ has good reduction.
From
Lemma~$\ref{L:rescurvewilde2}$ there is a model of $E/F_1$ 
with good reduction and residual curve~$\Ebar$.
Let $L = \Q_2^{un} F_1$ and $\alpha_1$ be defined by $\eqref{E:c62adic}$. 

Then there is a generator $\sigma$ of $\Phi = \Gal(L/\Q_2^{un})$
such that $\gamma_E(\sigma) \in \Aut(\Ebar)$  is the order $4$ 
automorphism given by
\[
 (X,Y) \mapsto (X + 1,Y + X + \alpha_1 + \omega_3). 
\]
Moreover, $\sigma$ is independent of $E$ 
as long as $e=4$, its $p$-torsion field extension is non-abelian and $E/F_1$ has good reduction.
\label{L:gammEwild4}
\end{lemma}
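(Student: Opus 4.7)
The plan is to follow the same strategy used in the proof of Lemma~\ref{L:gammEwild3}. First I would fix a generator $\sigma$ of the inertia subgroup of $\Gal(\overline{F}_1/\Q_2)$, which is cyclic of order $4$, and observe that the action of $\sigma$ on a uniformizer of $\overline{F}_1$ depends only on $F_1$, not on the elliptic curve. Lifting $\sigma$ to an element of $\Phi = \Gal(L/\Q_2^{un})$ (abusing notation as in the proof of Lemma~\ref{L:gammEwild3}) thus gives a candidate element independent of $E$, which establishes the last claim of the lemma once the main formula is verified.

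The core computation is to determine $\sigma(t)$ inside $\overline{F}_1$ to high enough $t$-adic precision and then apply Definition~\ref{D:gammaE} together with Lemma~\ref{L:hats} to the change of variables $(u,r,s,T)$ provided by Lemma~\ref{L:rescurvewilde2}. More precisely, I would compute the $t$-adic expansions of $\sigma(t) - t$, $\sigma(t)/t$, $\sigma(t^3)/t^3$, $\sigma(t^5)/t^5$ and analogous powers, then substitute into
\[
\hat{u} = \frac{\sigma(u)}{u}, \quad \hat{r} = \frac{\sigma(r) - r}{u^2}, \quad \hat{s} = \frac{\sigma(s) - s}{u}, \quad \hat{t} = \frac{\sigma(T) - T - s(\sigma(r) - r)}{u^3},
\]
reduce modulo the uniformizer of $L$, and read off $\gamma_E(\sigma)$ from its action $(X,Y) \mapsto (\bar{\hat{u}}^2 X + \bar{\hat{r}},\, \bar{\hat{u}}^3 Y + \bar{\hat{u}}^2 \bar{\hat{s}} X + \bar{\hat{t}})$. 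Since $\Aut(\Ebar) \simeq \SL_2(\F_3)$ and the residue field contains $\F_4$, one of the two cube roots of unity $\omega_3 \in \F_4$ will appear in the reductions; by replacing $\sigma$ by $\sigma^3$ if necessary (just as in Lemma~\ref{L:gammEwild3}) we may normalize so that the specific root $\omega_3$ fixed in the statement appears.

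The main obstacle will be carrying out the substitution in $\hat{t}$, where the term $-s(\sigma(r) - r)/u^3$ mixes $s$ and $\sigma(r)-r$ and where the constant $\beta = \alpha_1 + \beta_3 \pmod{2}$ enters through $r$ and $T$. The expected miracle is that the contributions of $\beta_3$ cancel between $\hat{r}$, $\hat{s}$ and $\hat{t}$ (so that the formula depends only on $E$ via $\alpha_1$), leaving the constant $\alpha_1 + \omega_3$ in the $Y$-shift. The two cases $(\vv(c_4),\vv(c_6),\vv(\Delta_m))=(5,8,9)$ and $(7,11,15)$ have to be handled separately with the corresponding $(u,r,s,T)$ from Lemma~\ref{L:rescurvewilde2}, and one checks that both produce the same reduction, so that $\gamma_E(\sigma)$ is given by the claimed formula. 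Finally, a quick direct verification shows that the resulting automorphism has order $4$ in $\Aut(\Ebar)$, consistent with $|\Phi| = 4$.
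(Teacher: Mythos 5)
Your proposal is correct and follows essentially the same route as the paper's proof: fix $\sigma$ via its action on a uniformizer of the Galois closure of $F_1$ (which gives the independence from $E$), normalize by replacing $\sigma$ with $\sigma^3$ so that the chosen $\omega_3$ appears, and then compute $\hat{u},\hat{r},\hat{s},\hat{t}$ from Lemma~\ref{L:hats} applied to the coordinate changes of Lemma~\ref{L:rescurvewilde2}, case by case. Your anticipated cancellation of the $\beta_3$-contributions (leaving only $\alpha_1+\omega_3$ in the $Y$-shift) is exactly what the paper's computation confirms in both valuation cases.
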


\begin{proof} Write $F = F_1$ and let $\overline{F}$ be its Galois closure. 
Let $\sigma$ be a generator of the order 4 cyclic inertia subgroup of
$\Gal(\overline{F}/\Q_2)$. Computations in the field $\overline{F}$ 
show that
\begin{equation}
\vv_{\overline{F}}(\sigma(t)^3-t^3) = 5, \quad \vv_{\overline{F}}(\sigma(t)^6 - t^6) = 10, \quad
\vv_{\overline{F}}(\sigma(t)^7-t^7) = 9, \quad \vv_{\overline{F}}(\sigma(t)^8-t^8) = 22
\label{E:valuationsF1}
\end{equation}
and
\begin{equation}
\vv_{\overline{F}}(\sigma(t)^2-t^2) = 6, \qquad \frac{\sigma(t)}{t} \equiv \frac{\sigma(t) - t}{t^3} \equiv \frac{\sigma(t)^2 - t^2}{t^6} \equiv \frac{\sigma(t)^7 - t^7}{t^9} \equiv 1 \pmod{t}.
\label{E:congruencesF1}
\end{equation}
Furthermore, by replacing $\sigma$ with $\sigma^3$ if necessary, we can also assume that
\begin{equation}
\frac{\sigma(t)^5 - t^5 - t (\sigma(t)^2 - t^2)}{t^9} \equiv \omega_3 \pmod{t}.
\label{E:congruencesF1II}
\end{equation}
Abusing notation we let $\sigma$ be the generator of $\Phi = \Gal(L/\Q_2^{un})$ 
that lifts the previously fixed~$\sigma$. We shall shortly show that $\sigma$
satisfies the desired properties for any $E$ satisfying the hypothesis. 
In particular, it is independent of the elliptic curve $E$ as long as
$e(E)=4$, $K = \Q_2(E[p])/\Q_2$ is non-abelian and $E/F_1$ has good reduction, proving the 
last statement.

Let $E/\Q_2$ be an elliptic curve with potentially good reduction with $e=4$. 
Let $p \geq 3$ and suppose that its $p$-torsion field extension $K/\Q_2$ is non-abelian 
and that $E$ has good reduction over $F$. We now divide into two cases:

{\sc Case 1:} Suppose that $E$ satisfies $(\vv(c_4), \vv(c_6), \vv(\Delta)) = (5,8,9)$.
We will compute $\gamma_E(\sigma)$ by using the coordinate change 
in Lemma~\ref{L:rescurvewilde2} part (i) (which transforms $E/\Q_2$ into
a minimal model with good reduction for $E/F$ reducing to $\Ebar$). 
Indeed, we have to compute the reduction modulo $t$ of
\[
 \hat{u} = \frac{\sigma(u)}{u}, \quad \hat{r} = \frac{\sigma(r) - r}{u^2},
\quad \hat{s} = \frac{\sigma(s)-s}{u}, \quad \text{ and } \quad 
\hat{T} = \frac{\sigma(T) - T - s(\sigma(r) - r)}{u^3}.
\]
We have that
\begin{eqnarray*}
\hat{u} & = & \frac{\sigma(t)^3}{t^3}, \\
\hat{r} & = & \frac{\sigma(t)^2-t^2}{t^6} + (1-\beta_3)\frac{\sigma(t)^6-t^6}{t^6} + \beta \frac{\sigma(t)^7-t^7}{t^6}, \\
\hat{s} & = & \frac{\sigma(t) - t}{t^3} + \beta_3\frac{\sigma(t)^3 - t^3}{t^3}, \\
\hat{T} & = & \beta \frac{\sigma(t)^7 - t^7}{t^9} - \beta_3 \frac{t^3 (\sigma(t)^2 - t^2)}{t^9} 
	      + \frac{\sigma(t)^5 - t^5 - t (\sigma(t)^2 - t^2)}{t^9}
	      \\
	& + &  \frac{\beta (\sigma(t)^8 - t^8) - s(1-\beta_3)(\sigma(t)^6 - t^6) - s\beta(\sigma(t)^7 - t^7)}{t^9}. \
\end{eqnarray*}

We have $\beta = \alpha_1 + \beta_3 \pmod{2}$ by definition. 
Now, from \eqref{E:valuationsF1}, \eqref{E:congruencesF1} and \eqref{E:congruencesF1II} we obtain
\[
 \hat{u} \equiv \hat{r} \equiv \hat{s} \equiv 1 \pmod{t} \quad \text{ and } 
 \quad \hat{T} \equiv \beta - \beta_3 \equiv \alpha_1 + \omega_3 \pmod{t}. 
\]

{\sc Case 2:} Suppose that $E$ satisfies $(\vv(c_4), \vv(c_6), \vv(\Delta)) = (7,11,15)$.
We will compute $\gamma_E(\sigma)$ by using the coordinate change 
in Lemma~\ref{L:rescurvewilde2} part (ii). Indeed, we have that
\begin{eqnarray*}
\hat{u} & = & \frac{\sigma(t)^5}{t^5}, \\
\hat{r} & = & \frac{\sigma(t)^6-t^6}{t^{10}} + \beta_3\frac{\sigma(t)^{10}-t^{10}}{t^{10}}, \\
\hat{s} & = & \frac{\sigma(t)^3 - t^3}{t^5} + (1-\beta_3)\frac{\sigma(t)^5 - t^5}{t^5}, \\
\hat{T} & = & \beta \frac{(\sigma(t)^{13} - t^{13})}{t^{15}} - (1-\beta_3) \frac{t^5 (\sigma(t)^6 - t^6)}{t^{15}} 
	      + \frac{\sigma(t)^{11} - t^{11} - t^3 (\sigma(t)^6 - t^6)}{t^{15}} \\
	& + &  \beta_3 \frac{-s(\sigma(t)^{10} - t^{10})}{t^{15}}. \
\end{eqnarray*}
With further computations in the field $\overline{F}$ we verify that $\sigma$ also satisfies
\begin{equation*}
\vv_{\overline{F}}(\sigma(t)^5-t^5) = 7, \quad \vv_{\overline{F}}(\sigma(t)^{10} - t^{10}) = 14, \quad
\end{equation*}
and
\begin{equation*}
\frac{\sigma(t)^6 - t^6}{t^{10}} \equiv \frac{\sigma(t)^3 - t^3}{t^5} \equiv 
\frac{\sigma(t)^{13} - t^{13}}{t^{15}} \equiv 1, \quad
\frac{\sigma(t)^{11} - t^{11} - t^3 (\sigma(t)^6 - t^6)}{t^{15}} \equiv \omega_3 + 1 \pmod{t}.
\end{equation*}
From this and \eqref{E:valuationsF1}, \eqref{E:congruencesF1} we obtain
\[
 \hat{u} \equiv \hat{r} \equiv \hat{s} \equiv 1 \pmod{t} \quad \text{ and } 
 \quad \hat{T} \equiv \beta - (1 -\beta_3) + \omega_3 + 1 \equiv \alpha_1 + \omega_3 \pmod{t}. 
\]
In both cases, we conclude that $\gamma_E(\sigma) \in \Aut(\Ebar)$ is given by
\[
 (X,Y) \mapsto (\hat{u}^2 X + \hat{r} \pmod{t}, \; \hat{u}^3 Y + \hat{u}^3 \hat{s} X + \hat{T} \pmod{t})
 = (X + 1,Y + X + \alpha_1 + \omega_3),
\]
concluding the proof.
\end{proof}

\section{The morphism $\gamma_E$ in the wild case $e=8$}
\label{S:coordchanges12}

Let $E/\Q_2$ be an elliptic curve with $e=8$ and conductor $2^5$ or $2^8$.
From parts (3) and (4) of Theorem~\ref{T:goodOverF} we know that $E$ obtains good reduction 
over one of the fields~$F_i$ defined by one of the polynomials $g_i$, for $i=1,2,3,4$. 
For $i=1,2,3$, we fix an uniformizer $\pi$ in $F_i$ as follows.
\begin{small}
\begin{table}[htb]
$$
\begin{array}{|c|c|} \hline
\text{ Field }                  & \pi \text{ is a root of } f(x) \text{ given by }  \\ \hline
g_1 &  x^8 - 4x^7 + 54x^6 - 200x^5 + 680x^4 - 3300x^3 + 5400x^2 - 10500x + 36750 \\    
g_2 &  x^8 - 4x^7 + 154x^6 + 40x^5 + 6680x^4 - 29336x^3 + 61764x^2 - 63604x + 27130 \\
g_3 &  x^8 + 20x^6 + 216x^5 + 3142x^4 + 624x^3 + 8892x^2 - 8424x + 5382 \\
\hline
\end{array}
$$
\caption{Eisenstein polynomials defining uniformizers in $F_i$}
\label{Table:uniformizer}
\end{table}
\end{small}

Due to reasons that will be explained in the proof of Theorem~\ref{T:mainWilde8II},
in this section we do not need to compute explicitly the image of $\gamma_E$. It will 
be enough for us to describe the change of coordinates leading 
the model \eqref{E:usefulmodel} into a minimal model of $E/F_i$ with good reduction.
More precisely, we consider changes of coordinates of the form
\begin{equation}
\label{E:coordinates} 
 x=u^2 x' + r, \qquad y=u^3 y' + u^2 s x' + t \qquad \text{ where } \quad u,r,s,t \in \calO_{F}
\end{equation}
and we will prescribe the value of $(u,r,s,t)$, in terms of the standard invariants of $E$.

Fix the following elliptic curves over $\F_2$:
\[
 \Ebar_1 : y^2 + y = x^3 + x^2 + x, \qquad \Ebar_2 : y^2 + y = x^3 + x^2 + x + 1,
\]
\[
 \Ebar_3 : y^2 + y = x^3 + 1, \qquad \Ebar_4 : y^2 + y = x^3. 
\]

\begin{theorem} Let $E/\Q_2$ be an elliptic curve with potentially 
good reduction with $e=8$ and conductor $2^5$ or $2^8$, so that one of the $D$ cases in 
Table~\ref{Table:model} is satisfied. Moreover, in cases $D_e$ and $D_f$ assume further that $E$
has good reduction over the field defined by $g_3$ 
(which is true up to quadratic twist by $\sqrt{2}$ by Theorem~\ref{T:goodOverF}).

Then the change of coordinates \eqref{E:coordinates} 
transforms the model \eqref{E:usefulmodel} into a minimal model with good reduction over $F$ and
residual curve $\Ebar$, where the relevant information is given by 
Tables~\ref{Table:(a)}, ~\ref{Table:(b)}, ~\ref{Table:(c)}, ~\ref{Table:(d)}, ~\ref{Table:(e)}~and~\ref{Table:(f)} in case $D_a$, $D_b$, $D_c$, $D_d$, $D_e$ and $D_f$, respectively.
\label{T:coordchanges8}
\end{theorem}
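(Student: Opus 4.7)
The proof will proceed by direct verification, case by case, using the transformation formulas \eqref{E:coordchange} for Weierstrass coefficients under the coordinate change \eqref{E:coordinates}. The strategy mirrors the one already employed in Lemma~\ref{L:rescurvewilde2} for the case $e=4$, but is substantially longer because there are six cases $D_a,\ldots,D_f$ and two possible fields of good reduction in each of the conductor $2^5$ and conductor $2^8$ settings.

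First I would establish, for each of the fields $F$ defined by $g_1,g_2,g_3$ (with the chosen uniformizer $\pi$), the $\pi$-adic expansion of $2$ analogous to the identity $2=\mu t^4$ used for $F_1$. Since $v_F(2)=e=8$, we have $2=\mu\pi^8$ for some unit $\mu\in\calO_F^\times$ whose leading terms I would compute explicitly to a precision sufficient for the later work (roughly $\pi^{v_F(\Delta_m)}$). In parallel, I would derive $\pi$-adic expansions of $\tilde c_4$ and $\tilde c_6$ analogous to \eqref{E:c4pWild4} and \eqref{E:c6pWild4}, using the $2$-adic expansions of $\tilde c_4,\tilde c_6\in\Z_2^\times$ and the expansion of $\mu$. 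From Lemma~\ref{L:usefulmodel}, the coefficients of the working model are $a=-c_4/48$ and $b=-c_6/864$, so the needed $\pi$-adic expansions of $a,b$ follow immediately.

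Next, for each case $D_a,\ldots,D_f$ in Table~\ref{Table:model}, I would use Tables~\ref{Table:(a)}--\ref{Table:(f)} to read off $(u,r,s,t)\in\calO_F^4$, substitute into \eqref{E:coordchange}, and show that the resulting coefficients $a_1',a_2',a_3',a_4',a_6'$ lie in $\calO_F$ and reduce to those of the claimed residual curve $\Ebar_i$. Since $v_F(\Delta(W))=8\,v_2(\Delta_m)-12\,v_F(u)$, the choice $v_F(u)=2v_2(\Delta_m)/3$ (which the tables respect) guarantees $v_F(\Delta(W))=0$, so minimality and good reduction are automatic once integrality of the $a_i'$ is verified. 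The choice of $(r,s,t)$ is then pinned down precisely to force the correct reduction among $\Ebar_1,\Ebar_2,\Ebar_3,\Ebar_4$; for $\Ebar_1,\Ebar_2$ the key congruences determine the constant term of $a_6'$ modulo $\pi$, while for $\Ebar_3,\Ebar_4$ one must further ensure that $s$ and the linear term of $r$ reduce correctly so that $a_1'\equiv 0$ and $a_2'\equiv 0\pmod\pi$.

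The main obstacle is bookkeeping: each $a_i'$ must be computed to a precision of order $v_F(u^i)$ (so up to $\pi^{18}$ for cases $D_a,D_b$ with $v_F(u)=4$, and higher for the larger defects), and the parity/congruence conditions on $\tilde c_4,\tilde c_6,\tilde\Delta\pmod 8$ attached to each case of Lemma~\ref{L:usefulmodel} must be carried consistently through the calculation. Cases $D_a$ and $D_b$ with $n\geq 7$ or $n\geq 10$ respectively require an extra observation: the $v_2(c_6)$ terms of $a$ and $b$ enter only beyond the precision needed to identify the residual curve, so the same coordinate change works uniformly in $n$, which can be verified by a direct valuation estimate on the extra contribution. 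In cases $D_e$, $D_f$ the assumption that $E$ has good reduction over the field defined by $g_3$ (rather than $g_4$) is used to guarantee that the prescribed $(u,r,s,t)$, chosen with $\pi$ a root of the polynomial attached to $g_3$, lies in $\calO_F$; if $E$ had good reduction over $g_4$ instead, its quadratic twist by $2$ would fall into the good case and the analogous table would apply. All calculations are performed with the aid of {\tt Magma}~\cite{MAGMA}, as flagged at the end of Section~3.
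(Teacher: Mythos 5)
Your proposal is correct and follows essentially the same route as the paper: fix the uniformizer of each field $g_i$, expand $2=\mu\pi^{8}$ and the units $\tilde c_4,\tilde c_6$ $\pi$-adically, push the model \eqref{E:usefulmodel} through \eqref{E:coordchange} with the $(u,r,s,t)$ of the tables, and verify integrality plus the residual curve (with $\vv_F(\Delta(W))=8\vv_2(\Delta_m)-12\vv_F(u)=0$ giving minimality for free), exactly as in the $e=4$ case of Lemma~\ref{L:rescurvewilde2} and with {\tt Magma} assistance. Only your precision bookkeeping is slightly off: in case $D_a$ one has $\vv_F(u)=4$ so $u^6a_6'$ must be controlled modulo $\pi^{24}$ (one more digit for the reduction), and in case $D_b$ one has $u=\pi^8$, not $\vv_F(u)=4$; these corrections do not affect the structure of the argument.
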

The proof of this theorem relies on very lengthy and repetitive calculations. 
To illustrate the arguments involved we will sketch the proof of Lemma~\ref{L:lemma22}.
This lemma proves Theorem~\ref{T:coordchanges8} in the case of $E/\Q_2$ satisfying case $D_a$ of Table~\ref{Table:model}, that is, the content of Table~\ref{Table:(a)} is correct.
Alternatively, since the conclusions we aim for depend only on a finite amount of precision for 
the coefficients of the model \eqref{E:usefulmodel} one can write a {\tt Magma} program that starts 
from the model \eqref{E:usefulmodel} applies the change of coordinates and tests for all the conclusions. 

\begin{lemma} \label{L:lemma22}
Let $E/\Q_2$ satisfy case~$D_a$ in Table~\ref{Table:model}.
\begin{enumerate}
 \item If $n \geq 8$, 
 set $\mu = 1 + \pi^5 + \pi^7$ if $\tilde{c}_4 \equiv 7 \pmod{8}$ or
 $\mu = 1 + \pi^4 + \pi^5 + \pi^7$ if $\tilde{c}_4 \equiv 3 \pmod{8}$.
 \item If $n = 7$, 
  set $\mu = 1 + \pi^5$ if $\tilde{c}_4 \equiv 3 \pmod{8}$ or
  $ \mu = 1 + \pi^4 + \pi^5$ if $\tilde{c}_4 \equiv 7 \pmod{8}$.
\end{enumerate}
Here $\pi$ denotes the uniformizer in $F_1$ and $F_2$ 
defined in Table~\ref{Table:uniformizer} 
in the cases (1) and (2), respectively. 
Furthermore, define also the quantities 
\[
u=\pi^4, \qquad r=1+\pi^4, \qquad s=1 + \pi^2, \qquad t = \pi^4 \mu.
\]
Then, in case $(1)$, the change of coordinates \eqref{E:coordinates} 
transforms the model \eqref{E:usefulmodel} into a minimal model with good reduction over $F_1$ and
residual curve $\Ebar$ given by
\begin{enumerate}
 \item[(i)] $\Ebar = \Ebar_1$ if $n \geq 9$ and $\tilde{c}_4 \equiv 7,11 \pmod{16}$ or 
 $n = 8$ and $\tilde{c}_4 \equiv 3,15 \pmod{16}$;
 \item[(ii)] $\Ebar = \Ebar_2$ if $n \geq 9$ and $\tilde{c}_4 \equiv 3,15 \pmod{16}$ or
 $n = 8$ and $\tilde{c}_4 \equiv 7,11 \pmod{16}$;
\end{enumerate}
and, in case $(2)$, the change of coordinates \eqref{E:coordinates} gives rise to 
a minimal model $E/F_2$ with good 
reduction and $\Ebar$ given by
\begin{enumerate}
 \item[(i)] $\Ebar = \Ebar_2$  if $\tilde{c}_4 \equiv 3,7 \pmod{16}$ and 
 $\tilde{c}_6 \equiv 1,5,9,13 \pmod{16}$ or $\tilde{c}_4 \equiv 11,15 \pmod{16}$ and 
 $\tilde{c}_6 \equiv 3,7,11,15 \pmod{16}$;
 \item[(ii)] $\Ebar = \Ebar_1$ if $\tilde{c}_4 \equiv 3,7 \pmod{16}$ and 
 $\tilde{c}_6 \equiv 3,7,11,15 \pmod{16}$ or $\tilde{c}_4 \equiv 11,15 \pmod{16}$ and 
 $\tilde{c}_6 \equiv 1,5,9,13 \pmod{16}$.
\end{enumerate}
\end{lemma}
\begin{proof} 
Suppose we are in case (1). Let $\vv_2$ denote the usual valuation in $\Q_2$ and write 
$\vv$ for the valuation in $F_1$ normalized such that $\vv(2) = 8$. In $F_1$, we have
\[
 2 = \beta \pi^8 \quad \text{ where } \quad \beta = 1 + \pi^6 + \pi^9 + \pi^{11}+ O(\pi^{12}). 
\]
Consider the model $W$ for $E/F_1$ 
\[
 W \; : \; y^2 + a_1' yx + a_3'y = x^3 + a_2' x^2 + a_4'x + a_6'
\]
obtained from \eqref{E:usefulmodel} 
by applying the change of coordinates
given in the statement. It satisfies
\[ 
\vv(\Delta(W)) = \vv (u^{-12} \Delta_m) = -12 \vv(u) + 8 \cdot \vv_2(\Delta_m)
= -12 \cdot 4 + 8 \cdot 6 = 0,
\]
hence it will be a minimal model with good reduction if we can show that $\vv(a_i') \geq 0$ for all~$i$,
where the $a_i'$ are given by the formulas in \eqref{E:coordchange}. We first check 
\[
 \vv(a_1') = \vv(2) + \vv(s) - \vv(u) = 8+0-4=4, \quad  \vv(a_3') = \vv(2) + \vv(t) - 3\vv(u) = 8+4-12=0;
\]
observe also that
\[
 3r-s^2 = 3+3\pi^4 - (1+2\pi^2 + \pi^4) = 2-2\pi^2 + 2\pi^4 
\]
therefore $\vv(a_2') = \vv(2-2\pi^2 + 2\pi^4) - 2\vv(u) = 8-8 = 0$. 

Note that $c_4 \neq 0$ and recall the quantity $\tilde{c}_4 = c_4/2^{\vv_2(c_4)}$. We have
\[
 a_4 = -\frac{\tilde{c}_4}{3} \qquad \text{ and } \qquad a_4' = \frac{a_4 + 3r^2 - 2st}{u^4},
\]
and we need to prove that the numerator of $a_4'$ is divisible by $\pi^{16}$. For this, we compute
\[
 3r^2 = 3 + 6\pi^4 + 3\pi^8, \qquad 2st \equiv 2\pi^4 + 2\pi^6 \pmod{\pi^{16}}
\]
where the second congruence holds for the two values of $t$ in the statement.
So we need that 
\[ 
-\tilde{c}_4/3 + 3 + 6\pi^4 + 3\pi^8 - 2\pi^4 - 2\pi^6 \equiv 
-\tilde{c}_4/3 + 3 - 2\pi^6 + \pi^8 \equiv 0 \pmod{\pi^{16}} 
\]
which is the same as having $\tilde{c}_4 - 9 + 6\pi^6 - 3\pi^8 \equiv 0 \pmod{\pi^{16}}$.

Recall that $\tilde{c}_4 \equiv 3 \pmod{4}$, so we have a $2$-adic expansion
\[
 \tilde{c}_4 = 1 \cdot 2^0 + 1 \cdot 2 + \alpha_1 \cdot 2^2 + O(2^3)
 \quad \text{ with} \quad  \alpha_1 \in \{0,1\};
\]
hence $\tilde{c}_4 \equiv 1 + \beta \pi^8 \pmod{\pi^{16}}$. 
We can now compute
\[
 \tilde{c}_4 - 9 + 6\pi^6 - 3\pi^8  \equiv - 8 + (\beta-3)\pi^8 + 3s\pi^{14} \equiv (3\beta + 1)\pi^{14} \equiv 0 \pmod{\pi^{16}},
\]
as desired. To finish the proof we are left to show $\vv(a_6') \geq 0$; 
indeed, we have 
\[
 a_6' = \frac{a_6 + r a_4 + r^3 - t^2}{u^6} \qquad \text{ where } \qquad a_6 = - \frac{2^{n-5} \tilde{c}_6}{3^3}  
\]
and want to prove $a_6 + r a_4 + r^3 - t^2 \equiv 0 \pmod{\pi^{24}}$. 

Since $n \geq 8$ it follows that $2^3 \mid a_6$ thus $a_6 \equiv 0 \pmod{\pi^{24}}$.

Assume further $\tilde{c}_4 \equiv 7 \pmod{8}$ so that $t = \pi^4 + \pi^9 + \pi^{11}$;
in particular, $\tilde{c}_4 \equiv 7 \pmod{\pi^{24}}$.

We check that $r^3 = 1 + 3\pi^4 + 3\pi^8 + \pi^{12}$ and
\[
 r a_4 \equiv \frac{-7}{3} - 7\pi^{4} \pmod{\pi^{24}}, \qquad 
 t^2  \equiv \pi^{8} + \pi^{18} + \pi^{21} + \pi^{22} + \pi^{23} \pmod{\pi^{24}}
\]
and further calculations show 
\[
 a_6 + r a_4 + r^3 - t^2 \equiv 4 - 9\pi^8 - (3+s)\pi^{12} + 3t^2 
 \equiv 4 - 6\pi^8 + 3 \pi^{22} \equiv 0 \pmod{\pi^{24}},
\]
concluding the proof when $\tilde{c}_4 \equiv 7 \pmod{8}$; for the
case $\tilde{c}_4 \equiv 3 \pmod{8}$ we proceed with similar calculations, 
which completes the proof that the model is minimal and with good 
reduction in case (1). For the claim regarding the residual curve we proceed
with same kind of computations using one further step of precision; this will 
allow to decide if $\vv(a_6') = 0$, hence $\Ebar = \Ebar_2$, or $\vv(a_6') > 0$ 
giving $\Ebar = \Ebar_1$.

Case (2) follows from analogous computations over the field $F_2$. 
\end{proof}

\section{The morphism $\gamma_E$ in the wild case $e=12$}

Let $E/\Q_3$ be an elliptic curve with $e=12$, hence conductor $3^3$ or $3^5$.
From parts (6) and~(7) of Theorem~\ref{T:goodOverF} we know that $E$ obtains good reduction over
a field $F$ defined by one of the polynomials $h_i$, for $i=1,2,3,4,5$.

For the field defined by $h_i$ we fix an uniformizer $\pi$ satisfying $h_i(\pi) = 0$.

As in Section~\ref{S:coordchanges12} we will not describe the image of $\gamma_E$ 
explicitly. Again, it will 
be enough for us to describe change of coordinates leading 
the model \eqref{E:usefulmodel} into a minimal model of $E/F$ with good reduction.
More precisely, we consider change of coordinates of the form
\begin{equation}
\label{E:coordinates12} 
 x=u^2 x' + r, \qquad y=u^3 y'  \qquad \text{ where } \quad u,r \in \calO_{F}
\end{equation}
and we will prescribe the value of $(u,r)$, in terms of the standard invariants of $E$.

Fix the following elliptic curves over $\F_3$:
\[
 \Ebar_0 : y^2 = x^3 + x, \quad \Ebar_1 : y^2 = x^3 + x + 1 \quad 
 \Ebar_2 : y^2 = x^3 + x + 2 \quad \Ebar_3 : y^2 = x^3 + 2x. 
\]
\begin{theorem} Let $E/\Q_3$ be an elliptic curve with potentially 
good reduction with $e=12$, so that one of the $G$ cases in 
Table~\ref{Table:model} is satisfied. 

Then the change of coordinates \eqref{E:coordinates12} 
transforms the model \eqref{E:usefulmodel} into a minimal model with good reduction over $F$ and
residual curve $\Ebar$, where the relevant information is given by 
Tables~\ref{Table:(a12)}, ~\ref{Table:(b12)}, ~\ref{Table:(c12)}, ~\ref{Table:(d12)}, ~\ref{Table:(e12)},
~\ref{Table:(f12)},~\ref{Table:(g12)},~\ref{Table:(h12)},~\ref{Table:(i12)}~and~~\ref{Table:(j12)} 
in case $G_a$, $G_b$, $G_c$, $G_d$, $G_e$, $G_f$, $G_g$, $G_h$, $G_i$ and $G_j$, respectively.
\label{T:coordchanges12}
\end{theorem}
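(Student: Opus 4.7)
The plan is to verify the claim by direct computation, case by case, following the same strategy used to establish Theorem~\ref{T:coordchanges8}. Starting from the minimal model \eqref{E:usefulmodel} of $E/\Q_3$, namely $y^2 = x^3 + ax + b$ with $a = -c_4/48$ and $b = -c_6/864$, the coordinate change \eqref{E:coordinates12}, which has no affine shift in $y$, simplifies the formulas in \eqref{E:coordchange} to produce the new Weierstrass model
\begin{equation*}
y'^2 = x'^3 + \frac{3r}{u^2}\,x'^2 + \frac{3r^2 + a}{u^4}\,x' + \frac{r^3 + ar + b}{u^6}.
\end{equation*}
For each of the ten cases $G_a,\ldots,G_j$ listed in Table~\ref{Table:model}, the tables referenced in the statement prescribe a value of $(u,r) \in \calO_F$ together with a candidate residual curve $\Ebar_i \in \{\Ebar_0,\Ebar_1,\Ebar_2,\Ebar_3\}$, where $F$ is one of the fields in parts (6)--(7) of Theorem~\ref{T:goodOverF} with fixed uniformizer $\pi$ satisfying $h_i(\pi) = 0$.

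The verification then proceeds in three steps. First, I would use the valuations $\vv_3(c_4), \vv_3(c_6), \vv_3(\Delta_m)$ read off from Table~\ref{Table:model}, together with $\vv_F(3) = 12$ and the prescribed $\vv_F(u)$ from the relevant table, to show by valuation estimates that each of $3r/u^2$, $(3r^2+a)/u^4$, $(r^3 + ar + b)/u^6$ lies in $\calO_F$. Second, the discriminant of the new model satisfies $\vv_F(\Delta') = \vv_F(\Delta_m) - 12\vv_F(u) = 12\vv_3(\Delta_m) - 12\vv_F(u)$, and checking that the prescribed $u$ satisfies $\vv_F(u) = \vv_3(\Delta_m)$ yields $\vv_F(\Delta') = 0$, so that the transformed model is minimal with good reduction over $F$. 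Third, the residual curve is identified by reducing the new coefficients modulo $(\pi)$ and comparing the resulting equation over $\F_3$ with the prescribed $\Ebar_i$; since the target curves have distinct $j$-invariants (and quadratic twist class), matching the reduced equation uniquely identifies $\Ebar_i$.

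The main obstacle is two-fold. On one hand, within each case $G_x$ the choice of field $F$ among $h_1,\ldots,h_5$ depends on fine congruences among $\tilde{c}_4, \tilde{c}_6, \tilde{\Delta}$ modulo powers of $3$, as recorded in Tables~\ref{Table:FoverQ3I}--\ref{Table:FoverQ3II}; this forces one to split each $G_x$ into several sub-cases that differ in the $\pi$-adic expansions one must substitute. On the other hand, extracting the reductions of $(3r^2+a)/u^4$ and $(r^3+ar+b)/u^6$ requires $\pi$-adic arithmetic in the degree-$12$ totally ramified extensions $F/\Q_3$ to relatively high precision, precisely because the leading terms of the numerators partially cancel against $a$ and $b$. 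Nevertheless, once a uniformizer $\pi$ in each $F$ is fixed, all the checks reduce to finite-precision congruences in $\calO_F$, and they can be performed mechanically in \texttt{Magma}, exactly as was done for Theorem~\ref{T:coordchanges8}. The honest content of the proof is the determination of the tables themselves, which was carried out by the authors by working backwards from the demand that the reduced model be one of $\Ebar_0,\ldots,\Ebar_3$.
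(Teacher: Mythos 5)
Your proposal is correct and matches the paper's (computational) proof: the paper likewise establishes the tables by applying the specialization of \eqref{E:coordchange} with $s=t=0$, checking integrality of the transformed coefficients and $\vv_F(\Delta')=0$ by $\pi$-adic valuation estimates, and then reducing modulo $(\pi)$ with one extra step of precision to identify $\Ebar$, all carried out case by case in {\tt Magma} exactly as in the $e=8$ analogue. The only content beyond your outline is the bookkeeping itself, which is what the tables record.
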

\begin{proof} This follows from the same kind of lengthy calculations as Theorem~\ref{T:coordchanges8}.
\end{proof}

\section{Tables with coordinate changes}
The following tables give the coordinate changes used in Theorems~\ref{T:coordchanges8}~and~\ref{T:coordchanges12}.

\label{S:coordchanges}
\begin{footnotesize}
\begin{table}[htb]
$$
\begin{array}{|c|c|c|c|c|c|c|c|} \hline
\mbox{Case } D_a   & \tilde{c}_4 \pmod{8} &  u    &r& s & t & \mbox{Field} \\ \hline
(4, n \geq 8,  6) &  7                   & \pi^4 & 1+\pi^4  & 1 + \pi^2  & \pi^4 (1 + \pi^5 + \pi^7) & g_1 \\ 
(4, n \geq 8,  6) &  3                   & \pi^4 & 1+\pi^4  & 1 + \pi^2  & \pi^4 (1 + \pi^4 + \pi^5 + \pi^7) & g_1 \\
\hline
(4, n = 7,  6)    &  3                   & \pi^4 & 1+\pi^4  & 1 + \pi^2  & \pi^4 (1 + \pi^5) & g_2 \\ 
(4, n = 7,  6)    &  7                   & \pi^4 & 1+\pi^4  & 1 + \pi^2  & \pi^4 (1 + \pi^4 + \pi^5) & g_2 \\
\hline
\end{array}
$$
$$
\begin{array}{|c|c|c|} \hline
\mbox{Field} & n \ge 7 \; \text{ and } \; \tilde{c}_4, \; \tilde{c}_6 \pmod{16} &  \Ebar \\ \hline
  g_1        & n \geq 9, \; \tilde{c}_4 \; \equiv 7, 11 \quad \text{or} \quad n = 8, \; \tilde{c}_4 \equiv 3, 15 &  \Ebar_1 \\ 
  g_1        & n \geq 9, \; \tilde{c}_4 \equiv 3, 15 \quad \text{or} \quad  n = 8, \; \tilde{c}_4 \equiv 7, 11   &  \Ebar_2 \\ 
  \hline
  g_2        & n=7, \; \tilde{c}_4 \equiv 3, 7, \;  \tilde{c}_6 \equiv   3, 7, 11, 15 \quad  \text{or} \quad
   n=7, \; \tilde{c}_4 \equiv 11, 15, \tilde{c}_6 \equiv   1, 5, 9,  13 & \Ebar_1 \\
  g_2        & n=7, \; \tilde{c}_4 \equiv 3, 7, \;  \tilde{c}_6 \equiv   1, 5, 9,  13 \quad \text{or} \quad
  n=7, \; \tilde{c}_4 \equiv 11, 15, \; \tilde{c}_6 \equiv   3, 7, 11, 15 & \Ebar_2 \\
\hline
\end{array}
$$
\caption{Description of $F$, $\Ebar$ and $(u,r,s,t)$ in case~$D_a$.}
\label{Table:(a)}
\end{table}
\end{footnotesize}

\begin{footnotesize}
\begin{table}[htb]
$$
\begin{array}{|c|c|c|c|c|c|c|} \hline
\mbox{Case } D_b   & \tilde{c}_4 \pmod{8} &  u    &r& s & t & \mbox{Field} \\ \hline
(6, n \geq 11, 12) &  5                   & \pi^8 & \pi^8  & \pi^4  & \pi^{16} (1 + \pi^2 + \pi^6 + \pi^7) & g_1 \\ 
(6, n \geq 11, 12) &  1                   & \pi^8 & \pi^8  & \pi^4  & \pi^{16} (1 + \pi^2 + \pi^4 + \pi^7) & g_1 \\
\hline
(6, n = 10, 12)  &  5                   & \pi^8 & \pi^8  & \pi^4  & \pi^{16} (1 + \pi^2 + \pi^7) & g_2 \\ 
(6, n = 10, 12)  &  1                   & \pi^8 & \pi^8  & \pi^4  & \pi^{16} (1 + \pi^2 + \pi^4 + \pi^6 + \pi^7) & g_2 \\
\hline
\end{array}
$$
$$
\begin{array}{|c|c|c|} \hline
\mbox{Field} & n \ge 10 \; \text{ and } \; \tilde{c}_4, \; \tilde{c}_6 \pmod{16} &  \Ebar \\ \hline
  g_1        & n \geq 12, \; \tilde{c}_4 \; \equiv 1, 5 \quad \text{or} \quad n = 11, \; \tilde{c}_4 \equiv 9, 3 &  \Ebar_1 \\ 
  g_1        & n \geq 12, \; \tilde{c}_4 \equiv 9,3  \quad \text{or} \quad  n = 11, \; \tilde{c}_4 \equiv 1, 5   &  \Ebar_2 \\ 
  \hline
  g_2        & n=10, \; \tilde{c}_4 \equiv 9, 13, \;  \tilde{c}_6 \equiv   3, 7, 11, 15 \quad  \text{or} \quad   n=10, \; \tilde{c}_4 \equiv 1, 5, \tilde{c}_6 \equiv   1, 5, 9, 13 & \Ebar_1 \\
  g_2        & n=10, \; \tilde{c}_4 \equiv 9, 13, \;  \tilde{c}_6 \equiv   1, 5, 9,  13 \quad \text{or} \quad
  n=10, \; \tilde{c}_4 \equiv 1, 5, \; \tilde{c}_6 \equiv   3, 7, 11, 15 & \Ebar_2 \\
\hline
\end{array}
$$
\caption{Description of $F$, $\Ebar$ and $(u,r,s,t)$ in case~$D_b$.}
\label{Table:(b)}
\end{table}
\end{footnotesize}

\begin{footnotesize}
\begin{table}[htb]
$$
\begin{array}{|c|c|c|c|c|c|c|} \hline
\mbox{Case } D_c   & \tilde{c}_4, \tilde{c}_6 \pmod{4} &  u    &r& s & t & \mbox{Field } \\ \hline
(7, 9, 12) &  1, 1                	 & \pi^8 & \pi^{12}  & \pi^6  & \pi^{16} (1 + \pi^5 + \pi^6 + \pi^7) & g_1 \\ 
(7, 9, 12) &  1, 3                  & \pi^8 & \pi^{12}  & \pi^6  & \pi^{16} (1 + \pi^4 + \pi^5 + \pi^7) & g_1 \\
\hline
(7, 9, 12)  &  3,1                    & \pi^8 & \pi^{12}  & \pi^6  & \pi^{16} (1 + \pi^5 + \pi^6) & g_2 \\ 
(7, 9, 12)  &  3,3                    & \pi^8 & \pi^{12}  & \pi^6  & \pi^{16} (1 + \pi^4 + \pi^5) & g_2 \\
\hline
\end{array}
$$
$$
\begin{array}{|c|c|c|} \hline
\mbox{Field} & \tilde{c}_6 \pmod{8} &  \Ebar \\ \hline
  g_1        & 5, 7 &  \Ebar_3 \\ 
  g_1        & 1, 3 &  \Ebar_4 \\ 
  \hline
  g_2        & 1, 7 &  \Ebar_3 \\ 
  g_2        & 3, 5 &  \Ebar_4 \\
\hline
\end{array}
$$
\caption{Description of $F$, $\Ebar$ and $(u,r,s,t)$ in case~$D_c$.}
\label{Table:(c)}
\end{table}
\end{footnotesize}

\begin{footnotesize}
\begin{table}[htb]
$$
\begin{array}{|c|c|c|c|c|c|c|} \hline
\mbox{Case } D_d   & \tilde{\Delta}  \pmod{4}, \; \tilde{c}_6 \pmod{8} &  u    &r& s & t & \mbox{Field} \\ \hline
(4, 6, 9) &  1, 1                  & \pi^6 & 1+\pi^{10}  & 1 + \pi^4 + \pi^5  & \pi^{10} (1 + \pi^2 + \pi^4 + \pi^5) & g_1 \\ 
(4, 6, 9) &  1, 3                  & \pi^6 & 1+\pi^{8} + \pi^{10}  & 1+\pi^5  & \pi^{10} (1 + \pi^2 + \pi^4 + \pi^5 + \pi^7) & g_1 \\
(4, 6, 9) &  1, 5                  & \pi^6 & 1+\pi^{10}  & 1 + \pi^4 + \pi^5  & \pi^{10} (1 + \pi^2 + \pi^4 + \pi^5 + \pi^6) & g_1 \\
(4, 6, 9) &  1, 7                  &  \pi^6 & 1+\pi^{8} + \pi^{10}  & 1+\pi^5  & \pi^{10} (1 + \pi^2 + \pi^4 + \pi^5 + \pi^6 + \pi^7) & g_1 \\
\hline
(4, 6, 9) &  3, 1                  & \pi^6 & 1+\pi^{10}  & 1 + \pi^4 + \pi^5  & \pi^{10} (1 + \pi^2 + \pi^4 + \pi^5 + \pi^7) & g_2 \\ 
(4, 6, 9) &  3, 3                  & \pi^6 & 1+\pi^{8} + \pi^{10}  & 1+\pi^5  & \pi^{10} (1 + \pi^2 + \pi^4 + \pi^5 + \pi^6) & g_2 \\
(4, 6, 9) &  3, 5                  & \pi^6 & 1+\pi^{10}  & 1 + \pi^4 + \pi^5  & \pi^{10} (1 + \pi^2 + \pi^4 + \pi^5 + \pi^6 + \pi^7) & g_2 \\
(4, 6, 9) &  3, 7                  &  \pi^6 & 1+\pi^{8} + \pi^{10}  & 1+\pi^5  & \pi^{10} (1 + \pi^2 + \pi^4 + \pi^5) & g_2 \\
\hline
\end{array}
$$
$$
\begin{array}{|c|c|c|} \hline
\mbox{Field} & \tilde{c}_6 \pmod{8} &  \Ebar \\ \hline
  g_1        & 1,3,5,7 &  \Ebar_2, \Ebar_3, \Ebar_1, \Ebar_4, \text{ respectively} \\ 
  \hline
  g_2        & 1,3,5,7 &  \Ebar_1, \Ebar_3, \Ebar_2, \Ebar_4, \text{ respectively} \\ 
\hline
\end{array}
$$
\caption{Description of $F$, $\Ebar$ and $(u,r,s,t)$ in case~$D_d$.}
\label{Table:(d)}
\end{table}
\end{footnotesize}

\begin{footnotesize}
\begin{table}[htb]
$$ 
\begin{array}{|c|c|c|c|c|c|c|} \hline
\mbox{Case } D_e   & \tilde{c}_4 \pmod{4} &  u  &r& s & t & \mbox{Field} \\ \hline
(5, n = 9,  9) &  1   & \pi^6 & \pi^8 + \pi^6 + \pi^4  & \pi^4 + \pi^3 + \pi^2 
& \pi^{10} (\pi^{7} + \pi^4 + \pi^3 + \pi + 1) & g_3 \\ 
(5, n = 9,  9) &  3   & \pi^6 & \pi^{10} + \pi^6 + \pi^4  & \pi^5 + \pi^3 + \pi^2
& \pi^{10} (\pi^7 + \pi^6 + \pi^5 + \pi^4 + \pi^2 + \pi + 1) & g_3 \\ 
(5, n \geq 10 ,  9) &  1   & \pi^6 & \pi^8 + \pi^6 + \pi^4  & \pi^4 + \pi^3 + \pi^2 
& \pi^{10} (\pi^6 + \pi^4 + \pi^3 + \pi + 1) & g_3 \\ 
(5, n \geq 10,   9) &  3   & \pi^6 & \pi^{10} + \pi^6 + \pi^4  & \pi^5 + \pi^3 + \pi^2
& \pi^{10} (\pi^5 + \pi^4 + \pi^2 + \pi + 1) & g_3 \\ 
\hline
\end{array}
$$
$$
\begin{array}{|c|c|c|} \hline
\mbox{Field} & n \ge 9, \; \tilde{c}_4 \pmod{16} &  \Ebar \\ \hline
  g_3        & n = 9, \; \tilde{c}_4 \; \equiv 1, 11 \quad \text{or} \quad n \geq 10, \; \tilde{c}_4 \equiv 9, 11 &  \Ebar_1 \\ 
  g_3        & n = 9, \; \tilde{c}_4 \; \equiv 3, 9 \quad \text{or} \quad n \geq 10, \; \tilde{c}_4 \equiv 1, 3 &  \Ebar_2 \\ 
  \hline
\end{array}
$$
\caption{Description of $F$, $\Ebar$ and $(u,r,s,t)$ in case~$D_e$.}
\label{Table:(e)}
\end{table}
\end{footnotesize}

\begin{footnotesize}
\begin{table}[htb]
$$
\begin{array}{|c|c|c|c|c|c|c|} \hline
\mbox{Case } D_f   & \tilde{c}_4 \pmod{4} &  u  &r& s & t & \mbox{Field} \\ \hline
(7, n = 12, 15) &  1   & \pi^{10} & \pi^{18} + \pi^{14} + \pi^{12}  & \pi^{9} + \pi^{7} + \pi^{6}
& \pi^{22} (\pi^{6} + \pi^2 + \pi + 1) & g_3 \\ 
(7, n = 12, 15) &  3   & \pi^{10} & \pi^{16} + \pi^{14} + \pi^{12}  & \pi^8 + \pi^7 + \pi^6
& \pi^{22} (\pi^{7} + \pi^6 + \pi^5 + \pi^3 + \pi + 1) & g_3 \\ 
(7, n \geq 13, 15) &  1   & \pi^{10} & \pi^{18} + \pi^{14} + \pi^{12}  & \pi^9 + \pi^7 + \pi^6
& \pi^{22} (\pi^7 + \pi^2 + \pi + 1) & g_3 \\ 
(7, n \geq 13, 15) &  3   & \pi^{10} & \pi^{16} + \pi^{14} + \pi^{12}  & \pi^{8} + \pi^{7} + \pi^{6}
& \pi^{22} (\pi^5 + \pi^3  + \pi + 1) & g_3 \\ 
\hline
\end{array}
$$
$$
\begin{array}{|c|c|c|} \hline
\mbox{Field} & n \ge 12, \; \tilde{c}_4 \pmod{16} &  \Ebar \\ \hline
  g_3        & n = 12, \; \tilde{c}_4 \; \equiv 5, 15 \quad \text{or} \quad n \geq 13, \; \tilde{c}_4 \equiv 13, 15 &  \Ebar_1 \\ 
  g_3        & n = 12, \; \tilde{c}_4 \; \equiv 7, 13 \quad \text{or} \quad n \geq 13, \; \tilde{c}_4 \equiv 5, 7 &  \Ebar_2 \\ 
  \hline
\end{array}
$$
\caption{Description of $F$, $\Ebar$ and $(u,r,s,t)$ in case~$D_f$.}
\label{Table:(f)}
\end{table}
\end{footnotesize}

\begin{footnotesize}
\begin{table}[htb]
$$
\begin{array}{|c|c|c|c|c|c|c|} \hline
\mbox{Case } G_a  & \tilde{c}_6 \pmod{9} &  u       &   r                & \mbox{Field} &  \Ebar  \\ \hline
(n \geq 3, 3, 3)  &  1                   & \pi^{3}  & 2\pi^4 + 2         &   h_2        & \Ebar_0 \\
(n \geq 3, 3, 3)  &  2                   & \pi^{3}  & 2\pi^4 + 1         &   h_2        & \; \Ebar_0 \\  
(n \geq 3, 3, 3)  &  7                   & \pi^{3}  & \pi^4 + 2          &   h_2        &  \; \Ebar_0 \\
(n \geq 3, 3, 3)  &  8                   & \pi^{3}  & \pi^4 + 1          &   h_2        & \Ebar_0 \\
\hline
(n = 2, 3, 3)    &  1                   & \pi^{3}  & 2\pi^5 + \pi^4 + 2 &   h_1        & \Ebar_2 \\
(n = 2, 3, 3)    &  4                   & \pi^{3}  & \pi^5 + 2\pi^4 + 2 &   h_1        & \Ebar_1 \\
(n = 2, 3, 3)    &  5                   & \pi^{3}  & 2\pi^5 + \pi^4 + 1 &   h_1        & \Ebar_2 \\
(n = 2, 3, 3)    &  8                   & \pi^{3}  & \pi^5 + 2\pi^4 + 1 &   h_1        & \Ebar_1 \\ \hline
\end{array}
$$
\caption{Description of $F$, $\Ebar$ and $(u,r)$ in case~$G_a$.}
\label{Table:(a12)}
\end{table}
\end{footnotesize}

\begin{footnotesize}
\begin{table}[htb]
$$
\begin{array}{|c|c|c|c|c|c|c|} \hline
\mbox{Case } G_b  & \tilde{c}_6 \pmod{9} &  u       &   r                  & \mbox{Field} &  \Ebar  \\ \hline
(n \geq 5, 6, 9)  &  1                   & \pi^{9}  & \pi^{12}             &   h_2        & \Ebar_3 \\
(n \geq 5, 6, 9)  &  2                   & \pi^{9}  & \pi^{12}(2\pi^4 + 2) &   h_2        & \Ebar_3 \\  
(n \geq 5, 6, 9)  &  7                   & \pi^{9}  & \pi^{12}(\pi^4 + 1)  &   h_2        & \Ebar_3 \\
(n \geq 5, 6, 9)  &  8                   & \pi^{9}  & 2\pi^{12}            &   h_2        & \Ebar_3 \\
\hline
(n = 4, 6, 9) &  1                   & \pi^{9}  & \pi^{12}(2\pi^3 + 1) &   h_1        & \Ebar_2 \\
(n = 4, 6, 9) &  4                   & \pi^{9}  & \pi^{12}(\pi^5 + 2\pi^4 + 2\pi^3 + 1) &  h_1        & \Ebar_0 \\
(n = 4, 6, 9) &  5                   & \pi^{9}  & \pi^{12}(2\pi^5 + \pi^4 + \pi^3 + 2) &   h_1        & \Ebar_0 \\
(n = 4, 6, 9) &  8                   & \pi^{9}  & \pi^{12}(\pi^3 + 2) &   h_1        & \Ebar_1 \\ \hline
\end{array}
$$
\caption{Description of $F$, $\Ebar$ and $(u,r)$ in case~$G_b$.}
\label{Table:(b12)}
\end{table}
\end{footnotesize}

\begin{footnotesize}
\begin{table}[htb]
$$
\begin{array}{|c|c|c|c|c|c|c|} \hline
\mbox{Case } D_c  & \tilde{\Delta}_m, \; \tilde{c}_6 \pmod{3} &  u   &   r             & \mbox{Field} &  \Ebar  \\ \hline
(2,4,3) &  1, 1                   & \pi^{3}  & \pi^4(2\pi + 1)         &   h_1        & \Ebar_2 \\
(2,4,3) &  1, 2                   & \pi^{3}  & \pi^4(\pi + 2)          &   h_1        & \Ebar_1 \\
(2,4,3) &  2, 1                   & \pi^{3}  & \pi^4                   &   h_2        & \Ebar_3 \\
(2,4,3) &  2, 2                   & \pi^{3}  & 2\pi^4                  &   h_2        & \Ebar_3 \\
\hline
\end{array}
$$
\caption{Description of $F$, $\Ebar$ and $(u,r)$ in case~$G_c$.}
\label{Table:(c12)}
\end{table}
\end{footnotesize}

\begin{footnotesize}
\begin{table}[htb]
$$
\begin{array}{|c|c|c|c|c|c|c|} \hline
\mbox{Case } G_d  & \tilde{\Delta}_m, \; \tilde{c}_6 \pmod{3} &  u   &   r             & \mbox{Field} &  \Ebar  \\ \hline
(2,3,5) &  1, 1                   & \pi^{5}  & 2\pi^9 + 2\pi^8 + 2     &   h_1        & \Ebar_1 \\
(2,3,5) &  1, 2                   & \pi^{5}  & \pi^9 +  \pi^8 + 1      &   h_1        & \Ebar_2 \\
(2,3,5) &  2, 1                   & \pi^{5}  & \pi^8 + 2               &   h_2        & \Ebar_3 \\
(2,3,5) &  2, 2                   & \pi^{5}  & 2\pi^8 + 1              &   h_2        & \Ebar_3 \\
\hline
\end{array}
$$
\caption{Description of $F$, $\Ebar$ and $(u,r)$ in case~$G_d$.}
\label{Table:(d12)}
\end{table}
\end{footnotesize}

\begin{footnotesize}
\begin{table}[htb]
$$
\begin{array}{|c|c|c|c|c|c|c|} \hline
\mbox{Case } G_e  & \tilde{\Delta}_m, \; \tilde{c}_6 \pmod{3} &  u   &   r             & \mbox{Field} &  \Ebar  \\ \hline
(4,7,9) &  1, 1                   & \pi^{9}  & \pi^{16}(\pi  + 2)     &   h_1        & \Ebar_1 \\
(4,7,9) &  1, 2                   & \pi^{9}  & \pi^{16}(2\pi + 1)     &   h_1        & \Ebar_2 \\
(4,7,9) &  2, 1                   & \pi^{9}  & 2\pi^{16}              &   h_2        & \Ebar_3 \\
(4,7,9) &  2, 2                   & \pi^{9}  & \pi^{16}               &   h_2        & \Ebar_3 \\
\hline
\end{array}
$$
\caption{Description of $F$, $\Ebar$ and $(u,r)$ in case~$G_e$}
\label{Table:(e12)}
\end{table}
\end{footnotesize}

\begin{footnotesize}
\begin{table}[htb]
$$
\begin{array}{|c|c|c|c|c|c|c|} \hline
\mbox{Case } G_f  & \tilde{\Delta}_m, \; \tilde{c}_6 \pmod{3} &  u   &   r             & \mbox{Field} &  \Ebar  \\ \hline
(4,6,11) &  1, 1                   & \pi^{11}  & \pi^{12}(\pi^8 + \pi^7 + \pi^6 + 2\pi^5 + \pi^4 + 2\pi^3 + 1) 
&   h_1        & \Ebar_1 \\
(4,6,11) &  1, 2                   & \pi^{11}  & \pi^{12}(2\pi^8 + 2\pi^7 + 2\pi^6 + \pi^5 + 2\pi^4 + \pi^3 + 2) 
&   h_1        & \Ebar_2 \\
(4,6,11) &  2, 1                   & \pi^{11}  & \pi^{12}(2\pi^4 + 1)              &   h_2        & \Ebar_3 \\
(4,6,11) &  2, 2                   & \pi^{11}  & \pi^{12}(\pi^4 + 2)               &   h_2        & \Ebar_3 \\
\hline
\end{array}
$$
\caption{Description of $F$, $\Ebar$ and $(u,r)$ in case~$G_f$.}
\label{Table:(f12)}
\end{table}
\end{footnotesize}

\begin{footnotesize}
\begin{table}[htb]
$$
\begin{array}{|c|c|c|c|c|c|c|} \hline
\mbox{Case } G_g  & \tilde{\Delta} \pmod{9}, \; \tilde{c}_6 \pmod{3} &  u       &   r                & \mbox{Field} &  \Ebar  \\ \hline
(n \geq 4, 4, 5) &  8, 1                & \pi^{5}  & \pi^4(2\pi^5 + 2\pi^3 + \pi + 2)  &   h_3        & \Ebar_0 \\
(n \geq 4, 4, 5) &  8, 2                & \pi^{5}  & \pi^4(\pi^5 + \pi^3 + 2\pi + 1)   &    h_3        & \Ebar_0 \\
(n \geq 4, 4, 5) &  5, 1                & \pi^{5}  & \pi^4(\pi^4 + \pi^2 + 2)    &   h_4                & \Ebar_0 \\
(n \geq 4, 4, 5) &  5, 2                & \pi^{5}  & \pi^4(2\pi^4 + 2\pi^2 + 1)  &   h_4              & \Ebar_0 \\
(n \geq 4, 4, 5) &  2, 1                & \pi^{5}  & \pi^4(2\pi^5 + 2\pi^4 + \pi^3 + 2 \pi + 2)    &   h_5                & \Ebar_0 \\
(n \geq 4, 4, 5) &  2, 2                & \pi^{5}  & \pi^4(\pi^5 + \pi^4 + 2\pi^3 + \pi + 1)  &   h_5              & \Ebar_0 \\
\hline
\end{array}
$$
$$
\begin{array}{|c|c|c|c|c|c|c|c|} \hline
\mbox{Case } G_g  & \tilde{\Delta} \pmod{9}, \; \tilde{c}_4 \pmod{3} & \tilde{c}_6 \pmod{9} &  u       &   r                & \mbox{Field} &  \Ebar  \\ \hline
(n = 3, 4, 5) &  2, 2 \text{ or } 5, 1  &   2  & \pi^{5}  & \pi^4(2\pi^4 + \pi^3 + 2\pi + 1)  &   h_3  & \Ebar_2 \\
(n = 3, 4, 5) &  2, 2 \text{ or } 5, 1  &   4  & \pi^{5}  & \pi^4(\pi^5 + 2\pi^4 + 2\pi^3 + \pi + 2)  &   h_3  & \Ebar_2 \\
(n = 3, 4, 5) &  2, 2 \text{ or } 5, 1  &   5  & \pi^{5}  & \pi^4(2\pi^5 + \pi^4 + \pi^3 + 2\pi + 1)  &   h_3  & \Ebar_1 \\
(n = 3, 4, 5) &  2, 2 \text{ or } 5, 1  &   7  & \pi^{5}  & \pi^4(\pi^4 + 2\pi^3 + \pi + 2)  &   h_3  & \Ebar_1 \\

(n = 3, 4, 5) &  2, 1 \text{ or } 8, 2  &   1  & \pi^{5}  & \pi^4(\pi^2 + 2)  &   h_4  & \Ebar_2 \\
(n = 3, 4, 5) &  2, 1 \text{ or } 8, 2  &   4  & \pi^{5}  & \pi^4(2\pi^4 + \pi^2 + 2)  &   h_4  & \Ebar_1 \\
(n = 3, 4, 5) &  2, 1 \text{ or } 8, 2  &   5  & \pi^{5}  & \pi^4(\pi^4 + 2\pi^2 + 1)  &   h_4  & \Ebar_2 \\
(n = 3, 4, 5) &  2, 1 \text{ or } 8, 2  &   8  & \pi^{5}  & \pi^4(2\pi^2 + 1)  &   h_4  & \Ebar_1 \\

(n = 3, 4, 5) &  5, 2 \text{ or } 8, 1  &   1  & \pi^{5}  & \pi^4(\pi^5 + \pi^3 + 2\pi + 2)  &   h_5  & \Ebar_1 \\
(n = 3, 4, 5) &  5, 2 \text{ or } 8, 1  &   2  & \pi^{5}  & \pi^4(2\pi^4 + 2\pi^3 + \pi + 1)  &   h_5  & \Ebar_1 \\
(n = 3, 4, 5) &  5, 2 \text{ or } 8, 1  &   7  & \pi^{5}  & \pi^4(\pi^4 + \pi^3 + 2\pi + 2)  &   h_5  & \Ebar_2 \\
(n = 3, 4, 5) &  5, 2 \text{ or } 8, 1  &   8  & \pi^{5}  & \pi^4(2\pi^5 + 2\pi^3 + \pi + 1)  &   h_5  & \Ebar_2 \\
\hline
\end{array}
$$
\caption{Description of $F$, $\Ebar$ and $(u,r)$ in case~$G_g$.}
\label{Table:(g12)}
\end{table}
\end{footnotesize}

\begin{footnotesize}
\begin{table}[htb]
$$
\begin{array}{|c|c|c|c|c|c|c|} \hline
\mbox{Case } G_h  & \tilde{\Delta} \pmod{9}, \; \tilde{c}_6 \pmod{3} &  u       &   r                & \mbox{Field} &  \Ebar  \\ \hline
(n \geq 4, 5, 7) &  5, 1                & \pi^{7}  & \pi^8(\pi^5 + 2\pi^4 + \pi^3 + 2\pi^2 + 2\pi + 2)  &   h_3        & \Ebar_1 \\
(n \geq 4, 5, 7) &  5, 2                & \pi^{7}  & \pi^8(2\pi^5 + \pi^4 + 2\pi^3 + \pi^2 + \pi + 1)  &   h_3        & \Ebar_2 \\

(n \geq 4, 5, 7) &  8, 1                & \pi^{7}  & \pi^8(\pi^4 + 2\pi^2 + 2)  &   h_4        & \Ebar_1 \\
(n \geq 4, 5, 7) &  8, 2                & \pi^{7}  & \pi^8(2\pi^4 + \pi^2 + 1)  &   h_4        & \Ebar_2 \\

(n \geq 4, 5, 7) &  2, 1                & \pi^{7}  & \pi^8(2\pi^5 + 2\pi^3 + 2\pi^2 + \pi + 2) &   h_5        & \Ebar_0 \\
(n \geq 4, 5, 7) &  2, 2                & \pi^{7}  & \pi^8(\pi^5 + \pi^3 + \pi^2 + 2\pi + 1)  &   h_5        & \Ebar_0 \\
\hline
\end{array}
$$
\caption{Description of $F$, $\Ebar$ and $(u,r)$ in case~$G_h$.}
\label{Table:(h12)}
\end{table}
\end{footnotesize}

\begin{footnotesize}
\begin{table}[htb]
$$
\begin{array}{|c|c|c|c|c|c|c|} \hline
\mbox{Case } G_i  & \tilde{\Delta} \pmod{9}, \; \tilde{c}_6 \pmod{3} &  u       &   r                & \mbox{Field} &  \Ebar  \\ \hline
(n \geq 6, 7, 11) &  8, 1                & \pi^{11}  & \pi^{16}(2\pi^5 + 2\pi^4 + \pi + 2)  &   h_3      & \Ebar_1 \\
(n \geq 6, 7, 11) &  8, 2                & \pi^{11}  & \pi^{16}(\pi^5 + \pi^4 + 2\pi + 1)  &   h_3      & \Ebar_2 \\

(n \geq 6, 7, 11) &  5, 1                & \pi^{11}  & \pi^{16}(\pi^4 + \pi^2 + 2)  &   h_4      & \Ebar_1 \\
(n \geq 6, 7, 11) &  5, 2                & \pi^{11}  & \pi^{16}(2\pi^4 + 2\pi^2 + 1)  &   h_4      & \Ebar_2 \\

(n \geq 6, 7, 11) &  2, 1                & \pi^{11}  & \pi^{16}(2\pi^5 + \pi^4 + 2\pi + 2)  &   h_5      & \Ebar_1 \\
(n \geq 6, 7, 11) &  2, 2                & \pi^{11}  & \pi^{16}(\pi^5 + 2\pi^4 + \pi + 1)  &   h_5      & \Ebar_2 \\

\hline
\end{array}
$$
$$
\begin{array}{|c|c|c|c|c|c|c|c|} \hline
\mbox{Case } G_i  & \tilde{\Delta} \pmod{9}, \; \tilde{c}_4 \pmod{3} & \tilde{c}_6 \pmod{9} &  u       &   r                & \mbox{Field} &  \Ebar  \\ \hline
(n = 5, 7, 11) &  2, 2 \text{ or } 5, 1  &   2  & \pi^{11}  & \pi^{16}(2\pi + 1)  &   h_3  & \Ebar_1 \\
(n = 5, 7, 11) &  2, 2 \text{ or } 5, 1  &   4  & \pi^{11}  & \pi^{16}(\pi^5 + \pi^4 + \pi + 2)  &   h_3  & \Ebar_0 \\
(n = 5, 7, 11) &  2, 2 \text{ or } 5, 1  &   5  & \pi^{11}  & \pi^{16}(2\pi^5 + 2\pi^4 + 2\pi + 1) & h_3  & \Ebar_0 \\
(n = 5, 7, 11) &  2, 2 \text{ or } 5, 1  &   7  & \pi^{11}  & \pi^{16}(\pi + 2)  &   h_3  & \Ebar_2 \\

(n = 5, 7, 11) &  2, 1 \text{ or } 8, 2  &   1  & \pi^{11}  & \pi^{16}(\pi^2 + 2)  &   h_4  & \Ebar_0 \\
(n = 5, 7, 11) &  2, 1 \text{ or } 8, 2  &   4  & \pi^{11}  & \pi^{16}(2\pi^4 + \pi^2 + 2)  &   h_4  & \Ebar_2 \\
(n = 5, 7, 11) &  2, 1 \text{ or } 8, 2  &   5  & \pi^{11}  & \pi^{16}(\pi^4 + 2\pi^2 + 1) & h_4  & \Ebar_1 \\
(n = 5, 7, 11) &  2, 1 \text{ or } 8, 2  &   8  & \pi^{11}  & \pi^{16}(2\pi^2 + 1)  &   h_4  & \Ebar_0 \\
(n = 5, 7, 11) &  5, 2 \text{ or } 8, 1  &   1  & \pi^{11}  & \pi^{16}(\pi^5 + 2\pi^4 + 2\pi + 2) &   h_5  & \Ebar_2\\
(n = 5, 7, 11) &  5, 2 \text{ or } 8, 1  &   2  & \pi^{11}  & \pi^{16}(\pi + 1) &   h_5  & \Ebar_0\\
(n = 5, 7, 11) &  5, 2 \text{ or } 8, 1  &   7  & \pi^{11}  & \pi^{16}(2\pi + 2) &   h_5  & \Ebar_0\\
(n = 5, 7, 11) &  5, 2 \text{ or } 8, 1  &   8  & \pi^{11}  & \pi^{16}(2\pi^5 + \pi^4 + \pi + 1) &   h_5  & \Ebar_1\\
\hline
\end{array}
$$
\caption{Description of $F$, $\Ebar$ and $(u,r)$ in case~$G_i$.}
\label{Table:(i12)}
\end{table}
\end{footnotesize}

\begin{footnotesize}
\begin{table}[htb]
$$
\begin{array}{|c|c|c|c|c|c|c|} \hline
\mbox{Case } G_j  & \tilde{\Delta} \pmod{9}, \; \tilde{c}_6 \pmod{3} &  u       &   r                & \mbox{Field} &  \Ebar  \\ \hline
(n \geq 6, 8, 13) &  5, 1   & \pi^{13}  & \pi^{20}(2\pi^5 + 2\pi^3 + 2\pi^2 + 2\pi + 2)  &   h_3        & \Ebar_0 \\
(n \geq 6, 8, 13) &  5, 2   & \pi^{13}  & \pi^{20}(\pi^5 + \pi^3 + \pi^2 + \pi + 1)      &   h_3        & \Ebar_0 \\

(n \geq 6, 8, 13) &  8, 1   & \pi^{13}  & \pi^{20}(\pi^4 + 2\pi^2 + 2)  &   h_4        & \Ebar_2 \\
(n \geq 6, 8, 13) &  8, 2   & \pi^{13}  & \pi^{20}(2\pi^4 + \pi^2 + 1)      &  h_4        & \Ebar_1 \\
(n \geq 6, 8, 13) &  2, 1   & \pi^{13}  & \pi^{20}(\pi^5 + \pi^4 + \pi^3 + 2\pi^2 + \pi + 2)  &   h_5        & \Ebar_2 \\
(n \geq 6, 8, 13) &  2, 2   & \pi^{13}  & \pi^{20}(2\pi^5 + 2\pi^4 + 2\pi^3 + \pi^2 + 2\pi + 1)  &   h_5   & \Ebar_1 \\
\hline
\end{array}
$$
\caption{Description of $F$, $\Ebar$ and $(u,r)$ in case~$G_j$.}
\label{Table:(j12)}
\end{table}
\end{footnotesize}

\FloatBarrier

{\large \part{Proof of the criteria}}
\label{Part:proofsCyclic}

We will now use the previous results 
to prove some of our main criteria, more precisely,  Theorems~\ref{T:mainTame3},~\ref{T:mainWild3},~\ref{T:mainTame4}
and~\ref{T:mainWild4}. Let us first recall some useful facts and notation.

Let $E/\Q_\ell$ an elliptic curve with potentially good reduction. 
Let $F \subset \Qbar_\ell$ be a field where $E/F$ has good reduction. Write $\pi_F$
for an uniformizer in $F$ and $\Frob_F \in G_F = \Gal(\Qbar_\ell / F)$ for 
a Frobenius element. Write $\Ebar$ for the residue elliptic curve 
obtained by reduction mod~$\pi_F$ of a model of $E / F$ with good reduction. 
Let $E_F[p]$ denote $E[p]$ as a $G_F$-module.
We write $\varphi : E_F[p] \rightarrow \overline{E}[p]$ for 
the reduction morphism, which is a symplectic $G_F$-isomorphism. 

Let $\Aut(\Ebar)$ denote the group of $\Fbar_\ell$-automorphisms of $\Ebar$.
Let $L = \Q_\ell^{un}(E[p])$ and $\Phi = \Gal(L/\Q_\ell^{un})$.
The action of $\Phi$ on $L$ induces an injective morphism 
$\gamma_E : \Phi \rightarrow \Aut(\overline{E})$ satisfying, 
for all $\sigma \in \Phi$, the relation
\begin{equation}
\label{E:phi2}
 \varphi \circ \rhobar_{E,p}(\sigma) = \psi(\gamma_E(\sigma)) \circ \varphi,
\end{equation}
where $\psi : \Aut(\Ebar) \rightarrow \GL(\Ebar[p])$ is the natural
injective morphism. 

Let $E/\Q_\ell$ and $E'/\Q_\ell$ denote elliptic curves
with isomorphic $p$-torsion modules. Assume further 
they have potentially good reduction with semistability
defect $e=e' \in \{ 3, 4 \}$. In particular, 
they have the same $p$-torsion field $K$ and we
let the generators $\sigma, \tau \in \Gal(K/\Q_\ell)$ 
and the fields $F$, $K_2 = F \cap K$ 
be given by Proposition~\ref{P:groupStructure}
applied to either $E$ or $E'$.

Write $\bar{\tau}$ the Frobenius element in $\Gal(\Fbar_\ell / \F_\ell)$. 
The element $\tau \in \Gal(K/K_2)$ acts on the residue
field of $K$ as $\bar{\tau}$;
there is a natural identification of $\sigma$ with an element 
$\sigma \in \Phi \simeq \Gal(K_{un}/K)$.

\section{Proof of Theorem~\ref{T:mainTame3}}
\label{S:thm1}

Here we are in the case of tame reduction with $e=e'=3$, hence
$F = \Q_\ell(\ell^{1/3})$ is totally ramified and non-Galois 
since $\ell \equiv 2 \pmod{3}$. We will need the following key lemma.

\begin{lemma} Let $\ell \equiv 2 \pmod{3}$ be a prime.
Suppose $E/\Q_\ell$, $E'/\Q_\ell$ satisfy $e=e'=3$ 
and both have a $3$-torsion point over $\Q_\ell$.
Suppose that $E[p]$ and $E'[p]$ are isomorphic $G_{\Q_{\ell}}$-modules.

Set $s=1$ if $\upsilon_\ell(\Delta_m) \equiv \upsilon_\ell(\Delta_m') \pmod{3}$ 
and $s=-1$ otherwise. 

Then we can choose symplectic bases of $E[p]$ and $E'[p]$ such that the following holds
\[
N = N' \qquad \text{ and } \qquad A^s = A', 
\]
where $A$, $A'$, $N$ and $N'$ are the matrices representing
$\rhobar_{E,p}(\sigma)$, $\rhobar_{E',p}(\sigma)$, $\rhobar_{E,p}(\tau)$ and $\rhobar_{E',p}(\tau)$
in the fixed bases. 
\label{L:goodbasis}
\end{lemma}

\begin{proof} It follows from Lemma~\ref{L:gammaEe3} we 
can choose minimal models for $E$, $E'$ over $F$ reducing to $ \Ebar : Y^2 + Y = X^3$.
Moreover, $\gamma_E(\sigma)^s = \gamma_{E'}(\sigma)$ in $\Aut(\Ebar)$ with 
$s=1$ if $\upsilon_\ell(\Delta_m) \equiv \upsilon_\ell(\Delta_m') \pmod{3}$ and $s=-1$ otherwise.
Thus $\psi(\gamma_E(\sigma))^s = \psi(\gamma_{E'}(\sigma))$. 

Write $\rhobar$ for the representation giving the action of $\Gal(\Fbar_\ell / \F_\ell)$ on $\Ebar[p]$. The reduction morphisms $\varphi : E_F[p] \to \Ebar[p]$ and $\varphi' : E'_F[p] \to \Ebar[p]$  satisfy $\varphi \circ \rhobar_{E,p}(\Frob_F) = \rhobar(\bar{\tau}) \circ \varphi$ and $\varphi' \circ \rhobar_{E',p}(\Frob_F) = \rhobar(\bar{\tau}) \circ \varphi'$. 

From Lemma~\ref{L:FinK} part~(i) we have $F \subset K$, hence we can further assume $\tau$
acts on $K$ as $\Frob_F$ to conclude that $\varphi \circ \rhobar_{E,p}(\tau) = \rhobar(\bar{\tau}) \circ \varphi$ and $\varphi' \circ \rhobar_{E',p}(\tau) = \rhobar(\bar{\tau}) \circ \varphi'$.

Fix a symplectic basis for $\Ebar[p]$. Let $\bar{N}$ be the matrix representing $\rhobar(\bar{\tau})$ in that basis.
Lift the fixed basis to basis of $E[p]$ and $E'[p]$ via the reduction morphisms. The lifted basis are symplectic
and in these basis the matrix representing $\varphi$ and $\varphi'$ are the identity.
Thus $N = N' = \bar{N}$. Finally, it follows from $\psi(\gamma_E(\sigma))^s = \psi(\gamma_{E'}(\sigma))$ and \eqref{E:phi2} that $A' = A^s$ in the same fixed bases, as desired. 
\end{proof}

\noindent{\bf Proof of Theorem~\ref{T:mainTame3}}
Since $\ell \equiv 2 \pmod{3}$, it follows from Corollary~\ref{C:nonabelianTame}
that $\rhobar_{E,p}$ has non-abelian image, hence Corollary~\ref{C:notBoth} implies the last statement. 

Let $\phi : E[p] \to  E'[p]$ be an isomorphism of $G_{\Q_{\ell}}$-modules.

We first make the following simplifications related to the definition of $t$.
The representation $\rhobar_{E,3}$ has image of order 6 and is 
conjugate to exactly one of
\begin{equation} \label{E:matrixgroups}
  \begin{pmatrix} 1 & * \\ 0 & \chi_3 \end{pmatrix} \qquad \text{ or } \qquad \begin{pmatrix} \chi_3 & * \\ 0 & 1 \end{pmatrix}
  \end{equation}
respectively if $E$ has a 3-torsion point over $\Q_\ell$ or not (see also Proposition~\ref{P:mod3}). 
The same holds for $E'$.
If both $E$ and $E'$ are in the second case the quadratic twist by $\chi_3$ will change both to the first case. Taking simultaneous quadratic twists by the same character preserves the hypothesis $E[p] \simeq E'[p]$
and does not affect the symplectic type of~$\phi$. Thus we can assume that either both curves have a 3-torsion point over $\Q_\ell$ 
or only one of them does.

We define $s=1$ if $r = 0$ and $s=-1$ if $r = 1$.

We now divide the proof in two natural cases.

{\sc Case I:} Suppose $t=0$; hence both curves have a $3$-torsion point over $\Q_\ell$. 

Fix symplectic basis for $E[p]$ and $E'[p]$ given by Lemma~\ref{L:goodbasis}.
Let also $A$, $A'$, $N$, $N'$ be as in Lemma~\ref{L:goodbasis}. Write $\Theta$ for the matrix
representing $\phi$ in the fixed bases. From the relations in Proposition~\ref{P:groupStructure} part (i)
and the fact that $\phi$ commutes with the action of $\sigma$ and $\tau$ we can write
\[
 NAN^{-1} = A^{-1}, \qquad A^s = \Theta A \Theta^{-1}, \qquad N = \Theta N \Theta^{-1}, \qquad A^3 = 1. 
\]
We have $\det A = 1$ (inertia lands in $\SL_2(\F_p)$) and $\det N = \chi_p (\Frob_\ell) \equiv \ell \pmod{p}$. 

Suppose $p \geq 5$ and write $G = \GL_2(\F_p)$. It follows from Proposition~\ref{P:centralizer} that
the centralizer $C_G(A)$ is a Cartan subgroup $C$. 
We have that $N \not\in C$ but $N$ is in the normalizer $N_G(C)$. From Proposition~\ref{P:groupStructure}
we know that $\Gal(K/\Q_\ell)$ has order coprime to $p$, hence $N$ has order coprime
to $p$ and the centralizer $C_G(N)$, again by Proposition~\ref{P:groupStructure}, 
is a Cartan subgroup $C'$ different from $C$.

Set $V = \Theta$ if $s=1$ or $V = N^{-1} \Theta$ if $s=-1$. It is easy to check that $V \in C$ and also $V \in C'$.
Since $C \ne C'$ the intersection $C \cap C'$ are scalar matrices. Thus $\det V$ is a square mod~$p$.

Suppose $s=1$. Then $\det V = \det \Theta$ is a square mod~$p$, hence $E[p]$ and $E'[p]$ are symplectically isomorphic. 

Suppose $s=-1$. Then $\det V = \det N^{-1} \det \Theta$ is a square mod~$p$. Therefore $\det \Theta$ is a square if and only if $\det N$ is a square, that is if and only if $(\ell/p) =1$. 

So far we have proved the following statements for  all $p \geq 5$:
\begin{enumerate}
 \item if $t=0$ and $s=1$ then $E[p]$ and $E'[p]$ are symplectically isomorphic;
 \item if $t=0$ and $s=-1$ then $E[p]$ and $E'[p]$ are symplectically isomorphic if and only if $(\ell/p) = 1$.
\end{enumerate}
Suppose now $p=3$ and write $G = \GL_2(\F_3)$. Then $C = C_G(A) = \langle -A \rangle \subset \SL_2(\F_3)$.
Define $V$ as before, hence $V \in C$. If $s=1$ then $\det V = \det \Theta = 1$ is a square. If $s=-1$ then $1 = \det V = \det N^{-1} \det \Theta$, hence $\det \Theta = \det N = (\ell/3)$. 
This proves (1) and (2) also for $p=3$. 

Now note that $(\ell / 3) = -1$ since $\ell \equiv 2 \pmod{3}$, therefore $(\ell / 3)^r = (-1)^r = 1$
if and only if $r=0$, proving (ii) when $t=0$.

{\sc Case II:} Suppose $t=1$. Since the matrix groups in~\eqref{E:matrixgroups} are not conjugate the $3$-torsion modules are not isomorphic in this case. This shows that $p=3$ and $t=1$ cannot occur, completing the proof of (ii). We thus have $p \geq 5$ from now on. 

We can assume $E'$ does not have a 3-torsion point over $\Q_\ell$.
Since $\rhobar_{E',3}$ is reducible there is a 3-isogeny $h : E' \to E''$, inducing an isomorphism
$\phi_h : E'[p] \to E''[p]$. Moreover, the composition $\phi_h \circ \phi : E[p] \to E''[p]$ is a $G_{\Q_\ell}$-modules isomorphism which is symplectic if and only if $(3/p)=1$.

The curve $E''$ has a 3-torsion point over $\Q_\ell$ and $\upsilon(\Delta_m') = \upsilon(\Delta_m'')$ (see \cite[Table~1]{DD2015}).  We can now apply case (1) or (2) with $E$ and $E''$ to decide the symplectic type of $\phi_h \circ \phi$. 
Finally, note that if $\phi$ is symplectic the composition $\phi_h \circ \phi$ is symplectic if and only if
$\phi_h$ is symplectic; if $\phi$ is anti-symplectic the composition $\phi_h \circ \phi$ is symplectic if and only if $\phi_h$ is anti-symplectic. 

By now we have also established the following statements for all $p \geq 5$:
\begin{enumerate}
 \item[(3)] if $t=1$ and $s=1$ then $E[p]$ and $E'[p]$ are symplectically isomorphic if and only if $(3/p) = 1$; 
 \item[(4)] if $t=1$ and $s=-1$ then $E[p]$ and $E'[p]$ are symplectically isomorphic if and only if $(3/p)(\ell/p) = 1$; 
\end{enumerate}

Recall that we have defined $s=1$ if $r = 0$ and $s=-1$ if $r = 1$. To finish the proof we note
that the statements $(1)$, $(2)$, $(3)$ and $(4)$ can be summarized has
\[
 E[p] \text{ and } E'[p] \quad \text{are symplectically isomorphic} \quad \Leftrightarrow \quad \left(\frac{\ell}{p}\right)^r \left (\frac{3}{p} \right)^t = 1,
\]
as desired.

\section{Proof of Theorem~\ref{T:e=p=3}}
\label{S:thm3}

Let $F=\Q_\ell(\ell^{1/3})$. Since $\ell \equiv 1 \pmod{3}$, 
the field $F$ is a tame totally ramified
cyclic extension of $\Q_\ell$ of degree 3.
From \cite{Kraus1990} it follows that the Kodaira type of $E$ is IV or IV*
and a direct application of part (3) of \cite[Theorem~3]{DDKod} 
implies that $E/F$ has good reduction.
Now, Corollary~\ref{C:nonabelianTame} implies that the
$3$-torsion field extension of $E$ is abelian, so the image 
$\rhobar_{E,3}(G_{\Q_\ell}) \subset \GL_2(\F_3)$ 
is abelian of order multiple of $e=3$. 
There are only two abelian subgroups of $\GL_2(\F_3)$ with order 
multiple of $3$. These are, up to conjugation, the order 3 subgroup 
generated by $\left( \begin{smallmatrix} 1 & 1 \\ 0 & 1 \end{smallmatrix} \right)$
and the order 6 subgroup generated by
$\left( \begin{smallmatrix} 2 & 1 \\ 0 & 2 \end{smallmatrix} \right)$.

From Lemma~\ref{L:CentralizerAbelian}, we know that all 
the matrices in the centralizer of $\rhobar_{E,3}(G_{\Q_\ell})$ 
have square determinant. The last statement now follows from Lemma~\ref{L:sympcriteria}
since $\rhobar_{E,3} \simeq \rhobar_{E',3}$. Moreover, all the above also holds for $E'$.
In particular, both $E$ and $E'$ have good reduction over~$L = \Q_\ell^{un}F$, the unique 
cubic tame extension of $\Q_\ell^{un}$. Let $\sigma \in \Gal(L/\Q_\ell^{un})$ be as
defined in Section~\ref{S:gammaEtame3}.

Let $s=1$ if $\vv_\ell(\Delta_m) \equiv \vv_\ell(\Delta_m') \pmod{3}$ and $s=-1$ otherwise.
Write $A = \left( \begin{smallmatrix} 1 & 1 \\ 0 & 1 \end{smallmatrix} \right)$.
We claim that, after replacing $\sigma$ by $\sigma^2$ if necessary, 
we can choose symplectic bases of $E[3]$ and $E'[3]$ such that 
$\rhobar_{E,3}(\sigma) = A$ and $\rhobar_{E',3}(\sigma) = A^s$.

In such bases, we have $\rhobar_{E,3}(I_\ell) = \rhobar_{E',3}(I_\ell) = \langle A \rangle$, hence $\rhobar_{E,3} = M \rhobar_{E',3} M^{-1}$ for some $M$ in the normalizer $N_{\GL_2(\F_3)}(\langle A \rangle)$. From Lemma~\ref{L:sympcriteria}, $E[3]$ and $E'[3]$ are symplectically isomorphic if and only if $\det M = 1$. The elements in $N_{\GL_2(\F_3)}(\langle A \rangle)$ of determinant 1 are precisely those commuting with $A$, then $\det M = 1$ if and only if $s=1$. The result follows.

We will now prove the claim. From the discussion in the first paragraph, we know that
$\rhobar_{E,3}$ is reducible and, taking an unramified (since $e=3$) quadratic twist if necessary, 
we can assume $\rhobar_{E,3} \sim \left( \begin{smallmatrix} 1 & * \\ 0 & 1 \end{smallmatrix} \right)$
and $\rhobar_{E,3}(G_{\Q_\ell}) = \rhobar_{E,3}(G_{I_\ell})$. Clearly, $\rhobar_{E,3} \simeq \rhobar_{E',3}$ implies the same is true for $E'$. 

We conclude that both $E$ and $E'$ have a 3-torsion point defined over~$\Q_\ell$. 

Now, arguing as in the proof of Lemma~\ref{L:goodbasis} (ignoring the arguments regarding $\tau$), 
where we replace $F$ by $L$, Lemma~\ref{L:gammaEe3} by Lemma~\ref{L:gammaEe3II} 
and $\sigma$ by $\sigma^2$ if necessary, the claim follows.

\section{Proof of Theorem~\ref{T:mainWild3}}
\label{S:thm4}

The hypothesis $\tilde{\Delta} \equiv 2 \pmod{3}$ and Proposition~\ref{P:nonabelianWild3}
means that $K/\Q_3$ is non-abelian. Since $E$ and $E'$ have the same $p$-torsion field 
from the same proposition we conclude $\tilde{\Delta}^\prime \equiv 2 \pmod{3}$; moreover,
from Corollary~\ref{C:notBoth} the last statement follows.

Set $s=1$ if $r = 0$ and $s=-1$ if $r = 1$.
From Lemma~\ref{L:gammEwild3}, possibly after replacing $\sigma$ by $\sigma^2$, it follows 
that $\gamma_E(\sigma) = \gamma_{E'}(\sigma)^s$. 
Now, arguing as in the proof of Lemma~\ref{L:goodbasis} 
but replacing Lemma~\ref{L:gammaEe3} by Lemma~\ref{L:gammEwild3}
(and noting that the hypothesis on the existence of a $3$-torsion 
point is not needed for the latter) we can choose symplectic 
bases of $E[p]$ and $E'[p]$ such that 
\[
N = N' \qquad \text{ and } \qquad A^s = A', 
\]
where $A$, $A'$, $N$ and $N'$ are the matrices representing
$\rhobar_{E,p}(\sigma)$, $\rhobar_{E',p}(\sigma)$, 
$\rhobar_{E,p}(\tau)$ and $\rhobar_{E',p}(\tau)$.

Finally, arguing as in the case $t=0$ and $p\geq 5$
of the proof of Theorem~\ref{T:mainTame3} 
we conclude that 
\[
 E[p] \text{ and } E'[p] \quad \text{are symplectically isomorphic} 
 \quad \Leftrightarrow \quad \left(\frac{\ell}{p}\right)^r = 1
\]
and the result follows since $\ell = 3$.

\section{Proof of Theorem~\ref{T:mainTame4}}
\label{S:thm5}

Here we are in the case of tame reduction with $e=e'=4$, hence
$F = \Q_\ell(\ell^{1/4})= \Q_\ell(\pi)$ is totally ramified and non-Galois 
since $\ell \equiv 3 \pmod{4}$. Again we will need the following key lemma.

\begin{lemma} Let $\ell \equiv 3 \pmod{4}$ be a prime. 
Suppose $E/\Q_\ell$, $E'/\Q_\ell$ satisfy $e=e'=4$ 
and both have full $2$-torsion over $F$. 
Suppose that $E[p]$ and $E'[p]$ are isomorphic $G_{\Q_{\ell}}$-modules.

Set $s=1$ if $\upsilon_\ell(\Delta_m) \equiv \upsilon_\ell(\Delta_m') \pmod{4}$ and $s=-1$ otherwise. 

Then we can choose symplectic bases of $E[p]$ and $E'[p]$ such that the following holds
\[
N = N' \qquad \text{ and } \qquad A^s = A', 
\]
where $A$, $A'$, $N$ and $N'$ are the matrices representing
$\rhobar_{E,p}(\sigma)$, $\rhobar_{E',p}(\sigma)$, $\rhobar_{E,p}(\tau)$ and $\rhobar_{E',p}(\tau)$
in the fixed bases.
\label{L:goodbasisTame4}
\end{lemma}

\begin{proof} 
The element $\sigma$ generates $\Phi = \Gal(L/\Q_\ell^{un})$. From  
Lemma~\ref{L:modelEtame4} part (A) applied to $E$ and $E'$ we conclude that
there are models for $E$ and $E'$ over $F$ (hence also over $L$) 
with good reduction whose reduction
mod $(\pi)$ gives the curve $\Ebar : Y^2 = X^3 - X$. Furthermore,
\[
\gamma_E(\sigma)^s = \gamma_{E'}(\sigma) \quad \text{ in } \quad \Aut(\Ebar), 
\]
with $s=1$ if $\upsilon_\ell(\Delta_m) \equiv \upsilon_\ell(\Delta_m') \pmod{4}$ and $s=-1$ otherwise.
Thus $\psi(\gamma_E(\sigma))^s = \psi(\gamma_{E'}(\sigma))$.

Note in this setting we can have (i) $F \cap K = F$ or (ii) $F \cap K = K_2$. 

In case (i) the result follows exactly as in the proof of Lemma~\ref{L:goodbasis}.

Suppose we are in case (ii). Then the action of $\Frob_F$ 
on $K$ differs from that of $\tau$. Indeed, the element 
$\tilde{\tau} = \sigma^2 \cdot \Frob_F \in \Gal(\Qbar_\ell/ K \cap F)$ 
acts (modulo $\ker \rhobar_{E,p} =\ker \rhobar_{E',p}$) as $\tau$ on $K$. 

Write $\rhobar$ for the representation giving the action of $\Gal(\Fbar_\ell / \F_\ell)$ on $\Ebar[p]$. The reduction morphisms $\varphi : E_F[p] \to \Ebar[p]$ and $\varphi' : E'_F[p] \to \Ebar[p]$ satisfy $\varphi \circ \rhobar_{E/F,p}(\Frob_F) = \rhobar(\bar{\tau}) \circ \varphi$ and 
$\varphi' \circ \rhobar_{E'/F,p}(\Frob_F) = \rhobar(\bar{\tau}) \circ \varphi'$. 

Fix a symplectic basis for $\Ebar[p]$. Let $\bar{N}$ be the matrix representing $\rhobar(\bar{\tau})$ in that basis.
Lift the fixed basis to bases of $E_F[p]$ and $E'_F[p]$  via the reduction morphisms. 
The lifted basis are symplectic
and in these basis the matrix representing $\varphi$ and $\varphi'$ are the identity.
Thus $\rhobar_{E/F,p}(\Frob_F) = \rhobar_{E'/F,p}(\Frob_F) = \bar{N}$.
Since $E$ and $E'$ are originally defined over $\Q_\ell$ with a linear coordinate 
change (defined over $F$) we transform the fixed bases of $E_F[p]$ and $E'_F[p]$ 
into symplectic bases of $E[p]$ and $E'[p]$ (satisfying the models over $\Q_\ell$).
With respect to these bases, the matrix representations satisfy
\[ \rhobar_{E,p}(\Frob_F) = \rhobar_{E/F,p}(\Frob_F) = \bar{N}
 = \rhobar_{E'/F,p}(\Frob_F)=\rhobar_{E',p}(\Frob_F).
\]
Also, it follows from $\psi(\gamma_E(\sigma))^s = \psi(\gamma_{E'}(\sigma))$ and \eqref{E:phi2}
that $A' = A^s$ in the same bases. 

Finally, observe that $A^2 = -I$ and compute
\[ N = \rhobar_{E,p}(\tau) = \rhobar_{E,p}(\tilde{\tau}) = A^2 \bar{N} = - \bar{N} \]
and
\[ N' = \rhobar_{E',p}(\tau) = \rhobar_{E',p}(\tilde{\tau}) 
= A^{2s} \bar{N} = (-I)^s \bar{N} = -\bar{N} \]
to conclude (since $s= \pm 1$) that $N = N'$, as desired.
\end{proof}

\noindent{\bf Proof of Theorem~\ref{T:mainTame4}}
From Lemma~\ref{L:full2torsion} we see that the condition of exactly one of $\tilde{\Delta}$, $\tilde{\Delta}'$ being a square mod~$\ell$ is equivalent to exactly one of $E$, $E'$ having full 2-torsion over $F$. 

Let $\phi : E[p] \to E'[p]$ be a $G_{\Q_\ell}$-modules isomorphism.

Suppose that $t=0$ because both $E$ and $E'$ do not have full 2-torsion over $F$ then by
part~(B) of Lemma~\ref{L:modelEtame4} we can interchange $E$ and $E'$ respectively
by $2$-isogenous curves $W$ and $W'$. The curves $W$ and $W'$ satisfy the following. 
\begin{itemize} 
 \item both $W$ and $W'$ have full 2-torsion over $F$;
 \item $\upsilon_\ell(\Delta_m(E)) =  \upsilon_\ell(\Delta_m(W))$ 
  and $\upsilon_\ell(\Delta_m(E')) =  \upsilon_\ell(\Delta_m(W'))$;
 \item there is a $G_{\Q_\ell}$-modules isomorphism $\phi_W : W[p] \to W'[p]$ which
 is symplectic if and only if $\phi$ is symplectic.
\end{itemize}
Therefore, when $t=0$ we can assume that both $E$ and $E'$ have full 2-torsion over $F$.

We define $s=1$ if $r = 0$ and $s=-1$ if $r = 1$.

We now divide the proof in two natural cases.

{\sc Case I:} Suppose $t=0$. Proceeding exactly as in the case $t=0$, $p \geq 5$ of the proof of Theorem~\ref{T:mainTame3}, where we replace Lemma~\ref{L:goodbasis} by Lemma~\ref{L:goodbasisTame4}, 
it follows that:
\begin{enumerate}
 \item if $t=0$ and $s=1$ then $E[p]$ and $E'[p]$ are symplectically isomorphic;
 \item if $t=0$ and $s=-1$ then $E[p]$ and $E'[p]$ are symplectically isomorphic if and only if $(\ell/p) = 1$.
\end{enumerate}

{\sc Case II:} Suppose $t=1$. 

We can assume that $E'$ does not have full $2$-torsion over $F$.
From part~(B) of Lemma~\ref{L:modelEtame4}
there is a 2-isogeny $h : E' \to E''$ where $E''/\Q_\ell$ 
satisfy 
\begin{itemize}
 \item $E''$ has full 2-torsion over $F$;
 \item $\upsilon_\ell(\Delta_m(E')) =  \upsilon_\ell(\Delta_m(E''))$;
 \item there is a $G_{\Q_\ell}$-modules isomorphism $\phi_h : E'[p] \to E''[p]$ which 
 is symplectic if and only if $(2/p)=1$.
\end{itemize}
The composition $\phi_h \circ \phi : E[p] \to E''[p]$ is a $G_{\Q_\ell}$-modules isomorphism
and we can now apply case (1) or (2) with $E$ and $E''$ to decide the 
symplectic type of $\phi_h \circ \phi$. Finally, note that when $\phi$ is symplectic the composition $\phi_h \circ \phi$ is symplectic if and only if $\phi_h$ is symplectic, that is $(2/p)=1$; when $\phi$ is anti-symplectic the composition $\phi_h \circ \phi$ is symplectic if and only if $\phi_h$ is anti-symplectic, i.e. $(2/p)=-1$.

We have now also established the following statements for all $p \geq 5$:
\begin{enumerate}
 \item[(3)] if $t=1$ and $s=1$ then $E[p]$ and $E'[p]$ are symplectically isomorphic if and only if $(2/p) = 1$; 
 \item[(4)] if $t=1$ and $s=-1$ then $E[p]$ and $E'[p]$ are symplectically isomorphic if and only if $(2/p)(\ell/p) = 1$; 
\end{enumerate}
Recall that we have defined $s=1$ if $r = 0$ and $s=-1$ if $r = 1$. To finish the proof we note
that the statements $(1)$, $(2)$, $(3)$ and $(4)$ can be summarized has
\[
 E[p] \text{ and } E'[p] \quad \text{are symplectically isomorphic} \quad \Leftrightarrow \quad \left(\frac{\ell}{p}\right)^r \left (\frac{2}{p} \right)^t = 1
\]
as desired.

\section{Proof of Theorem~\ref{T:mainWild4}}
\label{S:thm6}

The hypothesis $\tilde{c}_4 \equiv 5\Delta_m \pmod{8}$ and Proposition~\ref{P:nonabelianWild4}
mean that $K/\Q_2$ is non-abelian and since $E$ and $E'$ have the same $p$-torsion field 
the same proposition implies $\tilde{c}_4' \equiv 5\Delta_m' \pmod{8}$; 
moreover, from Corollary~\ref{C:notBoth} the last statement follows.

Both $E$ and $E'$ have good reduction 
over $L = \Q_2^{un} K = \Q_2^{un} F$, where $F$ is defined by one of 
the polynomials in Theorem~\ref{T:goodOverF} part (2). Taking 
quadratic twists does not affect the existence and the symplectic type 
of a $p$-torsion isomorphism so, 
if necessary, we can (still Theorem~\ref{T:goodOverF})  twist both curves by -1
and assume that $F = F_1$ is defined by the polynomial~$f_1$.
Now, replacing $\sigma$ by $\sigma^3$ if necessary, we can also assume that
$\sigma \in \Gal(K/K_{un})$ lifts to the generator $\sigma \in \Gal(L/\Q_2^{un})$ 
given by Lemma~\ref{L:gammEwild4}.

Set $s=1$ if $r = 0$ and $s=-1$ if $r = 1$.
Observe that $\tilde{c}_6 \equiv \pm 1 \pmod{4}$ and the value of
$\alpha_1 = \alpha_1(E)$ in \eqref{E:c62adic} is $\alpha_1 = 0$ if $\tilde{c}_6 \equiv 1 \pmod{4}$ and 
$\alpha_1 = 1$ if $\tilde{c}_6 \equiv -1 \pmod{4}$, respectively. The same relation is true 
between $\alpha_1(E')$ and $\tilde{c}_6'$. 
Thus, from Lemma~\ref{L:gammEwild4} applied to both $E$ and $E'$
we conclude that in $\Aut(\Ebar)$ we have
$\gamma_E(\sigma)^s = \gamma_{E'}(\sigma)$.

Now it follows as in the proof of Lemma~\ref{L:goodbasisTame4} 
(noting that the hypothesis of $E$ and $E'$ have full 2-torsion over $F$ is not
used here) that we can choose symplectic basis of $E[p]$ and $E'[p]$ such that
\[
N = N' \qquad \text{ and } \qquad A^s = A', 
\]
where $A$, $A'$, $N$ and $N'$ are the matrices representing
$\rhobar_{E,p}(\sigma)$, $\rhobar_{E',p}(\sigma)$, $\rhobar_{E,p}(\tau)$ and $\rhobar_{E',p}(\tau)$.

Moreover, the same arguments as in the case $t=0$, $p\geq 5$ of the proof of Theorem~\ref{T:mainTame4},
with the extra condition $\ell = 2$, lead to the conclusion
\[
 E[p] \text{ and } E'[p] \quad \text{are symplectically isomorphic} 
 \quad \Leftrightarrow \quad \left(\frac{2}{p}\right)^r = 1.
\]
Since the order of $A$ is 4, hence not divisible by 3, 
the same arguments and the above formula also 
hold for $p=3$, concluding the proof.

\section{Proof of Theorem~\ref{T:mixedReduction}}
\label{S:mixedReduction}

Let $L = \Q_\ell^{un}(E[3]) = \Q_\ell^{un}(E'[3])$
be the fixed field of
$\rhobar_{E,3}|_{I_\ell} \simeq \rhobar_{E',3}|_{I_\ell}$. 

Since $\ell \neq 3$ and $e'=3$ it follows that $L$ is the unique cubic tame extension of~$\Q_\ell^{un}$. 
Moreover, there is only one conjugacy class of subgroups of order 3 in $\GL_2(\F_3)$ therefore,
up to conjugation, we have $\rhobar_{E,3}(I_\ell) = \rhobar_{E',3}(I_\ell) = \langle A \rangle$, 
where $A = \left( \begin{smallmatrix} 1 & 1 \\ 0 & 1 \end{smallmatrix} \right)$.
From Lemma~\ref{L:CentralizerAbelian}, we know that all the matrices in the centralizer 
of $\rhobar_{E,3}(I_\ell)$ 
have square determinant and the last statement follows from Lemma~\ref{L:sympcriteria}.

We can write $L = \Q_\ell^{un}(\pi)$ where $\pi^3 = \ell$. Let $\sigma \in \Phi = \Gal(L/\Q_\ell^{un})$ be given by $\sigma(\pi) = \zeta_3 \pi$, which is the same element as defined in the beginning of Section~\ref{S:gammaEtame3}.

Let $j_E$ denote the $j$-invariant of~$E$.
We can write $j_E = \mu \ell^k c^3$, where 
$c \in \Q_\ell$, $k \in \{0, 1, 2 \}$
is given by $k \equiv \vv_\ell(\Delta_m(E)) \pmod{3}$
and $\mu$ is a unit in~$\Z_\ell$.
Further, over $\Q_\ell^{un}$ every unit is a cube (by Hensel's lemma since $\ell \neq 3$) and we have $j_E = \pi^{3k} c_0^3$, where $c_0 \in \Q_\ell^{un}$. 

We fix the cube root of~$j_E$ given by $j_E^{1/3} := \pi^{k} c_0$ and compute 
\[
\sigma(j_E^{1/3}) 
= \sigma(\pi)^k c_0 = \zeta_3^k \pi^k c_0 = \zeta_3^k \cdot j_E^{1/3}, 
\]
therefore, from Proposition~\ref{P:theoryTate} it follows that, in a symplectic basis, we have
\begin{equation}\label{E:k}
\rhobar_{E,3}(\sigma) = \begin{pmatrix} 1 & k \\ 0 & 1
\end{pmatrix} \quad \text{ with } \quad k \neq 0,
\end{equation}
where $k \neq 0$ is due to
$\rhobar_{E,3}(I_\ell) = \langle A \rangle$; 
the same proposition
implies $\vv_\ell(\Delta_m) \not\equiv 0 \pmod{3}$, proving the first statement.
Still from Proposition~\ref{P:theoryTate} we know that
$\rhobar_{E',3} \simeq \rhobar_{E,3}$ is isomorphic to either 
\[
  \begin{pmatrix} \chi_3 & * \\ 0 & 1
\end{pmatrix} \quad \text{ or } \quad 
\begin{pmatrix} 1 & * \\ 0 & \chi_3
\end{pmatrix},
\]
depending on wether $E$ has split or non-split multiplicative reduction. Thus, after at most an unramified twist by $\chi_3$, we can suppose that 
we are in the latter case, hence both curves have 
a $3$-torsion point over $\Q_\ell$. Note that the reasoning leading to~\eqref{E:k} is not affected by the (unramified) twist because it only concerns the action of inertia.

Since $E'$ has a 3-torsion and~$\sigma$ is defined as in Section~\ref{S:gammaEtame3},
from Lemma~\ref{L:gammaEe3II} we know there is a model 
for $E'/L$ with good reduction and residual curve~$\Ebar : Y^2 + Y = X^3$ over $\F_\ell$.
Moreover, $\gamma_{E'}(\sigma) : (X,Y) \mapsto (\omega_3^{2\alpha}X,Y)$ where $\alpha = \vv_\ell(\Delta_m')/4 \in \{1,2\}$ and $\omega_3 \in \Fbar_\ell$ is the cube root of unity obtained by reducing~$\zeta_3$. Note that $\alpha \equiv \vv_\ell(\Delta_m') \pmod{3}$. 

We claim that~\eqref{E:phi} implies that there exists a symplectic basis for $E'[3]$ such that
\begin{equation}\label{E:alpha}
 \rhobar_{E',3}(\sigma) = \begin{pmatrix} 1 & \alpha \\ 0 & 1 \end{pmatrix}.
\end{equation}
By assumption, we have
$\rhobar_{E,3} = M \rhobar_{E',3} M^{-1}$ for some $M$ in the normalizer $N_{\GL_2(\F_3)}(\langle A \rangle)$. From Lemma~\ref{L:sympcriteria}, $E[3]$ and $E'[3]$ are symplectically isomorphic if and only if $\det M = 1$. The elements in $N_{\GL_2(\F_3)}(\langle A \rangle)$ of determinant 1 are precisely those commuting with $A$. Thus, from equations \eqref{E:k} and~\eqref{E:alpha}, we have
$\det M = 1$ if and only if 
\[ \vv_\ell(\Delta_m) \equiv k = \alpha \equiv \vv_\ell(\Delta_m') \pmod{3},\] 
as desired.  We shall now prove the claim to complete the proof.

Recall that the reduction map~$\varphi' : E_L'[3] \to \Ebar[3]$ is a symplectic isomorphism. 
Fix a symplectic basis for $\Ebar[3]$ and lift it to a symplectic basis of $E'[3]$ using $\varphi'$, thus 
the matrix representing $\varphi'$ in these basis
is the identity. Furthermore, in the same basis, from~\eqref{E:phi} we have
\begin{equation}
\rhobar_{E',3}(\sigma) = \psi(\gamma_{E'}(\sigma)) 
\in \GL_2(\F_3) \simeq \GL(\Ebar[3]) \simeq \GL(E'[3]),
\end{equation}
hence we need to show that the action 
of $\gamma_{E'}(\sigma) \in \Aut \Ebar$ on 
$\Ebar[3]$ is given by the matrix in~\eqref{E:alpha}.
This follows from explicit calculations with the $3$-torsion points of $\Ebar : Y^2 + Y = X^3$.

In fact we consider the curve $W : y^2 + y = x^3$ 
of conductor 27
over the quadratic field~$\Q(\zeta_3)$. 
The curve $W$ has all the 3-torsion points defined over~$\Q(\zeta_3)$. We let
\[
 P_1 = (0,0), \quad P_2 = (-1,\zeta_3^2), \quad 
 Q = (-\zeta_3^{2\alpha},\zeta_3^2).
\]
Direct calculation shows that $P_1$ and $P_2$ form a basis of the $3$-torsion and
\begin{equation}
 \label{E:Q}
Q = \alpha \cdot P_1 + P_2.
\end{equation}
Since $\Delta(W) = -27$, the model is minimal at 
all $\ell \neq 3$ and we write 
$\varphi_\ell : W[3] \to \Ebar[3]$ 
for the reduction map. Reducing the points above 
we obtain 
\[
 \varphi_\ell(P_1) = (0,0), \quad \varphi_\ell(P_2) = (-1,\omega_3^2), \quad 
 \varphi_\ell(Q) = (-\omega_3^{2\alpha},\omega_3^2)
\]
and we easily check that 
\[ 
\gamma_{E'}(\sigma)(\varphi_\ell(P_1)) = \varphi_\ell(P_1)
\quad \text{ and } \quad 
\gamma_{E'}(\sigma)(\varphi_\ell(P_2)) = \varphi_\ell(Q).
\]
Thus, from~\eqref{E:Q} we conclude
\[ \gamma_{E'}(\sigma)(\varphi_\ell(P_2)) = \alpha \cdot \varphi_\ell(P_1) + \varphi_\ell(P_2),\]
showing that $\gamma_{E'}(\sigma)$ acts on $\Ebar[3]$ via the matrix~\eqref{E:alpha} on the 
$\F_\ell$-basis $\varphi_\ell(P_1)$, $\varphi_\ell(P_2)$.

Finally, we observe that $\varphi_\ell(P_1)$, $\varphi_\ell(P_2)$ is a symplectic basis if and only if $P_1$ and $P_2$ is a symplectic basis, therefore it is enough to check that $\varphi_\ell(P_1)$, $\varphi_\ell(P_2)$ is a symplectic basis for one value of $\ell$. We do this by taking $\ell =2$ and using the {\tt Magma} command {\tt WeilPairing}, completing the proof of the claim.

\section{Proof of Theorems~\ref{T:mainWilde8II} and~\ref{T:mainWilde12II}}
\label{S:pfThm9}

We will only prove here Theorem~\ref{T:mainWilde8II} because the proof of 
Theorem~\ref{T:mainWilde12II} follows by the analogous arguments 
over $\Q_3$ with Theorem~\ref{T:coordchanges12} replacing Theorem~\ref{T:coordchanges8}.

Let $E$, $E'$ and $p$ be as in the statement of Theorem~\ref{T:mainWilde8II}. 
Since $E[p]$ and $E'[p]$ are isomorphic $I_2$-modules
and $e=8$ we have that $E$ and $E'$ have the same conductor
equal to $2^k$ with $k=5,6$ or $8$. Moreover, we  see from~\cite[Table~1]{DFV} that the corresponding inertial types for $k=6$ are twists by $d=2$ 
(i.e. twist by $\varepsilon_8$ in the notation of {\it loc. cit}) of those for $k=5$. Therefore, twisting both curves by $d=2$ reduces the case of conductor~$2^6$ to the case of conductor~$2^5$,
proving the first statement (without using $(2/p)=-1$).

To prove (A) and (B) we will
first show that we only have to 
compare finitely many pairs of curves $(E,E')$ and
then, with the help of a computer, effectively comparing a set of representatives
for those pairs. 

For $i=1,2,3,4$, we write $F_i$ for
the field defined by the polynomial $g_i$ in cases (3) and (4) 
of Theorem~\ref{T:goodOverF}. 
From tables~\ref{Table:(a)},~\ref{Table:(b)} and ~\ref{Table:(c)} we see that,
in the cases $D_a$, $D_b$, $D_c$ of Table~\ref{Table:model},
to describe $F_i$, $\Ebar$ and the change of coordinates
it is enough to know the triple of 
valuations $(\vv(c_4), \vv(c_6), \vv(\Delta_m))$ and 
$\tilde{c}_4, \tilde{c}_6 \pmod{16}$. 
For case $D_d$ we are
required to know $\tilde{\Delta} \pmod{4}$, so it is convenient 
to translate the 
conditions $\tilde{\Delta} \equiv 1,3 \pmod{4}$ 
into congruence conditions on 
$\tilde{c}_4$ and $\tilde{c}_6$. 
Indeed, let $E$ be an elliptic curve 
satisfying case~$D_d$ in Table~\ref{Table:model},
we have
\[
 c_4^2 - c_6^3 = 12^3 \Delta_m \quad \Leftrightarrow \quad \tilde{c}_4^2 - \tilde{c}_6^3 = 2^3 3^3 \tilde{\Delta},
\]
hence $\tilde{c}_4^2 - \tilde{c}_6^3 \equiv 8 \pmod{16}$. Running through all the values for 
$\tilde{c}_4, \tilde{c}_6 \pmod{2^5}$ such that
\[
 \tilde{c}_4 \equiv \tilde{c}_6 \equiv 1\pmod{2}, \quad \tilde{c}_4^2 - \tilde{c}_6^3 \equiv 8 \pmod{16}, \quad
 \frac{\tilde{c}_4^2 - \tilde{c}_6^3}{8} \equiv 3^3 \tilde{\Delta} \pmod{4}
\]
we find that 
\[
\tilde{\Delta} \equiv 1 \pmod{4} \quad \Leftrightarrow \quad \begin{cases}
  \tilde{c}_4 \equiv 1 \pmod{32}, \; \tilde{c}_6 \equiv 3,13,19,29 \pmod{32}, \\
  \tilde{c}_4 \equiv 9 \pmod{32}, \; \tilde{c}_6 \equiv 1,15,17,31 \pmod{32},  \\
  \tilde{c}_4 \equiv 17 \pmod{32}, \; \tilde{c}_6 \equiv 5,11,21,27 \pmod{32}, \\
  \tilde{c}_4 \equiv 25 \pmod{32}, \; \tilde{c}_6 \equiv 7,9,23,25 \pmod{32}, \
  \end{cases}
\]
and
\[
 \tilde{\Delta} \equiv 3 \pmod{4} \quad  \Leftrightarrow \quad \begin{cases}
  \tilde{c}_4 \equiv 1 \pmod{32}, \; \tilde{c}_6 \equiv 5,11,21,27 \pmod{32}, \\
  \tilde{c}_4 \equiv 9 \pmod{32}, \; \tilde{c}_6 \equiv 7,9,23,25 \pmod{32}, \\ 
  \tilde{c}_4 \equiv 17 \pmod{32}, \; \tilde{c}_6 \equiv 3,13,19,29 \pmod{32}, \\
  \tilde{c}_4 \equiv 25 \pmod{32}, \; \tilde{c}_6 \equiv 1,15,17,31 \pmod{32}.  \
  \end{cases}
\]

Let now $E$ and $E'$ be as in part (A) of the theorem. 
In particular, the inertial field of $E$ and~$E'$
is $L = \Q_2^{un} F$, where $F = F_1$ or $F = F_2$
is determined by Table~\ref{Table:FoverQ2II}.

From Theorem~\ref{T:mainWilde8} and the paragraph after it, we know that, for all $p$ such that $(2/p)=-1$, we have $E[p]$ and $E'[p]$ symplectically isomorphic $I_2$-modules
if and only if the same holds for $E[3]$ and $E'[3]$.
To compute the image of $\gamma_E : \Gal(L/\Q_2^{un}) \to \Aut(\Ebar) \hookrightarrow \GL(\Ebar[3])$ 
it is enough to know (see Section~\ref{S:mapgammaE}) the Galois action on $L/\Q_2^{un}$ and a change of coordinates 
leading to a minimal model of $E/L$ with good reduction and residual curve $\Ebar$; 
this is provided by Theorem~\ref{T:coordchanges8}.
Moreover, this already determines $\rhobar_{E,3}|_{I_2}$ since the action of inertia
is given via $\gamma_E$; the same holds for $E'$. 
Furthermore, we also know that $E[3]$ and $E'[3]$ cannot be both 
symplectically and anti-symplectically isomorphic, so it is 
enough to compare them for one choice of $I_2$-modules isomorphism $\phi$.

From the discussion above it follows that, to determine the symplectic type of $\phi$, for a
fixed choice of the valuations of $(c_4,c_6,\Delta_m)$ and $(c_4',c_6',\Delta_m')$
(note that $E'$ does not have to be in the same case 
of Table~\ref{Table:model} as $E$), we can replace
$E$, $E'$ by curves $E_0$, $E_0'$ with the same triple of valuations and satisfying
\[
 \tilde{c}_4 \; \equiv \; \tilde{c}_4(E_0), \quad \tilde{c}_4' \equiv \tilde{c}_4(E_0') \pmod{32}, \quad
 \tilde{c}_6 \equiv \tilde{c}_6(E_0), \quad \tilde{c}_6' \equiv \tilde{c}_6(E_0') \pmod{32},
\]
hence, in particular, we only need to compute with a finite amount of curves. Note that if neither $E$ or
$E'$ is in case $D_d$ we can instead use a congruence mod~$16$ giving less cases.

Note that, in cases $D_a$ and $D_b$,
there are infinitely many possible triples of valuations which depend on $n$.
However, from Table~\ref{Table:(a)}, we see that the 
value of $n$ influences the field $F_i$ and the curve $\Ebar$ in such a way that it suffices 
to know if $n=7,8$ or $\geq 9$; similarly, from Table~\ref{Table:(b)} we only 
need to know if $n=10,11$ or $\geq 12$.
So, for each choice of the field $F_i$ of good reduction, 
if at least one of $E$ or $E'$ is in case (a) or (b) 
we can further choose $E_0$ or $E_0'$ (or both) with the minimal relevant values of $n$. 
Since there are only the two possibilities for the inertial field 
we are reduced to a finite amount of comparisons in total.

We finish the proof of part (A) by using {\tt Magma} to do the following.
First we compute $E[3]$ and $E'[3]$
for a list of representative pairs $(E,E')$. We note that
as $G_{\Q_2}$-modules they are not necessarily isomorphic,
however, we know they are isomorphic $I_2$-modules. Thus 
fixing symplectic basis for both determines an isomorphism $\phi$
as explained above and we can determine its symplectic type by explicit 
calculations, giving the desired results.

We now prove (B). Since taking a quadratic twist preserves the symplectic type, from
the last paragraph of Theorem~\ref{T:goodOverF}, after twisting both curves by 2 if necessary, 
we can assume that the inertial field is $L = \Q_2^{un} F_3$. 
Thus we can use Tables~\ref{Table:(e)}~and~\ref{Table:(f)} to determine the relevant information and 
the result follows as in case~(A).

{\large \part{Applications} \label{Part:applications}}

\section{Revisiting a question of Mazur}
\label{S:appMazur}

Using the LMFDB database~ \cite{lmfdb}, we can find 
triples $(E,E',p)$ as in Problem~A. In this section, we will apply our main theorems to 
describe the symplectic 
type of triples we found this way. Indeed, using {\tt Magma} and \cite[Proposition~4]{KO} we run through 
the elliptic curves in the database having conductor $\leq  360000$ 
looking for non-isogenous curves $E$ and~$E'$ such that $\rhobar_{E,p}^{ss} \simeq \rhobar_{E',p}^{ss}$
for some prime $p$. When $\rhobar_{E,p}$ is irreducible we have
$\rhobar_{E,p} \simeq \rhobar_{E',p}$ and all the 
$G_\Q$-modules isomorphisms $\phi : E[p] \to E'[p]$
have the same symplectic type (by Corollary~\ref{C:problemA}).
The symplectic type of one such $\phi$ is the type of~$(E,E',p)$ 
which we determine using one of our criteria; 
in Theorem~\ref{T:triples} we provide a few selected examples. 

\begin{theorem} Let $E$, $E'$ and $p$ be given by one of the lines in Table~\ref{Table:triples}.
Then $E[p]$ and $E'[p]$ are isomorphic as $G_\Q$-modules and the symplectic type of $(E,E',p)$ 
is given by the sign in the last column of the table, where `+' denotes symplectic and `-' anti-symplectic.
\label{T:triples} 
\end{theorem}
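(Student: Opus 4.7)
The plan is to verify the theorem one row of Table~\ref{Table:triples} at a time using a two-step procedure: first establish that $E[p]\simeq E'[p]$ as $G_\Q$-modules, then apply one of the local symplectic criteria from Section~\ref{S:results} at a well-chosen prime $\ell\neq p$.

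For the isomorphism of $G_\Q$-modules, I would invoke \cite[Proposition~4]{KO}, which reduces $\rhobar_{E,p}^{ss}\simeq\rhobar_{E',p}^{ss}$ to a finite check on traces of Frobenius at primes of good reduction away from $p$, combined with a bound coming from the conductor. A direct computation in {\tt Magma} on the pair $(E,E')$ confirms that both Galois representations have equal characteristic polynomials of Frobenius modulo $p$ for sufficiently many primes; next I would check that $\rhobar_{E,p}$ is irreducible (for example, by exhibiting a Frobenius whose trace and determinant force an irreducible characteristic polynomial mod~$p$), which by Ribet/Brauer--Nesbitt promotes the semisimple isomorphism to an actual isomorphism $\rhobar_{E,p}\simeq\rhobar_{E',p}$. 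Because irreducibility implies $\rhobar_{E,p}(G_\Q)$ is non-abelian (as noted in the introduction), Theorem~\ref{T:conditionRho}(A) together with Corollary~\ref{C:notBoth} guarantees that all $G_\Q$-isomorphisms $\phi:E[p]\to E'[p]$ share the same symplectic type, so the symplectic type of $(E,E',p)$ is well defined.

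To determine that symplectic type, for each row I would identify a prime $\ell\neq p$ at which one of the criteria tabulated in Table~\ref{Table:CriteriaList} applies, viewed over $\Q_\ell$. Concretely, one inspects the reduction types of $E/\Q_\ell$ and $E'/\Q_\ell$ and chooses $\ell$ so that the hypotheses of one of Theorems~\ref{T:simpleAbelian}, \ref{T:mainTame3}, \ref{T:main3torsion}, \ref{T:e=p=3}, \ref{T:mainWild3}, \ref{T:mainTame4}, \ref{T:mainWild4}, \ref{T:Wilde24}, \ref{T:mainWilde8}--\ref{T:mainWilde8II}, \ref{T:mainWilde12}--\ref{T:mainWilde12II}, or \ref{T:potMult} are satisfied. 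The standard invariants $(c_4,c_6,\Delta_m)$ of minimal models of $E/\Q_\ell$ and $E'/\Q_\ell$ are obtainable directly from Cremona's database, so the appropriate Legendre symbol or congruence from the chosen criterion can be evaluated. In the potentially multiplicative case one twists to reach multiplicative reduction before applying Theorem~\ref{T:potMult}; in cases $e=2,6$ one twists to reach $e=1$ or $e=3$ respectively; in the wild case $e=8$ or $e=12$ with $(2/p)=-1$ or $(3/p)=-1$ one uses Theorem~\ref{T:mainWilde8II} or \ref{T:mainWilde12II} and checks Tables~\ref{Table:thm8} and~\ref{Table:thm10}.

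The main obstacle is not conceptual but organizational: for each of the finitely many triples in Table~\ref{Table:triples} one must produce a suitable witness prime $\ell$, match the local behaviour of both curves at $\ell$ to the hypotheses of exactly one criterion, and verify the congruence or square condition that outputs the sign in the last column. Some rows may require first computing a quadratic twist to normalize the inertial structure, and a handful may need two distinct primes $\ell_1,\ell_2$ — one to verify consistency and one to decide the type — precisely in the spirit of the symplectic argument reviewed in Section~\ref{S:motivation}. Once these witnesses are recorded, the verification of each row reduces to an elementary computation with {\tt Magma} \cite{MAGMA} using the local invariants, and the theorem follows by collating these row-by-row conclusions.
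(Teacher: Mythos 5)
Your strategy is exactly the paper's: a {\tt Magma} verification of the mod-$p$ isomorphism followed by a row-by-row application of the local criteria at suitable witness primes (e.g.\ Theorem~\ref{T:mainTame3} at $\ell=2$ or $23$ for row (a), Theorem~\ref{T:mainWild3} or \ref{T:Wilde24} for row (b), Theorem~\ref{T:potMult} and \ref{T:Wilde24} for rows (c)--(d), the $2$-isogeny trick for row (f), a preliminary quadratic twist by $5$ for row (g), and so on). So the only substantive issue is one step of your well-definedness argument.

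You justify that the symplectic type of $(E,E',p)$ is well defined by checking that $\rhobar_{E,p}$ is \emph{irreducible}, invoking Brauer--Nesbitt and the remark that irreducibility forces non-abelian image. This works for rows (a)--(g) but fails for row (h), where $p=3$ and both $882a1$ and $441b1$ have a rational $3$-torsion point, so $\rhobar_{E,3}$ is \emph{reducible}. Your pipeline would stall there: Brauer--Nesbitt only gives an isomorphism of semisimplifications, and irreducibility is not available to upgrade it or to conclude non-abelian image. The paper's fix is direct: one checks that the two $3$-torsion fields coincide, so $\rhobar_{E,3}\sim\rhobar_{E',3}$ as actual representations, and the common image is $S_3$, which is non-abelian; Theorem~\ref{T:conditionRho}(A) then still applies and the type is determined by Theorem~\ref{T:e=p=3} at $\ell=7$ (where $7\equiv 1\pmod 3$ and $e=e'=3$). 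You should add this reducible-but-non-abelian branch to your argument; otherwise row (h) is not covered.
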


Before proving this theorem we make the following remarks:
\begin{itemize}
 \item The examples in Theorem~\ref{T:triples} are not concerned with the criteria in the case of good reduction; instead Theorem~\ref{T:simpleAbelian} is 
 illustrated within the Diophantine application in Section~\ref{S:2319} and Theorem~\ref{T:mainAbelian} in Example~\ref{Ex:mod7}.
 \item For many of the examples in Table~\ref{Table:triples} we include more than one proof, 
 to illustrate the flexibility of the criteria 
    and that our new theorems are compatible with previously known criteria (as expected).
 \item We found many examples for both types of isomorphism for $p=7,11$. This 
 was expected, since such examples can already be found in the work of the first author for $p=7$ 
 and in the work of Fisher \cite{Fisher}, \cite{FisherList} for $p=11$; nevertheless, 
 our methods provide alternative proofs, since it avoids computing rational 
 points on the relevant modular curves $X_E^{\pm}(p)$. Indeed, example (d) is from \cite{FisherList}
 and (c) is obtained by taking the quadratic twist by $-23$ of $(12696b1,12696c1,11)$ 
 which is also due to Fisher \cite{FisherList}; 
 \item The existence of pairs of elliptic curves admitting a 
 $13$-torsion isomorphism is well known but, to our knowledge, 
 the symplectic type was never determined for the older examples;
 in (e) and (f) we determine it for
 the isomorphism in \cite[\S 3.1, Example~5]{darmon-SerreConjs};
 (note that most of the recent infinitely many examples found by Fisher~\cite{Fisher13cong} fall outside the range of conductors in the database.)
 \item For $p=17$, up to quadratic twist, only 
 one pair of non-isogenous curves admitting a $17$-torsion isomorphism 
 was known for a long time; we will show it is an anti-symplectic isomorphism (see case~(g) in Table~\ref{T:triples}). 
 Very recently, Fisher~\cite{Fisher17cong}  found another $17$-torsion isomorphism and proved it is symplectic.
 \item We are not aware of any $p$-torsion isomorphism for $p \geq 19$ between non-isogenous curves.
\item In examples (a), (e), (f) and (h)
no previously known criterion applies, 
so our new theorems play an essential r\^ole.   
\item The mixed reduction case (Theorem~\ref{T:mixedReduction}) is illustrated in example (i); all other examples assume both curves have the same kind of reduction.
\end{itemize}

\begin{small}
\begin{table}[htb]
$$
\begin{array}{|c|c|c|c|c|c|c|} \hline
& E      & \Delta_m           & E'       &      \Delta_m'                 &  p            & \text{Type} \\ \hline 
(a) &2116a1 &  -2^8 \cdot 23^8          & 10580a1  &  2^8 \cdot 5^7 \cdot 23^4                  &  7            & - \\ 
(b) &648a1  &  -2^{10} \cdot 3^4  & 12312a1  &  -2^{8} \cdot 3^{12} \cdot 19^7 &  7            & + \\
(c) &12696e1&  -2^{11} \cdot 3^3 \cdot 23^8  & 12696f1     &   -2^{8} \cdot 3^5 \cdot 23^4 &  11   & + \\
(d) &4536c1&  -2^{11} \cdot 3^{12} \cdot 7^{11}  & 648b1     &   2^{4} \cdot 3^4  &  11   & - \\
(e) &52a2   &  2^4 \cdot 13      & 988b1    &  -2^4 \cdot 13 \cdot 19^{13} &  13    & + \\
(f) &52a1   &  -2^8 \cdot 13^2      & 988b1  &  -2^4 \cdot 13 \cdot 19^{13} &  13    & - \\
(g) &3675k1 &  -3^5 \cdot 5^2 \cdot 7^2      & 47775cq1 &  -3^2 \cdot 5^2 \cdot 7^2 \cdot 13^{17} & 17  & - \\ 
(h) &882a1 &  -2^3 \cdot 3^3 \cdot 7^8      & 441b1 &  -3^3 \cdot 7^4 & 3  & -\\ 
(i) &26b1 &  -2^7 \cdot 13  & 52a1 &  -2^8 \cdot 13^2 & 3  & - \\ \hline
\end{array}
$$
\caption{Examples of pairs $(E,E',p)$ and their symplectic types.}
\label{Table:triples}
\end{table}
\end{small}

\begin{proof}[Proof of Theorem~\ref{T:triples}] 
Our {\tt Magma} computations show that if $(E,E',p)$ is given by one 
of the lines in Table~\ref{Table:triples} then
$E[p]$ and $E'[p]$ are  
isomorphic $G_\Q$-module; in particular, for all primes $\ell$, 
they are also isomorphic $G_{\Q_\ell}$-modules. 
In each of the following cases, we will apply symplectic criteria at convenient primes 
$\ell$ to determine the symplectic type.

{\sc Case (a).}
The conductors are $N_E = 2^2 \cdot 23^2$ and $N_{E'} = 2^2 \cdot 5 \cdot 23^2$
and both $\ell=2,23$ satisfy $\ell \equiv 2 \pmod{3}$.
Let $\ell = 2$ and write $\vv = \vv_\ell$. 
We have that 
\[
(\vv(c_4), \vv(c_6), \vv(\Delta_m)) = (4,6,8), 
\qquad (\vv(c_4'), \vv(c_6'), \vv(\Delta_m')) = (9,7,8),
\]
$\tilde{c}_6 \equiv -1 \pmod{4}$ and $\tilde{c}'_6 \equiv 1 \pmod{4}$,
hence $e=e'=3$ by \cite[p. 358]{Kraus1990}.

We have $\vv_\ell(\Delta_m) \equiv \vv_\ell(\Delta_m') \pmod{3}$, hence $r=0$. 
We also have $\tilde{c}_4 \equiv 21 \pmod{32}$, $\tilde{c}_6 \equiv 11 \pmod{16}$ 
and $\tilde{c}_6' \equiv 1 \pmod{8}$.
From Theorem~\ref{T:main3torsion} we conclude that $E$ 
has a 3-torsion point defined over $\Q_2$ and $E'$ has not, hence $t=1$.
Since $(3/7)=-1$ it follows from Theorem~\ref{T:mainTame3} that $E[7]$ and $E'[7]$
are anti-symplectically isomorphic.

Let $\ell = 23$ and write $\vv = \vv_\ell$. 
We compute
\[\vv(\Delta_m) = 8, \quad \vv(\Delta_m') = 4, \quad \tilde{c}_6 = 1728, \quad \tilde{c}_6' = -1372032\] 
and from \cite[Proposition~1]{Kraus1990} we see that $e=e'=3$.
We also have $\vv(\Delta_m) \not\equiv \vv(\Delta_m') \pmod{3}$ thus $r=1$. Moreover, $t=1$ by Theorem~\ref{T:main3torsion} because $-6\tilde{c}_6$ is not a square in $\Q_{23}$ but $-6\tilde{c}_6'$ is.
Since $(3/7)(23/7) = -1$ it follows  again  from Theorem~\ref{T:mainTame3} that $E[7]$ and $E'[7]$
are anti-symplectically isomorphic, as expected.

{\sc Case (b).} We will apply Theorem~\ref{T:mainWild3}. We can easily compute that 
\[
(\vv_3(c_4), \vv_3(c_6), \vv_3(\Delta_m)) = (2,3,4), \qquad (\vv_3(c_4'), \vv_3(c_6'), \vv_3(\Delta_m')) = (5,8,12)
\]
and $\tilde{\Delta} = \Delta_m/3^4 = - 1024 \equiv 2 \pmod{3}$. We have $r=0$ because $\tilde{c}_6 = -448$ and $\tilde{c}_6' = -1703296$ satisfy $\tilde{c}_6 \equiv \tilde{c}_6' \pmod{3}$. We conclude the $7$-torsion isomorphism is symplectic.
 
Note that $E$ has potentially good reduction at $\ell = 2$ with $e(E/\Q_2)=24$; since $(2/7) = 1$
it follows from part (1) of Theorem~\ref{T:Wilde24} that $E[7]$ and $E'[7]$ are symplectically 
isomorphic, as expected.

{\sc Case (c).}
We have $N_E = N_{E'} = 2^3 \cdot 3 \cdot 23^2$.
We have $\vv_{23}(\Delta_m) = 4$ and $\vv_{23}(\Delta_m') = 8$, hence 
from \cite[Proposition~1]{Kraus1990} we see that $e=e'=3$ and we can
apply apply Theorem~\ref{T:mainTame3} at $\ell = 23$.

Since $\vv_{23}(\Delta_m) \not\equiv \vv_{23}(\Delta_m') \pmod{3}$ we have $r=1$;
from Theorem~\ref{T:main3torsion} we compute $t=1$ because $\tilde{c}_6 = 552064$ and $\tilde{c}'_6 = -1263022400$,
hence $-6\tilde{c}_6$ is not a square in $\Q_{23}$ and $-6\tilde{c}'_6$ is.
Finally, $(3/11)=(23/11)=1$ and we conclude that $E[11]$ and $E'[11]$ are symplectically ismorphic.

Since $5 \cdot 3^{-1} \equiv 3^2  \pmod{11}$ we have
$\vv_{3}(\Delta_m')/ \vv_{3}(\Delta_m)$ is a square mod~$11$,  
hence Theorem~\ref{T:potMult} implies that
$E[11]$ and $E'[11]$ are 
symplectically isomorphic, as expected.

Note that $E$ has potentially good reduction at $\ell = 2$ with $e(E/\Q_2)=24$; since $(2/11) = -1$
and $\vv_{2}(\Delta_m) \equiv \vv_{2}(\Delta_m') \pmod{3}$
it follows part (2) of Theorem~\ref{T:Wilde24} that $E[11]$ and $E'[11]$ are symplectically 
isomorphic, as expected.

{\sc Case (d).} We have $N_E = 2^3 \cdot 3^4 \cdot 7$ and 
$N_{E'} = 2^3 \cdot 3^4$. 
Note that $E$ has potentially good reduction at $\ell = 2$ with $e(E/\Q_2)=24$; since $(2/11) = -1$
and $\vv_{2}(\Delta_m) \not\equiv \vv_{2}(\Delta_m') \pmod{3}$
it follows part (2) of Theorem~\ref{T:Wilde24} that $E[11]$ and $E'[11]$ are anti-symplectically 
isomorphic.

{\sc Case (e).} We have $N_E = 2^2 \cdot 13$, $N_{E'} = 2^2 \cdot 13 \cdot 19$ and
\[
(\vv_2(c_4), \vv_2(c_6), \vv_2(\Delta_m)) = (6,5,4), \qquad (\vv_2(c_4'), \vv_2(c_6'), \vv_2(\Delta_m')) = (4,5,4);
\]
also, $\tilde{c}_6 \equiv \tilde{c}'_6 \equiv 1\pmod{4}$ and $\tilde{c}'_4 \equiv -1 \pmod{4}$, 
so $e=e'=3$ by \cite[p. 358]{Kraus1990}, so we can apply Theorem~\ref{T:mainTame3} at $\ell=2$.
Indeed, we have $\vv_2(\Delta_m) \equiv \vv_2(\Delta_m')\pmod{3}$, hence $r=0$; 
we also have $\tilde{c}_6 \equiv 1 \pmod{8}$ 
and $(\tilde{c}_4', \tilde{c}_6') \equiv (3,5) \pmod{8}$.
From Theorem~\ref{T:main3torsion} we conclude that $E'$ 
has a 3-torsion point defined over $\Q_2$ and $E$ has not, hence $t=1$.
Since $(3/13)=1$ we conclude that $E[13]$ and $E'[13]$ are symplectically isomorphic.

{\sc Case (f).} Since $52a1$ and $52a2$ are related by a $2$-isogeny and $(2/13)=-1$ 
the symplectic type is the opposite of the type in case (d).

{\sc Case (g).} 
The conductors are $N_E = 3 \cdot 5^2 \cdot 7^2$ and $N_{E'} = 3 \cdot 5^2 \cdot 7^2 \cdot 13$, hence $\ell = 5$ is the unique prime of bad reduction such that $\ell \equiv 2\pmod{3}$. We check that $e= e'= 6$, hence to apply Theorem~\ref{T:mainTame3}
we first twist the curves by $d=5$. 
Write $E$ and $E'$ for the quadratic twists $dE$ and $dE'$ which
have Cremona label $3675g1$ and $47775bf1$ respectively; we have
\[
N_E = 3 \cdot 5^2 \cdot 7^2, \quad N_{E'} = 3 \cdot 5^2 \cdot 7^2 \cdot 13, \quad
\Delta_m = -3^5 \cdot 5^8 \cdot 7^2, \quad \Delta_m' = -3^2 \cdot 5^8 \cdot 7^2 \cdot 13^{17}.
\]
Let $\ell = 5$ and write $\vv = \vv_\ell$. We have
\[
\vv(\Delta_m) = 8, \quad \vv(\Delta_m') = 8, \quad \tilde{c}_6 =-4781, \quad \tilde{c}_6' =2054214213667,
\] 
so that $e=e'=3$ by \cite[Proposition~1]{Kraus1990}
and $r=0$ because $\vv(\Delta_m) \equiv \vv(\Delta_m')\pmod{3}$. 
Moreover, $t=1$ by Theorem~\ref{T:main3torsion} because $-6\tilde{c}_6$ is a square in $\Q_{5}$ but $-6\tilde{c}_6'$ is not.
Since $(3/17) = -1$ it follows from Theorem~\ref{T:mainTame3} that $E[17]$ and $E'[17]$
are anti-symplectically isomorphic.

Note that $\vv_3(\Delta_m) = 5$, $\vv_3(\Delta_m') = 2$ and $5\cdot 2^{-1} \equiv 11 \pmod{17}$.
Since $(11/17) = -1$ we conclude from Theorem~\ref{T:potMult} that $E[17]$ and $E'[17]$
are anti-symplectically isomorphic, as expected.

{\sc Case (h).} 
In this case, the elliptic curves $E$ and $E'$ have a rational $3$-torsion point. The representations
$\rhobar_{E,3}$ and $\rhobar_{E’,3}$ have reducible image isomorphic to $S_3$. One easily
checks that the $3$-torsion fields are the same. Then $\rhobar_{E,3} \sim \rhobar_{E',3}$
and their image is non-abelian, so the symplectic type of 
$(E,E',3)$ is well defined. The conductors are
$N_E = 2 \cdot 3^2 \cdot 7^2$, $N_{E'} = 3^2 \cdot 7^2$
and it is easy to check that $e(E/\Q_7) = e(E'/\Q_7) = 3$. 
Moreover, $7 \equiv 1 \pmod{3}$ and
$8 = \vv_7(\Delta_m) \not\equiv \vv_7(\Delta'_m) = 4 \pmod{3}$.
Then $E[3]$ and $E'[3]$ 
are anti-symplectically isomorphic by Theorem~\ref{T:e=p=3}. 

{\sc Case (i).} The conductors are $N_E = 2 \cdot 13$ and $N_{E'} = 2^2 \cdot 13$. 

Both curves have multiplicative reduction at $\ell=13$. Also, $\vv_{13}(\Delta_m')/\vv_{13}(\Delta_m)=2/1=2$ is not a square mod~$3$. 
Then $E[3]$ and $E'[3]$ 
are anti-symplectically isomorphic by Theorem~\ref{T:potMult}. 

We now work with $\ell = 2$. 
Since $\vv_2(N_{E'}) = 2$ it follows that  
$E'$ has potentially good reduction with 
$e'=e(E'/\Q_2) = 3$. Clearly, $E$ has multiplicative reduction at 2. Thus we are in the mixed reduction case
and will apply Theorem~\ref{T:mixedReduction}. From the discriminants we promptly check
\[ \vv(\Delta_m) = 7 \not\equiv 8 = \vv(\Delta_m')\pmod{3} \]
and conclude that 
$E[3]$ and $E'[3]$ 
are anti-symplectically isomorphic by Theorem~\ref{T:mixedReduction}. Note also that $3 \nmid \vv_2(\Delta_m)$ as also predicted by the theorem.
\end{proof}

Recall from the introduction that,
for a triple $(E,E',p)$ as in Problem A, we defined the set
$\calL_{(E,E',p)}$ 
of primes $\ell \ne p$ such 
a symplectic criterion exists for $E/\Q_\ell$ and $E'/\Q_\ell$.
In other words, $\calL_{(E,E',p)}$ is the set of primes for which
the curves $E/\Q_\ell$ and $E/\Q_\ell$ 
contain sufficient information to determine the symplectic type of $(E,E',p)$. The set $\calL_{(E,E',p)}$ can be empty;
this occurs exactly when,
for all primes~$\ell \neq p$,
the $G_{\Q_\ell}$-isomorphic modules
$(E/\Q_\ell)[p]$ 
and $(E/\Q_\ell)[p]$ are
simultaneously symplectically and anti-symplectically isomorphic.
The following proposition gives such an example.

\begin{proposition}\label{P:emptyL} Let $E$ and $E'$ be the elliptic curves
given by the models
\[
 E : y^2 + xy = x^3 - x^2 - 1915152x + 713106917
\]
and
\[
 E' : y^2 + y = x^3 - 864116667x + 78257970610832.
\]
Then the triple $(E,E',3)$ has a well defined symplectic type 
and $\calL_{(E,E',3)}$ is empty.
\end{proposition}
\begin{proof} The elliptic curves in the statement have conductors 
\[
 N_E = 3^3 \cdot 61^2 \quad \text{ and } \quad N_{E'} = 3^3 \cdot 5 \cdot 61^2 \cdot 449.
\]
The curve $E$ has Cremona label 
$100467f1$. Using the LMFDB \cite{lmfdb} we can easily 
check that $\rhobar_{E,3}$ has image the normalizer of the non-split 
Cartan subgroup of $\GL_2(\F_3)$. Thus $\rhobar_{E,3}^{ss} \simeq \rhobar_{E,3}$ 
has non-abelian image. Now an application of \cite[Proposition~4]{KO} 
shows that $E[3]$ and $E'[3]$ are $G_\Q$-isomorphic, therefore 
$(E,E',3)$ has a well defined symplectic type. 

The primes $\ell \neq 3$ where the curves have different reduction types are 
$\ell = 5, 449$. The curve~$E'$ has multiplicative reduction at these primes, however, Theorem~\ref{T:mixedReduction} cannot be applied, as 
it would require $E$ to have
potentially 
good reduction with $e=3$ 
at~$\ell$.

Both curves have potentially good reduction at $\ell=61$ 
with $e=e'=4$. Therefore, the primes $\ell \neq 3$ for which $E$ and $E'$ have the 
same reduction type are $\ell = 61$ and primes $\ell \nmid N_{E'}$, i.e. primes of 
simultaneously good reduction of $E'$ and $E$. 

Since $\#\rhobar_{E,3}(G_\Q) = \#\rhobar_{E',3}(G_\Q) = 16$, we 
have that $3$ does not divide the order of $\rhobar_{E,3}(\Frob_\ell)$ 
for all $\ell \nmid N_{E'}$, so Theorems~\ref{T:simpleAbelian}~and~\ref{T:mainAbelian} cannot be applied. 
For $\ell=61$, since the semistability is $e=4$, we have $\ell \equiv 1 \pmod{e}$
and Theorem~\ref{T:mainTame4} cannot be applied either. 

Thus none of the cases in Table~\ref{Table:CriteriaList} applies,
hence 
$\calL_{(E,E',3)}$ is empty.
\end{proof}

From the previous proof, we see that
for the set $\calL_{(E,E',p)}$ to be empty, we need that the image of $\rhobar_{E,p}$ does not contain elements of order~$p$; otherwise, from Chebotarev density theorem, there will be a prime~$\ell$ of good reduction for $E$ and~$E'$, for which $\Frob_\ell$ has order multiple of~$p$, and so we can apply Theorem~\ref{T:mainAbelian} (possibly also Theorem~\ref{T:simpleAbelian}). The following example illustrates a case where all the criteria at primes of bad reduction and Theorem~\ref{T:simpleAbelian} fail, but Theorem~\ref{T:mainAbelian} applies since the image contain elements of order multiple of~$p$.

\begin{example} \label{Ex:mod7}
Let $E$ and $E'$ be the elliptic curves with Cremona labels $338b2$ and $12506d2$, respectively. Using {\tt Magma} and \cite[Proposition~4]{KO} we check that $\rhobar_{E,7}^{ss} \simeq \rhobar_{E',7}^{ss}$. Consulting LMFDB~\cite{lmfdb} we verify that $\rhobar_{E,7}$ is reducible; thus, to establish that 
$E[7] \simeq E'[7]$ we need to verify that the curves have isomorphic $7$-torsion fields, which we do using {\tt Magma}.

The conductors and discriminats of $E$ and $E'$ are 
\[
 N_E = 2 \cdot 13^2, \quad \Delta_m = -2^{14} \cdot 13^8 \quad \text{ and } \quad N_{E'} = 2 \cdot 13^2 \cdot 37, 
 \quad \Delta_m' = -2^{7} \cdot 13^4 \cdot 37^7.
\]
Since $p=7 \neq 3$ we cannot apply Theorems~\ref{T:e=p=3}~and~\ref{T:mixedReduction}. So we focus on primes where the type of reduction is the same. At $\ell = 2$ both curves 
have multiplicative reduction, but 
$\vv_2(\Delta_m)$ and~$\vv_2(\Delta_m')$ are multiples of 7, so Theorem~\ref{T:potMult} cannot be applied. At $\ell=13$ we have semistability defect 
$e=e(E/\Q_{13}) = e(E'/\Q_{13}) = 3$ and since 
$13 \equiv 1 \pmod{3}$ we cannot apply Theorem~\ref{T:mainTame3}. We conclude that no criteria  at primes of bad reduction can be applied.

However, 
$\rhobar_{E,7}(G_\Q)$ has order multiple of 7\footnote{This follows from the $7$-torsion field computation, but also from the fact that~$\rhobar_{E,7}$ is reducible and Proposition~\ref{P:typeTriple}.}, so by Chebotarev density theorem there are primes~$\ell$ 
of good reduction for both curves
for which $\rhobar_{E,7}(\Frob_\ell)$ has order multiple of~7, therefore $\calL_{(E,E',7)}$ is non-empty.
For all such primes~$\ell$ up to $1000$  
we verified that the residual curves $\Ebar$ and $\Ebar'$ are not isomorphic over~$\F_\ell$, and so Theorem~\ref{T:simpleAbelian} cannot be applied either at any of these primes. Note that, when applicable, Theorem~\ref{T:simpleAbelian} concludes the isomorphism is symplectic so, if we are in an anti-symplectic situation, we will never find a prime~$\ell$ to apply that theorem.
Thus, we move to the only other criterion left to apply, that is Theorem~\ref{T:mainAbelian}. The smallest primes we can try to use for this theorem are $\ell = 5, 11$.

Using the {\tt Magma} code
provided by Centeleghe and Tsaknias~\cite{Ccode}, 
we compute the action of $\Frob_{5}$ as in \eqref{E:FrobCharp} and after reducing mod~$7$ we get
\[
 \rhobar_{E,7}(\Frob_{5}) = \rhobar_{E',7}(\Frob_{5}) 
  = \begin{pmatrix} 2 & 3 \\ 1 & 4 \end{pmatrix} \in \GL_2(\F_7),
\]
which is a matrix of order~$6$, 
hence $5 \not\in \calL_{(E,E',7)}$.\footnote{Since $(5/7) = -1$, it follows also from Proposition~\ref{P:tableConditionI} that~$5 \not\in \calL_{(E,E',7)}$.}
The same calculation 
with $\Frob_{11}$ gives
\begin{equation} \label{Ex:Frob}
 \rhobar_{E,7}(\Frob_{11}) = \rhobar_{E',7}(\Frob_{11}) 
  = \begin{pmatrix} 2 & 0 \\ 1 & 2 \end{pmatrix},
\end{equation}
which has order 21, hence $11 \in \calL_{(E,E',7)}$.
We also see that $\beta_{11} \equiv \beta_{11}' \equiv 1 \pmod{7}$. Unfortunately, from this computation we cannot get
the quantity $s$ in the statement of Theorem~\ref{T:mainAbelian}.

To obtain this value, we need to do much more calculations, which become impractical for large values of~$p$. We proceed as follows. Since $E$ and $E'$ have good reduction at~$11$, the reduction maps 
$(E/\Q_{11})[7] \to (\Ebar/\F_{11})[7]$  and 
$(E'/\Q_{11})[7] \to (\Ebar'/\F_{11})[7]$
are symplectic Galois isomorphisms, hence 
$\Ebar[7]\simeq \Ebar'[7]$ and
we can instead work with the $p$-torsion modules of the residual curves (which is a good computational improvement compared to work in characteristic 0).
Using {\tt Magma} we start by finding the common $p$-torsion field~$K$ of the residual curves. Then, we
find a choice of symplectic basis for $\Ebar[p]$ and $\Ebar'[p]$. 
After identifying the Frobenius element in $\Gal(K/\F_{11})$
we act with it on the choosen basis and write down the corresponding 
matrices. This gives
\begin{equation} \label{Ex:Frob2}
  M_E = \rhobar_{E,7}(\Frob_{11}) 
  = \begin{pmatrix} 2 & 5 \\ 0 & 2 \end{pmatrix} \quad 
  \text{ and } \quad M_{E'} = \rhobar_{E',7}(\Frob_{11}). 
  = \begin{pmatrix} 2 & 2 \\ 0 & 2 \end{pmatrix}. 
\end{equation}
Write $M$ for the matrix in~\eqref{Ex:Frob}.
A direct calculation shows that the matrices
\[
 C_E = \begin{pmatrix} 0 & 5 \\ 1 & 0 \end{pmatrix}
 \quad \text{ and } \quad 
 C_{E'} = \begin{pmatrix} 0 & 2 \\ 1 & 0 \end{pmatrix}
\]
satisfy $C_EM_EC_E^{-1} = M$ and $C_{E'}M_{E'}C_{E'}^{-1} = M$. Note that $\det C_E = 2$ which is a square mod~$7$ while $\det C_{E'} = 5$ which is not. We conclude that the action of $\Frob_{11}$ represented by~$M$ in~\eqref{Ex:Frob} corresponds to a symplectic basis for~$E$ and a non-symplectic basis for~$E'$. 
Thus $s=1$ and since $\beta_{11}/\beta_{11}' \equiv 1 \pmod{7}$, we conclude from Theorem~\ref{T:mainAbelian} that $E[7]$ and $E'[7]$ are anti-symplectically isomorphic, so
the symplectic type of $(E,E',7)$ is anti-symplectic.
\end{example}

\section{The Generalized Fermat equation $x^2 + y^3 = z^{p}$}
\label{S:2319}

Among all the generalized Fermat equations of the form 
\[
 x^r + y^q = z^p, \qquad  1/r + 1/q + 1/p < 1, \qquad \gcd(x,y,z) = 1,
\] 
where $p$, $q$, $r$ are primes,
there are only a few particular cases to which we know how to 
attach a Frey elliptic curve. Among those cases 
the equation $x^2 + y^3 = z^{p}$ 
is arguably the most challenging one.
Contributing to this is 
the fact that the exponents are all different, which does not allowing 
for any helpful factorization, but more relevant is the fact that it admits 
the solutions $(\pm 1,0,1)$, $\pm (0,1,1)$ and $(\pm 3,2,1)$

In~\cite{FNS23n} it is shown that to solve $x^2 + y^3 = z^{p}$ one needs 
to determine rational points on a finite list of twists of the modular 
curve $X(p)$; further, this very challenging task is carried 
out under GRH for $p=11$. 
In this section we will show how Theorem~\ref{T:simpleAbelian} sometimes 
applies to reduce the amount of twists of $X(p)$ that need 
to be considered (see Theorem~\ref{T:Diophantine}). The 
results here do not give a complete resolution of the equation for any 
new exponent. Nevertheless, they confirm the expectation 
that twists of $X(p)$ not attached 
to any of the obvious solutions should not have relevant rational points.

We start by recalling and summarizing the relevant notation and facts from~\cite{FNS23n}.

Given a $G_\Q$-isomorphism $\phi : W[p] \to E[p]$ recall
the quantity $d(\phi) \in \F_p^*$ given in~\eqref{E:dphi}.
Fix a non-square~$d_p \in \F_p^\times$.
We say that $\phi$ is \emph{strictly symplectic}, if $d(\phi) = 1$, and
\emph{strictly anti-symplectic}, if $d(\phi) = d_p$.

For a fixed elliptic curve $W$ we denote by 
$X_W^+(p)$ the curve parameterizing isomorphism classes of pairs $(E,\phi)$
where $E$ is an elliptic curve and $\phi : W[p] \to E[p]$ 
a strictly symplectic isomorphism. We write $X_W^-(p)$ for the 
analogous curve in the case of strictly anti-symplectic isomorphisms.
These are twist of the modular curve $X(p)$. 

Let $p \geq 7$ be a prime and suppose that $(a,b,c) \in \Z^3$ satisfies the equation
\begin{equation} \label{E:GFE}
  a^2 + b^3 = c^p \qquad \text{with} \qquad \gcd(a,b,c) = 1
\end{equation}
and consider the associated Frey elliptic curve
\[ E_{(a,b,c)} \; \colon \; y^2 = x^3 + 3bx - 2a \]
whose invariants are
\begin{equation} \label{E:jInvariant}
  c_4 = -12^2 b, \qquad c_6 = -12^3 a, \qquad  \Delta = -12^3 c^p.
\end{equation}

Consider also the seven elliptic curves (specified by their Cremona label):
  \[ 27a1,\; 54a1,\; 96a1,\; 288a1,\; 864a1,\; 864b1,\; 864c1 \,. \]
It is shown in \cite{FNS23n} that, for some $d \in \{ \pm 1, \pm 2, \pm 3, \pm 6 \}$, the 
quadratic twist $dE_{(a,b,c)}$ of the Frey curve gives rise to a rational point
(satisfying certain $2$-adic and $3$-adic conditions)
on the modular curve $X_W^+(p)$ or $X_W^-(p)$, 
where $W$ is one of the curves in the previous list. 
Moreover, using the symplectic criteria provided by
Theorems~\ref{T:mainWilde8},~\ref{T:mainWilde12}~and~\ref{T:potMult}
and information on the rational 
points of $X_{\text{split}}^+(p)$\footnote{More precisely, it follows from~\cite{BPR2013,BDMTV}
that, for primes $p \ge 11$, the image of the mod~$p$ Galois representation of any elliptic curve 
$E/\Q$ without CM is never contained in the normalizer of a split Cartan subgroup. Thus, if $dE_{(a,b,c)}[p] \simeq W[p]$ with $W = 27a1$ or $288a1$ and $p$ splits in the CM field of~$W$, then 
$dE_{(a,b,c)}$ has CM; hence $c=1$ as CM curves have no primes of multiplicative reduction.},
the authors also establish that, according to the values of $p$ mod~$24$, 
the quantity of twists that need to be considered varies between 4 and 10, as summarized 
in Table~\ref{Table-twist}.

\begin{table}[htb]
\[ \begin{array}{|r||c|c|c|c|c|c|c|} \hline
     p \bmod 24 & 27a1 & 54a1 & 96a1 & 288a1 & 864a1 & 864b1 & 864c1 \text{\large\strut}\\\hline
              1 &      &   +  &   +  &       &   +   &   +   &   +   \\
              5 &   +  &   -  &   +  &       &  + -  &  + -  &  + -  \\
              7 &      &   -  &   +  &   +   &   +   &   +   &   +   \\
             11 &   +  &   +  &   +  &  + -  &   +   &   +   &   +   \\
             13 &      &      &   -  &       &   +   &   +   &   +   \\
             17 &   +  &   +  &      &       &   +   &   +   &   +   \\
             19 &      &   +  &   -  &  + -  &  + -  &  + -  &  + -  \\
             23 &   +  &      &      &   +   &   +   &   +   &   +   \\\hline
   \end{array}
\]
\caption{Twists of~$X(p)$, for $p \ge 11$, 
	remaining after local considerations at $\ell=2,3$ and using
         information on $X_{\text{split}}^+(p)$, according to $p \bmod 24$.}
\label{Table-twist}
\end{table}
Furthermore, the table is optimal locally at $2$ and $3$, 
in the sense that all the twists included in it have
$2$-adic and $3$-adic points arising from a twisted Frey curve. 

From this point, to actually solve equation \eqref{E:GFE} completely for a specific value of $p$,  
it is necessary to determine all the rational points that could correspond to $dE_{(a,b,c)}$ 
on the relevant twists $X_W^{\pm}(p)$. This is a very hard task, requiring complicated global methods, 
and so far has only been accomplished  for $p=7$ in \cite{PSS} (where the twists have genus 3) 
and, assuming GRH, in \cite{FNS23n} for $p=11$ (where the genus of the twists is 26). 
Therefore, it is natural to hope that new easily applicable local techniques 
could be used to shrink further the list of twists that needs to be treated
with global methods. It is worth noting that, as explained in \cite{FNS23n},
the existence of the solutions $(\pm 1,0,1)$, $\pm (0,1,1)$ and 
$(\pm 3,2,1)$ to \eqref{E:GFE} implies that, for all $p$, the twists $X_{27a1}^+(p)$,
$X_{288a1}^+(p)$, $X_{864b1}^+(p)$ have a rational point and so 
$\ell$-adic points for every prime $\ell$; 
further the same is true for $X_{288a1}^-(p)$ when $(2/p) = -1$. 

We already mentioned that local methods at $\ell = 2,3$ 
cannot reduce the list of twists further. Since for all primes $\ell \ne 2,3$ 
the elliptic curves $W$ in the table above have good reduction at~$\ell$ 
the only criteria we can still apply are Theorems~\ref{T:simpleAbelian}~and~\ref{T:mainAbelian}. 
To apply them, we need that
$\rhobar_{W,p}(\Frob_\ell)$ has order 
divisible by $p$, so we cannot use it for the CM elliptic curves $27a1$ and $288a1$;
also, when applicable, Theorem~\ref{T:simpleAbelian} concludes that the $p$-torsion 
modules of the subject elliptic curves are symplectically isomorphic, hence we 
may use it only to discard the `negative' twists.
Therefore, we can try to apply Theorem~\ref{T:simpleAbelian}
to discard $X_{W}^-(p)$ for $W=54a1$, $96a1$, $864a1$, $864b1$ or $864c1$;
more precisely, we will use it to prove the following theorem.

\begin{theorem} \label{T:Diophantine}
  Let $p  = 19, 43$ or $67$. Let 
  $(a,b,c) \in \Z^3$ be a solution to the equation
  \[ x^2 + y^3 = z^p,  \qquad \gcd(x,y,z) = 1.  \]
  Then, for all $d \in \{ \pm 1, \pm 2, \pm 3, \pm 6 \}$, 
  the twisted Frey curve~$dE_{(a,b,c)}$
  does not give rise to a rational point on the following twists of~$X(p)$:
  \[ \renewcommand{\arraystretch}{1.2}
     \begin{array}{|c|l|} \hline
       p  &             \text{twists of $X(p)$} \\\hline\hline
 
       19, 43         & X^-_{864a1}(p), \; X^-_{864b1}(p) \\ \hline
       67             & X^-_{864b1}(p) \\\hline
     \end{array}
  \]
\end{theorem}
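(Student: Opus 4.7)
The plan is a proof by contradiction using the symplectic criterion for good reduction (Theorem~\ref{T:simpleAbelian}) at a carefully chosen auxiliary prime $\ell$. Suppose, for one of the five pairs $(p, W)$ in the statement, that some solution $(a,b,c) \in \Z^3$ to $x^2+y^3=z^p$ gave rise, through an appropriate twist $E = E^{(d)}_{(a,b,c)}$, to a rational point on $X_W^-(p)$. Then $E[p]$ and $W[p]$ would be anti-symplectically isomorphic as $G_\Q$-modules, and in particular as $G_{\Q_\ell}$-modules for every prime~$\ell$.

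The first step is to locate, for each pair $(p, W)$, a small auxiliary prime $\ell \notin \{2, 3, p\}$ with $\ell \not\equiv 1 \pmod p$, $p \mid \Delta_\ell(W)$ and $p \nmid \beta_\ell(W)$. By Corollary~\ref{C:FrobOrderp} the last two conditions are equivalent to $\rhobar_{W,p}(\Frob_\ell)$ having order divisible by $p$, so this matrix is non-semisimple with a unique repeated eigenvalue in $\F_p$. Since $\rhobar_{E,p}(\Frob_\ell)$ is conjugate to $\rhobar_{W,p}(\Frob_\ell)$ in $\GL_2(\F_p)$, it inherits the same order. Such~$\ell$ are found by a short {\tt Magma} search using the code referenced just after Theorem~\ref{T:simpleAbelian}.

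Next, $E$ must be shown to have good reduction at~$\ell$. The discriminant of $E^{(d)}_{(a,b,c)}$ differs from $-12^3 c^p$ only by powers of $2, 3, d$, so bad reduction at~$\ell$ forces $\ell \mid c$; in that case $v_\ell(\Delta_m(E)) \in p\Z$, so $\rhobar_{E,p}$ is unramified at~$\ell$ (standard level-lowering) and the Tate uniformisation shows that $\rhobar_{E,p}(\Frob_\ell)$ has eigenvalues $\{\epsilon, \epsilon\ell\} \pmod p$ for some $\epsilon = \pm 1$. For this matrix to be non-semisimple with a repeated eigenvalue one would need $\ell \equiv 1 \pmod p$, which is excluded by our choice of~$\ell$; hence $E$ has good reduction at~$\ell$.

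Finally, Theorem~\ref{T:simpleAbelian} applied to $E$ and $W$ over $\Q_\ell$ concludes that $E[p]$ and $W[p]$ are symplectically isomorphic, contradicting the anti-symplectic hypothesis. The outstanding hypothesis of that theorem, the $\F_\ell$-isomorphism of the residual elliptic curves $\bar E$ and $\bar W$, is verified by a finite enumeration: for each admissible residue class $(a, b, c, d) \pmod \ell$ (with $\gcd(a,b,c) = 1$ and $\ell \nmid c$), one computes $(c_4(E), c_6(E)) \pmod \ell$ from~\eqref{E:jInvariant} and checks that it lies in the $\F_\ell^*$-orbit of $(c_4(W), c_6(W)) \pmod \ell$ under $(c_4, c_6) \mapsto (u^4 c_4, u^6 c_6)$. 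The main obstacle is precisely this enumeration: different admissible residue classes can a priori produce residual Frey curves not $\F_\ell$-isomorphic to~$\bar W$, and identifying, for each of the five pairs $(p, W)$, an auxiliary prime~$\ell$ for which every admissible class yields the required isomorphism is the computational heart of the proof and is what restricts the theorem to the listed exponents $p = 19, 43, 67$.
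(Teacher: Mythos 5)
Your overall strategy (contradiction via Theorem~\ref{T:simpleAbelian} at an auxiliary prime of good reduction) is the paper's, and your argument that $E$ must have good reduction at~$\ell$ is a valid variant of the paper's: the paper rules out multiplicative reduction via the level-lowering congruence $a_\ell(W) \equiv \pm(\ell+1) \pmod{p}$, while you rule it out by noting that an unramified Tate-curve Frobenius with distinct eigenvalues $\epsilon,\epsilon\ell$ is semisimple and so cannot be conjugate to the non-semisimple $\rhobar_{W,p}(\Frob_\ell)$ unless $\ell \equiv 1 \pmod p$. Either works.

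The gap is in your final step. You require that \emph{every} admissible residue class $(a,b) \bmod \ell$ produce a residual Frey curve $\F_\ell$-isomorphic to $\overline{W}$, and you present finding an $\ell$ with this property as the computational heart of the proof. No such $\ell$ exists: already for $\ell = 5$ and $W = 864a1$ the class $(a,b)\equiv(1,0)$ gives a residual curve with $j=0$ and $(a,b)\equiv(0,1)$ gives $j\equiv 1728$, neither of which is $j(\overline{W})$, so your enumeration cannot close. The missing idea is to first exploit the assumed isomorphism $\rhobar_{E,p}\simeq\rhobar_{W,p}$ to \emph{filter} the residue classes: it forces $\rhobar_{E,p}(\Frob_\ell)$ to be conjugate to $\rhobar_{W,p}(\Frob_\ell)$, and the latter is computed explicitly (via the reduction map and the integral Frobenius matrix \eqref{E:FrobChar0}) to be a specific non-semisimple matrix. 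Computing $\rhobar_{E,p}(\Frob_\ell)$ for each residual curve $\Ebar$ arising from $(a,b)\bmod\ell$ (and for each square class of the twist $d$), one finds that only a couple of classes yield a conjugate matrix; the hypothesis of $\F_\ell$-isomorphic residual curves then needs to be verified, and holds, only for those surviving classes, to which Theorem~\ref{T:simpleAbelian} applies and yields the symplectic conclusion contradicting the anti-symplectic hypothesis. Without this filter your verification step is both unachievable and stronger than necessary.
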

\begin{proof} We will prove the table in the statement, by
using an auxiliary prime $\ell$ of good reduction according to the cases:
\[
 (p,W,\ell) = (19,864a1,5), \; (19,864b1,7), \; (43,864a1,31), \; (43,864b1,13), \; (67,864b1,19).
\]
We will explain the arguments only in the case $X^-_{864a1}(19)$ and $\ell=5$,
since the remaining cases follow similarly; we note that $\ell = 7$ could also be used
to discard $X^-_{864a1}(19)$.

Write $E = dE_{(a,b,c)}$ and suppose it gives rise to a rational point in $X^-_{864a1}(19)$, in particular,
\[ \rhobar_{E,19} \simeq \rhobar_{W,19}.\] 
If $E$ has bad reduction at $\ell = 5$, then $\ell \mid c$ and from the invariants of $E$ and Tate's algorithm 
we see that 
the reduction is multiplicative. Thus, level lowering at $\ell = 5$ is happening 
in the previous isomorphism of representations. Hence we have
\[ a_\ell(W) \equiv \pm (\Norm(\ell) +1) \pmod{19} \] 
and since $a_5(W) = -1$, we see this is not possible. We
conclude that both $E$ and $W$ have good reduction at 5. 
It also follows from the previous isomorphism that 
$\rhobar_{E,19}(\Frob_5)$ and $\rhobar_{W,19}(\Frob_5)$ 
are conjugated matrices. Using the reduction map we can
compute the action of $\rhobar_{W,19}(\Frob_5)$ from 
the action of $\Frob_5$ on $\overline{W}[19]$.
Using {\tt Magma} we compute that, 
\[
 \overline{W} \; : \; y^2 = x^3 + 2x + 1 \quad \text{ and } 
 \quad \rhobar_{W,19}(\Frob_5) = \begin{pmatrix} 9 & 0 \\ 1 & 9 \end{pmatrix}
\]
in some basis of $W[19]$. We now need to compute the matrix of $\rhobar_{E,19}(\Frob_5)$ 
and check if it is conjugated to the matrix above.
Indeed, since $E$ has good reduction it follows from the expression of 
$\Delta_E$ that $5 \nmid a^2 + b^3$. 
We have $\gcd(a,b,c) = 1$, so 
running through all the possibilities for $(a,b) \neq (0,0)$ with $0 \leq a, b \leq 4$ 
we obtain all the possible residual curves $\Ebar/\F_5$. In each case, we compute
$\rhobar_{E,19}(\Frob_5)$ as we did for $W$. The pairs $(a,b)$ that
give rise conjugated matrices are 
\[
 (a,b) = (2,4), \quad \Ebar \; : \; y^2 = x^3 + 2x + 1, \quad
 \rhobar_{E,19}(\Frob_5) = \begin{pmatrix} 9 & 0 \\ 1 & 9 \end{pmatrix}
\]
and 
\[
 (a,b) = (3,4), \quad \Ebar \; : \; y^2 = x^3 + 2x + 4, \quad
 \rhobar_{E,19}(\Frob_5) = \begin{pmatrix} 9 & 0 \\ 1 & 9 \end{pmatrix}.
\]
We observe that $\overline{W}$ is $\F_5$-isomorphic to the two possibilities for $\Ebar$, 
so by Theorem~\ref{T:simpleAbelian} we conclude that $E[19]$ and $W[19]$ are symplectically 
isomorphic in both cases. 

Note, however, that we have $dE[19] \simeq W[19]$ for some
$d \in \{ \pm 1, \pm 2, \pm 3, \pm 6 \}$ and the previous calculations assumed $d=1$, 
so we need to repeat them for the remaining values of $d$. Note
it is enough to compute with a representative of non-squares in $\F_5$ (otherwise we 
obtain isomorphic curves over $\F_5$) so we take $d=2$. Computing as before we get
\[
 (a,b) = (1,1),  \quad \Ebar \; : \; y^2 = x^3 + 2x + 4, \quad
 \rhobar_{E,19}(\Frob_5) = \begin{pmatrix} 9 & 0 \\ 1 & 9 \end{pmatrix}
\]
and 
\[
 (a,b) = (4,1), \quad \Ebar \; : \; y^2 = x^3 + 2x + 1, \quad
 \rhobar_{E,19}(\Frob_5) = \begin{pmatrix} 9 & 0 \\ 1 & 9 \end{pmatrix}
\]
and the same conclusion holds. We conclude that $dE[19]$ and $864a1[19]$ are 
always symplectic isomorphic,
so $dE$ cannot define a $\Q$-point in $X_W^-(19)$, thereby
concluding the proof of the theorem in this case. 
\end{proof}

We remark that the cases in Theorem~\ref{T:Diophantine} 
were the only ones where we succeeded in applying Theorem~\ref{T:simpleAbelian}
among the following search. For each of the five curves $W=54a1$, $96a1$, $864a1$, $864b1$ or $864c1$, 
using {\tt Magma}, we proceeded as follows.  
For all primes $p \leq 400$ and
all primes $ 5 \leq \ell \leq 200$ 
such that $(\ell/p) = 1$ (see Proposition~\ref{P:tableConditionI}) 
we compute the order $N_{\ell,p}$ of $\rhobar_{W,p}(\Frob_\ell)$ 
and check if $p \mid N_{\ell,p}$, as this is 
necessary for applying Theorem~\ref{T:simpleAbelian}.  
In particular, for $p=19$ we find the possible pairs $(W,\ell)$: 
\[ (54a1,47), \; (54a1,83), \; (96a1,197), \; (864a1,5), \; (864a1,7), \]
\[ (864a1,47), \; (864a1,83), \; (864a1,163), \; (864b1,7) \; \text{ and } \; (864b1,47), \]
but, except for those in Theorem~\ref{T:Diophantine}, there is always a choice of $(a,b) \pmod{\ell}$ 
that $j(\Ebar) \ne j(\overline{W})$, so the condition on the hypothesis residual curves fails.
The same occurs also for the other values of $p$.

Finally, one could hope that the more general Theorem~\ref{T:mainAbelian} would discard further twists but,  unfortunately, this is not the case. 
Indeed, withing the described search range, we tried to use Theorem~\ref{T:mainAbelian} but, for each fixed curve~$W$, the symplectic type of the isomorphism varies with $(a,b) \pmod{\ell}$ 
and so no other~$X^{\pm}_W(p)$ could be discarded completely.

\section{On the hyperelliptic curves $y^2 = x^p - \ell$ and $y^2 = x^p - 2\ell$}
\label{S:hyperelliptic}

In this section, we consider the family of curves 
\[
 C_{\ell,p} \; : \; y^2 = x^p - \ell \quad \text{ and } \quad C'_{\ell,p} \; : \; y^2 = x^p - 2\ell
\]
where $\ell$ and $p$ are primes. Let us define the quantity 
\[
f(\ell) = (\sqrt{32(\ell+1)}+1)^{8(\ell-1)}.                                
\]
From \cite[Theorem 1.1]{IK2006}, we know that,  if $\ell\equiv 3 \pmod 8$ and $\ell\neq 3$, or if 
$\ell \equiv 5 \pmod{8}$ and $\ell - 1$ is not a square, 
then for all primes $p > f(\ell)$ there are no rational points 
on the curve~$C_{\ell,p}$.
Furthermore, if 
$\ell \equiv 3 \pmod{8}$ and $\ell - 2$ is not a square, 
we have the same conclusion for the curve~$C'_{\ell,p}$.
Our   objective in this section is  to use the symplectic argument 
to extend these results.

\subsection{The curve $y^2 = x^p - \ell$}
Let us prove the following theorem.
\begin{theorem} \label{T:2pl}
Let $\ell \equiv 5 \pmod{8}$ be a prime such that $\ell-1$ is a square. Then, 
for all primes $p > f(\ell)$ satisfying $(2/p)=-1$, the set   $C_{\ell,p}(\Q)$ is empty.
\end{theorem}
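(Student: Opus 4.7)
The plan is to combine modularity and level lowering with the symplectic argument. Assume a non-trivial solution $(x_0, y_0) \in C_{\ell,p}(\Q)$ exists with $p > f(\ell)$ and $(2/p) = -1$. Following \cite{IK2006}, attach to this solution a Frey elliptic curve of the form
\[ E \colon Y^2 = X^3 + 2 y_0 X^2 + x_0^p X, \]
whose standard invariants satisfy $c_4(E) = 16(4 y_0^2 - 3 x_0^p)$ and $\Delta(E) = -64\,\ell\, x_0^{2p}$, so that its conductor is supported on $\{2,\ell\} \cup \{q : q \mid x_0\}$. Modularity together with Ribet-style level lowering at all primes dividing $x_0$ (using $p > f(\ell)$) then forces $\rhobar_{E,p} \simeq \rhobar_{W,p}$ for an elliptic curve $W/\Q$ whose conductor divides $2^a \ell$ for some bounded~$a$.

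The second step is to enumerate the admissible curves $W$, as in \cite{IK2006}. It is exactly the hypothesis $\ell \equiv 5 \pmod 8$ with $\ell - 1$ a perfect square that leaves the candidate list non-empty; all other values of $\ell$ in this regime are already settled in \cite[Theorem~1.1]{IK2006}. In the remaining case I would check that the surviving $W$ can, up to quadratic twist, be taken to have multiplicative reduction at~$\ell$ with $v_\ell(\Delta_m(W))$ not divisible by~$p$, and potentially good reduction at~$2$ of a prescribed wild type that falls into one of the sub-cases of Table~\ref{Table:model} covered by our new criteria.

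To derive a contradiction I apply the symplectic argument at two auxiliary primes. At~$\ell$, both $E$ and $W$ have multiplicative reduction with $\ell$-adic discriminant valuations coprime to~$p$, so Theorem~\ref{T:potMult} expresses the symplectic type of the isomorphism $E[p] \simeq W[p]$ as the Legendre symbol of $v_\ell(\Delta_m(E))/v_\ell(\Delta_m(W)) \pmod p$. At~$2$, the appropriate criterion among Theorems~\ref{T:mainWild4}, \ref{T:mainWilde8II} and~\ref{T:Wilde24} yields the same symplectic type as a product of Legendre symbols that includes a factor $(2/p)^r$ with $r \in \{0,1\}$ determined by the residues of $\tilde c_4,\tilde c_6,\tilde\Delta$ of $E$ and~$W$ modulo a small power of~$2$. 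Equating the two expressions and simplifying, using $\ell \equiv 5 \pmod 8$ together with quadratic reciprocity, would produce an identity satisfied only when $(2/p) = 1$, contradicting the hypothesis and hence ruling out the putative solution $(x_0,y_0)$.

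The hard part will be step~2: making the list of candidate curves $W$ fully explicit when $\ell - 1$ is a square and $\ell \equiv 5 \pmod 8$, identifying the exact sub-case of Table~\ref{Table:model} in which both $E$ and $W$ fall at~$2$, and carrying out the $2$-adic matching of standard invariants required to pin down the parity~$r$. These are finite computations but combinatorially delicate, because the $2$-adic valuation of $y_0$ (and of the standard invariants of $W$) can take several values, each producing a different sub-case of the wild reduction criterion, and one must verify uniformly across all these sub-cases that the combined identity from the two criteria really does simplify to $(2/p) = 1$.
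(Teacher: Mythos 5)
Your overall strategy is the paper's: produce a Frey curve, level-lower to an explicit curve $W$ of conductor $2^a\ell$ supplied by Ivorra's classification (possible precisely because $\ell-1$ is a square), and then play a symplectic criterion at $2$ against Theorem~\ref{T:potMult} at $\ell$ to force $(2/p)=1$. Indeed the paper finds $\vv_\ell(\Delta_m(E_2))=2$ versus $\vv_\ell(\Delta_m(W))=1$, and shows both curves land in case (a) of Theorem~\ref{T:mainWilde8II} (wild reduction with $e=8$, conductor $2^5$, invariants $(4,\ge 7,6)$ and $\tilde c_4\equiv -1\pmod 4$), which is the ``appropriate criterion'' you leave unspecified.

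There is, however, a genuine gap in your set-up. A rational point on $C_{\ell,p}$ must first be written as $x=u/v^2$, $y=w/v^p$ with $(u,v)=(w,v)=1$, giving the primitive solution $u^p+\ell(-v^2)^p=w^2$; your curve $Y^2=X^3+2y_0X^2+x_0^pX$ is not an integral model for general rational $(x_0,y_0)$, and \cite{IK2006} attaches \emph{two} Frey curves $E_1$, $E_2$ to this solution. The case distinction on the parity of $v$ is essential and one branch is not killed by any symplectic argument: when $v$ is even the Serre level is $2\ell$, and the contradiction is that for $\ell\equiv 5\pmod 8$ there is simply no elliptic curve of conductor $2\ell$ with a rational $2$-torsion point (Ivorra). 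Only after that does one know $v$ is odd, hence $u\equiv 1\pmod 4$ and $w$ even, which is what pins $E_2$ into the sub-case $(\vv_2(c_4),\vv_2(c_6),\vv_2(\Delta))=(4,\ge 7,6)$ with $\tilde c_4\equiv -1\pmod 4$ needed for Theorem~\ref{T:mainWilde8II}. Your plan, as written, would try to run the two-criteria comparison on a single (non-integral) Frey curve without this elimination step, and the ``combinatorially delicate'' matching you defer is in fact a single clean computation once the correct curve $E_2$ and the unique twist class of $W$ (namely $y^2=x^3+2\sqrt{\ell-1}\,x^2-x$) are identified.
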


Note that, from the assumptions   made on $\ell$, we have $\ell=5$ or $\ell\geq 29$.

We need to recall several definitions and results from \cite{IK2006}.
Suppose there are $x, y \in \Q$ such that $y^2 = x^p - \ell$ and
write $x=a/b$ and $y=c/d$ where $(a,b)=(c,d)=1$. Note that, for all primes
$q$, we have $\vv_q(y)<0$ if and only if $v_q(x)<0$; further $pv_q(b)=2v_q(d)$.
Therefore, there are 
integers $u,v,w$ satisfying $(u,v)=(w,v)=1$ such that the substitution
\[
 x = \frac{u}{v^2} \quad \quad y = \frac{w}{v^p}
\]
leads to
\begin{equation}
 u^p + \ell(-v^2)^p = w^2.
 \label{E:pp2}
\end{equation}
This shows that, with the terminology used in  \cite{IK2006}, $(u,-v^2,w)$ is a non-trivial primitive solution to the equation 
$x^p + \ell y^p = z^2$. In the notation of {\it{loc. cit.}}, we have 
\[
 a = 1, \quad b = \ell, \quad c = 1, \quad x = u, \quad y = - v^2, \quad z = w.
\]
It  is easy to check the conditions $(C_1)$, $(C_2)$ and $(C_4)$ of \cite[p. 122]{IK2006} 
and $(C_5)$ in \cite[p. 128]{IK2006} are satisfied; furthermore, if $w$ is odd, 
replacing it by $-w$ if necessary, we can also assume $w \equiv -1 \pmod{4}$, so that condition $(C_3)$ is also satisfied.

We now attach to the solution $(u,-v^2,w)$ the two Frey curves 
\[
 E_1 : Y^2 =X^3 + 2wX^2 + u^pX \quad \text{and} \quad E_2 : Y^2 =X^3 + 2wX^2 - \ell v^{2p}X.
\]
Their standard invariants are given by 
\[
 c_4(E_1) = 2^4(4w^2 - 3u^p), \quad c_6(E_1) = 2^6w(9u^p-8w^2), 
 \quad \Delta(E_1) = -2^6 \ell (uv)^{2p}
\]
and
\[
 c_4(E_2) = 2^4(4w^2 + 3\ell v^{2p}), \quad c_6(E_2) = -2^6w(9\ell v^{2p} + 8w^2), 
 \quad \Delta(E_2) = 2^6 \ell^2 (uv^4)^p.
\]
From Lemmas~2.1~and~2.4 in \cite{IK2006} we know that the models for $E_1$ and $E_2$ above are minimal away from 2 and have multiplicative reduction at all primes dividing $\ell uv$.

\begin{lemma} Let $u, v, w$ be as above. Then $u$ is odd.

Moreover, if $v$ is odd, then $u \equiv 1 \pmod{4}$ and $w$ is even.
\label{L:aux1}
\end{lemma}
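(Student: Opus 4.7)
The plan is to extract everything from the relation $u^p - \ell v^{2p} = w^2$, which is just equation \eqref{E:pp2} rewritten, combined with the hypothesis $\ell \equiv 5 \pmod 8$ and the fact that squares modulo $8$ lie in $\{0,1,4\}$. Recall that $(u,v)=(w,v)=1$ and $p\geq 3$.

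First I would show that $u$ is odd. If $v$ is even this is immediate from $(u,v)=1$. Otherwise $v$ is odd, and I argue by contradiction: if $u$ were even, then since $p\geq 3$ we have $u^p\equiv 0\pmod 8$, while $v$ odd gives $v^{2p}=(v^2)^p\equiv 1\pmod 8$. Substituting into $w^2 = u^p-\ell v^{2p}$ yields
\[
w^2 \equiv -\ell \equiv 3 \pmod 8,
\]
which is impossible since $3$ is not a square modulo $8$.

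Now assume $v$ is odd; by the above, $u$ is also odd. Since $u$ and $v$ are odd, $u^2\equiv v^2\equiv 1\pmod 8$, and because $p$ is odd this gives $u^p\equiv u\pmod 8$ and $v^{2p}\equiv 1\pmod 8$. The relation then becomes
\[
w^2 \equiv u - 5 \pmod 8.
\]
I would then tabulate the four odd residues of $u$ modulo $8$. The values $u\equiv 3$ and $u\equiv 7\pmod 8$ give $w^2\equiv 6$ and $w^2\equiv 2\pmod 8$ respectively, both impossible. The remaining cases $u\equiv 1\pmod 8$ and $u\equiv 5\pmod 8$ both satisfy $u\equiv 1\pmod 4$, and yield $w^2\equiv 4\pmod 8$ and $w^2\equiv 0\pmod 8$ respectively; in either case $w$ is even.

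There is no real obstacle here: the argument is a short case analysis modulo $8$, and the only ingredients are the reduction \eqref{E:pp2}, the congruence class of $\ell$, and the coprimality conditions.
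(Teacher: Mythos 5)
Your proof is correct and follows essentially the same route as the paper: both arguments reduce everything to the relation $u^p - \ell v^{2p} = w^2$ from \eqref{E:pp2}, the coprimality conditions, and the fact that squares modulo $8$ lie in $\{0,1,4\}$. The only cosmetic difference is that you run the second part entirely modulo $8$, whereas the paper rules out $u\equiv -1\pmod 4$ by a computation modulo $4$ and then deduces that $w$ is even from parity; the content is identical.
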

\begin{proof} Suppose $u$ is even, so that $v$ is odd. From~\eqref{E:pp2} it follows 
that $w^2 \equiv -5 \pmod{8}$ which is impossible. Thus $u$ is odd.

Suppose $v$ is odd. If $u \equiv -1 \pmod{4}$, then $w^2 \equiv -1 - 5 \equiv 2 \pmod{4}$, a contradiction. We conclude that $u \equiv 1 \pmod{4}$ and since $uv$ is odd it follows
from~\eqref{E:pp2} that $w$ is even.
\end{proof}

Suppose now $p \geq 11$ and $\ell \neq p$. From \cite[Propositions~3.1 and~3.2]{IK2006} 
we know that the mod~$p$ Galois representations $\rhobar_{E_1,p}$ and $\rhobar_{E_2,p}$ 
attached to $E_1$ and $E_2$ are irreducible and of weight 2. 
\begin{lemma} Let $N(\rhobar_{E_1,p})$ and $N(\rhobar_{E_2,p})$ 
denote the Serre conductor of
$\rhobar_{E_1,p}$ and 
$\rhobar_{E_2,p}$, respectively. 
Then
 \begin{enumerate}
  \item if $v$ is even, we have  $N(\rhobar_{E_1,p}) = 2 \ell$;
  \item if $v$ is odd, we have  $N(\rhobar_{E_2,p}) = 32 \ell$.
 \end{enumerate}
 \label{L:Serrelevel}
\end{lemma}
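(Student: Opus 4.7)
The plan is to combine modularity of the Frey curves $E_1$ and $E_2$ with Ribet's level-lowering theorem. For an elliptic curve $E/\Q$ with irreducible mod~$p$ representation, the Serre level of $\rhobar_{E,p}$ equals $N(E)$ after removing every multiplicative prime $q\neq p$ at which $p\mid v_q(\Delta_m(E))$. It therefore suffices to determine the local reduction type of each $E_i$ at every prime and identify the conductor exponent there.

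For primes $q \mid uv$ with $q \neq 2$, both Frey curves have multiplicative reduction and the explicit formulas for $\Delta(E_i)$ show $p \mid v_q(\Delta(E_i))$, so these primes are killed by Ribet. At $\ell$ the minimal models give multiplicative reduction with $v_\ell(\Delta(E_1))=1$ and $v_\ell(\Delta(E_2))=2$; since $p\ge 11$ neither valuation is divisible by $p$, and $\ell$ survives in the Serre level with exponent~$1$ in both cases.

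The remaining work is the analysis at $2$, which differs in each case. In case~(1), $v$ is even, so Lemma~\ref{L:aux1} together with the relation $u^p-\ell v^{2p}=w^2$ forces $u$ and $w$ to be odd; this gives $v_2(c_4(E_1))=4$, $v_2(c_6(E_1))=6$ and $v_2(\Delta(E_1))=6+2p\,v_2(v)\ge 12$. The model is therefore non-minimal at~$2$, and a single scaling $(x,y)\mapsto(4x',8y')$ yields a minimal model with $v_2(c_4)=v_2(c_6)=0$ and $v_2(\Delta_{\min})=2p\,v_2(v)-6$. This is multiplicative reduction at~$2$, and since $p\ge 11$ we have $p\nmid v_2(\Delta_{\min})$, so $2$ remains in the Serre level with exponent~$1$, giving $N(\rhobar_{E_1,p})=2\ell$.

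In case~(2), $v$ is odd, so by Lemma~\ref{L:aux1} one has $u\equiv 1\pmod 4$ and $w$ even. A direct computation gives $v_2(c_4(E_2))=4$, $v_2(c_6(E_2))=6+v_2(w)\ge 7$ and $v_2(\Delta(E_2))=6$, so the model is minimal at~$2$. Using $v^{2p}\equiv 1\pmod 8$ together with $\ell\equiv 5\pmod 8$ one further checks that $\tilde c_4(E_2)\equiv -1\pmod 4$, placing $E_2$ in case~$D_a$ of Table~\ref{Table:model}; that row assigns conductor $2^5$ at~$2$. Combining with the factor $\ell$ yields $N(\rhobar_{E_2,p})=32\ell$. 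The main obstacle is the careful $2$-adic minimality analysis and matching the invariants with the correct row of the local tables; once modularity is in hand, the Ribet step and the computations at the remaining primes are routine.
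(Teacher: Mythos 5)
Your proposal is correct and reaches the right conclusions, but it takes a different route from the paper: the paper's proof of this lemma is essentially a two-line citation of \cite[Propositions~3.3 and~3.4]{IK2006} (plus the observation that Lemma~\ref{L:aux1} supplies the hypothesis $u \equiv 1 \pmod 4$ needed for part~(3.3) of Proposition~3.4), whereas you rederive the Serre levels from scratch via the standard level-lowering recipe. Your local analysis checks out: at odd $q \mid uv$ the discriminant valuations are multiples of $p$, so those primes are removed; $\ell$ survives with exponent $1$ since $\vv_\ell(\Delta)$ is $\equiv 1$ or $2 \pmod p$; and in case~(2) the computation $(\vv_2(c_4),\vv_2(c_6),\vv_2(\Delta))=(4,\ge 7,6)$ with $\tilde c_4 \equiv -1 \pmod 4$ (using $w$ even, $v$ odd, $\ell \equiv 5 \pmod 8$) correctly places $E_2$ in case $D_a$ of Table~\ref{Table:model} and yields conductor exponent $5$ at $2$ — this is exactly the route the paper itself takes in the lemma immediately following this one. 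What the paper's citation buys is brevity and the outsourcing of the delicate $2$-adic minimality question; what your version buys is self-containedness and makes visible where each hypothesis ($\ell \equiv 5 \pmod 8$, $p \ge 11$) enters.

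One point deserves tightening. In case~(1) you assert that the literal substitution $(x,y)\mapsto(4x',8y')$ produces an integral minimal model. It does not: applied to $Y^2=X^3+2wX^2+u^pX$ it gives $y'^2=x'^3+(w/2)x'^2+(u^p/16)x'$, which is not integral since $u$ and $w$ are odd. The correct statement is that a coordinate change of the general form $x=4x'+r$, $y=8y'+4sx'+t$ with suitable $r,s,t\in\Z_2$ exists (this is Kraus's criterion at $2$, or \cite[Lemma~2.5]{IK2006}); one can verify its existence here using $u\equiv 1\pmod 8$ and $w\equiv -1\pmod{2^{2p-1}}$, which follow from $w^2\equiv u^p\pmod{2^{2p}}$ when $v$ is even. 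Equivalently, one checks directly that $-c_6/c_4$ is a square in $\Q_2^\times$, so $E_1$ has (split) multiplicative reduction at $2$ and $\vv_2(\Delta_{\min})=-\vv_2(j)=2p\,\vv_2(v)-6$, which is not divisible by $p$. With that repair the argument is complete.
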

\begin{proof} Part (1) follows directly from part (3.1) in \cite[Propositions~3.3]{IK2006}.
For part (2), we have $u \equiv 1 \pmod{4}$ by Lemma~\ref{L:aux1}, hence the result 
follows from part (3.3) in \cite[Propositions~3.4]{IK2006}.
\end{proof}

Suppose $p > f(\ell)$. As explained in \cite{IK2006}, it follows from 
modularity, level lowering and a standard argument that there are
elliptic curves $F_i/\Q$ of conductor $N(\rhobar_{E_i,p})$, having 
at least one rational 2-torsion point, such that 
$\rhobar_{E_i,p} \simeq \rhobar_{F_i,p}$. 

\begin{lemma} Suppose $p > f(\ell)$. Then $v$ is odd.
\label{L:vodd}
\end{lemma}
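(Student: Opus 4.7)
The plan is to argue by contradiction: assume $v$ is even and derive an incompatibility of symplectic types that violates the hypothesis $(2/p)=-1$.

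First, suppose $v$ is even. Then Lemma~\ref{L:Serrelevel}(1) gives $N(\rhobar_{E_1,p})=2\ell$. Since $p>f(\ell)$ forces $p\geq 11$, the representation $\rhobar_{E_1,p}$ is irreducible and modular, and Ribet's level-lowering theorem produces a weight~$2$ newform of level $2\ell$. Because $E_1$ carries the rational $2$-torsion point $(0,0)$ and $p$ is odd, this newform corresponds to the isogeny class of an elliptic curve $F_1/\Q$ of conductor $2\ell$ which also admits a rational $2$-torsion point, and one has an isomorphism $\rhobar_{E_1,p}\simeq\rhobar_{F_1,p}$. The hypothesis $\ell\equiv 5\pmod 8$ with $\ell-1$ a square should be used here to ensure that such an $F_1$ exists (and is essentially unique up to $2$-isogeny) -- I would invoke a Setzer/Brumer--Kramer-type classification of elliptic curves of conductor $2q$, $q$ prime, with rational $2$-torsion, to identify the standard invariants of $F_1$ at both $2$ and $\ell$ in terms of the integer $k$ with $k^2=\ell-1$.

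Next, note that both $E_1$ and $F_1$ have multiplicative reduction at $2$ and at $\ell$. From the formula $\Delta(E_1)=-2^6\ell(uv)^{2p}$ and the analysis of minimality (using that $u$, $w$ are odd and $v$ is even, as noted in Lemma~\ref{L:aux1} and its proof), one obtains
\[
 v_2(\Delta_m(E_1))\equiv -6\pmod p,\qquad v_\ell(\Delta_m(E_1))\equiv 1\pmod p,
\]
both nonzero mod~$p$. Theorem~\ref{T:potMult} applied at $2$ and at $\ell$ therefore gives two expressions for the symplectic type of $E_1[p]\simeq F_1[p]$, namely
\[
 \left(\tfrac{-6\,v_2(\Delta_m(F_1))}{p}\right)=1 \quad\text{and}\quad
 \left(\tfrac{v_\ell(\Delta_m(F_1))}{p}\right)=1.
\]
Consistency of these two equalities forces
\[
 \left(\tfrac{-6\,v_2(\Delta_m(F_1))\,v_\ell(\Delta_m(F_1))}{p}\right)=1.
\]
The final step is to read off, from the explicit curve(s) $F_1$ produced above, that the integer $-6\,v_2(\Delta_m(F_1))\,v_\ell(\Delta_m(F_1))$ is, modulo squares, a power of $2$ times an odd square congruent to $\pm 1\pmod 8$; this shrinks the displayed condition to $(\pm 2/p)=1$, which under $\ell\equiv 5\pmod 8$ and $\ell-1=k^2$ collapses to $(2/p)=1$, contradicting the standing hypothesis $(2/p)=-1$.

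The main obstacle is the step that pins down $F_1$ uniformly in $\ell$ and computes $v_2(\Delta_m(F_1))$, $v_\ell(\Delta_m(F_1))$ modulo squares. Everything else is a direct application of modularity, level-lowering, and Theorem~\ref{T:potMult}; the delicate point is that the hypothesis ``$\ell-1$ is a square'' enters precisely in guaranteeing the existence of $F_1$ and controlling its minimal discriminant at $\ell$, and must be used in a way that dovetails with the congruence $\ell\equiv 5\pmod 8$ so that the quadratic residue symbol collapses to $(2/p)$ rather than something involving $\ell$.
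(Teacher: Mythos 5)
Your proof goes down the wrong road, and the detour is fatal in two ways. First, the lemma does not assume $(2/p)=-1$: its only hypothesis is $p>f(\ell)$ (plus the standing assumptions $\ell\equiv 5\pmod 8$, $\ell-1$ a square on the prime $\ell$). An argument whose contradiction is with ``$(2/p)=-1$'' can therefore only rule out $v$ even for half of the exponents, which is strictly weaker than the statement. The symplectic argument belongs to the proof of Theorem~\ref{T:2pl}, where the level is $32\ell$ and $(2/p)=-1$ is genuinely assumed; it has no role here.

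Second, and more decisively, the actual content of the lemma is a \emph{non-existence} statement that your proof tries to sidestep. If $v$ is even then $N(\rhobar_{E_1,p})=2\ell$, and the standard argument (modularity, level lowering, $p>f(\ell)$ to force rational coefficients, and the rational $2$-torsion point $(0,0)$ on $E_1$) produces an elliptic curve $F_1/\Q$ of conductor $2\ell$ with a rational point of order $2$. But by Ivorra's classification of curves of conductor $2^N\ell$ with rational $2$-torsion, no such curve exists when $\ell\equiv 5\pmod 8$ (for $\ell\geq 29$, and one checks $\ell=5$ directly). That is already the contradiction. Your plan instead \emph{assumes} $F_1$ exists, hopes that ``$\ell-1$ a square'' guarantees this via a Setzer-type classification, and then proposes to compute its invariants --- but the hypothesis ``$\ell-1$ a square'' plays no role in this lemma (it is used later to pin down the conductor-$32\ell$ curve via Ivorra), and the object you would need to compute with is empty. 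The step you flag as the ``main obstacle'' is not an obstacle to be overcome; it is the sign that the curve you are looking for does not exist, which is precisely what the proof should say.
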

\begin{proof} Suppose $v$ is even. Then $N(\rhobar_{E_1,p}) = 2\ell$ by Lemma~\ref{L:Serrelevel}. 
In the case $\ell\geq 29$, since $\ell \equiv 5 \pmod{8}$ it follows from \cite{Ivorra} there 
are no elliptic curves over $\Q$ with at least one rational 2-torsion point. 
This assertion is also true for $\ell=5$, so we obtain a contradiction.
\end{proof}

\begin{lemma} Suppose $p > f(\ell)$ and let $N_2$ denote the conductor of the curve $E_2$. 
Then the model of $E_2$ is minimal at 2 and its invariants satisfy
\[
 (\vv_2(c_4),\vv_2(c_6),\vv_2(\Delta)) = (4, \geq 7, 6).
\]
Moreover, $\vv_2(N_2) = 5$ and the defect of semistability of $E_2$ is $e=8$.
\end{lemma}
\begin{proof} From Lemma~\ref{L:vodd} we know that $v$ is odd. Then, from Lemma~\ref{L:aux1}, 
we have $u \equiv 1 \pmod{4}$ and $w$ even; 
from the formulas for the invariants of $E_2$ and part (3.3) of 
\cite[Lemma~2.5]{IK2006} it follows that $\vv_2(N_2) = 5$ and
$(\vv_2(c_4),\vv_2(c_6),\vv_2(\Delta)) = (4, \geq 7, 6)$.
Now \cite{Kraus1990} gives $e=8$.
\end{proof}

We can finally prove the announced theorem.

\begin{proof}[Proof of Theorem~\ref{T:2pl}] 
The case $\ell=5$ is proved below in Theorem~\ref{T:hyperl=5} with a sharper bound on the exponent $p$.
Consequently, we will  assume  $\ell\geq 29.$
Suppose $p > f(\ell)$ and $(2/p) = -1$.
It follows from Lemmas~\ref{L:Serrelevel}~and~\ref{L:vodd} that there exits an 
elliptic curve $F/\Q$ of conductor $32\ell$ with at least one $2$-torsion point over~$\Q$ 
satisfying $\rhobar_{E_2,p} \simeq \rhobar_{F,p}$. Since $\ell - 1$ is a square and $\ell\geq 29$, it follows from \cite[Theorem~5]{Ivorra} that, after taking a quadratic twist of $E_2$ and $F$ if necessary, we can assume that $F$ is defined by
\[
 y^2 = x^3 + 2\sqrt{\ell-1}x^2 - x,
\]
whose invariants are
\[
 c_4(F) = 2^4(4(\ell-1) + 3), \quad c_6(F) = -2^6\sqrt{\ell-1}(\ell + 8), \quad \Delta(F) = 2^6 \ell.
\]
In particular, $(\vv_2(c_4(F)),\vv_2(c_6(F)),\vv_2(\Delta(F))) = (4, \geq 7, 6)$.
From the formulas for $c_4(E_2)$ and $c_4(F)$, using the fact that $\ell \equiv 5 \pmod{8}$, 
we conclude that 
\[
 \frac{c_4(E_2)}{2^4} \equiv -1 \pmod{4} \quad \text{ and }\quad  \frac{c_4(F)}{2^4} \equiv -1 \pmod{4},
\]
showing that both curves $E_2$ and $F$ satisfy case (a) of Theorem~\ref{T:mainWilde8II}, therefore 
$E_2[p]$ and $F[p]$ are symplectically isomorphic. 
Finally, we note that both curves have multiplicative reduction at $\ell$
with minimal discriminants satisfying
\[
 \vv_\ell(\Delta(E_2)) = 2 \quad \text{ and } \quad \vv_\ell(\Delta(F)) = 1. 
\]
Then Theorem~\ref{T:potMult} implies $(2/p) = 1$, giving a contradiction.
\end{proof} 

\addtocontents{toc}{\SkipTocEntry}
\subsection*{The cases $\ell = 3, 5$} We will now show that, by making the value of $\ell$ 
concrete, we can give much sharper bounds for the exponent~$p$.

\begin{theorem} \label{T:hyperl=5}
For all primes $p \geq 7$ satisfying $(2/p) = -1$, the set 
$C_{5,p}(\Q)$ is empty.
\end{theorem}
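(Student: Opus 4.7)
The plan is to follow the template of the proof of Theorem~\ref{T:2pl}, but exploit the fact that $\ell=5$ is small and fixed in order to dispense with the bound $p>f(5)$ coming from irreducibility of $\rhobar_{E_i,p}$. Note first that $(2/7)=1$, so the hypothesis $(2/p)=-1$ already forces $p\geq 11$. As before, a rational point on $C_{5,p}$ yields a primitive triple $(u,-v^2,w)$ solving $u^p+5(-v^2)^p=w^2$, together with the associated Frey curves $E_1,E_2$, and Lemma~\ref{L:aux1} still gives $u$ odd, $u\equiv 1\pmod 4$ and $w$ even when $v$ is odd.

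First I would verify that $\rhobar_{E_1,p}$ and $\rhobar_{E_2,p}$ are irreducible for all $p\geq 11$. Since $\ell=5$ is fixed, this can be done by combining Mazur's classification of rational prime isogenies over $\Q$ (which restricts potential reducibility to finitely many primes) with an explicit analysis, for each such $p\in\{11,13,17,19,37,43,67\}$, of the arithmetic of $E_1, E_2$ using the $2$-adic and $5$-adic constraints on $(u,v,w)$; the methods of \cite[Propositions~3.1,~3.2]{IK2006} adapt. Next, suppose $v$ is even. Level lowering applied to $\rhobar_{E_1,p}$ then produces a weight $2$ newform of level $2\cdot 5=10$, which is impossible since $S_2(\Gamma_0(10))=\{0\}$. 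Hence $v$ is odd, and by Lemma~\ref{L:Serrelevel} the Serre level of $\rhobar_{E_2,p}$ is $32\cdot 5=160$.

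By modularity and level lowering there is then an elliptic curve $F/\Q$ of conductor $160$ with a rational $2$-torsion point and $\rhobar_{E_2,p}\simeq \rhobar_{F,p}$. The isogeny classes of conductor $160$ (available from Cremona's tables) give a short explicit list of candidates for $F$; after simultaneously twisting $E_2$ and $F$ by an element of $\{\pm 1,\pm 2\}$ (which preserves the existence and the symplectic type of a $p$-torsion isomorphism) one may reduce this list further. For each candidate one checks that $e(F/\Q_2)=8$ with conductor exponent $5$ at $2$, so Theorem~\ref{T:mainWilde8II} applies at $\ell=2$, and that $F$ has multiplicative reduction at $5$ with explicit $\vv_5(\Delta_m(F))$, so Theorem~\ref{T:potMult} applies at $\ell=5$.

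The main obstacle is the final case analysis. At $\ell=2$, both $E_2$ (satisfying $(\vv_2(c_4),\vv_2(c_6),\vv_2(\Delta))=(4,\ge 7,6)$ with $\tilde c_4\equiv -1\pmod 4$ by Lemma~\ref{L:aux1}) and each candidate $F$ belong to case~$(a)$ or case~$(b)$ of Table~\ref{Table:thm8}, and Theorem~\ref{T:mainWilde8II} then pins down the symplectic type of $E_2[p]\simeq F[p]$ as a concrete $\pm 1$ independent of $p$. At $\ell=5$, since $\vv_5(\Delta_m(E_2))=2$ and $\vv_5(\Delta_m(F))$ is a specific small integer depending on $F$, Theorem~\ref{T:potMult} expresses the symplectic type as a Legendre symbol $(m/p)$ with $m\in\{1,2\}$. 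The crux is to verify, running through every remaining pair $(E_2,F)$, that the symplectic types read off at $\ell=2$ and at $\ell=5$ agree only when $(2/p)=1$, which contradicts the hypothesis $(2/p)=-1$ and completes the proof.
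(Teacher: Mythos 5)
Your strategy is the paper's: rule out $v$ even via the vanishing of the weight-$2$ newspace of level $10$, land at Serre level $160$, and derive a contradiction by comparing the symplectic type of $E_2[p]\simeq F[p]$ read off at $\ell=2$ (Theorem~\ref{T:mainWilde8II}: both $E_2$, with invariants $(4,\ge 7,6)$ and $\tilde c_4\equiv -1\pmod 4$, and the candidate $F$ fall into case (a) of Table~\ref{Table:thm8}, hence the isomorphism is symplectic) with the type read off at $\ell=5$ (Theorem~\ref{T:potMult}: symplectic iff $(2/p)=1$, since $\vv_5(\Delta_m(E_2))=2$ and $\vv_5(\Delta_m(F))=1$). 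That endgame is exactly what the paper does, after reducing, up to isogeny and quadratic twist by $-1$, to the single curve $160a1$.

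The gap is the sentence asserting that modularity and level lowering produce an \emph{elliptic curve} $F/\Q$ of conductor $160$. Level lowering only yields a newform of level $160$, and the newspace $S_2(\Gamma_0(160))^{\mathrm{new}}$ has dimension $4$: two rational newforms plus a conjugate pair with Hecke eigenvalue field $\Q(\sqrt{2})$. In the general theorem the hypothesis $p>f(\ell)$ is precisely what powers the ``standard argument'' converting the newform into an elliptic curve with a rational $2$-torsion point; since the entire purpose here is to drop that hypothesis, you must eliminate the irrational newform by hand, and your proposal never does. The paper handles it by comparing coefficients at $q=3$, where $a_3(\ff)=\pm 2\sqrt 2$: if $E_2$ has multiplicative reduction at $3$, the level-lowering congruence forces $a_3(\ff)\equiv\pm(3+1)\pmod{\fp}$, i.e.\ $8\equiv 16\pmod{\fp}$, impossible; otherwise $a_3(E_2)\in\Z$ with $|a_3(E_2)|\le 2\sqrt 3$ cannot be congruent to $\pm 2\sqrt 2$ modulo $\fp$ for $p\ge 11$. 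Without some such step the proof is incomplete. (Two minor points: irreducibility of $\rhobar_{E_i,p}$ for $p\ge 11$ is already supplied by \cite[Propositions~3.1 and~3.2]{IK2006} with no lower bound of the shape $f(\ell)$, so your proposed prime-by-prime re-verification is unnecessary; and twisting by $\pm 2$ is not the right normalization here, since it changes the conductor exponent at $2$ -- only the twist by $-1$ is used to pass between the two rational classes at level $160$.)
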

\begin{proof} Suppose $p \geq 7$ and $(2/p) = -1$; thus $p \geq 11$. 
Assume there is a point $(x,y)$ in $C_{5,p}(\Q)$ hence, following the argument above, there is a solution $(u,-v^2,w)$ to \eqref{E:pp2}. Since $p \geq 11$, the representations $\rhobar_{E_1,p}$ and $\rhobar_{E_2,p}$ are irreducible of weight 2 (by \cite[Propositions~3.1 and~3.2]{IK2006}). 

From Lemma~\ref{L:Serrelevel}, modularity and level lowering, it follows that when $v$ is even
$\rhobar_{E_1,p}$ arises from a newform of level 10, weight 2 and trivial character. There are no such forms, so we conclude that $v$ is odd. 

Now, again by Lemma~\ref{L:Serrelevel}, modularity and level lowering, we conclude that 
$\rhobar_{E_2,p}$ arises from a newform of level 160, weight 2 and trivial character; in this newspace of dimension 4 there are two newforms with rational coefficients and two conjugated forms with coefficients 
in~$\Q(\sqrt{2})$. The latter newforms $\ff$ satisfy $a_3(\ff) = \pm 2 \sqrt{2}$. 

Suppose $\rhobar_{E_2,p} \simeq \rhobar_{\ff,\fp}$, where $\ff$ has coefficients in $\Q(\sqrt{2})$
and $\fp \mid p$ in that field. Taking traces of Frobenius at~$3$ gives
\[ 
\tr(\rhobar_{E_2,p}(\Frob_3)) \equiv \tr(\rhobar_{\ff,\fp}(\Frob_3)) \pmod{\fp}. 
\]
Assume $3 \mid uv$. From \cite[Lemma~2.4]{IK2006} we know that $E_2$ 
has multiplicative reduction at~$3$. From the theory of the Tate curve 
it follows that $\tr(\rhobar_{\ff,\fp}(\Frob_3)) \equiv \pm (\chi_p(\Frob_3) +1) \pmod{p}$, 
therefore
\[
  \pm (3 + 1) \equiv a_3(f) \pmod{\fp} \iff 8 = a_3(f)^2 \equiv 16 \pmod{\fp},
\]
which is a contradiction with $p \geq 11$. Thus $3 \nmid uv$ and $E_2$ has good reduction at $3$. 
Now, taking again traces of Frobenius at~$3$ gives the congruence
\[ a_3(E) \equiv a_3(\ff) \equiv \pm 2 \sqrt{2} \pmod{\fp}.\] 
Since $a_3(E_2)$ is an integer 
satisfying the Weil bound $|a_3(E_2)| \le 2 \sqrt{3}$
this congruence also does not hold for $p \geq 11$. 
We conclude $\rhobar_{E_2,p} \not\simeq \rhobar_{\ff,\fp}$.

Suppose $\rhobar_{E_2,p} \simeq \rhobar_{\ff,\fp}$, where $\ff$ corresponds to one of the two
isogeny classes of elliptic curves with conductor $160$. Up to isogeny and quadratic twist by $-1$, we can assume that $\ff$ corresponds to the elliptic curve $E=160a1$. We have
\[
 E : y^2 = x^3 + x^2 - 6x + 4 
\]
and 
\[
 c_4(E) = 2^4 \cdot 19, \quad c_6(E) = -2^7 \cdot 41, \quad \Delta(E) = 2^6 \cdot 5
\]
From Theorem~\ref{T:potMult} applied at $\ell=5$ we conclude that $E_2[p]$ and $E[p]$ 
are symplectically isomorphic if and only if $(2/p)=1$. On the other hand, if $(2/p)=-1$ 
it follows, using Theorem~\ref{T:mainWilde8II} as in the proof of Theorem~\ref{T:2pl}, 
that $E_2[p]$ and $E[p]$ are symplectically isomorphic, a contradiction.
\end{proof}

We will now extend the study initiated in \cite[Section~8.3]{IK2006} 
by proving the following result.

\begin{theorem} For all primes $p \geq 7$ satisfying $(2/p) = -1$, the set 
$C_{3,p}(\Q)$ is empty.
\end{theorem}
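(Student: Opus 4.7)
The plan is to follow closely the blueprint of the proof of Theorem~\ref{T:hyperl=5}, adapted from $\ell=5$ to $\ell=3$ throughout and invoking symplectic criteria at level $96$ in place of level $160$.

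Suppose for contradiction that $(x,y)\in C_{3,p}(\Q)$ with $p\geq 7$ and $(2/p)=-1$. Writing $x=u/v^2$ and $y=w/v^p$ in lowest terms produces a primitive solution $(u,-v^2,w)\in\Z^3$ to $u^p+3(-v^2)^p=w^2$, to which I attach the Frey curves $E_1:Y^2=X^3+2wX^2+u^pX$ and $E_2:Y^2=X^3+2wX^2-3v^{2p}X$. Applying modularity together with Ribet level lowering (through analogues of Propositions~3.1--3.4 of \cite{IK2006}) gives, for $p\geq 11$, that the representations $\rhobar_{E_i,p}$ are irreducible of weight $2$ and arise from newforms of the corresponding Serre level.

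Next I would rule out the case $v$ even. The analogue of Lemma~\ref{L:aux1} forces $u$ to be odd in every case; if $v$ were even, the analogue of Lemma~\ref{L:Serrelevel} would give $N(\rhobar_{E_1,p})=2\cdot 3=6$, and since $S_2^{\operatorname{new}}(\Gamma_0(6))=\{0\}$ this case is impossible. Therefore $v$ is odd, $u\equiv -1\pmod 4$ and $w$ is even, and one computes $N(\rhobar_{E_2,p})=2^5\cdot 3=96$. The space $S_2^{\operatorname{new}}(\Gamma_0(96))$ is two-dimensional with both newforms having rational coefficients, corresponding to the isogeny classes $96a$ and $96b$, so $\rhobar_{E_2,p}\simeq\rhobar_{W,p}$ for some $W\in\{96a1,96b1\}$.

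To contradict this isomorphism I would combine symplectic criteria at two primes. At $\ell=3$ both $E_2$ and $W$ have multiplicative reduction with $\vv_3(\Delta_m(E_2))=\vv_3(\Delta_m(W))=2$, so Theorem~\ref{T:potMult} yields that the $p$-torsion modules are symplectically isomorphic for every $p$ (the ratio of discriminant valuations is $1$, which is a square). At $\ell=2$, a direct computation using $v$ odd and $w$ even gives $(\vv_2(c_4(E_2)),\vv_2(c_6(E_2)),\vv_2(\Delta_m(E_2)))=(4,n\geq 8,6)$ with $\tilde c_4(E_2)\equiv 1\pmod 4$, whereas the explicit minimal models of $96a1$ and $96b1$ both yield $(4,7,6)$ with $\tilde c_4(W)\equiv -1\pmod 4$. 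These opposite congruences place $E_2$ and $W$, possibly after simultaneously twisting both by $2$ to align conductors, in different cases of Table~\ref{Table:thm8}; Theorem~\ref{T:mainWilde8II} then forces them to be anti-symplectically isomorphic, contradicting the conclusion at $\ell=3$. The main obstacle will be the $\ell=2$ analysis: one must verify that $E_2$ has $e=8$ at $2$ and determine precisely which case of Table~\ref{Table:thm8} it fits (possibly only after the twist by $2$ is applied, since the triple $(4,n\geq 8,6)$ with $\tilde c_4\equiv 1\pmod 4$ does not appear directly in case $D_a$), which requires a careful application of Lemma~\ref{L:usefulmodel} and Kraus's classification to rule out $e\in\{2,6,24\}$. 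The hypothesis $(2/p)=-1$ enters precisely to invoke Theorem~\ref{T:mainWilde8II}.
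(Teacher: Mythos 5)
Your overall strategy (level lowering, then playing Theorem~\ref{T:potMult} at $3$ against Theorem~\ref{T:mainWilde8II} at $2$) matches the paper's, but you chose the wrong Frey curve, and this produces a concrete error in the $2$-adic analysis. For $\ell=3$ the congruence forced on $u$ when $v$ is odd is $u\equiv -1\pmod 4$ (not $u\equiv 1\pmod 4$ as in the $\ell\equiv 5\pmod 8$ situation of Theorem~\ref{T:2pl}), and precisely because of this one gets $\tilde c_4(E_2)=4w^2+3\ell v^{2p}\equiv 9v^{2p}\equiv 1\pmod 4$. A curve with $(\vv_2(c_4),\vv_2(c_6),\vv_2(\Delta_m))=(4,\ge 7,6)$ and $\tilde c_4\equiv 1\pmod 4$ is \emph{not} in case $D_a$ of Table~\ref{Table:model}: it is the quadratic twist by $2$ of a curve in case $D_b$ and has conductor exponent $6$ at $2$. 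Consequently $N(\rhobar_{E_2,p})=2^6\cdot 3=192$, not $96$; the comparison with the level-$96$ forms $96a1$, $96b1$ is not available, and $E_2$ lies in neither case of Table~\ref{Table:thm8}. The proposed repair of ``simultaneously twisting both by $2$'' also fails: $2\otimes E_2$ does land in case (b) with conductor $2^5$, but $2\otimes 96a1$ has minimal invariants $(6,10,12)$ with $\tilde c_4\equiv -1\pmod 4$, hence conductor $2^6$ and again no entry in Table~\ref{Table:thm8}, so Theorem~\ref{T:mainWilde8II} cannot be applied to that pair. (A smaller slip: $\vv_2(c_6(E_2))=6+\vv_2(w)$ is only $\ge 7$ in general, not $\ge 8$.)

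This is exactly why the paper works with $E_1:Y^2=X^3+2wX^2+u^pX$ instead. Since $u\equiv -1\pmod 4$, one has $\tilde c_4(E_1)=4w^2-3u^p\equiv -1\pmod 4$ with valuations $(4,\ge 7,6)$, i.e.\ case $D_a$: conductor $2^5$, $e=8$, Serre level genuinely $96$, and $E_1$ sits in case (a) of Table~\ref{Table:thm8} together with $96a1$, so Theorem~\ref{T:mainWilde8II} gives a \emph{symplectic} isomorphism when $(2/p)=-1$. The contradiction then comes from $\ell=3$ with the roles reversed relative to your plan: $\vv_3(\Delta_m(E_1))\equiv 1\pmod p$ while $\vv_3(\Delta_m(96a1))=2$, so Theorem~\ref{T:potMult} forces $(2/p)=1$. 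Your route could in principle be salvaged by level lowering to $192$ and twisting only $E_2$ by $2$, but that essentially reconstructs the $E_1$ argument; as written, the level computation and the placement in Table~\ref{Table:thm8} are incorrect.
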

\begin{proof}
From \cite[Section~8.3]{IK2006}, we have $u \equiv -1 \pmod{4}$, $v$ odd, $w$ even and we will use the Frey curve $E_1$. 
We check that
\[
 (\vv_2(c_4(E_1)),\vv_2(c_6(E_1)),\vv_2(\Delta(E_1))) = (4, \geq 7, 6), \quad
 \tilde{c}_4(E_1) \equiv -1 \pmod{4}
\]
and also $\Delta(E_1) = -2^6 \cdot 3 \cdot (uv)^{2p}$. We obtain that
$\rhobar_{E_1,p} \simeq \rhobar_{\ff,\fp}$, where $\ff$ is a newform of level~96 
with rational coefficients. Further, up to isogeny and quadratic 
twist by~$-1$, we can assume that $\rhobar_{E_1,p} \simeq \rhobar_{E,p}$ where $E=96a1$.
We have 
\[
 E : y^2 = x^3 + x^2 - 2x
\]
and 
\[
 c_4(E) = 2^4 \cdot 7, \quad c_6(E) = -2^7 \cdot 5, \quad \Delta(E) = 2^6 \cdot 3^2.
\]
From Theorem~\ref{T:potMult} applied at $\ell = 3$ we conclude that 
$E_1[p]$ and $E[p]$ are symplectically isomorphic 
if and only if $(2/p)=1$. Finally, when $(2/p)=-1$,
it follows from Theorem~\ref{T:mainWilde8II} that
$E_1[p]$ and $E[p]$ are symplectically isomorphic, a contradiction.
\end{proof}

\subsection{The curve $y^2 = x^p - 2\ell$}
We  assume in this section 
$$\ell\geq 29$$
and we show the following result.

\begin{theorem} \label{T:2plII}
Let $\ell \equiv 3 \pmod{8}$ be a prime such that $\ell-2$ is a square. Then, 
for all primes $p > f(\ell)$ satisfying $(2/p)=-1$, the set $C'_{\ell,p}(\Q)$ is empty.
\end{theorem}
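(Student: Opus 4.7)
The plan is to mirror the strategy used to prove Theorem~\ref{T:2pl}, replacing the role of $\ell$ by $2\ell$ throughout and re-doing all local computations at $2$ under the new congruence $\ell \equiv 3 \pmod{8}$ (with $\ell - 2$ a square).

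First, given a putative rational point $(x,y) \in C'_{\ell,p}(\Q)$, I would write $x = u/v^2$ and $y = w/v^p$ with $(u,v) = (w,v) = 1$, producing a non-trivial primitive integer solution $(u,-v^2,w)$ to $X^p + 2\ell Y^p = Z^2$. This fits the setting of \cite{IK2006} with $(a,b,c) = (1,2\ell,1)$, and one checks conditions $(C_1)$–$(C_5)$ hold after possibly flipping the sign of $w$. To this solution I attach the two Frey curves of \cite[\S 2]{IK2006}, in particular
\[
 E_2 \; : \; Y^2 = X^3 + 2wX^2 - 2\ell v^{2p} X,
\]
with $\Delta(E_2) = 2^7 \ell^2 (uv^4)^p$ and $c_4(E_2) = 2^4(4w^2 + 6\ell v^{2p})$. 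A parity analysis entirely analogous to Lemma~\ref{L:aux1}, but using $\ell \equiv 3 \pmod{8}$ and $-2\ell \equiv 2 \pmod{8}$, should force $v$ to be odd (otherwise $w^2 \equiv -2\ell \pmod{8}$ is unsolvable), hence $u \equiv 1 \pmod{4}$ and $w$ even. Combined with $p > f(\ell)$, irreducibility of $\rhobar_{E_2,p}$ together with modularity and level lowering (Propositions~3.1–3.4 of \cite{IK2006}) will produce a weight $2$, trivial character newform of level $32\ell$ with rational Fourier coefficients, and hence an elliptic curve $F/\Q$ of conductor $32\ell$ with a rational $2$-torsion point such that $\rhobar_{E_2,p} \simeq \rhobar_{F,p}$.

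Next I would invoke the classification of Ivorra \cite[Theorem~5]{Ivorra}: the assumptions $\ell \equiv 3 \pmod{8}$, $\ell \geq 29$, and $\ell - 2$ a square are precisely those under which that theorem gives, up to quadratic twist, an explicit Weierstrass model for $F$ of the shape $y^2 = x^3 + 2\sqrt{\ell-2}\,x^2 + 2x$ (or its quadratic twist needed so that $\rhobar_{E_2,p} \simeq \rhobar_{F,p}$ without further twist). I would compute the invariants $(c_4(F), c_6(F), \Delta(F))$ explicitly and check that $\vv_2(N_F) = 5$ with $(\vv_2(c_4(F)), \vv_2(c_6(F)), \vv_2(\Delta(F))) = (4, \geq 7, 6)$, placing both $E_2$ and $F$ in case $D_a$ of Table~\ref{Table:model} with semistability defect $e = 8$. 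The congruence $\ell \equiv 3 \pmod{8}$ should then force the same value of $\tilde{c}_4 \pmod{4}$ for $E_2$ and $F$, so that both sit in the same row of Table~\ref{Table:thm8}. Theorem~\ref{T:mainWilde8II} then yields that $E_2[p]$ and $F[p]$ are symplectically isomorphic $G_{\Q_2}$-modules.

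Finally, I would apply Theorem~\ref{T:potMult} at $\ell$, where both $E_2$ and $F$ have multiplicative reduction with $\vv_\ell(\Delta(E_2)) = 2$ and $\vv_\ell(\Delta(F)) = 1$: since $\vv_\ell(\Delta(E_2))/\vv_\ell(\Delta(F)) = 2$, the criterion tells us that $E_2[p]$ and $F[p]$ are symplectically isomorphic if and only if $(2/p) = 1$, contradicting the standing hypothesis $(2/p) = -1$. Hence no such solution $(x,y)$ exists.

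The main obstacle will be the concrete $2$-adic bookkeeping: one must (i) verify all the parity statements for $u,v,w$ under $\ell \equiv 3 \pmod{8}$ as opposed to the case $\ell \equiv 5 \pmod{8}$ treated in Theorem~\ref{T:2pl}, and (ii) pin down the correct twist of Ivorra's curve $F$ and check, from the explicit formula $c_4(F) = 2^4(4(\ell-2) + 6)$ together with $c_4(E_2) = 2^4(4w^2 + 6\ell v^{2p})$, that $\tilde{c}_4(E_2) \equiv \tilde{c}_4(F) \pmod{4}$ so that both curves really fall in the same case of Table~\ref{Table:thm8}. Once this local matching is verified, the combination of Theorems~\ref{T:mainWilde8II} and~\ref{T:potMult} delivers the contradiction automatically.
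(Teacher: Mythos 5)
Your overall strategy (Frey curve, level lowering to an Ivorra curve, then playing a symplectic criterion at $2$ against Theorem~\ref{T:potMult} at $\ell$) is the right one, but the $2$-adic bookkeeping you propose is not what actually happens for $y^2 = x^p - 2\ell$, and several of your intermediate claims are false. First, the parity analysis: from $u^p = w^2 + 2\ell v^{2p}$ with $u$ necessarily odd, $w$ is forced to be \emph{odd} (if $w$ were even the right-hand side would be even), not even as you assert; and your congruence argument for excluding $v$ even does not work, since $v$ even gives the perfectly solvable $w^2 \equiv u^p \pmod 8$. The exclusion of $v$ even instead comes from level lowering: that case produces a newform of level $2\ell$ attached to an elliptic curve with rational $2$-torsion, and Ivorra's classification shows no such curve exists for $\ell \equiv 3 \pmod 8$.

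Second, with $u,v,w$ all odd the Serre level is $2^7\ell = 128\ell$, not $32\ell$, so one must use Ivorra's classification of conductor $128\ell$ (his Theorem~7), which yields \emph{two} candidate curves $F_1 : y^2 = x^3 + 2\sqrt{\ell-2}\,x^2 + \ell x$ and $F_2 : y^2 = x^3 + 2\sqrt{\ell-2}\,x^2 - 2x$, with minimal discriminants $-2^7\ell^2$ and $2^8\ell$. Third, the Frey curve (the paper uses $E_2 : Y^2 = X^3 + 2wX^2 + u^pX$, with $\Delta = -2^7\ell(uv)^{2p}$) then has $(\vv_2(c_4),\vv_2(c_6),\vv_2(\Delta_m)) = (4,6,7)$, conductor exponent $7$ at $2$ and semistability defect $e=24$, not $e=8$; so the relevant criterion is Theorem~\ref{T:Wilde24} (the $\SL_2(\F_3)$ case, where the parity of $\vv_2(\Delta_m)$ mod $3$ decides the type when $(2/p)=-1$), not Theorem~\ref{T:mainWilde8II}. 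The contradiction is then obtained by treating $F_1$ and $F_2$ separately: for $F_1$ the $e=24$ criterion forces a symplectic isomorphism while Theorem~\ref{T:potMult} at $\ell$ forces $(2/p)=1$; for $F_2$ the two criteria give opposite symplectic types outright. As written, your plan would stall at the level-lowering step (no form of level $32\ell$ arises) and at the choice of symplectic criterion.
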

Suppose there are $x, y \in \Q$ such that $y^2 = x^p - 2\ell$. 
Therefore, there are 
integers $u,v,w$ satisfying $(u,v)=(w,v)=1$ such that the substitution
\[
 x = \frac{u}{v^2} \quad \quad y = \frac{w}{v^p}
\]
leads to
\begin{equation}
 2\ell(-v^2)^p + u^p = w^2.
 \label{E:pp2II}
\end{equation}
This shows that $(-v^2,u,w)$ is a non-trivial primitive solution to the equation 
$2\ell x^p + y^p = z^2$. In the notation of \cite{IK2006}, we have 
\[
 a = 2\ell, \quad b = 1, \quad c = 1, \quad x = -v^2, \quad y = u, \quad z = w.
\]
It  is easy to check the conditions $(C_1)$, $(C_2)$ and $(C_4)$ of \cite[p. 122]{IK2006} 
and $(C_5)$ in \cite[p. 128]{IK2006} are satisfied; furthermore, if $w$ is odd, 
replacing it by $-w$ if necessary, we can also assume $w \equiv -1 \pmod{4}$, so that condition $(C_3)$ is also satisfied.

We consider the Frey curve attached $(-v^2,u,w)$ defined by
\[
 E_3 : Y^2 = X^3 + 2wX^2 + u^{p}X 
\]
whose standard invariants are given by 
\[
 c_4(E_3) = 2^4(4w^2 - 3 u^{p}), \quad c_6(E_3) = 2^6 w (9 u^{p} - 8w^2), 
 \quad \Delta(E_3) = -2^7 \ell (uv)^{2p}.
\]
From \cite[Lemma~2.4]{IK2006} the model for $E_3$ is minimal away from 2 
and has multiplicative reduction at all primes dividing $\ell uv$.
The following lemma follows easily from~\eqref{E:pp2II}.

\begin{lemma} Let $u, v, w$ be as above. Then $uw$ is odd.

Moreover, if $v$ is even, then $u \equiv 1 \pmod{4}$ and if $v$ is odd, 
we have $u \equiv -1 \pmod{4}$.
\label{L:aux2}
\end{lemma}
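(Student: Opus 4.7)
The plan is to reduce equation \eqref{E:pp2II}, namely $w^2 = u^p - 2\ell v^{2p}$, modulo small powers of $2$, using the hypotheses $\ell \equiv 3 \pmod 8$, $p$ odd, and $\gcd(u,v) = \gcd(w,v) = 1$. The coprimality conditions force the parity analysis to split cleanly into the cases ``$v$ even'' and ``$v$ odd''.

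First I would establish that $u$ is odd. If $u$ were even, coprimality $\gcd(u,v)=1$ would force $v$ to be odd, and the high $2$-adic valuation of $u^p$ (note $p\geq 7$) together with $\ell\equiv 3\pmod 8$ and $v^{2p}\equiv 1\pmod 8$ would give
\[
 w^2 \equiv -2\ell v^{2p} \equiv -6 \equiv 2 \pmod 8,
\]
which is impossible since squares mod $8$ lie in $\{0,1,4\}$. Once $u$ is odd, reducing \eqref{E:pp2II} modulo $2$ in either parity of $v$ immediately gives $w^2$ odd, hence $w$ odd; this proves that $uw$ is odd.

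For the second statement I would invoke the standing convention $w\equiv -1\pmod 4$ (obtained, if necessary, by replacing $w$ with $-w$), which together with $w$ odd gives $w^2\equiv 1\pmod 8$. The key elementary fact is that for any odd integer $u$ and odd $p$ one has $u^p\equiv u\pmod 8$, since $u^2\equiv 1\pmod 8$. If $v$ is even, then $2\ell v^{2p}$ is divisible by a high power of $2$, so \eqref{E:pp2II} reduces mod $8$ to $1\equiv u\pmod 8$, whence $u\equiv 1\pmod 4$. If instead $v$ is odd, then $v^{2p}\equiv 1\pmod 8$ and $2\ell\equiv 6\pmod{16}$, giving $-2\ell v^{2p}\equiv 2\pmod 8$, so \eqref{E:pp2II} becomes $1\equiv 2+u\pmod 8$, that is $u\equiv -1\pmod 4$.

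Since the whole argument is just a parity/mod $8$ bookkeeping exercise, there is really no hard step; the only point requiring any care is making sure to use the refinement $\ell\equiv 3\pmod 8$ (rather than merely $\ell$ odd) in the case $v$ odd, since a weaker hypothesis would only yield $u$ odd without pinning down its class mod $4$. This is also the reason the authors advertise the lemma as an easy consequence of~\eqref{E:pp2II}.
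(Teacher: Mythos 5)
Your proof is correct and is exactly the elementary mod~$4$/mod~$8$ reduction of~\eqref{E:pp2II} that the paper has in mind when it says the lemma ``follows easily'' (compare the written-out proof of Lemma~\ref{L:aux1}). One small caveat: your closing remark overstates the role of $\ell \equiv 3 \pmod 8$ --- since $2\ell v^{2p} \equiv 2 \pmod 4$ whenever $\ell$ and $v$ are odd, working modulo $4$ already pins down $u \equiv -1 \pmod 4$ using only that $\ell$ is odd, so the refinement modulo $8$ is not actually needed here.
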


Suppose now $p \geq 11$ and $\ell \neq p$. From \cite[Propositions~3.1 and~3.2]{IK2006} 
we know that the mod~$p$ Galois representation $\rhobar_{E_3,p}$ 
attached to $E_3$ is irreducible and of weight 2. 
\begin{lemma} Let $N(\rhobar_{E_3,p})$ 
denote the Serre conductor of $\rhobar_{E_3,p}$.
Then
 \begin{enumerate}
  \item if $v$ is even, we have $N(\rhobar_{E_3,p}) = 2 \ell$;
  \item if $v$ is odd, we have $N(\rhobar_{E_3,p}) = 2^7 \ell$.
 \end{enumerate}
 \label{L:SerrelevelII}
\end{lemma}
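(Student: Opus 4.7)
The plan is to proceed exactly as for Lemma~\ref{L:Serrelevel}, combining modularity, Ribet's level lowering, and a Tate-type analysis of the conductor of $E_2$ at the primes~$2$ and~$\ell$. Since $\rhobar_{E_2,p}$ is irreducible of weight~$2$ and $E_2/\Q$ is modular, the Serre level $N(\rhobar_{E_2,p})$ equals the conductor $N(E_2)$ with each prime $q \neq 2,\ell,p$ contributing trivially if $E_2$ has multiplicative reduction at~$q$ and $p \mid \vv_q(\Delta(E_2))$. The task therefore reduces to computing the local conductor exponents at primes $q \mid \Delta(E_2)$ and stripping those where level lowering applies.

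For a prime $q \nmid 2\ell p$: since the model for $E_2$ is minimal away from~$2$ and has multiplicative reduction at all primes dividing $\ell uv$, such a~$q$ contributes to the conductor only if $q \mid uv$; in that case $\vv_q(\Delta(E_2)) = 2p\,\vv_q(uv)$ is divisible by~$p$, so level lowering removes~$q$ entirely. Next, at~$\ell$ itself, the coprimality $(u,v)=1$ together with equation~\eqref{E:pp2II} forces $\ell \nmid uv$ (if $\ell\mid u$ then $\ell\mid w$, and then $\ell \mid v$, a contradiction); hence $\vv_\ell(\Delta(E_2)) = 1$, which is coprime to~$p$ and the reduction is multiplicative, so~$\ell$ contributes exactly the factor~$\ell^1$.

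The remaining and delicate contribution is at $q=2$, where we split into cases via Lemma~\ref{L:aux2}. When $v$ is even we have $u \equiv 1 \pmod 4$ and $w$ odd; a direct computation in the spirit of \cite[Lemma~2.5]{IK2006} shows that the given model is not minimal at~$2$ and, after applying Tate's algorithm, $E_2/\Q_2$ has multiplicative reduction with $\vv_2(N(E_2)) = 1$ and $p \nmid \vv_2(\Delta_m(E_2))$, so the contribution at~$2$ is~$2^1$. When $v$ is odd we have $u \equiv -1 \pmod 4$ and $w$ odd; the same invariants now give $(\vv_2(c_4), \vv_2(c_6), \vv_2(\Delta(E_2))) = (4, \geq 7, 7)$, the model is minimal at~$2$, and the curve has wild potentially good reduction of defect $e=8$, yielding $\vv_2(N(E_2)) = 7$ and again no level lowering at~$2$ since $p \nmid 7$. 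Combining the three contributions gives $N(\rhobar_{E_2,p}) = 2\ell$ and $2^7\ell$ respectively.

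The main obstacle is the bookkeeping at~$2$ in the case $v$ odd: one must carefully verify the valuations of $c_4, c_6, \Delta$ modulo sufficiently high powers of~$2$, using the congruence $u \equiv -1 \pmod{4}$ together with equation~\eqref{E:pp2II}, to apply the right entry of Papadopoulos's tables (or equivalently \cite[Lemma~2.5]{IK2006}) and conclude that the conductor exponent is exactly~$7$ and not some lower value that would occur after a quadratic untwist. Everything else is routine and follows the template already established in the proof of Lemma~\ref{L:Serrelevel}.
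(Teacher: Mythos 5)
Your overall strategy (modularity, stripping the primes dividing $uv$ via level lowering since $\vv_q(\Delta(E_2))=2p\,\vv_q(uv)$, keeping $\ell$ because $\vv_\ell(\Delta(E_2))\equiv 1\pmod p$, and computing the local conductor at $2$ in the two cases of Lemma~\ref{L:aux2}) is the right template and matches what underlies the paper's proof; the paper itself simply quotes parts (2.1) and (2.2) of \cite[Proposition~3.4]{IK2006}, which packages exactly this computation. Your case ``$v$ even'' is correct: $\vv_2(c_4)=4$, $\vv_2(\Delta)=7+2p\,\vv_2(v)\ge 12$, the model is non-minimal at $2$, the minimal model has multiplicative reduction with $\vv_2(\Delta_m)=2p\,\vv_2(v)-5$ prime to $p$, giving the factor $2^1$.

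The case ``$v$ odd'' contains a genuine error. You claim $(\vv_2(c_4),\vv_2(c_6),\vv_2(\Delta))=(4,\ge 7,7)$ with semistability defect $e=8$. But $c_6(E_2)=2^6w(9u^{p}-8w^2)$ with $uw$ odd, so $9u^{p}-8w^2$ is odd and $\vv_2(c_6)=6$ exactly; moreover the triple $(4,\ge 7,7)$ is impossible, since $c_4^3-c_6^2=1728\Delta$ would then have $2$-adic valuation $12$ on the left and $13$ on the right. The correct local data is $(\vv_2(c_4),\vv_2(c_6),\vv_2(\Delta))=(4,6,7)$, which by \cite{Kraus1990} gives $e=24$ (not $e=8$: an $e=8$ curve with $\vv_2(c_4)=4$ has conductor $2^5$, cf.\ cases $D_a$, $D_d$ of Table~\ref{Table:model}, which would contradict the asserted level $2^7\ell$). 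So as written, the step ``yielding $\vv_2(N_2)=7$'' does not follow from your data; you need the corrected triple $(4,6,7)$, from which Papadopoulos's tables give $\vv_2(N_2)=7$ and $e=24$, exactly as recorded in the lemma immediately following this one in the paper. The distinction also matters downstream, since the proof of Theorem~\ref{T:2plII} applies the $e=24$ criterion (Theorem~\ref{T:Wilde24}) rather than an $e=8$ one. A further small point: your parenthetical only rules out $\ell\mid u$, not $\ell\mid v$; this is harmless, because even if $\ell\mid v$ the reduction at $\ell$ is multiplicative with $\vv_\ell(\Delta_m)=1+2p\,\vv_\ell(v)\not\equiv 0\pmod p$, so $\ell$ still contributes exactly $\ell^1$.
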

\begin{proof} This follows from parts (2.1) and (2.2) of 
\cite[Propositions~3.4]{IK2006}.
\end{proof}

Suppose $p > f(\ell)$. As explained in \cite{IK2006}, it follows from 
modularity, level lowering and a standard argument that there is
an elliptic curves $F/\Q$ of conductor $N(\rhobar_{E_3,p})$, having 
at least one rational 2-torsion point, such that 
$\rhobar_{E_3,p} \simeq \rhobar_{F,p}$. 

\begin{lemma} Suppose $p > f(\ell)$. Then $v$ is odd.
\label{L:voddII}
\end{lemma}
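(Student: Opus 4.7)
The plan is to mimic, word for word, the proof of Lemma~\ref{L:vodd}. Assume for contradiction that $v$ is even. Then part~(1) of Lemma~\ref{L:SerrelevelII} gives $N(\rhobar_{E_2,p}) = 2\ell$. By modularity of~$E_2/\Q$ together with Ribet's level lowering, the (irreducible, weight~2) representation $\rhobar_{E_2,p}$ arises from a weight-$2$ newform of level $2\ell$ and trivial character. Since $E_2$ carries the rational $2$-torsion point $(0,0)$, the image $\rhobar_{E_2,2}(G_\Q)$ sits inside a Borel, so (together with the compatibility of the $2$-adic and $p$-adic torsion structures, and the fact that a congruence between an elliptic curve and a newform with rational Hecke field is an isogeny-class congruence) the newform must come from an elliptic curve $F/\Q$ of conductor exactly $2\ell$ that also admits a rational $2$-torsion point.

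Now I would invoke Ivorra's classification in \cite{Ivorra} of elliptic curves over $\Q$ with conductor $2\ell$ and a rational $2$-torsion point. Under our standing hypotheses $\ell \equiv 3 \pmod 8$ and $\ell \geq 29$, this classification asserts that no such curve $F$ exists; this contradicts the existence of the elliptic curve $F$ produced above, and forces $v$ to be odd.

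The main (indeed essentially the only) obstacle is the invocation of \cite{Ivorra}: the classification there is organised by the residue of $\ell$ modulo powers of $2$ and by whether certain linear polynomials in $\ell$ are squares, so one must locate the exact line of the classification corresponding to $\ell \equiv 3 \pmod 8$ and check that the list is empty for $\ell \geq 29$. Everything else (modularity, level lowering to level $2\ell$, and the passage from a rational newform to an elliptic curve with rational $2$-torsion) is entirely parallel to the proof of Lemma~\ref{L:vodd} and requires no new ideas.
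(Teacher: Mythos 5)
Your proof is correct and follows exactly the paper's own argument: assume $v$ even, deduce from Lemma~\ref{L:SerrelevelII} that the Serre level is $2\ell$, use the standard argument (already set up before the lemma, following \cite{IK2006}) to produce an elliptic curve of conductor $2\ell$ with a rational $2$-torsion point, and contradict Ivorra's classification for $\ell \equiv 3 \pmod{8}$. The paper states this even more tersely, taking the modularity/level-lowering/rationality step as given from the preceding paragraph, so no further elaboration is needed.
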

\begin{proof} Suppose $v$ is even. Then $N(\rhobar_{E_3,p}) = 2\ell$ by Lemma~\ref{L:SerrelevelII}. 
Since $\ell \equiv 3 \pmod{8}$, it follows from \cite{Ivorra} there 
are no elliptic curves over $\Q$ with at least one rational 2-torsion point, 
a contradiction.
\end{proof}

\begin{lemma} \label{L:recall2}
Suppose $p > f(\ell)$ and let $N_2$ denote the conductor of the curve $E_3$. 
Then the model of $E_3$ is minimal at 2 and its invariants satisfy
\[
 (\vv_2(c_4),\vv_2(c_6),\vv_2(\Delta)) = (4, 6, 7).
\]
Moreover, $\vv_2(N_2) = 7$ and the defect of semistability of $E_3$ is $e=24$.
\end{lemma}
\begin{proof} From Lemma~\ref{L:vodd} we know that $v$ is odd and from Lemma~\ref{L:aux2} 
we have also $u,w$ odd. From the formulas for the invariants of $E_3$ and \cite{Kraus1990}
it follows that $(\vv_2(c_4),\vv_2(c_6),\vv_2(\Delta)) = (4, 6, 7)$,
$\vv_2(N_2) = 7$ and $e=24$.
\end{proof}

We can now prove our  result.

\begin{proof}[Proof of Theorem~\ref{T:2plII}]
Suppose $p > f(\ell)$ and $(2/p) = -1$.
It follows from Lemmas~\ref{L:SerrelevelII}~and~\ref{L:voddII} that there exits an 
elliptic curve $F/\Q$ of conductor $128\ell$ with at least one $2$-torsion point over~$\Q$ 
satisfying $\rhobar_{E_3,p} \simeq \rhobar_{F,p}$. Since $\ell - 2$ is a square and $\ell\geq 29$, it follows from \cite[Theorem~7]{Ivorra} that, after twisting  both $E_3$ and $F$ by $-1$ if necessary, we can assume that $F$ is one of the two curves
\[
 F_1 : y^2 = x^3 + 2\sqrt{\ell-2}x^2 + \ell x \quad \text{ or } \quad
 F_2 : y^2 = x^3 + 2\sqrt{\ell-2}x^2 - 2 x.
\]
whose semistability defect is $e(F_1) = e(F_2) = 24$ and minimal discriminants
\[
 \Delta(F_1) = -2^7 \ell^2 \quad \text{ and } \quad \Delta(F_2) = 2^8 \ell.
\]
From Lemma~\ref{L:recall2} we have $\Delta(E_3) = -2^7 \ell (uv)^{2p}$ is minimal.

Suppose $F=F_1$; from Theorem~\ref{T:Wilde24} part (2) we conclude that $E_3[p]$ and 
$F[p]$ are symplectically isomorphic. Now, applying 
Theorem~\ref{T:potMult} at $\ell$ forces $(2/p) = 1$, a contradiction.

Suppose $F=F_2$; from Theorem~\ref{T:potMult} at $\ell$ it follows 
that $E_3[p]$ and $F[p]$ are symplectically isomorphic. 
On the other hand, Theorem~\ref{T:Wilde24} part (2) implies $E_3[p]$ and 
$F[p]$ are anti-symplectically isomorphic, a contradiction. 
\end{proof} 

\addtocontents{toc}{\SkipTocEntry}

\end{document}